\newtheorem{claim}{Claim}
\newtheorem{coro}{Corollary}[section]
\newtheorem{thm}[coro]{Theorem}
\newtheorem{lem}[coro]{Lemma}
\newtheorem{prop}[coro]{Proposition}
\theoremstyle{definition}
\newtheorem{rem}[coro]{Remark}
\newtheorem{exa}[coro]{Example}
\newtheorem{defn}[coro]{Definition}
\newtheorem{obs}[coro]{Observation}
\newcommand{\Aut}[1]{{\operatorname{Aut}(}#1)}
\newcommand{\Rep}[1]{{\operatorname{Rep}(}#1)}
\newcommand{\id}[0]{{\operatorname{id}}}
\newcommand{\Ima}[1]{{\operatorname{Im}(}#1)}
\newcommand{\dist}[2]{{\operatorname{dist}(}#1, #2)}
\newcommand{\Sym}[1]{{\operatorname{Sym}(}#1)}
\newcommand{\Sp}[2]{{\mathsf {S}(}#1, #2)}
\newcommand{\B}[2]{{\mathsf {B}(}#1, #2)}
\newcommand{\N}{\mathbb{N}}
\newcommand{\T}{{\mathcal{T}}}
\newcommand{\Ra}{{\mathcal{R}}}
\newcommand{\Pa}{{\mathcal{P}}}
\newcommand{\inv}[1]{#1^{-1}}
\newcommand{\ch}[1]{{\operatorname{Ch}(}#1)}
\newcommand{\dtw}[0]{{\operatorname{dist}_{TW}}}
\newcommand{\dw}[0]{{\mathsf{d}_{W}}}
\newcommand{\gwr}{\text{--}\mathrm{WR}}
\newcommand{\dash}{\nobreakdash-\hspace{0pt}}
\newcommand{\adj}[1]{\overset{#1}{\sim}} 
\DeclareMathOperator{\Stab}{Stab}
\DeclareMathOperator{\Fix}{Fix}
\DeclareMathOperator{\proj}{proj}
\numberwithin{equation}{section}
\title{Universal groups for right-angled buildings}
\author{Tom De Medts, Ana C. Silva and Koen Struyve%
\footnote{Authors' address: Ghent University, Department of Mathematics, Krijgslaan 281--S22, 9000 Gent, Belgium.
E-mail addresses: {\tt Tom.DeMedts@UGent.be}, {\tt ana.fcd.silva@gmail.com}, {\tt kstruy@gmail.com}. Corresponding author: Tom De Medts.}}
\date{\today}
\begin{document}
\maketitle

\begin{abstract}
    In 2000, M.~Burger and S.~Mozes introduced universal groups acting on trees with a prescribed local action.
    We generalize this concept to groups acting on right-angled buildings.
    When the right-angled building is thick and irreducible of rank at least $2$ and each of the local permutation groups
    is transitive and generated by its point stabilizers, we show that the corresponding universal group is a simple group.

    When the building is locally finite, these universal groups are compactly generated totally disconnected locally compact groups,
    and we describe the structure of the maximal compact open subgroups of the universal groups as a limit of generalized wreath products.
\end{abstract}

\tableofcontents

\section*{Introduction}
\addcontentsline{toc}{section}{Introduction}%

In 2000, Marc Burger and Shahar Mozes introduced {\em universal groups} acting on regular locally finite trees with a prescribed local action \cite{BurgerMozes}.
More precisely, if $F \leq \Sym{d}$ is a finite transitive permutation group of degree $d$, and $T$ is the regular tree of degree $d$,
then the universal group $U(F)$ is the largest possible vertex-transitive subgroup of $\Aut{T}$ such that the induced local action around any vertex
is permutationally isomorphic to $F$.

In this paper, we will generalize this notion to the setting of {\em right-angled buildings}.
A right-angled building is a building for which the underlying Coxeter diagram contains only $2$'s and $\infty$'s, i.e., the corresponding Coxeter group $W$
with generating set $S$ of involutions only contains relations of the form $s_i s_j = s_j s_i$ for some pairs $i \neq j$, but no other relations.
This makes the combinatorics of the building behave very much like trees, while at the same time providing a much larger class of examples.
One similarity is the fact that, given a right-angled Coxeter group $(W,S)$, there is, for each collection of cardinalities $(q_s)_{s \in S}$,
a unique right-angled building of type $(W,S)$ such that each $s$-panel contains exactly $q_s$ chambers,
as was shown by Fr\'ed\'eric Haglund and Fr\'ed\'eric Paulin \cite{HP2003}.
(Such a result is not true at all for buildings in general.)
A more systematic study of combinatorial properties of right-angled buildings was initiated by Anne Thomas and Kevin Wortman in \cite{TW11}
and continued by Pierre-Emmanuel Caprace in \cite{Caprace},
and we rely on their results.

\medskip

Generalizing the concept of universal groups from trees to right-angled buildings involves a careful analysis of {\em colorings} in right-angled buildings.
We found it useful to introduce the concept of a {\em directed right-angled building}, which gives a systematic way to describe the unique right-angled
building with given valencies, unfolding from one particular chamber in the building.

In \cite{TW11}, Thomas and Wortman introduced the notion of a {\em tree-wall} for general right-angled buildings,
following an idea of Marc Bourdon \cite{Bourdon}, who introduced a similar notion for the right-angled Fuchsian case.
An important concept in our study of the right-angled buildings is the {\em tree-wall tree} (associated to a generator $s \in S$);
see Definition~\ref{deftreewall} below.
This will allow us to define a distance between tree-walls of the same type, which will be useful for inductive arguments.

We will also make extensive use of the notion of {\em wings}, introduced in \cite{Caprace}, following an idea from \cite[Proposition 6]{TW11}.
In particular, this will allow us to express a property for groups acting on right-angled buildings
which is similar to Tits' {\em independence property (P)} for groups acting on trees \cite{Tits}; see Proposition~\ref{fixpanel}.

\medskip

The full automorphism group of a thick {\em locally finite} semi-regular right-angled building $\Delta$ is a totally disconnected locally compact (t.d.l.c.) group
with respect to the permutation topology.
The universal group for a thick locally finite semi-regular right-angled building (with respect to a set of finite transitive permutation groups)
is always a chamber-transitive closed subgroup of $\Aut\Delta$,
and so it is itself a t.d.l.c.\@ group which is compactly generated.

Caprace has shown that if $\Delta$ is thick, semi-regular and irreducible (but not necessarily locally finite),
then the subgroup of type-preserving automorphisms is, in fact, an (abstractly) simple group \cite[Theorem 1.1]{Caprace}.
We will show that, when each of the local permutation groups is
generated by its point stabilizers, the universal group is again a simple group.
To prove this result, we will
use the independence property mentioned above.
Our approach is somewhat different from Caprace's approach, and in particular, our methods also give a simpler proof of his result.

Compactly generated (topologically) simple t.d.l.c.\@ groups are of interest since they provide the building blocks for a large class of t.d.l.c.\@ groups;
see \cite[Theorem C]{CM11}.
We investigate the structure of the maximal compact open subgroups of our universal groups.
We will provide explicit descriptions of the stabilizers of spheres of radius $n$ around a fixed chamber,
and also of the stabilizers of $w$-spheres, i.e., sets of elements at a fixed Weyl distance $w$ from a given chamber.
In both cases, we obtain a description either as an iterated semidirect product,
or at once as a generalized wreath product.

\subsubsection*{Organization of the paper}
\addcontentsline{toc}{subsection}{Organization of the paper}%

In Section~\ref{se:prelim}, we introduce the necessary notions from the theory of buildings,
and we recall some facts about subdirect products and wreath products in group theory.

Section~\ref{se:rab} is devoted to right-angled buildings.
We first study the combinatorics of right-angled Coxeter groups;
this gives rise, in particular, to a useful pair of ``Closing Squares Lemmas'' (Lemmas~\ref{Squares} and~\ref{Squares2}) in right-angled buildings.
We then have a closer look at parallel residues (a concept introduced by Tits \cite[p.~277]{Tits-Durham} and studied in the context of right-angled buildings in \cite{Caprace}),
after which we focus on tree-walls and wings, and introduce the tree-wall trees mentioned above.
Section~\ref{ss:colorings} then deals with different types of colorings in right-angled buildings (legal colorings, weak legal colorings and directed legal colorings),
and in Section~\ref{ss:directed} we introduce directed right-angled buildings, which will be useful later in our study of the maximal compact open subgroups.

We are then fully prepared to introduce the universal groups for right-angled buildings in Section~\ref{se:univ}.
After showing a handful of basic properties of these groups, we prove that we can extend the local action on a residue of the building to an element of the universal group
(Proposition~\ref{extuni}).
Then we prove that any non-trivial normal subgroup of $U$ contains subgroups with support on a single wing with respect to a tree-wall (Proposition~\ref{normfix});
this will turn out to be one of the key points to prove the simplicity of the universal group (Theorem~\ref{simplicity}).

In the final Section~\ref{se:cpt-open}, we assume the building to be locally finite,
and we describe the structure of the maximal compact open subgroups of $U$.
Our main results in that section are Theorem~\ref{thelimit} and Proposition~\ref{swstructure}.
It turns out to be very useful to use the notion of {\em generalized wreath products} with respect to a suitably chosen partial order.

\subsubsection*{Acknowledgment}
\addcontentsline{toc}{subsection}{Acknowledgment}%

The idea of defining universal groups for right-angled buildings is due to Pierre-Emmanuel Caprace.
We are grateful to him for sharing his idea with us.

This research was supported by the project ``Automorphism groups of locally finite trees'' (G011012), Research Foundation, Flanders, Belgium (F.W.O.-Vlaanderen).

\section{Preliminaries}\label{se:prelim}
\subsection{Buildings}

We will follow the combinatorial approach to buildings, described, for instance, in \cite{Weiss},
from which we will silently adopt the definitions, notations and conventions.
In particular, if $(W, S)$ is a Coxeter system, then a building $\Delta$ of type $(W,S)$ is an edge-colored graph, the vertices of which are the chambers of the building,
where two vertices are connected by an edge with label $s \in S$ if they are $s$-adjacent, i.e., if the two vertices lie in the same $s$-panel
(so the $s$-panels of $\Delta$ are complete subgraphs).
If $c$ and $d$ are $s$-adjacent chambers, we write $c \adj{s} d$.

The group of type-preserving automorphisms of $\Delta$ will be denoted by $\Aut\Delta$.
We can equip $\Aut \Delta$ with the {\em permutation topology}, defined by choosing as a neighborhood basis of the identity, the family of pointwise stabilizers of finite subsets of $\ch\Delta$, i.e., a neighborhood basis of the identity is given by
\[ \{\Aut\Delta _{(F)} \mid F \text{ is a finite subset of } \ch\Delta \}. \]
It follows that a sequence $(g_i)_{i\in \N}$ of elements of $\Aut \Delta$ has a limit $g\in \Aut \Delta$ if and only if for every chamber $c\in \ch\Delta$ there is a number $N$ (possibly depending on $c$) such that $g_n.c=g.c$ for every $n\geq N$.
One could also use the property above describing convergence of sequences as a definition of the topology and then we think of the permutation topology as the topology of pointwise convergence.
If we think of $\ch\Gamma$ as having the discrete topology and the elements of $\Aut \Delta$ as maps $\ch\Delta \to \ch\Delta$, then the permutation topology is the same as the compact-open topology.

With the definition of the permutation topology, we can characterize the open subgroups in $\Aut \Delta$.
A subgroup $G\leq \Aut \Delta$ is \emph{open} if and only if there is a finite subset $F$ of $\ch\Delta$ such that $\Aut\Delta_{(F)}\subseteq G$.

In the case where $\Delta$ is locally finite, the group  $\Aut \Delta$ is Hausdorff, locally compact and totally disconnected; see, for example, \cite[Lemma~1]{Woess} and {\cite[Corollary~3.1.12]{topologicalgroupsandrelatedstrcutures}}.

\bigskip

We will use the following notation throughout the paper.
Let $\Sigma$ be a Coxeter diagram with vertex set $S$ and let $(W,S)$ be a Coxeter system of type $\Sigma$ with set of generators $S$.
For each pair $s,t \in S$, we let $m_{st}$ be the corresponding entry of the Coxeter matrix, i.e., $m_{st}$ is the order of the element $st \in W$.

\begin{defn}\label{3CBrown}
A \emph{$\Sigma$-elementary operation} is an operation of one of the following two types:
\begin{enumerate}[(1)]
	\item
	 Delete a subword of the form $ss$.
	\item
	 Given $s,t\in S$ with $s\neq t$ and $m_{st}<\infty$, replace an alternating subword $st\cdots$ of length $m_{st}$ by the alternating word $ts\cdots$ of length $m_{st}$.
\end{enumerate}
We call a word \emph{$\Sigma$-reduced}\index{Word ! Reduced} if it cannot be shortened by any finite sequence of the operations (1) and (2) above.
\end{defn}

For Coxeter groups there is a solution for the word problem.

\begin{lem}[{\cite[Section 3C]{Brown}}]\label{reduced words}
Let $(W,S)$ be a Coxeter system with Coxeter diagram $\Sigma$.
\begin{enumerate}
	\item
	A word is reduced if and only if it is $\Sigma$-reduced.
	\item
	 If $w_1$ and $w_2$ are reduced words, then they represent the same element if and only if $w_1$ can be transformed into $w_2$ by applying a sequence of elementary operations of type (2).
\end{enumerate}
\end{lem}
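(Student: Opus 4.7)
The plan is to prove both statements together by induction on the length $n$ of the element of $W$ represented. The easy direction "reduced $\Rightarrow$ $\Sigma$-reduced" in part~(1) is immediate: any elementary operation preserves the element of $W$, while type~(1) operations strictly shorten and type~(2) operations preserve length, so a shortening sequence applied to a reduced word would produce a shorter representative of the same element, which is impossible. For the same reason, any chain of elementary operations between two words of equal length can use only type-(2) moves, establishing the easy half of part~(2). What remains is the existence of such a chain in~(2) and the reverse implication in~(1).

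The main tool I would invoke is the strong exchange condition of Coxeter group theory (typically proved via the Tits reflection representation, and available in the same chapter of Brown cited in the statement): if $s_1 \cdots s_n$ is a reduced expression for $w$ and $s \in S$ satisfies $\ell(sw) < n$, then there is an index $i$ with $sw = s_1 \cdots \hat{s_i} \cdots s_n$ in $W$. From this I would derive, by a short induction, the key auxiliary lemma: if $s \neq t$ are in $S$ and $w \in W$ satisfies $\ell(sw) < \ell(w)$ and $\ell(tw) < \ell(w)$, then $m_{st} < \infty$, and $w$ admits a reduced expression with alternating prefix $\underbrace{stst\cdots}_{m_{st}\text{ letters}}$ as well as one with alternating prefix $\underbrace{tsts\cdots}_{m_{st}\text{ letters}}$, the two expressions agreeing from position $m_{st}+1$ onwards.

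Equipped with this lemma, part~(2) follows by induction on $n$. Let $w_1 = s_1 \cdots s_n$ and $w_2 = t_1 \cdots t_n$ be reduced expressions for $w$. If $s_1 = t_1$, delete the common first letter and apply induction to the resulting reduced expressions for $s_1 w$, extending the chain obtained by prepending $s_1$. If $s_1 \neq t_1$, then both $s_1 w$ and $t_1 w$ are strictly shorter than $w$, so the auxiliary lemma provides two specific reduced expressions $u_1$ and $u_2$ of $w$ beginning with $s_1$ and $t_1$ respectively, agreeing past their alternating prefixes; by the previous case applied inductively, $w_1$ is connected to $u_1$ by type-(2) moves and $w_2$ is connected to $u_2$, while $u_1$ is turned into $u_2$ by a single braid move on the alternating prefix. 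The non-trivial direction of part~(1) is then an immediate byproduct: a $\Sigma$-reduced word that is not reduced would, after the same alignment procedure applied against a strictly shorter reduced expression for the same element, expose either an $ss$ subword or a place where a braid move enables a shortening, contradicting the $\Sigma$-reduced hypothesis.

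The main obstacle is the auxiliary lemma itself, where the Coxeter hypothesis (that no relation beyond $s^2 = 1$ and the braid relations holds in $W$) is used in essential form; the induction establishing it must be arranged simultaneously with the main induction on $n$, and the delicate bookkeeping that forces the alternating prefix to appear out of the strong exchange condition is the combinatorial heart of the argument. Once this lemma is in hand, the rest is a clean formal assembly.
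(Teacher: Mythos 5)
The paper offers no proof of this classical lemma; it simply cites Brown, Section~3C. Your argument via the exchange condition --- strip a common initial letter when the two reduced words share one, and otherwise invoke the auxiliary fact that $\ell(sw)<\ell(w)$ and $\ell(tw)<\ell(w)$ force $m_{st}<\infty$ together with two reduced expressions for $w$ having alternating $\{s,t\}$-prefixes of length $m_{st}$ related by a single braid move --- is the standard Tits/Matsumoto proof, and it is essentially what the cited reference does. One refinement for the non-trivial direction of part~(1): the alignment procedure of part~(2) compares two \emph{reduced} words of \emph{equal} length, so rather than aligning the $\Sigma$-reduced word directly against a shorter reduced expression, one should locate the shortest prefix $s_1\cdots s_k$ that is not reduced, apply the exchange condition to the reduced prefix $s_1\cdots s_{k-1}$ to produce a second reduced expression for it ending in $s_k$, align those two length-$(k-1)$ reduced words by part~(2), and thereby expose an $s_k s_k$ subword in the original word; braid moves never shorten by themselves, they only rearrange so that a type-(1) deletion becomes available, which is how your phrase ``a place where a braid move enables a shortening'' should be understood.
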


Let $\Delta$ be a building of type $\Sigma$ and let $\delta \colon \ch\Delta\times\ch\Delta \to W$ denote the Weyl distance.
We consider a gallery distance between chambers in a building as in \cite{TW11}.

\begin{defn}\label{galdist}
The \emph{gallery distance}  is defined in $\ch\Delta$ as
\[ \dw(c, c')=l (\delta(c,c')), \]
i.e., the length of a minimal gallery between the chambers $c$ and $c'$.

For a fixed chamber $c_0\in\ch\Delta$ we define  the spheres at a fixed gallery distance from  $c_0$,
\[ \Sp{c_0}{n} =\{ c\in \ch\Delta \mid \dw(c_0, c) = n\}, \]
and the balls
\[ \B{c_0}{n} =\{ c\in \ch\Delta \mid \dw(c_0, c) \leq n\}. \]
\end{defn}

\begin{defn}\label{induced action}
\begin{enumerate}
    \item
        Fix a chamber $c_0$ of $\Delta$ and let $\Ra$ be a residue in $\Delta$ containing  $c_0$.
        Let $G\leq\Aut\Delta$ be a group of automorphisms of $\Delta$. We denote by $G_{c_0}$ the stabilizer of the chamber $c_0$ in the group $G$.
        Moreover, $\Fix_{G}(\Ra)$ will denote the fixator of $\Ra$ in $G$, i.e., the set of elements $g\in G$ such that $g.c=c$ for all $c\in \ch\Ra$.
        In particular $\Fix_{G}(\Ra)$ is subgroup of~$G_{c_0}$.
    \item
        Let $G\leq\Aut\Delta$ be a group of automorphisms of $\Delta$, and let $B$ be any subset of $\ch\Delta$.
        Then we write $G|_B$ for the permutation group induced on $B$, i.e.\@ $G|_B := \Stab_{G}(B)/\Fix_{G}(B)$.
\end{enumerate}
\end{defn}

\begin{defn}\label{convex}
A set of chambers $\mathcal C$ of $\Delta$ is called \emph{combinatorially convex} (or just {\em convex}) if for every pair $c,c' \in \mathcal C$, every minimal gallery from $c$ to $c'$ is entirely contained in $\mathcal C$.
\end{defn}

\subsection{Subdirect products}

In Section~\ref{se:cpt-open}, we will encounter groups that are subdirect products of generalized wreath products.
We briefly recall the notion of a subdirect product in this section, and explain generalized wreath products in the next section.

\begin{defn}\label{subdirectproduct}
Let $G_1, \dots, G_n$ be groups. A {\em subdirect product} of the groups $G_1, \dots , G_n$ is a subgroup $P\leq G_1\times \cdots \times G_n$ of the direct product such that for each $i \in \{ 1,\dots,n \}$, the canonical projection $P\to G_i$ is surjective.
\end{defn}

\begin{exa} \label{Intransitive groups}
Let $G$ be a group acting faithfully on a set $S$, and let
\[ S= S_1 \sqcup S_2 \sqcup \dotsm \sqcup S_n \]
be a decomposition of $S$ into $G$-invariant subsets, i.e., each set $S_i$ is a union of orbits for the action of $G$ on $S$.
Then we get corresponding homomorphisms
\[ \alpha_i \colon G \to \Sym{S_i} . \]
Let $G_i = \Ima{\alpha_i}$, and define a new homomorphism
\[ \varepsilon \colon G \to G_1 \times \dots \times G_n \colon g \mapsto \bigl( \alpha_1(g), \dots, \alpha_n(g) \bigr). \]
Then $\varepsilon$ is injective, hence $G \cong \varepsilon(G) \leq G_1 \times \dots \times G_n$,
and by definition, $\varepsilon(G)$ surjects onto each $G_i$.
Hence we have realized $G$ as a subdirect product of $G_1\times \dots \times G_n$.
\end{exa}

\subsection{Generalized wreath products}\label{sec:genwreath}

The compact open subgroups of the universal groups for locally finite trees have the structure of an iterated wreath product
\cite[p.\@ 143]{BurgerMozes}.
In the situation of locally finite right-angled buildings, we will describe the structure of the compact open subgroups in Section~\ref{se:cpt-open} below.
In order to describe these subgroups, we will require a more general description of groups arising as intersections of distinct wreath products.

The approach we take to these groups, called {\em generalized wreath products}, is the one described by Gerhard Behrendt in \cite{Behrendt}.
His most general definition of a generalized wreath product requires the notion of a so-called {\em systematic subset} of a set $X$ (see \cite[Section 2]{Behrendt}),
but we will not need this concept because we will only be dealing with finite posets (see the comment on top of p.\@~260 of {\em loc.\@~cit.}).

\begin{defn}\label{defn:gwp}
\begin{enumerate}
    \item
        An \emph{equivalence system} $(X, E)$ is a pair consisting of a set $X$ and a set $E$ of equivalence relations on $X$.
        The automorphism group of $(X, E)$ is the set of all permutations of the set $X$ which leave the relations in $E$ invariant, i.e.,
        \begin{multline*}
            \Aut{X,E} = \{ g \in \Sym X \mid x\sim y \iff g.x \sim g.y \\
                \text{ for all } x, y \in X \text{ and for all } {\sim} \in E \} .
        \end{multline*}
    \item
        Let $(S, \prec)$ be a poset (i.e., a partially ordered set%
        \footnote{All partial orders occurring in this paper are {\em strict} partial orders, i.e., binary relations that are irreflexive, transitive and antisymmetric.
        When $\prec$ is a (strict) partial order, we write $\preceq$ for the corresponding non-strict partial order.}%
        ).
        For each $s\in S$, let $G^s$ be a permutation group acting on a set $X_s$.
        Let $X=\prod_{s\in S} X_s$ be the direct product of the sets $\{X_s\}_{s\in S}$.

        For each $s\in S$, we define two equivalence relations on $X$ as follows.
        For each pair of tuples $x, y \in X$ we define
        \begin{align*}
            x \adj{s} y & \iff x_t=y_t \text{ for all } t \succ s. \\
            x \simeq_s y & \iff x_t=y_t \text{ for all } t \succeq s.
        \end{align*}
        We let $E=\{ \adj{s}  \mid s\in S  \} \cup \{ \simeq_s \mid s\in S  \}  $ be the set of all these equivalence relations.
        We define the \emph{generalized wreath product} $G = X \gwr_{s\in S} G^s$ as
        \begin{multline*}
            G = \{ g \in \Aut{X,E} \mid \text{ for each }x \in X \text{ and } s \in S \text{ there is } g_{s,x} \in G^s \\
                \text{such that } (g. y)_s = g_{s,x}.y_s \text{ for all } y\in X \text{ with } y\adj{s} x \} .
        \end{multline*}
        We note that  $X\gwr_{s\in S} G^s$ is indeed a group, with $(\inv g)_{s,x}= \inv{(g_{s, \inv{g}.x})}$ and  $(gh)_{s,x}=g_{s, h.x}\,h_{s,x}$.
\end{enumerate}
\end{defn}

\begin{rem}\label{rem:gwp}
    If $\prec$ is the empty partial order, then we get a direct product of the groups~$G^s$.
    If the poset $S$ is a chain, then we get the iterated wreath product of the groups $G_s$ with its imprimitive action,
    as in \cite[Section 2.6]{DixonMortimer} (sometimes also called the {\em complete wreath product}).

    In the most general case where we will apply this construction, it will, in fact, be possible to view the generalized wreath product
    as an {\em intersection} of wreath products acting on the same product set; see Remark~\ref{rem:intwr} below.
\end{rem}

In our study of the compact subgroups of the universal group it will be useful to consider certain quotients of generalized wreath products. That is what we will do now.
We retain the notation of the previous paragraphs.

\begin{defn}\label{def:ideal}
A subset $I$ of the poset $(S, \prec)$ is called an \emph{ideal} of $S$ if for every $s\in I$ and $t\in S$, $t\preceq s$ implies $t\in I$.
We define a new equivalence relation on $X$ as
\[x\sim_I y \iff x_s = y_s \text{ for all } s\not\in I.\]
As before, let $G = X\gwr_{s\in S} G^s$. We consider its subset
\[ D(I) = \{ g\in G \mid x\sim_I g.x \text{ for all } x\in X \} . \]
\end{defn}

\begin{prop}[{\cite[Proposition 7.1]{Behrendt}}]\label{DInormal}
Let $I$ be an ideal of $S$. Then $D(I)$ is a normal subgroup of $G$.
\end{prop}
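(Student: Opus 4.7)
The plan is to first verify that $D(I)$ is a subgroup of $G$, and then prove normality by exploiting how the ideal condition on $I$ matches the local structure of elements of $G$.

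For the subgroup check, I would observe that $\sim_I$ is an equivalence relation on $X$, so the conditions $x \sim_I g.x$ for all $x$ transfer easily: the identity is clear, closure under products follows from transitivity (if $h.x \sim_I x$ and $g.(h.x) \sim_I h.x$ then $gh.x \sim_I x$), and closure under inverses follows by substituting $y = g^{-1}.x$ to get $x \sim_I g^{-1}.x$ from $g.y \sim_I y$.

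The key observation for normality is the contrapositive of the ideal axiom: if $t \notin I$ and $s \succeq t$, then $s \notin I$. In other words, the complement $S \setminus I$ is upward closed. This dovetails exactly with the definition of $\adj{s}$, which only looks at coordinates strictly above $s$. Concretely, given $g \in G$, $h \in D(I)$, and $x \in X$, I set $y = g^{-1}.x$ and aim to prove $(ghg^{-1}.x)_s = x_s$ for every $s \notin I$. For such $s$ and any $t \succ s$ we have $t \notin I$, hence $(h.y)_t = y_t$; this means precisely that $h.y \adj{s} y$.

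Now I would invoke the defining local-action property of $g$: there exists $g_{s,y} \in G^s$ such that $(g.z)_s = g_{s,y}.z_s$ for every $z$ with $z \adj{s} y$. Applied to $z = h.y$ and to $z = y$, this yields
\[ (g.(h.y))_s = g_{s,y}.(h.y)_s \quad\text{and}\quad (g.y)_s = g_{s,y}.y_s. \]
Finally, since $s \notin I$ and $h \in D(I)$, we have $(h.y)_s = y_s$, so the two right-hand sides agree. Because $g.y = x$, this is exactly $(ghg^{-1}.x)_s = x_s$, giving $ghg^{-1} \in D(I)$.

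I do not foresee a real obstacle: the entire argument is a bookkeeping exercise, and the one place where something non-trivial happens is identifying that ``ideal'' translates to ``upward-closed complement'', which is exactly what lets the $\adj{s}$-relation propagate from $h.y$ to $y$. The rest is mechanical application of the definition of the generalized wreath product.
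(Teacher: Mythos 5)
Your proof is correct. Note first that the paper does not give its own argument for this proposition; it simply cites Behrendt's \cite[Proposition 7.1]{Behrendt}, so there is no proof in the paper to compare against directly. Your argument is the natural one and it is carried out cleanly: the subgroup check is just the equivalence-relation properties of $\sim_I$, and the normality argument correctly isolates the one non-trivial observation, namely that ``$I$ is a (downward-closed) ideal'' is equivalent to ``$S \setminus I$ is upward-closed'', which is precisely what makes the relation $h.y \adj{s} y$ hold for every $s \notin I$ when $h \in D(I)$. From there the defining local-action property of $g$ applies both to $z = h.y$ and to $z = y$ (the latter because $y \adj{s} y$ trivially), with the same local component $g_{s,y}$, and the chain $(ghg^{-1}.x)_s = g_{s,y}.(h.y)_s = g_{s,y}.y_s = (g.y)_s = x_s$ closes the argument. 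No gap.
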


The group $D(I)$ can also be described as a generalized wreath product; see \cite[Theorem 7.2]{Behrendt}.
We will describe such a generalized wreath product in the case that $|I|=1$.
Let $r\in S$ such that $I=\{r\}$ is an ideal of $S$; then $t \not\prec r$ for all $t \in S$.
We will write $D(r)$ rather than $D(\{r\})$.
For any subset $T\subseteq S$, let $p_T$ denote the projection map $p_T \colon X \to \prod_{s\in T} X_s$ defined by $x\mapsto (x_s)_{s\in T}$.

\begin{prop}\label{DIorder}
    Let $(S, \prec)$ be a poset, and let $I=\{r\}$ be an ideal of $S$.
    Let $d_r = \prod_{t \succ r} |X_t|$ (where $d_r = 1$ if there are no $t \succ r$).
    Then $D(r)$ is isomorphic to the direct product of $d_r$ copies of $G^r$.
    In particular, if the sets $X_s$ are finite, then
    \[ |D(r)| = |G^r|^{d_r} . \]
\end{prop}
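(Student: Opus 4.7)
The plan is to produce an explicit isomorphism $\Phi\colon D(r)\to (G^r)^{d_r}$ by showing that an element $g\in D(r)$ is entirely encoded by one ``local'' permutation in $G^r$ for each $\adj{r}$-equivalence class of $X$, and that this encoding respects the group law.

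First I would record the consequence of the hypothesis that $I=\{r\}$ is an ideal: for every $t\in S$, $t\preceq r$ forces $t=r$, so no $t\in S$ satisfies $t\prec r$. This will be the only place where the ideal property is used, and it is what makes the argument clean. From the definition of $D(r)$, if $g\in D(r)$ then $(g.y)_t=y_t$ for every $y\in X$ and every $t\neq r$, so the only non-trivial data attached to $g$ is $(g.y)_r$.

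Next I would analyse the dependence of $g_{r,x}$ on $x$. A short computation shows that $g_{r,x}$ depends only on the $\adj{r}$-class of $x$: for any $y\adj{r} x$, both $g_{r,x}$ and $g_{r,y}$ satisfy $(g.z)_r=g_{r,\cdot}.z_r$ for all $z\adj{r} x$, and since $X_r$-values can be chosen freely within such a class, the two elements of $G^r$ must agree. The $\adj{r}$-classes are parametrised by tuples $(x_t)_{t\succ r}$, so there are exactly $d_r=\prod_{t\succ r}|X_t|$ of them. I then define
\[
    \Phi\colon D(r)\longrightarrow \prod_{[x]\in X/{\adj{r}}} G^r,\qquad g\longmapsto (g_{r,[x]})_{[x]}.
\]

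To see that $\Phi$ is a group homomorphism, I would use the composition formula $(gh)_{r,x}=g_{r,h.x}\,h_{r,x}$. Since $h\in D(r)$, the tuple $h.x$ differs from $x$ only in the $r$-coordinate, hence $h.x$ lies in the same $\adj{r}$-class as $x$, so $g_{r,h.x}=g_{r,x}$ and $\Phi(gh)=\Phi(g)\Phi(h)$ componentwise. Injectivity is immediate: if each $g_{r,[x]}$ is trivial, then $(g.y)_r=y_r$ for all $y$, and combined with $(g.y)_t=y_t$ for $t\neq r$ this gives $g=\id$.

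The main technical point is surjectivity, and I expect it to be the one that needs the minimality of $r$ most seriously. Given a family $(\pi_{[x]})_{[x]}\in\prod_{[x]}G^r$, I would define the candidate $g$ by $(g.y)_r=\pi_{[y]}.y_r$ and $(g.y)_t=y_t$ for $t\neq r$, and then check (i) that $g$ preserves every relation in $E$ and (ii) that $g$ satisfies the local-action condition of Definition~\ref{defn:gwp}. For the relations $\adj{s}$ and $\simeq_s$ with $s\neq r$, only the coordinates $t\succ s$ or $t\succeq s$ are tested; if none of these equals $r$ the verification is trivial, and if $t=r$ occurs the condition $r\succ s$ (resp.\@ $r\succeq s$ with $s\neq r$) would force $s\prec r$, which is excluded. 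For $\simeq_r$ and $\adj{r}$ the check is immediate from the fact that $\pi_{[y]}$ depends only on the $\adj{r}$-class of $y$. Finally, the local-action condition holds by construction for $s=r$ (with $g_{r,x}=\pi_{[x]}$) and trivially for $s\neq r$ (with $g_{s,x}=\id_{X_s}$, using again that no $s\in S$ satisfies $s\prec r$, so nothing needs to be tested in the $r$-coordinate). This gives $\Phi$ surjective, hence an isomorphism; the cardinality statement $|D(r)|=|G^r|^{d_r}$ in the finite case is then automatic.
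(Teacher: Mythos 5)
Your proof is correct, but it takes a genuinely different route from the paper. The paper's proof of Proposition~\ref{DIorder} is a one-line citation to \cite[Theorem 7.2]{Behrendt} (together with a remark that the extra partial order appearing in Behrendt's general statement is empty in this special case). You instead give a self-contained argument from Definitions~\ref{defn:gwp} and~\ref{def:ideal}: you observe that $g\in D(r)$ is determined by the local maps $g_{r,x}$, show that $g_{r,x}$ depends only on the $\adj{r}$-class of $x$, count those classes (there are $d_r$ of them since a class is specified by $(x_t)_{t\succ r}$), and then verify that the resulting map to $(G^r)^{d_r}$ is an isomorphism. The key point you correctly isolate is the role of the ideal hypothesis: $I=\{r\}$ being an ideal means no $s$ satisfies $s\prec r$, which is exactly what makes (a)\ the coordinate $r$ invisible to the relations $\adj{s}$ and $\simeq_s$ for $s\neq r$, and (b)\ the constant local maps $g_{s,x}=\id$ for $s\neq r$ legitimate in the surjectivity step. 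Your verification of both directions of the relation-preservation condition, including the slightly delicate $\simeq_r$ case (where $[x]=[y]$ forces $\pi_{[x]}=\pi_{[y]}$ and then $x_r=y_r$), is sound, and the homomorphism and injectivity checks are fine. What the paper's approach buys is brevity and the fact that \cite[Theorem 7.2]{Behrendt} identifies $D(I)$ for an arbitrary ideal $I$, not just a singleton; what your approach buys is a proof that can be read without consulting Behrendt and that makes the mechanism transparent in the special case actually needed.
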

\begin{proof}
    This follows from \cite[Theorem 7.2]{Behrendt}.
    Notice that the general statement from {\em loc.\@~cit.} requires the definition of an additional partial order, which is empty in our case.
\end{proof}

\begin{lem}\label{le:r-ideal}
    Let $(S, \prec)$ be a poset, let $I=\{r\}$ be an ideal of $S$ and let $S'=S\setminus \{r\}$.
    Let $H^s = G^s$ for all $s \in S'$, let $H^r = 1$ and consider the group
    \[ H = X\gwr_{s\in S} H^s \leq G . \]
    Then $G=D(r)\rtimes H$.
\end{lem}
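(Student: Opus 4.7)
The plan is to construct an explicit section $G \to H$ of the quotient $G \to G/D(r)$, by ``stripping off'' the $r$-component of each $g \in G$. The structural fact that makes this possible is that, since $\{r\}$ is an ideal, $r$ is minimal in $(S, \prec)$, so for every $t \neq r$ we have $r \not\succeq t$. Since $g_{t,x}$ depends only on the coordinates $x_{t'}$ with $t' \succ t$, this means that, for $t \neq r$, the value $(g.x)_t$ depends only on the non-$r$ coordinates of $x$.

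Given $g \in G$, define $h \colon X \to X$ by $(h.y)_t := (g.y)_t$ for $t \neq r$ and $(h.y)_r := y_r$. The first task is to verify that $h \in H$. Bijectivity of $h$ follows from the independence just noted: given $z \in X$, pick $y'$ with $g.y' = z$ and then replace the $r$-coordinate of $y'$ by $z_r$ to obtain a preimage; injectivity is the contrapositive and reduces to a top-down induction on the partial order over the upward-closed set $S \setminus \{r\}$. That $h$ preserves every relation in $E$ is a routine case analysis whose key input is that, again by minimality of $r$, the set $\{t \in S : t \succ s,\ t \neq r\}$ is upward-closed, so equalities in the $g$-image transfer back to equalities in the source. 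Finally, the local data $h_{s,x} := g_{s,x}$ for $s \neq r$ and $h_{r,x} := 1 \in H^r$ certify that $h$ lies in $X \gwr_{s \in S} H^s = H$.

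Setting $d := g h^{-1}$, one checks $d \in D(r)$ directly: for $t \neq r$, the defining property of $h$ gives $(d.x)_t = (g.(h^{-1}.x))_t = (h.(h^{-1}.x))_t = x_t$. Hence $g = dh$ realizes the desired factorization with $d \in D(r)$ and $h \in H$. To see that $D(r) \cap H = \{1\}$, note that any element of $H$ fixes the $r$-coordinate (set $y = x$ and $s = r$ in the defining condition to obtain $(h'.x)_r = 1.x_r$), while any element of $D(r)$ fixes every coordinate different from $r$; an element in both therefore acts trivially on $X$ and so equals $1$ because $G$ acts faithfully on $X$. Combined with the normality of $D(r)$ in $G$ from Proposition~\ref{DInormal}, this gives $G = D(r) \rtimes H$.

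The main obstacle is nothing deeper than carefully carrying out the verification that $h \in \Aut{X,E}$: one must track the various relative positions of $s$ and $r$, and it is the minimality of $r$ (guaranteed by the ideal hypothesis) that makes the argument go through at each step. Beyond that, the proof is essentially a bookkeeping exercise.
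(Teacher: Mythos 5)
Your proof is correct and takes essentially the same route as the paper: both hinge on the observation that, because $\{r\}$ is an ideal (so $r$ is minimal and in particular $r \not\succ s$ for every $s$), the value $(g.x)_s$ for $s \neq r$ is independent of $x_r$, which lets one ``strip off'' the $r$-component of any $g \in G$ to produce the $H$-part of the factorization. Where the paper packages this as a surjective homomorphism onto the smaller generalized wreath product over $S \setminus \{r\}$ with kernel $D(r)$ and an explicit section with image $H$, you construct the factorization $g = dh$ directly and verify $D(r) \cap H = 1$ by hand, but the content is identical.
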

\begin{proof}
Let $X' = \prod_{s\in S'} X_s$ and $X = X_r \times X'$, and consider the generalized wreath product
\[ G'= X'\gwr_{s\in S'} G^s . \]
Since $\{r\}$ is an ideal of $S$, the group $D(r)$ is a normal subgroup of $G$ by Proposition~\ref{DInormal}.

Consider two elements $x_1=(x_r, x')$ and $x_2=(y_r, x')$ of the set $X = X_r \times X'$.
Since $r \not\succ s$ for all $s\in S'$ (because $\{r\}$ is an ideal of $S$),
we have $x_1 \simeq_s x_2$ for all $s\in S'$.
Therefore, by definition of $G$ (see Definition~\ref{defn:gwp}), we obtain that $(g.x_1)_s=(g.x_2)_s$ for all $s\in S'$ and all $g \in G$.

Consider the projection map $p_{S'} \colon X \to X'$.
Then this projection induces a homomorphism $\rho \colon G\to G'$ defined by $(\rho(g).x')_s=(g.(x_r, x'))_s$, for $s\in S'$ and for any $x_r\in X_r$
(the choice of the element of $X_r$ is irrelevant by the observation in the previous paragraph).
Notice that the kernel of $\rho$ is precisely $D(r)$.

Observe now that for any $h \in G'$, the element $\id \times h$ belongs to $G$.
In particular, $\rho$ is surjective, the map $\sigma \colon G' \to G \colon h \mapsto \id \times h$ is a section for~$\rho$ and $\operatorname{im}(\sigma) = H$.
We conclude that $G=D(r)\rtimes H$.
\end{proof}

\section{Right-angled buildings}\label{se:rab}

We now present the definition of right-angled Coxeter groups and right-angled buildings, and from now on,
we will assume that we are always in the right-angled case, unless otherwise stated.

\begin{defn}\label{2.3}
A Coxeter group $W$ is called \emph{right-angled} if the entries of the Coxeter matrix are $1$, $2$ and $\infty$. In other words, the group $W$ can be described as
\begin{multline*}
    W = \langle\{s_i\}_{i\in I} \mid (s_is_j)^{m_{ij}}\rangle, \\
        \text{ with } m_{ij}\in\{2,\infty\} \text{ for all } i\neq j \text{ and } m_{ii}=1 \text{ for all } i\in I.
\end{multline*}
In this case, we call the Coxeter diagram $\Sigma$ of $W$ a \emph{right-angled} Coxeter diagram.

A \emph{right-angled building} $\Delta$ is a building of type $\Sigma$, where $\Sigma$ is a right-angled Coxeter diagram.
\end{defn}

\begin{thm}[{\cite[Proposition 1.2]{HP2003}}]\label{unicityhp}
Let $(W, S)$ be a right-angled Coxeter group and $(q_s)_{s\in S}$ be a family of natural numbers with $q_s\geq 2$.
There exists a right-angled building of type $(W, S)$ such that for every $s\in S$, each $s$-panel has size $q_s$.
This building is unique, up to isomorphism.
\end{thm}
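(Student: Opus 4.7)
The plan is to prove existence by an explicit construction via graph products, and uniqueness by an inductive extension of an isomorphism along galleries; the key technical ingredient in both steps is the ``closing-squares'' structure of right-angled buildings (which the paper later formalizes in Lemmas~\ref{Squares} and~\ref{Squares2}).

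\emph{Existence.} I would realize $\Delta$ as a coset geometry of a graph product. Let $\Lambda$ be the graph on vertex set $S$ having an edge $\{s,t\}$ precisely when $s\neq t$ and $m_{st}=2$. For each $s\in S$, pick a finite group $H_s$ of order $q_s$ (e.g.\ $H_s=\Z/q_s\Z$), and let $G$ be the graph product of the family $\{H_s\}_{s\in S}$ over $\Lambda$, that is, the quotient of the free product $*_{s\in S}H_s$ by the relations $h_sh_t=h_th_s$ for every edge $\{s,t\}$ of $\Lambda$ and every $h_s\in H_s$, $h_t\in H_t$. Take the chambers of $\Delta$ to be the elements of $G$, and declare $g\adj{s}g'$ iff $g^{-1}g'\in H_s$; then each $s$-panel $gH_s$ has exactly $q_s$ chambers. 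Using Lemma~\ref{reduced words} and the fact that the defining relations of $G$ are in bijection with the right-angled commutation relations of $(W,S)$, each $g\in G$ has a well-defined image $w(g)\in W$ obtained by writing $g$ in normal form as a product $h_{s_1}\cdots h_{s_k}$ with $s_1\cdots s_k$ reduced in $W$, and projecting each $H_{s_i}$-letter to $s_i$. Setting $\delta(g,g')=w(g^{-1}g')$ produces a Weyl distance satisfying the building axioms of \cite{Weiss}.

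\emph{Uniqueness.} Given two buildings $\Delta,\Delta'$ as in the statement, with base chambers $c_0,c_0'$, I would build an isomorphism $\varphi\colon \Delta\to\Delta'$ with $\varphi(c_0)=c_0'$ by induction on the gallery distance from $c_0$. Suppose $\varphi$ has been defined on $\B{c_0}{n}$. For each $c\in \Sp{c_0}{n}$ and each $s\in S$ such that the $s$-panel $P$ through $c$ meets $\Sp{c_0}{n+1}$, choose any bijection $P\to P'$ fixing $\varphi(c)$, where $P'$ is the $s$-panel of $\Delta'$ through $\varphi(c)$; this is possible since $|P|=|P'|=q_s$. Iterating yields the desired isomorphism, provided one checks that the choices are consistent at chambers reached by distinct minimal galleries.

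\emph{Main obstacle.} In both parts, the only real difficulty is well-definedness of the assignment along distinct galleries. By Lemma~\ref{reduced words}, any two reduced expressions for the same Weyl element differ by a finite sequence of elementary operations of type~(2), which in the right-angled setting are exactly the commutations $st\leftrightarrow ts$ with $m_{st}=2$. Geometrically, each such commutation is witnessed inside a rank-$2$ residue of type $\{s,t\}$, which is a $q_s\times q_t$ grid (the direct product of an $s$-panel and a $t$-panel) whose isomorphism type is determined by $q_s$ and $q_t$ alone. Consequently both the identifications in the graph-product construction and the panel-by-panel extension of $\varphi$ close up consistently across every such square, which completes the proof. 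The right-angled hypothesis is indispensable here: in the presence of non-trivial braid relations ($m_{st}\geq 3$), the rank-$2$ residues of type $\{s,t\}$ are not determined up to isomorphism by $q_s,q_t$, and the analogous uniqueness statement fails for general buildings.
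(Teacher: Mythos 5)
The paper does not actually prove this theorem; it is quoted from \cite[Proposition 1.2]{HP2003}. It is, however, effectively re-derived later in Section~\ref{ss:directed}: the directed right-angled building $\Delta_D$ of Definition~\ref{def:deltaF} gives existence, and Proposition~\ref{unirab} together with Observation~\ref{obs:form} gives uniqueness. Your graph-product construction is essentially the same object as $\Delta_D$ --- identify $Y_s\setminus\{1\}$ with $H_s\setminus\{1\}$, and a pair (reduced word, sequence of non-trivial colors) with the normal form of a graph-product element --- and verifying the Weyl-distance axioms reduces to the normal form theorem for graph products, so the existence half of your sketch is sound.

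The uniqueness half has a gap. You propose extending $\varphi$ from $\B{c_0}{n}$ to $\Sp{c_0}{n+1}$ by ``choosing any bijection $P\to P'$ fixing $\varphi(c)$'' on every panel meeting the next sphere, and then checking consistency. But these choices are not all free, and arbitrary ones will in general be inconsistent: if $d\in\Sp{c_0}{n+1}$ has two neighbors $c_1,c_2\in\Sp{c_0}{n}$ with $d\adj{s}c_1$ and $d\adj{t}c_2$ (so $\delta(c_0,d)\in W_2(n+1)$ in the notation of Definition~\ref{defn:Ai}), then by Lemma~\ref{Squares} one has $|st|=2$ and a chamber $c_4\in\Sp{c_0}{n-1}$ with $c_4\adj{t}c_1$, $c_4\adj{s}c_2$, and the rank-two $\{s,t\}$-residue through $c_4$ is a grid on which $\varphi|_{\B{c_0}{n}}$ already \emph{forces} $\varphi(d)$; two bijections on $\Pa_{s,c_1}$ and $\Pa_{t,c_2}$ picked independently will generically disagree with this forced value. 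The correct procedure must separate the ``new'' panels (those whose far chambers lie in $A_1(n+1)$, where the bijection is a genuine free parameter) from the ``closing'' panels (far chambers in $A_2(n+1)$, where $\varphi$ is forced), and then verify that the forcings coming from different predecessor pairs agree --- this is exactly the dichotomy the paper makes explicit in Propositions~\ref{B1} and~\ref{B2} in the analogous situation. Your remark about squares points in the right direction, but as written the argument would define $\varphi(d)$ inconsistently. The paper's own route (Proposition~\ref{unirab}) avoids this pitfall altogether by first choosing directed legal colorings on both buildings: color-preservation rigidly pins down the isomorphism, so there are no free bijections to coordinate, and well-definedness becomes a clean check along single elementary transpositions.
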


This was proved for the right-angled Fuchsian case in \cite{Bourdon}. According to Haglund and Paulin \cite{HP2003} this result was proved by M. Globus and was also known by M.~Davis, T. Januszkiewicz and J. \'{S}wiatkowski.

A right-angled building with prescribed thickness in the panels, as described in Theorem~\ref{unicityhp}, is called a \emph{semi-regular} building.

\begin{thm}[Theorem 1.1 in \cite{Caprace}]
Let $\Delta$ be a thick semi-regular building of right-angled type $(W,S)$. Assume that $(W,S)$ is irreducible and non-spherical. Then the group $\Aut \Delta $ of type-preserving automorphisms of $\Delta$ is abstractly simple and acts strongly transitively on $\Delta$.
\end{thm}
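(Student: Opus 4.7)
My plan is to treat the two assertions separately. For \emph{strong transitivity}, I would appeal to the Haglund--Paulin uniqueness theorem (Theorem~\ref{unicityhp}): up to isomorphism there is a unique semi-regular right-angled building with prescribed panel sizes. Given a chamber $c_0$ and any prescribed permutation of each $s$-panel through $c_0$ by an element of $\Sym{q_s}$, one can extend it to a type-preserving automorphism of $\Delta$ by defining it one chamber at a time along reduced galleries, invoking uniqueness of the building to verify that the extension is consistent. Hence the stabilizer of any chamber induces the full symmetric group on each of its panels, and an induction on gallery distance yields chamber-transitivity; applying the same extension technique to isomorphisms between entire apartments rather than single panels upgrades this to strong transitivity in the usual BN-pair sense.

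For \emph{simplicity}, I would follow a Tits-style contraction argument based on wings and tree-walls. Let $N$ be a non-trivial normal subgroup of $\Aut{\Delta}$, fix $g \in N$ with $g \neq \id$, and pick a chamber $c$ with $g.c \neq c$. The key tool is the independence property (Proposition~\ref{fixpanel}), which states that the fixators of different wings across a common tree-wall commute with one another and, by thickness together with Theorem~\ref{unicityhp}, are non-trivial. I would choose a wing $X$ positioned so that $g.X$ lies in a wing across a different tree-wall, pick a non-trivial $h \in \Fix_{\Aut{\Delta}}(\ch\Delta \setminus X)$, and form the commutator $[g,h] \in N$; the independence property forces $[g,h]$ to be non-trivial and supported in $X \cup g.X$, which is strictly smaller than the support of $g$.

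Iterating this shrinking operation, $N$ eventually contains a non-trivial element whose support lies inside a single wing, and one further commutator step shows that $N$ in fact contains an entire wing-fixator. Strong transitivity then spreads this inclusion over all wings of all types, and the final step is to verify that the wing-fixators together generate all of $\Aut{\Delta}$; here the irreducibility and non-sphericity of $(W,S)$ are essential, since they guarantee enough distinct types of tree-walls for the generation to succeed (sphericity or reducibility would disconnect the ambient combinatorics and leave a proper subgroup). The principal difficulty is the contraction step: carefully choosing $X$ and tracking how $g$ permutes tree-walls so that each successive commutator is non-trivial with strictly smaller support requires the combinatorial machinery developed in Section~\ref{se:rab}, in particular the Closing Squares Lemmas and the tree-wall trees.
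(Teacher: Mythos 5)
Your plan for strong transitivity is on reasonable footing, but the simplicity argument has two genuine gaps. First, the ``contraction step'' does not work as stated: if $h$ is supported on a wing $X$, the commutator $[g,h]$ is supported on $X \cup g.X$, and there is no reason this should be ``strictly smaller than the support of $g$''. The support of a non-trivial $g \in N$ is arbitrary, wings are enormous (each is a full ``half'' of $\Delta$ across a tree-wall), and $X \cup g.X$ will typically be most of the building. The iteration you envision simply does not shrink anything. The mechanism that actually works, and that the paper uses (specialized from Theorem~\ref{simplicity} to $G^s = \Sym{Y_s}$), is a one-step trick rather than an iteration: first show $N$ contains an $s$-\emph{hyperbolic} element $g$ (an element translating along an axis in the $s$-tree-wall tree $\Gamma_s$; Lemma~\ref{hip}, which rests on Lemma~\ref{faithful} and Tits' elliptic/hyperbolic dichotomy). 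Then for a panel $\Pa$ on the axis and any $h$ in a suitable product of wing-fixators, the infinite product $x = \prod_{n \geq 0} g^n h g^{-n}$ converges in $U$ because the conjugates have pairwise disjoint supports (the wings at successive points of the axis are nested), and one checks directly that $h = [x,g]$. This is Lemma~\ref{Lemma7.1}, and it puts an entire tree-wall fixator inside $N$ at once (Proposition~\ref{normfix}), with no shrinking iteration.

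The second gap is the endgame. You correctly observe that one must show the wing-fixators generate all of $\Aut\Delta$ and that irreducibility and non-sphericity are needed, but ``enough distinct types of tree-walls'' is not an argument; the paper itself flags exactly this point in the remark after Theorem~\ref{simplicity} (where it notes that Lazarovich's result on the group generated by half-space fixators does \emph{not} imply simplicity precisely because showing $G^+ = G$ is the real work). The paper's concrete mechanism is an induction on the rank of $(W,S)$: pick $s \in S$ so that $S \setminus \{s\}$ remains irreducible; the residue $\Ra$ of type $S \setminus \{s\}$ through a fixed chamber $c$ is itself an irreducible thick semi-regular right-angled building of strictly smaller rank, so $U(\Ra)$ is simple by induction. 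One then shows both that $\Fix_U(\Ra) \subseteq N$ (Claim~\ref{lem:fix}, via Proposition~\ref{normfix}) and that $\Stab_N(\Ra)$ surjects onto $U(\Ra)$ (Claim~\ref{lem:stab}), which forces $N_c = U_c$; since $U$ is generated by its chamber stabilizers (Proposition~\ref{edgestab}), this gives $N = U$. Without some such recursive structure the generation claim remains an assertion, and this is where the actual difficulty of the theorem lives.
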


\subsection{Reduced words in right-angled Coxeter groups}

The elementary operations from Definition~\ref{3CBrown} become easier to describe in \mbox{right-angled} Coxeter groups since the generators either commute or are not related.
To be concrete, let $(W, S)$ be a right-angled Coxeter system with Coxeter diagram $\Sigma$ and set of generators \mbox{$S=\{s_i \mid i\in I\}$}. We define a $\Sigma$-\emph{elementary operation} as an operation of the following two types:
\begin{enumerate}[(1)]
	\item
	Delete a subword of the form $ss$, with $s\in S$.
	\item
	Replace a subword $st$ by $ts$ if $m_{ts}=2$.
\end{enumerate}
A word in the free monoid $M_S$ is then \emph{reduced}\index{Word ! Reduced} if it cannot be shortened by a sequence of \mbox{$\Sigma$-elementary} operations.
Moreover, by Lemma~\ref{reduced words}, two reduced words represent the same element of $W$ if and only if one can be obtained from the other by a sequence of elementary operations of type (2).

We observe that if $w_1$ is a reduced word with respect to $\Sigma$ and $w_2$ is a word obtained from $w_1$ by applying one $\Sigma$-elementary operation of type (2), then  $w_2$ is also a reduced word and $w_1$ and $w_2$ represent the same element of $W$.
Furthermore, the words $w_1$ and $w_2$ only differ in two consecutive letters that have been switched, let us say,
\[ w_1 = s_1\cdots s_rs_{r+1} \cdots s_\ell \text{ and } w_2=s_1\cdots s_{r+1} s_r \cdots s_\ell, \text{with } |s_rs_{r+1}|=2 \text{ in } \Sigma.  \]
If $\sigma\in \Sym \ell$ then we denote by $\sigma . w_1$ the word obtained by permuting the letters in $w_1$ according to the permutation $\sigma$, i.e.,
\[ \sigma. w_1= s_{\sigma(1)}\cdots s_{\sigma(r)} s_{\sigma(r+1)}\cdots s_{\sigma(\ell)}. \]
Hence, if $w_1$ and $w_2$ are as above and $\sigma=(r \ r+1)\in \Sym \ell$, then $\sigma. w_1 = w_2$.

\begin{defn}
Let $w=s_1\cdots s_\ell$ be a reduced word in $M_S$ with respect to a right-angled Coxeter diagram $\Sigma$. Let $\sigma = ( i \ i+1)$ be a transposition in $\Sym \ell$, with $i \in \{1,\dots, \ell-1\}$. We call $\sigma$ a \emph{$w$-elementary transposition}\index{Word ! Elementary transposition} if $s_i$ and $s_{i+1}$ commute in $W$.
\end{defn}

 In this way, we can associate an elementary transposition to each $\Sigma$-elementary operation of type (2).
By Lemma~\ref{reduced words},  two reduced words $w$ and $w'$ represent the same element of $W$ if and only if
\begin{multline*}
    w'= (\sigma_n \cdots \sigma_1). w, \\ \text{ where each } \sigma_i \text{ is a } (\sigma_{i-1}\cdots \sigma_1).w\text{-elementary transposition},
\end{multline*}
i.e., if $w'$ is obtained from $w$ by a sequence of elementary transpositions.

\begin{defn}\label{def:rep}
If $w$ is a reduced word of length $\ell$ with respect to $\Sigma$, then we define
\begin{multline*}
    \Rep w=\{ \sigma\in \Sym \ell \mid \sigma =\sigma_n\cdots \sigma_1, \text{ where each } \sigma_i \text{ is a } \\
        (\sigma_{i-1}\cdots\sigma_1).w\text{-elementary transposition}\}.
\end{multline*}
\end{defn}
The set $\Rep w$ is formed by the permutations of $\ell$ letters which give rise to reduced representations of $w$, according to the relations in the right-angled Coxeter diagram $\Sigma$.

\begin{defn}\label{def:poset}
Let $w=s_1\cdots s_\ell$ be a reduced word of length $\ell$ in $M_S$ with respect to $\Sigma$. Let $S_w = \{1, \ldots , \ell \}$. We define a new partial order ``$\prec_w$'' on $S_w$ as follows:
\[ i\prec_w j \iff \sigma(i) > \sigma(j) \text{ for all } \sigma\in \Rep w.\]
\end{defn}
Note that $i \prec_w j$ implies that $i > j$.
The fact that the order is reversed will be useful in Section~\ref{sec:compgenwreath} when we will describe certain compact open subgroups as generalized wreath products.

\begin{obs}\label{obsrep}
Let $w= s_1 \dotsm s_i \dotsm s_j \dotsm s_\ell$ be a reduced word in $M_S$ with respect to a right-angled Coxeter diagram $\Sigma$.
\begin{enumerate}
    \item\label{obsrep:1}
        Let $\sigma\in \Rep w$ such that $\sigma$ can be written as a product of elementary transpositions $\sigma_n\cdots \sigma_1$.
        Then for each $k\in \{1,\dots, n\}$, the word $(\sigma_k\cdots \sigma_1). w$ is also a reduced representation of $w$.
    \item
        If $\sigma_1$ is a \mbox{$w$-elementary} transposition switching two generators $s_i$ and $s_j$ and $\sigma_2$ is a \mbox{$\sigma_1.w$-elementary} transposition
        switching two generators $s_{i'}$ and $s_{j'}$ with $\{i,j\}\cap \{i',j'\}=\emptyset$ then $\sigma_2$ is also a $w$-elementary transposition.
    \item\label{obsrep:3}
         If $|s_is_j|=\infty$ in $\Sigma$, then $j\prec_w i$.
    \item\label{obsrep:4}
        If $j\not\prec_w i$  then by~\eqref{obsrep:3}, $|s_is_j|=2$,
        and moreover, for each $k\in \{i+1, \dots, j-1\}$, either $|s_is_k|=2$ or $|s_js_k|=2$ (or both).
    \item\label{obsrep:5}
         Let $s_r$ and $s_{r+1}$ be consecutive letters in $w$. Then $|s_r s_{r+1}|=2$ if and only if $r+1\not\prec_{w} r$.

\end{enumerate}
\end{obs}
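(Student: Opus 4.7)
The plan is to treat items~(1), (2) and (5) as essentially direct unpackings of the definitions, to identify (3) as the genuine content, and to reduce (4) immediately to (3). Throughout I will use the interpretation that $\sigma(k)$ records the position, in $\sigma.w$, of the letter originally at position $k$ of $w$; under this convention, an elementary transposition $(r,r+1)$ simply interchanges positions $r$ and $r+1$.

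For~(1), I proceed by induction on $k$: if $(\sigma_{k-1}\cdots\sigma_1).w$ is a reduced representation of $w$, then $\sigma_k$ is by definition a $(\sigma_{k-1}\cdots\sigma_1).w$-elementary transposition, so it interchanges two adjacent commuting letters in this word, and by Lemma~\ref{reduced words} the new word $(\sigma_k\cdots\sigma_1).w$ is still reduced and still represents $w$. For~(2), write $\sigma_1=(r,r+1)$ and $\sigma_2=(r',r'+1)$ with $\{r,r+1\}\cap\{r',r'+1\}=\emptyset$: the letters at positions $r'$ and $r'+1$ of $w$ and of $\sigma_1.w$ coincide, so the commutation of these letters in $\sigma_1.w$ (witnessing that $\sigma_2$ is $\sigma_1.w$-elementary) directly yields their commutation in $w$, i.e.\ $\sigma_2$ is $w$-elementary.

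For~(3), I argue by induction on the length $n$ of a decomposition $\sigma=\sigma_n\cdots\sigma_1\in\Rep w$, showing that $\sigma(j)>\sigma(i)$ whenever $|s_is_j|=\infty$ and $i<j$. The base case $n=0$ is $j>i$. For the inductive step, set $\rho=\sigma_{n-1}\cdots\sigma_1\in\Rep w$ and $\tau=\sigma_n=(r,r+1)$; the induction hypothesis gives $\rho(j)>\rho(i)$. A short case analysis on the intersection of $\{r,r+1\}$ with $\{\rho(i),\rho(j)\}$ shows this inequality is preserved by $\tau$, the only delicate configuration being $\rho(i)=r$ and $\rho(j)=r+1$. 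In that configuration the two letters that $\tau$ interchanges are precisely the letters originally at positions $i$ and $j$ of $w$, namely $s_i$ and $s_j$; since these do not commute, they cannot be swapped by the $\rho.w$-elementary transposition $\tau$, and so this case does not occur. Item~(4) is then immediate: assuming $j\not\prec_w i$, (3) forces $|s_is_j|=2$, and if some $k\in\{i+1,\dots,j-1\}$ satisfied $|s_is_k|=\infty=|s_js_k|$, then (3) would give both $k\prec_w i$ and $j\prec_w k$, whence $j\prec_w i$ by the evident transitivity of $\prec_w$ (from its definition), a contradiction. Item~(5) likewise reduces to (3): the ``only if'' direction is its contrapositive, while the ``if'' direction uses the explicit element $\tau=(r,r+1)\in\Rep w$, which satisfies $\tau(r+1)=r<r+1=\tau(r)$.

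The only step that requires real care is the case analysis in~(3), whose decisive point is that a $\rho.w$-elementary transposition cannot interchange the non-commuting letters $s_i$ and $s_j$, thereby ruling out the sole configuration that could violate the inequality $\sigma(j)>\sigma(i)$.
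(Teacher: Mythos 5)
The paper records this as an \emph{Observation} and supplies no proof, so there is no argument of the authors' to compare against; your write-up fills the gap directly from Definitions~\ref{def:rep} and~\ref{def:poset}, and it is correct. You rightly isolate (3) as the only item with real content, treat (1), (2), (5) as unpackings of the definitions, and reduce (4) to (3) via the evident transitivity of $\prec_w$. The induction in (3) is sound: the only delicate configuration $\rho(i)=r$, $\rho(j)=r+1$ is excluded precisely because there the elementary transposition $\sigma_n=(r,r+1)$ would have to interchange the non-commuting letters $s_i$ and $s_j$ sitting at positions $r,r+1$ of $\rho.w$. Your preliminary clarification of what $\sigma(k)$ means is also warranted: taken literally, the paper's displayed formula $\sigma.w=s_{\sigma(1)}\cdots s_{\sigma(\ell)}$ does not compose with the decomposition $\sigma=\sigma_n\cdots\sigma_1$ (applied step by step to $w$) the way one expects once $\sigma$ is not an involution, and the reading you adopt --- $\sigma(k)$ is the \emph{new} position of the letter originally at position $k$ --- is the one under which $j\prec_w i$ genuinely encodes ``$s_i$ stays to the left of $s_j$'' and the induction runs cleanly; the two readings agree on a single transposition, which is why no discrepancy surfaces in the paper's worked example. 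The only slip is that the labels ``if'' and ``only if'' in your discussion of (5) are swapped: the implication $|s_rs_{r+1}|=2\Rightarrow r+1\not\prec_w r$, which you prove by exhibiting $(r,r+1)\in\Rep w$, is the ``only if'' direction, and its converse, obtained from the contrapositive of (3), is the ``if'' direction. Both arguments are present and correct, so this does no harm.
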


The next lemma describes some conditions on the existence and structure of distinct reduced representations of elements in right-angled Coxeter groups.

\begin{lem}\label{reduced}
Let $(W, S)$ be a right-angled Coxeter system with Coxeter diagram~$\Sigma$.
Let \mbox{$w=w_1 s_r \cdots s_t w_2$} be a reduced word in $M_S$ with respect to~$\Sigma$.
If $t\not\prec_w r$ then there exist two reduced representations of the word $w$ of the form
\[ w_1 \cdots s_r s_t \cdots w_2 \quad \text{ and } \quad w_1\cdots s_t s_r \cdots w_2 , \]
i.e.\@
one can exchange the positions of $s_r$ and $s_t$ using only elementary operations on the generators of the set $\{s_r, s_{r+1}, \dots, s_{t-1}, s_t\}$,
without touching the prefix $w_1$ and the suffix $w_2$.
\end{lem}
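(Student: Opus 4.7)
My plan is to argue by strong induction on the gap $d := t - r$, with the key structural input coming from Observation~\ref{obsrep}. The base case $d=1$ is immediate: $t = r+1 \not\prec_w r$ gives $|s_r s_{r+1}| = 2$ by Observation~\ref{obsrep}(\ref{obsrep:5}), so the $w$-elementary transposition $(r, r+1)$ directly produces the second form.

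For the inductive step ($d \geq 2$), I examine the letter $s_{r+1}$. Observation~\ref{obsrep}(\ref{obsrep:4}) with $k = r+1$ forces $|s_r s_{r+1}| = 2$ or $|s_{r+1} s_t| = 2$. In the first case, I apply the transposition $(r, r+1)$, obtaining a reduced word $w'$ in which $s_r$ is at position $r+1$. Relating $\Rep{w'}$ to $\Rep{w}$ via precomposition with $(r, r+1)$ yields $t \not\prec_{w'} (r+1) \iff t \not\prec_w r$, so induction applied to the window $[r+1, t]$ of size $d-1$ finishes this subcase.

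If instead $|s_r s_{r+1}| = \infty$, then Observation~\ref{obsrep}(\ref{obsrep:4}) forces $|s_{r+1} s_t| = 2$, while Observation~\ref{obsrep}(\ref{obsrep:3}) gives $r+1 \prec_w r$. Transitivity of the partial order $\prec_w$ combined with $t \not\prec_w r$ then yields $t \not\prec_w (r+1)$, so the inductive hypothesis applied to the pair $(r+1, t)$ produces, using only transpositions in $[r+1, t]$, a reduced representation $w_1 s_r \alpha s_{r+1} s_t \beta w_2$ with $\alpha\beta$ a rearrangement of $s_{r+2} \cdots s_{t-1}$. Swapping the adjacent commuting pair $s_{r+1}, s_t$ produces $w_1 s_r \alpha s_t s_{r+1} \beta w_2$; now $s_t$ sits at position $r + |\alpha| + 1 \leq t - 1$, so a second inductive call on the strictly shorter window between $s_r$ and $s_t$ brings them adjacent, and a final elementary swap (using $|s_r s_t| = 2$, which comes from Observation~\ref{obsrep}(\ref{obsrep:4})) delivers both desired forms.

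The delicate step, which I expect to be the main obstacle, is justifying this second inductive call: one must verify that the analogue of the hypothesis $t \not\prec r$ continues to hold in the intermediate word. The key observation is that this condition is intrinsic to the underlying element of $W$ --- by Lemma~\ref{reduced words} the set of reduced representations of $w$ is a single equivalence class under elementary transpositions, so the assertion ``there is some reduced representation in which the letter originally at position $t$ precedes the letter originally at position $r$'' does not depend on the particular starting word. Hence the hypothesis propagates intact to any intermediate word along the way. Combined with the fact that every elementary transposition used acts on positions in $\{r, r+1, \dots, t\}$, the prefix $w_1$ and suffix $w_2$ are never touched, as required.
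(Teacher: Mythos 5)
Your proof is correct and follows essentially the same route as the paper's: induction on the gap between $s_r$ and $s_t$, a case split on whether $|s_r s_{r+1}|$ is $2$ or $\infty$, Observation~\ref{obsrep}\eqref{obsrep:3}--\eqref{obsrep:4} supplying the combinatorial input, and a second inductive call in the non-commuting case. The only stylistic difference is that the paper propagates the hypothesis $t \not\prec r$ to the intermediate word by explicitly carrying along a permutation ($\sigma$, and then $\sigma\circ\alpha^{-1}$ for the second call), whereas you argue more informally that the relation is ``intrinsic'' to the commutation class via Lemma~\ref{reduced words}; both say the same thing, though your phrasing implicitly relies on tracking individual letter-occurrences across elementary transpositions, which is cleanest to make precise exactly by the permutation bookkeeping the paper uses.
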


\begin{proof}
We will prove the result by induction on the number $N$ of letters between $s_r$ and $s_t$.
If $N=0$ then $w= w_1s_r s_t w_2,$ and the result follows from Observation~\ref{obsrep}\eqref{obsrep:4}.

Assume by induction hypothesis that if $N\leq n$ the result holds.
Consider \mbox{$w=w_1s_r \cdots s_t w_2$} with $N=n+1$, i.e., with $n+1$ letters between $s_r$ and $s_t$ in $w$, and let  $\sigma\in\Rep w$ such that $\sigma(t)<\sigma(r)$.

If $|s_rs_{r+1}|=2$ then $(r \ r+1) \in \Rep w$ and
\[ \overline{w}=(r \ r+1).w = w_1s_{r+1}s_r \cdots s_t w_2 \]
is a reduced representation of $w$ with $N=n$ letters between $s_r$ and $s_t$. Moreover the permutation \mbox{$\sigma\circ (r \ r+1)\in \Rep{\overline{w}}$} satisfies the conditions of the lemma. Thus the result follows from the induction hypothesis.

Assume now that  $|s_rs_{r+1}|=\infty$. By Observation~\ref{obsrep}\eqref{obsrep:4} we have  $|s_{r+1}s_t|=2$. Furthermore from Observation~\ref{obsrep}\eqref{obsrep:3} we obtain that \mbox{$\sigma(r)< \sigma (r+1)$}. Hence $\sigma(t)< \sigma (r+1)$ and we can apply the induction hypothesis to the generators $s_{r+1}$ and $s_t$ since the number of letters between them is less than or equal to $n$. Thus we obtain that
\[ w'=w_1 s_r  \cdots s_{r+1} s_t \cdots w_2 \ \ \text{ and }\ \
w^*=w_1 s_r \cdots s_t s_{r+1} \cdots w_2 \]
are reduced representations of $w$.

Let $\alpha \in \Rep w$ such that $\alpha.w=w^*$. The number of letters between $s_r$ and $s_t$ in $w^*$ is less than or equal to $n$ and therefore we can apply the induction hypothesis to $w^*$ with $\sigma\circ\alpha^{-1}\in\Rep{w^*}$ and we obtain that
\[ w_1\cdots s_r s_t  \cdots w_2 \ \ \text{ and }\ \
w_1\cdots s_t s_r  \cdots w_2 \]
are two reduced representations of $w^*$ and hence of $w$.
\end{proof}

We will now present two results that can be used in right-angled buildings to modify minimal galleries using the commutation relations of the Coxeter group.
We will refer to these results as the ``Closing Squares Lemmas'' (see also Figure~\ref{fig:CS} below).

\begin{lem}[Closing Squares 1]\label{Squares}
Let $c_0$ be a fixed chamber in a right-angled building $\Delta$.
Let $c_1, c_2 \in \Sp{c_0}n$ and $c_3 \in \Sp{c_0}{n+1}$ such that
\[ c_1 \adj{t} c_3 \quad \text{and} \quad c_2 \adj{s} c_3 \]
for some $s \neq t$.
Then $|st|=2$ in $\Sigma$ and there exists $c_4\in \Sp{c_0}{n-1}$ such that
\[ c_1 \adj{s} c_4 \quad \text{and} \quad c_2 \adj{t} c_4. \]
\end{lem}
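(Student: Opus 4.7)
The plan is to analyze the rank-$2$ residue of $\Delta$ containing $c_1$, $c_2$ and $c_3$, and to use the projection of $c_0$ onto it. Let $\Ra$ be the $\{s,t\}$-residue containing $c_3$; since $c_1 \adj{t} c_3$ and $c_2 \adj{s} c_3$, we also have $c_1, c_2 \in \ch\Ra$. Let $c^* = \proj_{\Ra}(c_0)$ be the projection of $c_0$ onto $\Ra$. I will invoke the standard fact that for every $c \in \ch\Ra$,
\[
    \dw(c_0, c) = \dw(c_0, c^*) + \dw(c^*, c),
\]
and that every minimal gallery from $c_0$ to such a chamber enters $\Ra$ at $c^*$ and stays inside $\Ra$ afterwards. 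Applied to $c_1, c_2, c_3$, this yields
\[
    \dw(c^*, c_1) = \dw(c^*, c_2) = \dw(c^*, c_3) - 1 =: k - 1.
\]

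First, I would rule out $|st| = \infty$. In that case $\Ra$ would be a residue of infinite dihedral type, in which every non-trivial Weyl element admits a \emph{unique} reduced expression, namely an alternating word in $s$ and $t$. In particular, every minimal gallery inside $\Ra$ from $c^*$ to $c_3$ would then have the same type sequence, and hence the same final type. But the minimal galleries via $c_1$ and via $c_2$ end in $t$ and in $s\neq t$ respectively, a contradiction. So $|st| = 2$.

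Next, with $|st|=2$, the residue $\Ra$ is a generalized digon---the direct product of a single $s$-panel and a single $t$-panel, of diameter $2$---so $k \in \{0,1,2\}$. Since $c_1 \neq c_2$ (otherwise $c_1 \adj{s} c_3$ and $c_1 \adj{t} c_3$ would force $s = t$), the equality $\dw(c^*,c_1) = \dw(c^*,c_2) = k-1$ rules out $k \in \{0,1\}$ and forces $k = 2$. Hence $c^*$ is at distance~$2$ from $c_3$ and at distance~$1$ from both $c_1$ and $c_2$; the generalized-digon structure then forces $c^* \adj{s} c_1$ (the alternative $c^* \adj{t} c_1$ would place $c^*$ in the $t$-panel of $c_3$, giving $\dw(c^*,c_3) \leq 1$) and, symmetrically, $c^* \adj{t} c_2$. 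Setting $c_4 := c^*$ then gives $\dw(c_0, c_4) = (n+1) - 2 = n-1$, as required.

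The main obstacle I anticipate is carrying out the generalized-digon case analysis cleanly. An alternative, purely word-combinatorial route would use Lemma~\ref{reduced words} together with Observation~\ref{obsrep}\eqref{obsrep:4}: the two minimal galleries furnish two reduced expressions $\delta(c_0, c_3) = u\,t = v\,s$, and from these one extracts a position $i$ in the expression $u\,t$ with $s_i = s$ for which $(n+1) \not\prec_w i$, which by Observation~\ref{obsrep}\eqref{obsrep:4} immediately yields $|st|=2$ and simultaneously supplies a reduced expression of $\delta(c_0, c_1)$ ending in $s$; the penultimate chamber of the corresponding minimal gallery is then the desired $c_4$.
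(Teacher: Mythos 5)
Your argument is correct, but it takes a genuinely different route from the paper's. The paper works directly with reduced words: from $c_1 \adj{t} c_3$ and $c_2 \adj{s} c_3$ it reads off $w_1 t = w_2 s$ in $W$ (where $w_i = \delta(c_0,c_i)$), concludes $|st|=2$ from the fact that both $s$ and $t$ shorten $\delta(c_0,c_3)$ in a right-angled Coxeter group, then takes $c_4$ to be the short $s$-neighbour of $c_1$ and verifies $w_1st = w_2$ to identify $c_4$'s $t$-neighbour in the digon with $c_2$. Your proof instead localizes everything to the $\{s,t\}$-residue $\Ra$ via the gate property: the distance additivity $\dw(c_0,c) = \dw(c_0,c^*) + \dw(c^*,c)$ for the gate $c^* = \proj_\Ra(c_0)$ turns the hypotheses into constraints on $\dw(c^*,\cdot)$, the uniqueness of reduced expressions in the infinite dihedral group kills $|st|=\infty$, and the resulting generalized digon of diameter $2$ forces $c^* = c_4$ with the required adjacencies. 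Both are sound; the paper's is shorter and purely word-theoretic, whereas yours makes the geometry explicit and, as a bonus, produces $c_4$ directly as the gate of $c_0$ in $\Ra$ rather than having to verify afterwards that the short $s$-neighbour of $c_1$ is $t$-adjacent to $c_2$. Your sketched "alternative word-combinatorial route" via Observation~\ref{obsrep}\eqref{obsrep:4} and Lemma~\ref{reduced} is essentially a fleshed-out version of the paper's argument.
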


\begin{proof}
Let $w_1$ and $w_2$ be reduced representations of $\delta(c_0, c_1)$ and $\delta(c_0,c_2)$, respectively.
Then $w_1t$ and $w_2s$ are two reduced representations of $\delta(c_0, c_3)$ and thus $w_1t = w_2s$ in $W$;
hence $|st|=2$.
Furthermore, $l(w_1s)<l(w_1)$ and thus $l(w_1s)=n-1$. Let $c_4$ be the chamber in $\Sp{c_0}{n-1}$ that is $s$\dash adjacent to~$c_1$.
Then $w_1st = w_1ts = w_2 s s = w_2$ in $W$. Therefore $c_4 \adj{t} c_2$.
\end{proof}

\begin{lem}[Closing Squares 2]\label{Squares2}
Let $c_0$ be a fixed chamber in a right-angled building $\Delta$.
Let $c_1, c_2 \in \Sp{c_0}n$ and $c_3\in \Sp{c_0}{n-1}$ such that
\[ c_1 \adj{s} c_2 \quad \text{and} \quad c_2 \adj{t} c_3 \]
for some $s \neq t$.
Then $|st|=2$ in $\Sigma$ and there exists $c_4\in \Sp{c_0}{n-1}$ such that
\[ c_1 \adj{t} c_4 \quad \text{and} \quad c_3 \adj{s} c_4. \]
\end{lem}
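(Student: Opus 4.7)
The plan is to adapt the proof of Closing Squares~1 (Lemma~\ref{Squares}) to the present situation. From $c_2 \adj{t} c_3$ with $c_3\in\Sp{c_0}{n-1}$, the chamber $c_3$ is the projection of $c_0$ onto the $t$-panel of $c_2$, so for any reduced representation $w_3$ of $\delta(c_0,c_3)$, the word $w_3 t$ is a reduced representation of $\delta(c_0,c_2)$ ending in $t$. For a second representation, observe that in the non-trivial case $c_1 \ne c_2$, neither $c_1$ nor $c_2$ is the gate of their common $s$-panel $\Pi$, since both lie in $\Sp{c_0}{n}$ while the gate would be at strictly smaller distance; hence the gate $c^*$ of $\Pi$ belongs to $\Sp{c_0}{n-1}$, and for a reduced representation $w^*$ of $\delta(c_0,c^*)$ we have that $w^* s$ is a reduced representation of $\delta(c_0,c_1)=\delta(c_0,c_2)$.

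Comparing, $w^* s = w_3 t$ in $W$, so both $s$ and $t$ are right descents of $\delta(c_0,c_2)$. As in the proof of Lemma~\ref{Squares}, this forces $|st|=2$: by Lemma~\ref{reduced words}(2), any two reduced representations of the same element are related by a sequence of commutation swaps, and $m_{st}=\infty$ is excluded because the $\{s,t\}$-component in the parabolic decomposition would then belong to an infinite dihedral group, in which no element has both $s$ and $t$ as right descents.

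Having $|st|=2$, we see that $w^* s t = w_3$ has length $n-1$, so $t$ is a right descent of $\delta(c_0,c_1)=w^*s$; consequently the $t$-panel of $c_1$ has its gate in $\Sp{c_0}{n-1}$, and we take $c_4$ to be this gate, so that $c_1\adj{t} c_4$ and $c_4\in\Sp{c_0}{n-1}$. To verify $c_3 \adj{s} c_4$, observe that $c_1, c_2, c_3, c_4$ all lie in the $\{s,t\}$-residue $R$ through $c_1$, which is a direct product of an $s$-panel and a $t$-panel (since $|st|=2$). Writing $g$ for the gate of $R$ from $c_0$ and using the additivity $\dw(c_0,c)=\dw(c_0,g)+\dw(g,c)$ for $c\in R$, one checks that $c_3$ and $c_4$, being the gates from $c_0$ of the $t$-panels of $c_2$ and of $c_1$ respectively, both share their $t$-coordinate with $g$; hence they lie in a common $s$-panel, giving $c_3\adj{s} c_4$ as required. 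The delicate step is the right-angled descent step $|st|=2$; the remainder is routine bookkeeping in a rank-2 residue, mirroring Lemma~\ref{Squares}.
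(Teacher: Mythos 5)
Your proof is correct, and it follows a mildly different route than the paper's. Both proofs start by observing that the gate $v_1=\proj_{\Pa_{s,c_1}}(c_0)$ lies in $\Sp{c_0}{n-1}$ and that $\delta(c_0,c_1)=\delta(c_0,c_2)$ has a reduced representation ending in $s$. From here the paper applies Lemma~\ref{Squares} (Closing Squares~1) to the triple $(v_1,c_3,c_2)$, which simultaneously yields $|st|=2$ and an auxiliary chamber $v_2\in\Sp{c_0}{n-2}$ with $v_2\adj{t}v_1$, $v_2\adj{s}c_3$; the chamber $c_4$ is then produced by replacing the minimal gallery of type $ts$ from $v_2$ to $c_1$ by one of type $st$, and $c_3\adj{s}c_4$ follows because both are $s$-adjacent to $v_2$. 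You instead extract $|st|=2$ directly from the descent-set argument (both $s$ and $t$ are right descents of $\delta(c_0,c_2)$, which in a right-angled group forces $m_{st}=2$), take $c_4$ to be the gate of $\Pa_{t,c_1}$ from $c_0$, and verify $c_3\adj{s}c_4$ via the product structure of the rank-$2$ spherical residue (cf.\ Lemma~\ref{Lemma2.2}). Both routes rest on the same core fact — the Weyl distance $\delta(c_0,c_2)$ has two distinct right descents — but you avoid the detour to $\Sp{c_0}{n-2}$ and keep everything inside $\B{c_0}{n}$, at the mild cost of redoing rather than invoking Closing Squares~1. One small imprecision worth fixing: when you assert that neither $c_1$ nor $c_2$ can be the gate of $\Pi$ because ``the gate would be at strictly smaller distance,'' the justification is a little circular. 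The clean argument is that if, say, $c_1$ were the gate, then $c_2\in\Pi\setminus\{c_1\}$ would be at distance $n+1$, contradicting $c_2\in\Sp{c_0}{n}$; hence neither is the gate, so $v_1\in\Pi$ differs from both, and $\dw(c_0,c_1)=\dw(c_0,v_1)+1$ gives $v_1\in\Sp{c_0}{n-1}$. This is a rephrasing only and does not affect the validity of your proof.
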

\begin{proof}
Let $w_1$ and $w_2$ be reduced representations of $\delta(c_0,c_1)$ and $\delta(c_0,c_2)$.
As $c_1$ and $c_2$ are $s$-adjacent and are both in $\Sp{c_0}n$, we know that $w_1 = w_2 = s_1\cdots s_{n-1} s$ in $W$.
Let $v_1\in \Sp{c_0}{n-1}$ be  the chamber $s$-adjacent to $c_1$ (and~$c_2$) at Weyl distance $s_1\cdots s_{n-1}$ from $c_0$.

 Applying Lemma~\ref{Squares} to $v_1$ and $c_3$ we obtain $v_2\in \Sp{c_0}{n-2}$ such that
\[ v_2 \adj{t} v_1 \quad \text{and} \quad v_2 \adj{s} c_3.\]
Furthermore $|st|=2$  in $\Sigma$. Since there is a minimal gallery of type $ts$ between $v_2$ and $c_1$, there must be one of type $st$. Hence there is a chamber $c_4\in \Sp{c_0}{n-1}$ that is $t$-adjacent to $c_1$ and $s$-adjacent to $v_2$.
Since $v_2$ is $s$-adjacent to $c_3$ and to~$c_4$, we conclude that $c_3$ and $c_4$ are $s$-adjacent.
\end{proof}

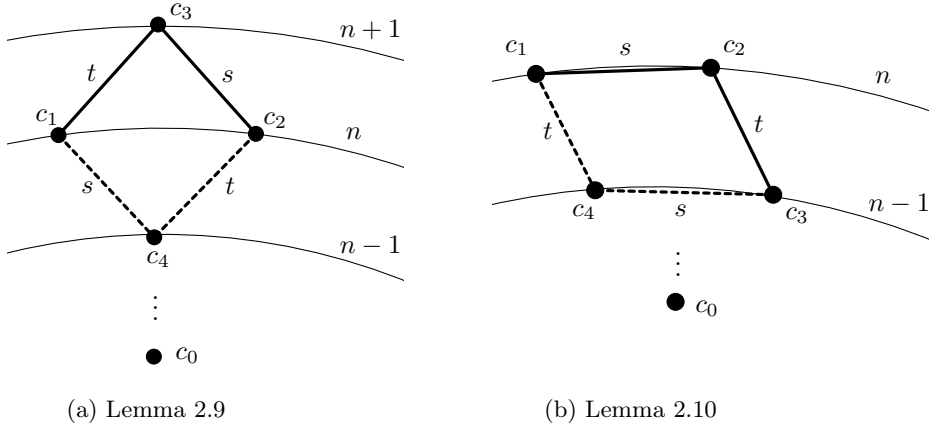
\begin{figure}[ht!]
\begin{subfigure}[b]{0.3\textwidth}
\definecolor{ttqqqq}{rgb}{0.2,0.,0.}
\begin{tikzpicture}[scale=0.8,line cap=round,line join=round,>=triangle 45,x=1.0cm,y=1.0cm]
\clip(11.5,-2) rectangle (18,4.28);
\draw [shift={(14.0370247934,-10.3016528926)}] plot[domain=1.02538367692:2.14734467758,variable=\t]({1.*12.4897881927*cos(\t r)+0.*12.4897881927*sin(\t r)},{0.*12.4897881927*cos(\t r)+1.*12.4897881927*sin(\t r)});
\draw [shift={(14.0370247934,-10.3016528926)}] plot[domain=0.997988156797:2.15611076477,variable=\t]({1.*14.1808741908*cos(\t r)+0.*14.1808741908*sin(\t r)},{0.*14.1808741908*cos(\t r)+1.*14.1808741908*sin(\t r)});
\draw [shift={(14.0370247934,-10.3016528926)}] plot[domain=1.06235112263:2.14028252827,variable=\t]({1.*10.7291062881*cos(\t r)+0.*10.7291062881*sin(\t r)},{0.*10.7291062881*cos(\t r)+1.*10.7291062881*sin(\t r)});
\draw (16.9,2.35394050274) node[anchor=north west] {$n$};
\draw (16.8,0.55) node[anchor=north west] {$n-1$};
\draw (16.8,4.13212405444) node[anchor=north west] {$n+1$};
\draw (15.5471274469,2.6) node[anchor=north west] {$c_2$};
\draw (11.8,2.65) node[anchor=north west] {$c_1$};
\draw (14.1,-1.3) node[anchor=north west] {$c_0$};
\draw (13.71,-0.17) node[anchor=north west] {$\vdots$};
\draw (13.6247668505,0.3) node[anchor=north west] {$c_4$};
\draw (14.0092389698,4.4) node[anchor=north west] {$c_3$};
\draw [line width=1.2pt] (13.9701176712,3.90429032555)-- (12.3332286877,2.07137777611);
\draw [line width=1.2pt] (13.9701176712,3.90429032555)-- (15.5781703153,2.09268773443);
\draw [line width=1.2pt,dash pattern=on 2pt off 2pt] (12.3332286877,2.07137777611)-- (13.9068569939,0.377507567897);
\draw [line width=1.2pt,dash pattern=on 2pt off 2pt] (15.5781703153,2.09268773443)-- (13.9068569939,0.377507567897);
\draw (12.6436849104,3.4) node[anchor=north west] {$t$};
\draw (14.8582815665,3.3) node[anchor=north west] {$s$};
\draw (12.55,1.45) node[anchor=north west] {$s$};
\draw (14.9223602531,1.52) node[anchor=north west] {$t$};
\begin{scriptsize}
\draw [fill=black] (12.3332286877,2.07137777611) circle (3.5pt);
\draw [fill=black] (15.5781703153,2.09268773443) circle (3.5pt);
\draw [fill=black] (13.9,-1.6) circle (3.5pt);
\draw [fill=black] (13.9068569939,0.377507567897) circle (3.5pt);
\draw [fill=black] (13.9701176712,3.90429032555) circle (3.5pt);
\end{scriptsize}
\end{tikzpicture}
\subcaption{\centering Lemma~\ref{Squares}}
\end{subfigure}
\hspace{2.5cm}
\begin{subfigure}[b]{0.3\textwidth}
\definecolor{ttqqqq}{rgb}{0.2,0.,0.}
\begin{tikzpicture}[scale=0.91,line cap=round,line join=round,>=triangle 45,x=1.0cm,y=1.0cm]
\clip(11.7,-2.4) rectangle (18,2.8);
\draw [shift={(14.0370247934,-10.3016528926)}] plot[domain=1.02538367692:2.14734467758,variable=\t]({1.*12.4897881927*cos(\t r)+0.*12.4897881927*sin(\t r)},{0.*12.4897881927*cos(\t r)+1.*12.4897881927*sin(\t r)});
\draw [shift={(14.0370247934,-10.3016528926)}] plot[domain=1.06235112263:2.14028252827,variable=\t]({1.*10.7291062881*cos(\t r)+0.*10.7291062881*sin(\t r)},{0.*10.7291062881*cos(\t r)+1.*10.7291062881*sin(\t r)});
\draw (17.1,2.2426021271) node[anchor=north west] {$n$};
\draw (17,0.453792447377) node[anchor=north west] {$n-1$};
\draw (14.9,2.7) node[anchor=north west] {$c_2$};
\draw (11.75,2.7) node[anchor=north west] {$c_1$};
\draw (14.5,-1.1) node[anchor=north west] {$c_0$};
\draw (14.2,-0.15) node[anchor=north west] {$\vdots$};
\draw (12.7186231146,0.31) node[anchor=north west] {$c_4$};
\draw (15.8,0.26) node[anchor=north west] {$c_3$};
\draw [line width=1.2pt,dash pattern=on 2pt off 2pt] (12.3332286877,2.07137777611)-- (13.1886752932,0.375450417608);
\draw (12.3,1.5) node[anchor=north west] {$t$};
\draw (14.2,0.3) node[anchor=north west] {$s$};
\draw (13.4,2.65) node[anchor=north west] {$s$};
\draw (15.35,1.6) node[anchor=north west] {$t$};
\draw [line width=1.2pt] (12.3332286877,2.07137777611)-- (14.8607567251,2.16094213425);
\draw [line width=1.2pt] (14.8607567251,2.16094213425)-- (15.7609052706,0.310165392801);
\draw [line width=1.2pt,dash pattern=on 2pt off 2pt] (15.7609052706,0.310165392801)-- (13.1886752932,0.375450417608);
\begin{scriptsize}
\draw [fill=black] (12.3332286877,2.07137777611) circle (3.5pt);
\draw [fill=black] (14.8607567251,2.16094213425) circle (3.5pt);
\draw [fill=black] (14.35,-1.25) circle (3.5pt);
\draw [fill=black] (13.1886752932,0.375450417608) circle (3.5pt);
\draw [fill=black] (15.7609052706,0.310165392801) circle (3.5pt);
\end{scriptsize}
\end{tikzpicture}
\subcaption{Lemma~\ref{Squares2}}
\end{subfigure}
\caption{Closing squares}\label{fig:CS}
\end{figure}

As a consequence of the closing squares lemmas, we are able to transform minimal galleries into ``concave'' minimal galleries.
\begin{lem}\label{concave}
Let $c_1$ and $c_2$ be two chambers in $\Delta$. There exists a minimal gallery $\gamma=(v_0, \dots, v_\ell)$ in $\Delta$ between $c_1=v_0$ and $c_2=v_\ell$
such that there are numbers $0 \leq j \leq k \leq \ell $ satisfying the following:
\begin{enumerate}
\item $\dw(c_0, v_i) < \dw(c_0, v_{i-1})$ for all $i\in\{1, \dots, j\}$;
\item $\dw(c_0, v_i) = \dw(c_0, v_{i-1})$ for all $i\in\{j+1, \dots, k \}$;
\item $\dw(c_0, v_i) > \dw(c_0, v_{i-1})$ for all $i\in\{k+1, \dots, \ell \}$.
\end{enumerate}
\end{lem}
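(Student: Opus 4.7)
The plan is to take a minimal gallery $\gamma = (v_0, \ldots, v_\ell)$ from $c_1$ to $c_2$ that minimizes the total ``height'' $N(\gamma) := \sum_{i=0}^\ell \dw(c_0, v_i)$, and show that any such gallery must be concave. Writing $d_i := \dw(c_0, v_i)$, any two consecutive chambers lie in a common panel, so $|d_i - d_{i-1}| \le 1$. A short combinatorial check on sequences satisfying this constraint shows that the concavity required in the statement is equivalent to forbidding, at every interior index $i$, the three local patterns (i)~$d_{i-1} < d_i > d_{i+1}$, (ii)~$d_{i-1} < d_i = d_{i+1}$ and (iii)~$d_{i-1} = d_i > d_{i+1}$. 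Indeed, if none of these occur then, reading the sequence $(d_i - d_{i-1}) \in \{-1,0,+1\}$ left to right, a step of $+1$ can only be followed by $+1$ and a step of $0$ can only be followed by $0$ or $+1$, which is exactly the claimed trichotomy.

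Next I would suppose for contradiction that $\gamma$ contains one of these patterns at some index $i$, with $v_{i-1} \adj{t} v_i \adj{s} v_{i+1}$ (and $s \neq t$, for otherwise the gallery would backtrack). The idea is to use the Closing Squares Lemmas to replace $v_i$ by a chamber $v_i^\ast$ strictly closer to $c_0$. In pattern~(i), with $d_{i-1} = d_{i+1} = n$ and $d_i = n+1$, Lemma~\ref{Squares} applied to $(v_{i-1}, v_{i+1}, v_i)$ supplies $v_i^\ast \in \Sp{c_0}{n-1}$ with $v_{i-1} \adj{s} v_i^\ast \adj{t} v_{i+1}$. In pattern~(ii), with $d_{i-1}=n$ and $d_i = d_{i+1} = n+1$, Lemma~\ref{Squares2} applied with $v_{i+1}, v_i, v_{i-1}$ playing the roles of $c_1, c_2, c_3$ produces $v_i^\ast \in \Sp{c_0}{n}$ with $v_{i-1} \adj{s} v_i^\ast$ and $v_{i+1} \adj{t} v_i^\ast$. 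Pattern~(iii) is symmetric and handled by the same lemma, now applied to the triple $(v_{i-1}, v_i, v_{i+1})$. In every case the lemma additionally yields $|st| = 2$, so replacing the $(t,s)$-subgallery by the $(s,t)$-subgallery preserves $\delta(v_{i-1}, v_{i+1})$; hence the modified sequence $\gamma'$ is still a minimal gallery from $c_1$ to $c_2$ of length $\ell$, but with $N(\gamma') < N(\gamma)$, contradicting the choice of $\gamma$.

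Therefore $\gamma$ is concave, and taking $j$ and $k$ to be respectively the first and last index at which $d_i$ attains its minimum furnishes the partition in the statement. The only delicate point is the bookkeeping when matching $(v_{i-1}, v_i, v_{i+1})$ to the variables $(c_1, c_2, c_3)$ of Lemma~\ref{Squares2} in patterns~(ii) and~(iii); once this is set up correctly, the strict decrease of $N$ is automatic, and no iteration argument is needed since the set of minimal galleries from $c_1$ to $c_2$ is finite, so $N$ attains its minimum and the contradiction above applies directly.
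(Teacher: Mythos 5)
Your proof is essentially the same as the paper's: both use the total height $\sum_i \dw(c_0, v_i)$ as a potential function, identify exactly the same three forbidden local patterns as characterizing concavity, and invoke the Closing Squares Lemmas to strictly decrease the potential whenever such a pattern occurs. One small slip in the final paragraph: the building need not be locally finite at this point in the paper, so the set of minimal galleries from $c_1$ to $c_2$ may well be infinite and you cannot invoke finiteness of that set. This does not affect the argument, though, since the potential takes values in $\N$, which is well-ordered, so a gallery of minimal potential still exists; alternatively one can phrase it as the paper does, as a terminating descent.
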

\begin{proof}
Let $(v_0, \dots , v_\ell)$ be a minimal gallery from $c_1$ to $c_2$ in $\Delta$.
We will essentially prove the result by closing squares whenever possible.

Let $h(\gamma) := \sum_{i=0}^\ell \dw(c_0,v_i)$ be the ``total height'' of the gallery w.r.t.\@~$c_0$.
Observe that the gallery $\gamma$ is of the required form if and only if it does not contain length $2$ subgalleries of any of the following form:
\begin{enumerate}[(a)]
    \item
        $(a, b, c)$ with $\dw(c_0, a)=r$, $\dw(c_0, b) = r+1$, $\dw(c_0, c) = r$;
    \item
        $(a, b, c)$ with $\dw(c_0, a)=r$, $\dw(c_0, b) = r+1$, $\dw(c_0, c) = r+1$;
    \item
        $(a, b, c)$ with $\dw(c_0, a)=r+1$, $\dw(c_0, b) = r+1$, $\dw(c_0, c) = r$.
\end{enumerate}
Indeed, the exclusion of galleries of type (a) and (b) says that once we start going up, we have to continue going up,
and the exclusion of galleries of type (a) and (c) says that once we stop going down, we can never go down again.

We will now show that if $\gamma$ contains a length $2$ subgallery of any of these forms, then we can replace $\gamma$ by another minimal gallery $\gamma'$ from $c_1$ to $c_2$
for which $h(\gamma') < h(\gamma)$.
Since $h(\gamma)$ is a natural number, this process has to stop eventually, and we will be left with a minimal gallery of the required form.

If we have a subgallery of type (a), then we can apply Lemma~\ref{Squares} to replace $b$ by some chamber $b'$ with $\dw(c_0, b') = r-1$.
If we have a subgallery of type (b) or type (c), then we can apply Lemma~\ref{Squares2} to replace $b$ by some chamber $b'$ with $\dw(c_0, b') = r$.
In all cases, we have replaced one chamber in $\gamma$ by a chamber which is closer to $c_0$, and hence we have indeed decreased the value of $h(\gamma)$, as claimed.
\end{proof}

\begin{coro}\label{lemmaB}
    Let $c_0$ be a fixed chamber in $\Delta$.
    If $c_1, c_2 \in \B{c_0}{n}$, then there exists a minimal gallery from $c_1$ to $c_2$ inside $\B{c_0}n$.
\end{coro}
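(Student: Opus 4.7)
The plan is to apply Lemma~\ref{concave} directly and observe that the resulting ``concave'' minimal gallery is automatically trapped inside the ball $\B{c_0}{n}$.

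More precisely, I would first invoke Lemma~\ref{concave} to obtain a minimal gallery $\gamma=(v_0,\dots,v_\ell)$ from $c_1=v_0$ to $c_2=v_\ell$, together with indices $0\le j\le k\le \ell$, such that $\dw(c_0,v_i)$ is strictly decreasing on $\{0,\dots,j\}$, constant on $\{j,\dots,k\}$, and strictly increasing on $\{k,\dots,\ell\}$. The key point is then that the function $i\mapsto \dw(c_0,v_i)$ attains its maximum on $\gamma$ at one of the two endpoints.

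I would argue this by cases on where the index $i$ lies. For $i\in\{0,\dots,j\}$, we have $\dw(c_0,v_i)\le \dw(c_0,v_0)=\dw(c_0,c_1)\le n$ by the strictly decreasing property. For $i\in\{j,\dots,k\}$, the distance equals $\dw(c_0,v_j)$, which is still bounded by $\dw(c_0,c_1)\le n$ by the previous step. Finally, for $i\in\{k,\dots,\ell\}$, we have $\dw(c_0,v_i)\le \dw(c_0,v_\ell)=\dw(c_0,c_2)\le n$ by the strictly increasing property. In every case, $v_i\in \B{c_0}{n}$, so $\gamma$ lies entirely inside the ball, as required.

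There is no real obstacle here; the work has already been done in Lemma~\ref{concave}. The only thing to double-check is the bookkeeping at the transition indices $j$ and $k$ (so that the ``piecewise'' bound really covers every vertex of $\gamma$), which is handled by the overlap at the shared endpoints of the three segments.
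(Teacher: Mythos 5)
Your proof is correct and follows the same route as the paper, which simply cites Lemma~\ref{concave} without spelling out the details; you have merely made explicit the observation that a concave gallery attains its maximum distance to $c_0$ at an endpoint.
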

\begin{proof}
    This follows directly from Lemma~\ref{concave}.
\end{proof}

\subsection{Projections and parallel residues}

Let $\Sigma$ be a right-angled Coxeter diagram with vertex set $S$ and let $(W,S)$ be the Coxeter system of type $\Sigma$ with set of generators $S=(s_i)_{i\in I}$. Let $\Delta$ be a right-angled building of type $\Sigma$.

\begin{defn}
Let $c$ be a chamber in $\Delta$ and $\Ra$ be a residue in $\Delta$.
The \emph{projection} of $c$ on $\Ra$ is the unique chamber in $\Ra$ that is closest to $c$ and it is denoted by $\proj_{\Ra}(c)$.
\end{defn}

The next fact is usually called the gate property and can be found, for instance, in \cite[Proposition 3.105]{AbramenkoBrown}.
\begin{prop}[Gate property]\label{Gate}
Let $c$ be a chamber in $\Delta$ and $\Ra$ be a residue in $\Delta$.
For any chamber $c'$ in $\Ra$, there is a minimal gallery from $c$ to $c'$ passing through $\proj_{\Ra}(c)$, and such that the subgallery from $\proj_{\Ra}(c)$ to $c'$ is contained in $\Ra$.
\end{prop}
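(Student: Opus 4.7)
The plan is to reduce the geometric statement to a Weyl-distance factorisation: find a chamber $d\in\ch\Ra$ such that, for every $c'\in\ch\Ra$, one has $\delta(c,c')=\delta(c,d)\,\delta(d,c')$ with $\delta(d,c')\in W_J$ (where $\Ra$ is the $J$-residue in question) and $\ell(\delta(c,c'))=\ell(\delta(c,d))+\ell(\delta(d,c'))$.  Once this factorisation is in hand, any minimal gallery from $c$ to $d$ concatenated with any minimal gallery from $d$ to $c'$ is again minimal, and the second piece lies inside $\Ra$ automatically because its type is a reduced word over $J$.

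To achieve this, I would first argue that the set $\{\delta(c,c'') : c''\in\ch\Ra\}$ is a single coset of $W_J$ in $W$.  One inclusion uses the definition of a $J$-residue (any two chambers of $\Ra$ are at Weyl distance in $W_J$) combined with the multiplication rule for Weyl distances along concatenated minimal galleries; the other uses the building axiom that every element of $W$ is realised as $\delta(c,c'')$ for a unique chamber $c''$.  Next, I would invoke the standard Coxeter-theoretic fact that every coset $wW_J$ contains a unique element $w_0$ of minimal length, with $\ell(w_0u)=\ell(w_0)+\ell(u)$ for all $u\in W_J$.  In the right-angled setting this is especially transparent using the elementary operations of Definition~\ref{3CBrown}: starting from any reduced word for an element of $wW_J$, one uses type-(2) moves to push all generators from $J$ as far to the right as possible, producing a reduced factorisation $w_0u$ in which no generator from $J$ occurs in $w_0$.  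Lemma~\ref{reduced words} then gives independence from the choice of reduced word, and the length-additivity is built into the construction.

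Now define $d\in\ch\Ra$ to be the unique chamber with $\delta(c,d)=w_0$.  For any $c'\in\ch\Ra$, setting $u := \delta(d,c')\in W_J$, the coset decomposition yields $\delta(c,c')=w_0u$ with lengths adding, so a minimal gallery from $c$ to $d$ followed by a minimal gallery from $d$ to $c'$ (contained in $\Ra$, being of type a reduced word in $W_J$) is indeed a minimal gallery from $c$ to $c'$.  Since $\ell(w_0)\leq\ell(w_0u)$ for every $u\in W_J$, this same chamber $d$ realises $\min_{c''\in\ch\Ra}\dw(c,c'')$, so $d=\proj_\Ra(c)$ and the projection is unique.  The main obstacle is the Coxeter-theoretic coset-representative lemma; everything else is essentially bookkeeping once that fact and Lemma~\ref{reduced words} are in hand.
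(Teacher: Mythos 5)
The paper does not prove this proposition; it is quoted from \cite[Proposition~3.105]{AbramenkoBrown}, so there is no argument in the paper to compare against. Your strategy is the standard textbook one and is sound in outline: identify $\{\delta(c,c'')\mid c''\in\ch\Ra\}$ as a single coset $wW_J$, take the minimal-length coset representative $w_0$, use the length-additivity $\ell(w_0u)=\ell(w_0)+\ell(u)$ for $u\in W_J$ to concatenate galleries, and conclude that the chamber $d\in\ch\Ra$ with $\delta(c,d)=w_0$ is $\proj_\Ra(c)$.

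There is, however, a genuine error in the way you justify the coset-representative lemma. You claim that, in the right-angled case, pushing all $J$-generators to the right via type-(2) moves yields a reduced factorisation $w_0u$ in which $w_0$ contains no letter of $J$. This is false: take the infinite dihedral group $W=\langle s,t\mid s^2=t^2=1\rangle$ with $J=\{s\}$, and consider the coset $stsW_J=\{sts,\,st\}$. Its minimal-length representative is $w_0=st$, which still contains $s\in J$ (the leading $s$ cannot be pushed past $t$). The correct characterization of the minimal coset representative in any Coxeter group is that $\ell(w_0s)>\ell(w_0)$ for every $s\in J$; this is strictly weaker than ``$w_0$ has no $J$-letters'' and it is what actually yields the length-additivity you need. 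You should simply invoke this standard fact rather than attempt to rederive it with the push-right heuristic, which, as stated, does not survive scrutiny. A secondary slip: ``every element of $W$ is realised as $\delta(c,c'')$ for a unique chamber $c''$'' is not a building axiom and is false in a thick building; only \emph{existence} of such a chamber is needed for that step, and the uniqueness of $\proj_\Ra(c)$ follows at the end from length-additivity and axiom (WD2), not from any uniqueness axiom. Neither slip alters the overall strategy, but both would need to be corrected before the proof is complete.
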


Next we present applications of the projection map that will be useful for us later on.
The first shows how the projections to panels are related to the structure of the Coxeter diagram of the building.

\begin{lem}\label{lemmaA}
Let $c_0$ be a fixed chamber of $\Delta$ and let $s\in S$.
Let $c_1\in \Sp{c_0}n$ and $c\in \B{c_0}{n+1}\setminus \ch{\Pa_{s,c_1}}$.
If $\proj_{\Pa_{s,c_1}}(c)=c_2\in \Sp{c_0}{ n+1}$ then $c_2$ is $t$-adjacent to some $c_3\in \Sp{c_0}n$ with $t\neq s$ and $st=ts$ in $W$.
\end{lem}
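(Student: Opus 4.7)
My strategy is to read off the desired chamber $c_3$ from the first step of a carefully chosen minimal gallery from $c_2$ to $c$, and then invoke Lemma~\ref{Squares} to obtain the commutation $st=ts$ for free.

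First, I would apply the gate property (Proposition~\ref{Gate}) to the panel $\Pa_{s,c_1}$ and the chamber $c$: since $c \notin \ch{\Pa_{s,c_1}}$ and $\proj_{\Pa_{s,c_1}}(c) = c_2$, every chamber $c' \in \ch{\Pa_{s,c_1}} \setminus \{c_2\}$ satisfies $\dw(c,c') = \dw(c,c_2) + 1$. Consequently, the first step of any minimal gallery from $c_2$ to $c$ must leave $\Pa_{s,c_1}$, and so its type is necessarily different from~$s$.

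Next, I would apply Lemma~\ref{concave} to replace this gallery by a concave one $c_2 = v_0, v_1, \dots, v_\ell = c$ in which $\dw(c_0, \cdot)$ first strictly decreases and then strictly increases. (The ``level'' middle portion in the statement of Lemma~\ref{concave} is automatically trivial, because two adjacent chambers in a building always have gallery distances to $c_0$ differing by exactly one.) I would then rule out the possibility that the descending part is empty: if it were, the whole gallery would be purely ascending, giving $\dw(c_0, c) = \dw(c_0, c_2) + \ell \ge (n+1) + 1$ (using $\ell \ge 1$ since $c \neq c_2$), contradicting $c \in \B{c_0}{n+1}$. Hence $v_1 \in \Sp{c_0}{n}$, and moreover the step $c_2 \adj{t} v_1$ has type $t \neq s$ by the previous paragraph.

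Finally, setting $c_3 := v_1$, I would have $c_1, c_3 \in \Sp{c_0}{n}$ and $c_2 \in \Sp{c_0}{n+1}$ with $c_1 \adj{s} c_2$, $c_3 \adj{t} c_2$, and $t \neq s$; Lemma~\ref{Squares} (Closing Squares~1) then yields $|st|=2$ in $\Sigma$, i.e.\ $st=ts$ in $W$. I do not anticipate a real obstacle: the proof amounts to combining the gate property (to force $t \neq s$) with the ball constraint on $c$ (to force the first step to descend towards $c_0$), and Lemma~\ref{Squares} then supplies the commutation relation without further work.
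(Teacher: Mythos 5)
Your plan follows essentially the same route as the paper (concave gallery from $c$ to $c_2$ together with Closing Squares), and your first paragraph (using the gate property to conclude that the first step out of $c_2$ has type $t\neq s$) is correct and matches the paper. However, there is a genuine gap in the parenthetical claim that the level middle portion of a concave gallery is automatically trivial because ``two adjacent chambers in a building always have gallery distances to $c_0$ differing by exactly one.'' This is false in a thick building: if $\Pa$ is an $s$-panel with $\proj_\Pa(c_0) = p \in \Sp{c_0}{m-1}$, then all $q_s - 1 \ge 2$ chambers of $\Pa\setminus\{p\}$ lie in $\Sp{c_0}{m}$, and any two of them are $s$-adjacent chambers at the same distance from $c_0$. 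The level portion of Lemma~\ref{concave} is present precisely to accommodate this phenomenon. As a result, your argument that the descending part is non-empty does not go through: the concave gallery from $c_2$ to $c$ may start with a level step, in which case $v_1 \in \Sp{c_0}{n+1}$ rather than $\Sp{c_0}{n}$, and your choice $c_3 := v_1$ fails. (Concretely, this happens when $c$ itself lies in $\Sp{c_0}{n+1}$ and the entire gallery from $c_2$ to $c$ is level.)

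The missing case is exactly what the paper treats separately: when $v_1 \in \Sp{c_0}{n+1}$, observe that $c_2$ and $v_1$ are $s_\ell$-adjacent chambers of the same sphere $\Sp{c_0}{n+1}$, so the unique chamber $v_2 := \proj_{\Pa_{s_\ell,c_2}}(c_0)$ lies in $\Sp{c_0}{n}$ and is $s_\ell$-adjacent to $c_2$; one then takes $c_3 := v_2$ and $t := s_\ell \neq s$, and applies Lemma~\ref{Squares} to $c_1, c_3 \in \Sp{c_0}{n}$ and $c_2 \in \Sp{c_0}{n+1}$ to deduce $st = ts$. With this extra case your argument is complete and coincides with the paper's.
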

\begin{proof}
By Lemma~\ref{concave} we can take a concave minimal gallery between $c$ and $c_2$. Let $w=s_1\cdots s_\ell$ be the corresponding reduced representation of $\delta(c,c_2)$.
Let $v_1$ be the chamber $s_\ell$-adjacent to $c_2$ that is at Weyl distance $s_1\cdots s_{\ell-1}$ from $c$. We have \mbox{$\dw(c_0, c_2)\geq \dw(c_0, v_1)$}.

We know that $s_\ell\neq s$ because $\proj_{\Pa_{s,c_1}}(c)=c_2$ and hence $l(w)<l(ws)$. If $v_1\in \Sp{c_0}n$ then the result follows from Lemma~\ref{Squares}.
If $v_1\in \Sp{c_0}{n+1}$, then there exists a (unique) chamber $v_2 \in \Sp{c_0}{n}$ that is $s_\ell$-adjacent to both $v_1$ and $c_2$,
and the result follows again from Lemma~\ref{Squares}.
\end{proof}

The next result will allow us, in the case of semi-regular right-angled buildings, to extend a permutation of an $s$-panel to an automorphism of the whole building in a useful way.

\begin{prop}[{\cite[Proposition 4.1]{Caprace}}]\label{extending}
Let $\Delta$ be a semi-regular right-angled building of type $\Sigma$. Let $s\in S$ and $\Pa$ be an $s$-panel. Given any permutation $\alpha \in \Sym{\ch{\Pa}}$ there is $\widetilde{\alpha}\in \Aut\Delta$ stabilizing $\Pa$ satisfying the following two conditions:

\begin{enumerate}
\item $\widetilde{\alpha}|_{\ch{\Pa}}=\alpha$;
\item $\widetilde{\alpha}$ fixes all chambers of $\Delta$ whose projection to $\Pa$ is fixed by $\alpha$.
\end{enumerate}
\end{prop}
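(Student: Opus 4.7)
The strategy is to decompose $\Delta$ into wings with respect to the panel $\Pa$ and exploit the uniqueness of semi-regular right-angled buildings (Theorem~\ref{unicityhp}) to transport the permutation $\alpha$ across wings. For each $d \in \ch\Pa$, define the wing
\[ X_d := \{ c \in \ch\Delta \mid \proj_\Pa(c) = d \}. \]
By the gate property (Proposition~\ref{Gate}), the sets $\{X_d\}_{d \in \ch\Pa}$ partition $\ch\Delta$, with $d \in X_d$ for every $d$. Note that for any chamber $c \in X_d$, every minimal gallery from $c$ to a chamber $e \in \ch\Pa \setminus \{d\}$ factors through the $s$-step from $d$ to $e$; in particular, this forces the distance from $c$ to $e$ to be $\dw(c,d)+1$, so the $s$-step between wings is ``rigid''.

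First, I would show that each wing $X_d$, equipped with the adjacencies and Weyl distance inherited from $\Delta$, is itself a thick semi-regular right-angled building of a fixed type $(W_s, S_s)$ independent of $d$, where $S_s \subseteq S$ consists of generators appropriate to the wing (concretely, the generators that can start a reduced word representing $\delta(d,c)$ for some $c\in X_d$; one expects $s \notin S_s$ and the excluded generators commute with $s$). The verification is essentially combinatorial: using the Closing Squares Lemmas (Lemmas~\ref{Squares} and~\ref{Squares2}) together with the concave gallery result (Lemma~\ref{concave}), one checks that $X_d$ is convex, that the panels of $\Delta$ entirely contained in $X_d$ give $X_d$ the structure of a building of type $(W_s, S_s)$, and that the panel thicknesses equal $q_t$ for each $t \in S_s$.

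Second, Theorem~\ref{unicityhp} yields, for each pair $d, d' \in \ch\Pa$, a type-preserving building isomorphism $\phi_{d,d'} \colon X_d \to X_{d'}$ with $\phi_{d,d'}(d) = d'$; we choose $\phi_{d,d} = \id_{X_d}$ for all $d$ so that the second condition of the proposition will come for free. Now define $\widetilde\alpha$ on each wing by
\[ \widetilde\alpha(c) := \phi_{d,\alpha(d)}(c) \quad \text{for } c \in X_d. \]
Since $\widetilde\alpha$ sends $X_d$ bijectively to $X_{\alpha(d)}$ and is a type-preserving isomorphism on each wing, the only thing to check is that $\widetilde\alpha$ preserves adjacencies between distinct wings. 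The only such adjacencies are $s$-adjacencies, and they occur exactly between chambers that lie in the common tree-wall of $\Pa$; the rigidity observation above, together with the fact that $\phi_{d,d'}$ preserves the wing combinatorics, implies that an $s$-adjacency $c \adj{s} c'$ with $c \in X_d$, $c' \in X_{d'}$ is mapped to an $s$-adjacency between the corresponding chambers in $X_{\alpha(d)}$ and $X_{\alpha(d')}$. Restricting $\widetilde\alpha$ to $\Pa$ recovers $\alpha$, and if $\alpha(d)=d$ then $\widetilde\alpha$ is the identity on $X_d$, which is exactly the second property.

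The main obstacle will be the structural Step 1: proving that each wing $X_d$ is, on the nose, a semi-regular right-angled building of the same prescribed type, so that Theorem~\ref{unicityhp} can be applied. The cross-wing adjacency check in Step 3 will be the other delicate point, since it requires knowing that chambers in distinct wings can only be glued along $s$-adjacencies inside the tree-wall of $\Pa$; once this is established, the compatibility with the $\phi_{d,d'}$'s is automatic because they are forced to respect the $s$-adjacent neighbors of $d$ and $d'$ outside $\Pa$.
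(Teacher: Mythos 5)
The paper cites this proposition from Caprace without reproving it, so there is no in-paper argument to compare against; but your proposal has a concrete gap in its central structural step. A wing $X_d$ is \emph{not} a semi-regular right-angled building of any type $(W_s,S_s)$, so Theorem~\ref{unicityhp} cannot be invoked. To see this, let $\T$ be the $s$-tree-wall of $\Pa$. By Lemma~\ref{Lemma2.2} the intersection $X_d\cap\ch\T$ is an entire residue of type $s^\perp$ (not merely $\{d\}$, unless $s^\perp=\emptyset$). For any chamber $e$ in this residue, the $s$-panel $\Pa_{s,e}$ is parallel to $\Pa$, so its $q_s$ chambers have distinct projections on $\Pa$ and hence $\Pa_{s,e}\cap X_d=\{e\}$; whereas for $e\in X_d\setminus\ch\T$, the panel $\Pa_{s,e}$ projects to the single chamber $d$ and is entirely contained in $X_d$. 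Thus $X_d$ has $s$-panels of two different sizes ($1$ along the boundary residue, $q_s$ in the interior), and is not a building of any type. Any uniqueness statement for ``wings rooted at a tree-wall'' would have to be proved from scratch, which is essentially the same sphere-by-sphere work as the proposition itself.

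Step 3 also has a real gap. Even granting the existence of isomorphisms $\phi_{d,d'}\colon X_d\to X_{d'}$, arbitrary choices (subject only to $\phi_{d,d'}(d)=d'$ and $\phi_{d,d}=\id$) do not glue to an automorphism of $\Delta$. For a cross-wing $s$-adjacency $c\adj{s}c'$ (which, as you correctly observe, must lie in $\T$, with $c\in X_d\cap\ch\T$ and $c'\in X_{d'}\cap\ch\T$), preserving the adjacency forces $\phi_{d,\alpha(d)}$ and $\phi_{d',\alpha(d')}$ to agree under the projections between the parallel panels $\Pa_{s,c}$ and $\Pa_{s,c'}$, i.e.\@ to induce the same map on the boundary $s^\perp$-residues. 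This is a genuine compatibility constraint across $q_s$ separate choices — $\phi_{d,d'}(d)=d'$ pins down only the one chamber $d$, not the whole residue $X_d\cap\ch\T$ — and it is not ``automatic'' as claimed; it has to be engineered into the construction of the $\phi_{d,d'}$. The paper's own device for doing this is the directed-building model: Proposition~\ref{prop:g0directed} constructs the required $\widetilde\alpha$ directly when $\alpha$ fixes the chamber of $\Pa$ carrying color $1$, and the general case reduces to that one (or, if $\alpha$ is fixed-point-free, condition~(2) is vacuous and chamber-transitivity suffices).
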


Our last application of the projection map gives a consequence of combinatorial convexity (cf.\@ Definition~\ref{convex}), in terms of the projection map.

\begin{prop}[{\cite[Section 2]{Caprace}}]\label{convexproj}
If a set of chambers $\mathcal C$ is combinatorially convex then for every $c\in \mathcal C$ and every residue $\Ra$ of $\Delta$ with $\ch{\Ra} \cap \mathcal {C} \neq \emptyset$ we have $\proj_{\Ra}(c)\in \mathcal C$.
\end{prop}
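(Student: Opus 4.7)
The plan is to prove Proposition~\ref{convexproj} by combining the gate property (Proposition~\ref{Gate}) with the definition of combinatorial convexity (Definition~\ref{convex}). The statement is essentially a direct consequence of these two ingredients, so no induction or case analysis is required.

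First I would fix $c \in \mathcal{C}$ and a residue $\Ra$ such that $\ch{\Ra} \cap \mathcal{C} \neq \emptyset$, and choose some chamber $c' \in \ch{\Ra} \cap \mathcal{C}$. Let $p := \proj_{\Ra}(c)$. By the gate property, there exists a minimal gallery
\[ \gamma = (c = v_0, v_1, \dots, v_k = p, v_{k+1}, \dots, v_\ell = c') \]
from $c$ to $c'$ that passes through $p$.

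Now, since $c, c' \in \mathcal{C}$ and $\mathcal{C}$ is combinatorially convex, every minimal gallery from $c$ to $c'$ is entirely contained in $\mathcal{C}$; in particular, $\gamma \subseteq \mathcal{C}$. Therefore the chamber $p = v_k$ appearing on this gallery lies in $\mathcal{C}$, which is exactly the statement $\proj_{\Ra}(c) \in \mathcal{C}$.

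There is no substantial obstacle here: the entire content of the argument is ``invoke gate property, then invoke convexity''. The only mild subtlety is that one needs a chamber of $\mathcal{C} \cap \ch{\Ra}$ to exist in order to trigger the gate property (which is why the hypothesis $\ch{\Ra} \cap \mathcal{C} \neq \emptyset$ is indispensable), and one needs the fact, built into Proposition~\ref{Gate}, that the gallery through the projection is genuinely a \emph{minimal} gallery from $c$ to $c'$, so that convexity of $\mathcal{C}$ applies to it.
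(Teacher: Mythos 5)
Your proof is correct and follows exactly the same approach as the paper: invoke the gate property (Proposition~\ref{Gate}) to produce a minimal gallery from $c$ to a chamber $c' \in \ch{\Ra} \cap \mathcal{C}$ that passes through $\proj_{\Ra}(c)$, then apply combinatorial convexity to conclude. The only difference is that you spell out the gallery notation and the role of each hypothesis more explicitly than the paper's terse two-line argument.
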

\begin{proof}
Let $c_1 \in \ch{\Ra} \cap \mathcal {C}$ and $c\in \mathcal C$. We know that there is a minimal gallery from $c$ to $c_1$ passing through $\proj_{\Ra}(c)$. As $\mathcal C$ is combinatorially convex the result follows.
\end{proof}

If $\Ra_1$ and $\Ra_2$ are two residues, then
\[ \proj_{\Ra_1}(\Ra_2)=\{ \proj_{\Ra_1}(c) \mid c\in \ch{\Ra_2} \} \]
is the set of chambers of a residue contained in $\Ra_1$. This is again a residue (cf.\@~\cite[Section 2]{Caprace}) and the rank of $\proj_{\Ra_1}(\Ra_2)$ is bounded above by the ranks of both $\Ra_1$ and $\Ra_2$.

\medskip

We finally collect some facts from \cite{Caprace} about parallelism of residues.

\begin{defn}
Two residues $\Ra_1$ and $\Ra_2$ are called \emph{parallel} if \mbox{$\proj_{\Ra_1}(\Ra_2)=\Ra_1$} and $\proj_{\Ra_2}(\Ra_1)=\Ra_2$.
\end{defn}

In particular, if $\Pa_1$ and $\Pa_2$ are two parallel panels, then the chamber sets of $\Pa_1$ and $\Pa_2$ are mutually in bijection under the respective projection maps (cf. \cite[Section 2]{Caprace}).

\begin{lem}[{\cite[Lemma 2.2]{Caprace}}]\label{Lemma2.2}
Let $J_1, J_2 \subset S$ be two disjoint subsets with $[J_1, J_2]=1$. Let $c\in \ch\Delta$. Then
\[ \ch{\Ra_{{J_1\cup J_2},c}}=\ch{\Ra_{{J_1},c}}\times \ch{\Ra_{{J_2},c}}, \]
and for $i\in \{1,2\}$, the canonical projection map $\ch{\Ra_{{J_1\cup J_2},c}} \to \ch{\Ra_{{J_i}, c}}$ coincides with the restriction of $\proj_{\Ra_{{J_i}, c}}$ to $\ch{\Ra_{{J_1\cup J_2}, c}}$. In particular, any two $J_i$-residues contained in $\Ra_{{J_1\cup J_2}, c}$ are parallel.
\end{lem}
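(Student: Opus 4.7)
The plan is to exploit the direct product decomposition $W_{J_1 \cup J_2} = W_{J_1} \times W_{J_2}$, which follows from $[J_1, J_2]=1$ and the disjointness of $J_1, J_2$: any element $w \in W_{J_1 \cup J_2}$ admits a unique factorization $w = w_1 w_2 = w_2 w_1$ with $w_i \in W_{J_i}$, and moreover $\ell(w) = \ell(w_1) + \ell(w_2)$. This length-additivity is essentially the only Coxeter-theoretic input needed: starting from any pair of reduced expressions of $w_1$ and $w_2$, their concatenation is already $\Sigma$-reduced, since no elementary operation (Definition~\ref{3CBrown}) can produce a cancellation between letters of $J_1$ and letters of $J_2$ once the two alphabets are disjoint, so Lemma~\ref{reduced words} applies.

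For the product decomposition of chamber sets, recall that for any $J$-residue containing $c$, the Weyl distance $\delta(c, \cdot)$ restricts to a bijection onto $W_J$. Taking $J = J_1, J_2, J_1\cup J_2$, the group-theoretic factorization above yields a bijection
\[ \phi \colon \ch{\Ra_{J_1 \cup J_2, c}} \to \ch{\Ra_{J_1, c}} \times \ch{\Ra_{J_2, c}}, \]
sending a chamber $d$ with $\delta(c, d) = w_1 w_2$ to the pair $(d_1, d_2)$, where $d_i$ is the unique chamber of $\Ra_{J_i, c}$ at Weyl distance $w_i$ from $c$. This is the set-theoretic equality asserted in the lemma.

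Next I would identify the two coordinates of $\phi$ with the corresponding building projections. For $d$ as above and an arbitrary $d' \in \Ra_{J_i, c}$ with $\delta(c, d') = w_i'$, the cocycle identity $\delta(d', d) = \delta(d', c)\,\delta(c, d)$ in $W$, combined with the length-additivity, yields
\[ \dw(d', d) = \ell(w_i'^{-1} w_i) + \ell(w_{3-i}), \]
which is minimized exactly when $w_i' = w_i$, i.e.\@ when $d' = d_i$. Hence $\proj_{\Ra_{J_i, c}}(d) = d_i$, which is precisely the $i$-th component of $\phi(d)$.

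Finally, parallelism of two $J_1$-residues $\Ra, \Ra' \subseteq \Ra_{J_1 \cup J_2, c}$ follows at once: via $\phi$, each of them is a fibre $\ch{\Ra_{J_1, c}} \times \{x_2\}$ over a chamber $x_2 \in \Ra_{J_2, c}$, and the distance formula above shows that the projection between two such fibres preserves the $J_1$-coordinate and is bijective onto the target fibre, giving $\proj_{\Ra}(\Ra') = \Ra$ and symmetrically $\proj_{\Ra'}(\Ra) = \Ra'$. The main conceptual obstacle is setting up the bookkeeping around $\phi$ so that both factorizations can be tracked simultaneously; once that is in place, the projection identification and the parallelism claim both reduce to short computations using the product formula for Weyl distances.
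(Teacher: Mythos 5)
The paper quotes this lemma from Caprace and gives no proof of its own, so the comparison is really about the validity of your argument. Unfortunately, the argument has a gap that runs through the whole proposal: you are treating $\Delta$ as though it were thin. The assertion that ``the Weyl distance $\delta(c,\cdot)$ restricts to a bijection onto $W_J$'' on a $J$-residue containing $c$ is false for thick buildings --- the map is surjective but far from injective, since a single thick $s$-panel not containing $c$ already contributes $q_s - 1 \geq 2$ chambers at the same Weyl distance from $c$. Consequently ``the unique chamber of $\Ra_{J_i,c}$ at Weyl distance $w_i$ from $c$'' is not well-defined, and the map $\phi$ as you define it does not exist (and even if $d_i$ were chosen arbitrarily, $\phi$ would not be injective, since distinct chambers at the same Weyl distance from $c$ would have the same image). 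The same thin-building confusion reappears in the projection computation: the identity $\delta(d', d) = \delta(d', c)\,\delta(c, d)$ does not hold in an arbitrary building --- for $c, d', d$ three distinct chambers in one thick $s$-panel, the left side is $s$ while the right side is $ss = 1$. It holds within apartments, or under an additional minimal-gallery condition that you would have to supply.

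Your Coxeter-theoretic input --- that $W_{J_1 \cup J_2} = W_{J_1} \times W_{J_2}$ with additive lengths, because $\Sigma$-elementary operations cannot mix the two alphabets --- is correct and is indeed an essential ingredient. But the bijection $\phi$ has to be defined as $d \mapsto \bigl(\proj_{\Ra_{J_1,c}}(d), \proj_{\Ra_{J_2,c}}(d)\bigr)$, which is always well-defined, and its bijectivity must then be established using the gate property (Proposition~\ref{Gate}) together with the length-additivity; that bijectivity is the real content of the lemma and is precisely the step your proposal does not actually carry out.
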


\begin{lem}[{\cite[Lemma 2.5]{Caprace}}]\label{Lemma2.5}
Let $\Pa_1$ and $\Pa_2$ be panels in $\Delta$. If there are two chambers of $\Pa_2$ having distinct projections on $\Pa_1$, then $\Pa_1$ and $\Pa_2$ are parallel.
\end{lem}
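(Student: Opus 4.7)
The plan is to establish the two equalities $\proj_{\Pa_1}(\Pa_2) = \Pa_1$ and $\proj_{\Pa_2}(\Pa_1) = \Pa_2$ in turn, both by exploiting the fact that a panel is a rank-one residue and hence admits only trivial subresidues (singletons or the whole panel).

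The first equality is essentially immediate: by the discussion preceding the statement, $\proj_{\Pa_1}(\Pa_2)$ is the chamber set of a residue contained in the panel $\Pa_1$. The hypothesis furnishes at least two chambers in this set, so it cannot be a singleton, hence it equals $\Pa_1$.

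For the second equality, it suffices to exhibit two distinct chambers in $\proj_{\Pa_2}(\Pa_1)$, since the same rank observation will then force $\proj_{\Pa_2}(\Pa_1) = \Pa_2$. Write $d_i := \proj_{\Pa_1}(c_i)$ for $i = 1, 2$, and let $s$ and $t$ be the types of $\Pa_1$ and $\Pa_2$ respectively, so that $d_1, d_2$ are $s$-adjacent and $c_1, c_2$ are $t$-adjacent; I plan to prove $\proj_{\Pa_2}(d_i) = c_i$, which supplies the two required chambers since $c_1 \neq c_2$. The key step is an auxiliary claim, proved using only the triangle inequality and the uniqueness of the closest chamber: if $c_1, c_2 \in \Pa_2$ are $t$-adjacent chambers with $\proj_{\Pa_1}(c_i) = d_i$ and $d_1 \neq d_2$, and if $n := \dist{c_1}{d_1}$, then $\dist{c_2}{d_2} = n$ and $\dist{c_1}{d_2} = \dist{c_2}{d_1} = n+1$.

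Granting this claim, suppose for contradiction that $c^* := \proj_{\Pa_2}(d_1)$ differs from $c_1$. Since $c^*, c_1 \in \Pa_2$ are then $t$-adjacent, the gate property (Proposition~\ref{Gate}) applied to a minimal gallery from $d_1$ to $c_1$ through $c^*$ gives $\dist{d_1}{c^*} = n - 1$. A further application of the triangle inequality pins the distance from $c^*$ to $\Pa_1$ down to exactly $n-1$, and uniqueness of projection yields $\proj_{\Pa_1}(c^*) = d_1$. Re-applying the auxiliary claim to the pair $(c^*, c_2)$ with new parameter $n - 1$ would then give $\dist{c_2}{d_2} = n - 1$, contradicting the value $n$ established above. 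Hence $\proj_{\Pa_2}(d_1) = c_1$ and, symmetrically, $\proj_{\Pa_2}(d_2) = c_2$, completing the proof. The main obstacle is the distance bookkeeping in the auxiliary claim: each individual inequality is elementary, but they must be combined in the right order and the uniqueness of the projection chamber must be invoked at the correct moments to rule out apparent alternatives.
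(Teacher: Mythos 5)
The paper does not actually prove this lemma --- it is cited directly from \cite[Lemma~2.5]{Caprace} --- so there is no in-paper proof to compare against. Your proof is correct, and worth examining on its own merits.

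Your first step (identifying $\proj_{\Pa_1}(\Pa_2)$ as a residue of rank $\leq 1$ contained in $\Pa_1$ with at least two chambers, hence all of $\Pa_1$) is the standard one. For the second equality $\proj_{\Pa_2}(\Pa_1) = \Pa_2$, the usual route --- and almost certainly the one in Caprace's paper, which develops the general projection calculus for residues --- is to invoke the Dress--Scharlau type fact that the restricted projection maps give mutually inverse bijections between $\proj_{\Pa_1}(\Pa_2)$ and $\proj_{\Pa_2}(\Pa_1)$. Since the former equals $\Pa_1$, which has $q_s \geq 2$ chambers, the latter also has at least two chambers and the rank argument applies again immediately. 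You instead reprove the needed consequence of that bijection from scratch: the auxiliary claim (which I verified --- both $m \leq n$ and $n \leq m$ follow from the gate property at $c_1$ and $c_2$ together with the triangle inequality across the $t$-adjacency) and the subsequent contradiction via $\proj_{\Pa_1}(c^*) = d_1$ are all correct, including the pinning-down of $\dist{c^*}{\Pa_1}$ to $n-1$ and the exclusion of $c^* = c_2$ (which follows because $\proj_{\Pa_1}(c^*) = d_1 \neq d_2 = \proj_{\Pa_1}(c_2)$). What your approach buys is self-containment: it uses only the gate property (Proposition~\ref{Gate}) and the triangle inequality, avoiding the bijection machinery entirely, at the cost of heavier distance bookkeeping and a two-step contradiction. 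Both proofs are sound; yours is a fine elementary alternative.
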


\begin{defn}\label{perp}
Let $J\subseteq S$. We define the set
\[ J^\perp = \{ t\in S\setminus J \mid ts=st \text{ for all } s\in J \}. \]
If $J=\{s\}$ then we denote  the set $J^\perp$ by $s^\perp$.
\end{defn}

\begin{prop}[{\cite[Proposition 2.8]{Caprace}}]\label{Proposition2.8}
Let $\Delta$ be a right-angled building of type $(W, S)$.
\begin{enumerate}
\item Any two parallel residues have the same type.
\item Let $J\subseteq S$. Given a residue $\Ra$ of type $J$, a residue $\Ra'$ is parallel to $\Ra$ if and only if $\Ra'$ is of type $J$ and $\Ra$ and $\Ra'$ are both contained in a common residue of type $J\cup J^\perp$.
\end{enumerate}
\end{prop}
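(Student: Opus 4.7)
The ($\Leftarrow$) direction of (2) is essentially immediate from Lemma~\ref{Lemma2.2}: taking $J_1 = J$ and $J_2 = J^\perp$ (which are disjoint and commute by the definition of $J^\perp$), any residue of type $J \cup J^\perp$ decomposes as a direct product of its $J$-residue and its $J^\perp$-residue, with the projection onto either factor coinciding with the canonical product projection. In particular, any two $J$-residues contained in a common $(J \cup J^\perp)$-residue are parallel.

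For (1) and the ($\Rightarrow$) direction of (2), fix parallel residues $\Ra$ of type $J$ and $\Ra'$ of type $J'$. Pick $c_1 \in \Ra$, set $c_2 := \proj_{\Ra'}(c_1)$ and $w := \delta(c_1, c_2)$ of length $\ell$. By the gate property (Proposition~\ref{Gate}) together with parallelism, $\proj_{\Ra'} \colon \Ra \to \Ra'$ is a bijection and the Weyl word $\delta(c, \proj_{\Ra'}(c))$ equals $w$ for every $c \in \Ra$. I would proceed by induction on $\ell$. The case $\ell = 0$ is trivial: $\Ra$ and $\Ra'$ share a chamber and parallelism forces $\Ra = \Ra'$. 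For the induction step, I would first observe that, for any reduced expression $w = s_1 \cdots s_\ell$, one has $s_1 \notin J$ and $s_\ell \notin J'$ (otherwise one could shorten the gallery inside $\Ra$ or $\Ra'$, contradicting minimality of the projection).

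Next, for each $s \in J$, take the $s$-neighbor $c_1'$ of $c_1$ in $\Ra$ and compare $c_2$ with $c_2' := \proj_{\Ra'}(c_1')$. Using the Closing Squares Lemmas~\ref{Squares} and~\ref{Squares2} inductively along a concave gallery (Lemma~\ref{concave}) from $c_1'$ to $c_2'$ passing close to $c_2$, together with the combinatorial convexity of balls (Proposition~\ref{convexproj}), one can show that $\delta(c_2, c_2') = s$; this forces $s \in J'$ and simultaneously forces $s$ to commute with every letter $s_i$ of $w$, i.e.\@ $s_i \in s^\perp$. Symmetrically, each $t \in J'$ lies in $J$ and commutes with every letter of $w$. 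Hence $J = J'$ (establishing part (1)) and every $s_i$ lies in $J^\perp$. Consequently, by Lemma~\ref{Lemma2.2}, both $c_1$ and $c_2$ lie in the single residue $\Ra_{J \cup J^\perp, c_1}$, which then contains $\Ra$ and $\Ra'$ entirely, completing (2).

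The main obstacle I expect is the commutation step: rigorously extracting from $\delta(c_2, c_2') \in W_{J'}$ the stronger fact that the letters of $w$ actually lie pointwise in $s^\perp$. This is precisely where the right-angled hypothesis is indispensable; one needs to use the observation (cf.\@ Observation~\ref{obsrep} and Lemma~\ref{reduced}) that in a right-angled Coxeter group commuting generators may be freely rearranged within reduced expressions, so that the hypothetical failure of commutation produces a genuinely longer conjugate $w^{-1} s w$, contradicting the bijectivity of $\proj_{\Ra'}$.
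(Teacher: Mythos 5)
This proposition is cited in the paper from \cite[Proposition 2.8]{Caprace} and is not proved there, so there is no in-paper argument to compare against; I can only assess your sketch on its own merits. The overall strategy is sound and you correctly isolate the real content: showing that, for parallel residues, the Weyl distance $w = \delta(c_1,\proj_{\Ra'}(c_1))$ commutes with each $s\in J$, and that this forces $J=J'$ and every letter of $w$ to lie in $J^\perp$.

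However, two steps are left vaguer than they need to be, and one invokes the wrong tool. First, the equality $\delta(c_2,c_2')=s$ is most naturally a consequence of the gate property, not of the Closing Squares Lemmas: writing $c_1' \adj{s} c_1$, the gate property at $c_1 = \proj_\Ra(c_2)$ gives $\delta(c_1',c_2)=sw$ with $l(sw)=l(w)+1$ (you have established $s$ is not a first letter of $w$), while the gate property at $c_2'=\proj_{\Ra'}(c_1')$ together with the constancy $\delta(c_1',c_2')=w$ gives $\delta(c_1',c_2)=w\cdot\delta(c_2',c_2)$ with lengths adding, forcing $l(\delta(c_2',c_2))=1$ and $\delta(c_2',c_2)=w^{-1}sw$. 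The concave-gallery machinery of Lemmas~\ref{Squares}, \ref{Squares2}, \ref{concave} is not the efficient route here. Second, the passage from $w^{-1}sw$ being a single generator $t\in J'$ to $t=s$ and $sw=ws$ relies on the fact that in a right-angled Coxeter group no two \emph{distinct} generators are conjugate (because the Coxeter diagram has no odd bond labels); this should be stated, since it is exactly what fails for general buildings and what makes part (1) true. Finally, the deduction ``$sw=ws$ implies $s$ commutes with each $s_i$'' requires one more observation: since $l(sw)=l(w)+1$, $s$ cannot occur in $w$ at all (otherwise, tracking the leftmost occurrence of $s$ under commutation moves between the two reduced words $s s_1\dotsm s_\ell$ and $s_1\dotsm s_\ell s$ would force a cancellation), and only then does $sw=ws$ force $s$ to commute letter-by-letter. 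You also assert without proof the constancy $\delta(c,\proj_{\Ra'}(c))=w$ for all $c\in\Ra$; this is true for parallel residues but deserves a short argument via the gate property in both directions. None of these are fatal, but as written the proof would not compile without filling these holes.
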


\begin{prop}[{\cite[Corollary 2.9]{Caprace}}]
Let $\Delta$ be a right-angled building. Then parallelism of residues is an equivalence relation.
\end{prop}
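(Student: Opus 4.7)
The plan is to establish the three defining properties of an equivalence relation, where reflexivity and symmetry are immediate from the definition, and transitivity is the only real content.

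For reflexivity, any residue $\Ra$ satisfies $\proj_{\Ra}(\Ra) = \Ra$, so $\Ra$ is parallel to itself. Symmetry is built into the definition, since the two conditions $\proj_{\Ra_1}(\Ra_2) = \Ra_1$ and $\proj_{\Ra_2}(\Ra_1) = \Ra_2$ are symmetric in $\Ra_1$ and $\Ra_2$.

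The main step is transitivity. Suppose $\Ra_1 \parallel \Ra_2$ and $\Ra_2 \parallel \Ra_3$. By Proposition~\ref{Proposition2.8}(1), all three residues share a common type $J \subseteq S$. I will then invoke Proposition~\ref{Proposition2.8}(2) twice: there exists a residue $\T_{12}$ of type $J \cup J^\perp$ containing both $\Ra_1$ and $\Ra_2$, and there exists a residue $\T_{23}$ of type $J \cup J^\perp$ containing both $\Ra_2$ and $\Ra_3$. The key observation is that $\T_{12}$ and $\T_{23}$ both contain $\Ra_2$ (in particular they share a chamber), and since there is a unique residue of any given type through any given chamber, we must have $\T_{12} = \T_{23}$. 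Hence $\Ra_1$ and $\Ra_3$ are both $J$-residues contained in this single residue of type $J \cup J^\perp$, so by the other direction of Proposition~\ref{Proposition2.8}(2), $\Ra_1$ and $\Ra_3$ are parallel.

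There is no real obstacle here; the only thing to be careful about is to cite Proposition~\ref{Proposition2.8} in both directions (the ``only if'' direction to extract the common overresidue, and the ``if'' direction to conclude parallelism of $\Ra_1$ and $\Ra_3$), and to justify the uniqueness of $\T_{12} = \T_{23}$ as the $(J \cup J^\perp)$-residue through any chamber of $\Ra_2$.
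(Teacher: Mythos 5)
The paper itself gives no proof for this proposition; it is stated as a direct citation of \cite[Corollary 2.9]{Caprace}. Your argument is correct and is precisely the natural deduction from Proposition~\ref{Proposition2.8}: reflexivity and symmetry are immediate from the definition, and transitivity follows because parallel residues share a type $J$, parallelism of $J$-residues is characterized by common containment in a $(J \cup J^\perp)$-residue, and the $(J \cup J^\perp)$-residue through any chamber of $\Ra_2$ is unique, forcing $\T_{12} = \T_{23}$. You correctly flag the two places where Proposition~\ref{Proposition2.8} is used in opposite directions, and the appeal to uniqueness of the residue of a given type through a given chamber is a standard building-theoretic fact available here. Nothing is missing.
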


\subsection{Tree-walls and wings}

We want to describe the equivalence classes of parallelism of panels in right-angled buildings.
It turns out that these classes are the so called tree-walls, initially defined in \cite{Bourdon} and taken over in \cite{TW11}.
Each tree-wall separates the building into combinatorially convex components, which will be called wings as in \cite{Caprace}.
At the end of this section, we will define a distance between tree-walls of the same type, which will be useful for our inductive arguments later on.

\begin{quote}\em
    We will assume from now on (and until the end of the paper) that $\Delta$ is
    a semi-regular right-angled building with prescribed thickness of the panels given by a set of cardinal numbers $(q_s)_{s\in S}$
    (see Theorem~\ref{unicityhp}).
    \smallskip
\end{quote}

\begin{defn}
Let $s\in S$.
An \emph{$s$-tree-wall} in $\Delta$ is an equivalence class of parallel $s$-panels of $\Delta$.
\end{defn}
By some slight abuse of notation, we will write $\ch\T$ for the set of all chambers contained in some $s$-panel belonging to the $s$-tree-wall $\T$,
and we will refer to these chambers as the {\em chambers of $\T$}.

Using Proposition~\ref{Proposition2.8} we know exactly how to describe the tree-walls in a right-angled building.
\begin{coro}\label{4.39}
Let $\Delta$ be a right-angled building of type $(W, S)$ and let $s\in S$. Then two $s$-panels $\Pa_1$ and $\Pa_2$
belong to the same $s$-tree-wall if and only if they are both contained in a common residue of type $s\cup s^\perp$.
\end{coro}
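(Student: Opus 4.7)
The plan is that this corollary is a direct specialization of Proposition~\ref{Proposition2.8}(2) to the case of panels, i.e., to residues of rank one. I would simply unfold the definitions and apply that proposition with $J = \{s\}$.

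More concretely, recall that an $s$-panel is by definition a residue of type $\{s\}$, and that two $s$-panels belong to the same $s$\dash tree-wall precisely when they are parallel. With $J = \{s\}$, Definition~\ref{perp} gives $J^\perp = s^\perp$, so $J \cup J^\perp = \{s\} \cup s^\perp$. Now apply Proposition~\ref{Proposition2.8}(2) to the residue $\Ra := \Pa_1$ of type $\{s\}$: it tells us that a residue $\Ra'$ is parallel to $\Pa_1$ if and only if $\Ra'$ is of type $\{s\}$ and $\Pa_1$ and $\Ra'$ are both contained in a common residue of type $\{s\} \cup s^\perp$. Taking $\Ra' = \Pa_2$ (which is automatically of type $\{s\}$) yields the desired equivalence.

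I do not expect any obstacle here: the entire content of the statement is already encoded in Proposition~\ref{Proposition2.8}(2), and the only thing to verify is the bookkeeping $J^\perp = s^\perp$ and the fact that the condition ``$\Ra'$ is of type $J$'' is automatically satisfied in both directions of the equivalence (it is part of the hypothesis when $\Ra' = \Pa_2$, and it is implied by parallelism via Proposition~\ref{Proposition2.8}(1) in the other direction). The corollary is really just a convenient restatement of the general proposition in the panel case, phrased in the language of tree-walls that was just introduced.
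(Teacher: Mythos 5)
Your proposal is correct and is exactly the paper's approach: the paper simply states that the corollary follows from Proposition~\ref{Proposition2.8}, and you have supplied precisely the unfolding of that proposition with $J = \{s\}$, using the definition of $s$-tree-wall as a parallelism class of $s$-panels.
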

In other words, the $s$-tree-walls are the sets of $s$-panels contained in a residue of type $s\cup s^\perp$.
\begin{coro}\label{tree residue}
Let $\T$ be an $s$-tree-wall in $\Delta$, let $\Pa$ be an $s$-panel in $\T$, and let $\Ra$ be the residue of type $s\cup s^\perp$ containing $\Pa$.
Then $\ch\T=\ch\Ra$.
\end{coro}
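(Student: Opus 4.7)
The plan is to prove the two inclusions $\ch\T\subseteq\ch\Ra$ and $\ch\Ra\subseteq\ch\T$ by a direct double application of Corollary~\ref{4.39}, using only the uniqueness of the residue of given type containing a fixed chamber.

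For the inclusion $\ch\T\subseteq\ch\Ra$, I would pick any chamber $c\in\ch\T$. By the definition of $\ch\T$, there exists an $s$-panel $\Pa'\in\T$ with $c\in\ch{\Pa'}$. Since $\Pa$ and $\Pa'$ lie in the same $s$-tree-wall $\T$, Corollary~\ref{4.39} produces a residue of type $s\cup s^\perp$ containing both of them; by uniqueness of the residue of a given type through a given chamber (applied to any chamber of $\Pa$), this residue must be exactly $\Ra$. In particular $c\in\ch{\Pa'}\subseteq\ch\Ra$.

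For the reverse inclusion $\ch\Ra\subseteq\ch\T$, I would take any chamber $c\in\ch\Ra$ and let $\Pa'$ be the $s$-panel containing $c$ (which exists because $s\in s\cup s^\perp$, so $\Pa'\subseteq\Ra$). Now both $\Pa$ and $\Pa'$ are $s$-panels contained in the common residue $\Ra$ of type $s\cup s^\perp$, so Corollary~\ref{4.39} gives that $\Pa'$ and $\Pa$ lie in the same $s$-tree-wall, which is $\T$. Hence $c\in\ch{\Pa'}\subseteq\ch\T$.

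There is no real obstacle here: the corollary is essentially a reformulation of Corollary~\ref{4.39} phrased at the level of chamber sets rather than at the level of panels, and the only subtlety worth noting is the uniqueness of the residue of type $s\cup s^\perp$ through any fixed chamber, which is a standard fact about residues in buildings. Both inclusions close in a couple of lines.
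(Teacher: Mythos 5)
Your proof is correct and follows exactly the route the paper intends: the paper states this corollary without proof as an immediate consequence of Corollary~\ref{4.39}, and your two inclusions (using Corollary~\ref{4.39} in each direction together with uniqueness of the $s\cup s^\perp$-residue through a given chamber) spell out precisely that implication.
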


\begin{coro}[{\cite[Corollary 3]{TW11}}]
The following possibilities for tree-walls in $\Delta$ can occur:
\begin{enumerate}
\item An $s$-tree-wall is reduced to a panel if and only if $\langle s^\perp\rangle$ is trivial.
\item An $s$-tree-wall is finite but not reduced to a panel if and only if $\langle s^\perp\rangle$ is finite but non-trivial.
\item An $s$-tree-wall is infinite if and only if $\langle s^\perp\rangle$ is infinite.
\end{enumerate}
\end{coro}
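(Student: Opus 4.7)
The plan is to use Corollary~\ref{tree residue} to replace the tree-wall $\T$ by a concrete residue whose type we control, and then apply the product decomposition of Lemma~\ref{Lemma2.2} to isolate the factor depending on $s^\perp$.

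Concretely, I would fix a chamber $c$ in $\T$ and set $\Ra = \Ra_{\{s\} \cup s^\perp, c}$. By Corollary~\ref{tree residue}, $\ch\T = \ch\Ra$. Since $\{s\}$ and $s^\perp$ are disjoint and every element of $s^\perp$ commutes with $s$ (Definition~\ref{perp}), Lemma~\ref{Lemma2.2} yields
\[ \ch\Ra = \ch{\Ra_{\{s\},c}} \times \ch{\Ra_{s^\perp,c}}, \]
and moreover each slice $\ch{\Ra_{\{s\},c'}}$ with $c' \in \ch{\Ra_{s^\perp,c}}$ is an $s$-panel contained in $\T$. Hence the $s$-panels of $\T$ are naturally in bijection with the chambers of $\Ra_{s^\perp,c}$.

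With this dictionary, the three alternatives for $\T$ reduce to whether $\ch{\Ra_{s^\perp,c}}$ consists of exactly one chamber, finitely many chambers, or infinitely many chambers. For (1), $\T$ is reduced to a single $s$-panel iff $|\ch{\Ra_{s^\perp,c}}| = 1$; since generators of a Coxeter group are non-trivial and each $q_t \geq 2$, this happens iff $s^\perp = \emptyset$, iff $\langle s^\perp \rangle$ is trivial. For (2) and (3), I would observe that $\Ra_{s^\perp,c}$ is itself a thick right-angled building whose Coxeter system is $(\langle s^\perp \rangle, s^\perp)$, and then invoke the standard fact that such a building has finitely many chambers exactly when its Coxeter group is finite.

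The one step that is not a direct quotation from earlier material is this last fact. One direction is immediate, because the Weyl distance $\delta(c,\cdot)$ surjects onto $\langle s^\perp \rangle$, so an infinite Coxeter group forces infinitely many chambers. For the reverse direction, when $\langle s^\perp \rangle$ is finite one enumerates the (finitely many) reduced expressions of its elements and, using the bound $q_t \geq 2$, inductively bounds the number of chambers reachable from $c$ by a minimal gallery of each given Weyl-type, summing to a finite total. I expect this counting to be the only non-routine step; the rest of the argument is a clean translation through Corollary~\ref{tree residue} and Lemma~\ref{Lemma2.2}.
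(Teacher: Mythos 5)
The paper does not give a proof of this corollary; it is imported verbatim from~\cite[Corollary~3]{TW11}, so there is no ``paper proof'' to compare against, and I will just assess your argument on its own terms.

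Your reduction is the right one and works. Corollary~\ref{tree residue} identifies $\ch\T$ with the $(\{s\}\cup s^\perp)$-residue $\Ra$ through a chamber of $\T$, Lemma~\ref{Lemma2.2} factors this as $\ch{\Ra_{\{s\},c}}\times\ch{\Ra_{s^\perp,c}}$, and the slices $\ch{\Ra_{\{s\},c'}}$ for $c'\in\ch{\Ra_{s^\perp,c}}$ are precisely the $s$-panels of $\T$ (they are parallel, hence in a common tree-wall, by Corollary~\ref{4.39}). So the $s$-panels of $\T$ are in bijection with $\ch{\Ra_{s^\perp,c}}$, and part (1) and the ``infinite $\Rightarrow$ infinite'' direction of (3) come out exactly as you say, the latter because the Weyl distance $\delta(c,\cdot)$ maps $\ch{\Ra_{s^\perp,c}}$ onto $\langle s^\perp\rangle$.

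The one place I would push back is the ``finite $\Rightarrow$ finite'' direction of (2)/(3). You say that using ``the bound $q_t\geq 2$'' one inductively bounds the number of chambers in $\Ra_{s^\perp,c}$ to a finite total. That inequality points the wrong way: $q_t\geq 2$ is a lower bound (useful for part (1), to rule out $|\ch{\Ra_{s^\perp,c}}|=1$ when $s^\perp\neq\emptyset$), whereas here you need an upper bound. The number of chambers at Weyl distance $s_1\cdots s_n$ from $c$ is $\prod_i(q_{s_i}-1)$, which is finite only if each $q_{s_i}$ is finite. In the preamble preceding this corollary, the paper allows the $q_s$ to be arbitrary cardinals $\geq 2$, so some local finiteness hypothesis (at least on the parameters $q_t$ for $t\in s^\perp$) is implicitly being assumed in the statement, and your argument should invoke it explicitly rather than $q_t\geq 2$. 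Otherwise, for instance, $s^\perp=\{t\}$ with $q_t=\aleph_0$ gives $\langle s^\perp\rangle\cong\Z/2\Z$ finite non-trivial while the $s$-tree-wall has infinitely many panels, contradicting (2). With that correction, the proof is sound.
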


\begin{exa}
\begin{enumerate}
\item  Let $W=\langle s,t \mid s^2=t^2=1\rangle$. Let $\Delta$ be a right-angled building of type $W$. Then $\Delta$ is a tree.
The $s$-tree-walls are the $s$-panels and the $t$-tree-walls are the $t$-panels of $\Delta$.

\item Let $W=\langle s,t,q \mid s^2=t^2=q^2=(st)^2=1\rangle$ and let $\Delta$ be a right-angled building of type $W$.
\begin{itemize}
\item The $q$-tree-walls are the sets of $q$-panels of $\Delta$.
\item The $s$-tree-walls are the sets of $s$-panels  in a common residue of type $\{s, t\}$ in $\Delta$.
\item The $t$-tree-walls are the sets of $t$-panels in a common residue of type $\{s, t\}$ in $\Delta$.
\end{itemize}
\end{enumerate}
\end{exa}

By Corollary~\ref{tree residue}, we can define projections on tree-walls.
\begin{defn}
    Let $s\in S$, let $\T$ be an $s$-tree-wall of $\Delta$, and let $c\in \ch\Delta$.
    We define the {\em projection of $c$ on $\T$} as $\proj_\T(c) := \proj_\Ra(c)$, where $\Ra$ is the residue of type $s \cup s^\perp$ containing the $s$-panels of $\T$.
\end{defn}

\begin{lem}\label{notreduced}
    Let $s\in S$, let $\T$ be an $s$-tree-wall of $\Delta$, let $c\in \ch\Delta$ and let $c_2\in\ch\T$.
    Let $w_1$ and $w_2$ be reduced representations of $\delta(c,\proj_{\T}(c))$ and $\delta(\proj_{\T}(c),c_2)$, respectively.
    Then $w_1w_2$ is a reduced representation of $\delta(c,c_2)$.
\end{lem}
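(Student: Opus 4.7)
The plan is to reduce the statement to a direct application of the gate property (Proposition~\ref{Gate}) applied to the residue $\Ra$ of type $s \cup s^\perp$ whose chamber set coincides with $\ch\T$. By Corollary~\ref{tree residue}, we have $\ch\T = \ch\Ra$, so in particular $c_2 \in \ch\Ra$ and by definition $\proj_\T(c) = \proj_\Ra(c)$.

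I would then argue as follows. Let $\gamma_1$ be a minimal gallery from $c$ to $\proj_\T(c)$ of type $w_1$; such a gallery exists because $w_1$ is a reduced representation of $\delta(c,\proj_\T(c))$. Let $\gamma_2$ be a minimal gallery from $\proj_\T(c)$ to $c_2$ of type $w_2$, which exists for the same reason, and which, in view of the gate property applied to $\Ra$, can be chosen inside $\Ra$. The concatenation $\gamma := \gamma_1 \gamma_2$ is then a gallery from $c$ to $c_2$ of type $w_1 w_2$, and its length is $l(w_1) + l(w_2)$.

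The key step is to verify that $\gamma$ is minimal. By the gate property, there exists some minimal gallery from $c$ to $c_2$ passing through $\proj_\T(c)$, so
\[ \dw(c,c_2) = \dw(c,\proj_\T(c)) + \dw(\proj_\T(c),c_2) = l(w_1) + l(w_2). \]
Thus $\gamma$ has length equal to $\dw(c,c_2)$, hence is minimal. Since the type of any minimal gallery is a reduced representation of the Weyl distance between its endpoints, $w_1 w_2$ is a reduced representation of $\delta(c,c_2)$.

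There is no real obstacle here beyond unpacking the definitions correctly, in particular the identification $\proj_\T = \proj_\Ra$ via Corollary~\ref{tree residue}; once that is in place, everything follows from the gate property and the additivity of gallery distance along a concatenation of minimal galleries sharing a common endpoint that is a gate.
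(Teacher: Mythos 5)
Your proof is correct and takes essentially the same approach as the paper, which simply cites the gate property (Proposition~\ref{Gate}); you have just spelled out the additivity of gallery distance and the identification $\proj_\T = \proj_\Ra$ explicitly.
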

\begin{proof}
    This follows immediately from the gate property (Proposition~\ref{Gate}).
\end{proof}

Each $s$-tree-wall yields a partition of the building into $q_s$ combinatorially convex components, which are called wings. We define wings in a building and present some results that connect wings, projections and tree-walls.

\begin{defn}\label{wings}
Let $c\in \ch\Delta$ and $s\in S$. Then the set of chambers
\[ X_s(c)=\{x\in \ch\Delta \mid \proj_{\Pa_{s,c}}(x)=c\} \]
is called the \emph{$s$-wing} of $c$.
\end{defn}
Notice that if $\Pa$ is any $s$-panel, then the set of $s$-wings of each of the $q_s$ different chambers of $\Pa$ forms a partition of $\ch\Delta$ into $q_s$ subsets.

We note that we defined wings with respect to panels only since it is sufficient for our purposes.
However, this concept can be generalized to residues of any type; see \cite{Caprace}.

\begin{prop}[{\cite[Proposition 3.2]{Caprace}}]
In a right-angled building, wings are combinatorially convex.
\end{prop}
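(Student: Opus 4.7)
The plan is to reduce everything to a \emph{projection stability} lemma combined with the concave-gallery tool of Lemma~\ref{concave}. First I would show: if $v \adj{t} v'$ with $t \neq s$, then $\proj_\Pa(v) = \proj_\Pa(v')$, where $\Pa = \Pa_{s,c}$. If the projections differed, then Lemma~\ref{Lemma2.5} would force the $t$-panel $\Pa_{t,v}$ to be parallel to $\Pa$, and Proposition~\ref{Proposition2.8}(1) would then give $t = s$, a contradiction. The same Lemma~\ref{Lemma2.5} further implies that along any gallery $(v_0, \ldots, v_\ell)$ the value $p(i) := \proj_\Pa(v_i)$ can change only at $s$-steps whose $s$-panel is $\Pa$ itself or parallel to $\Pa$.

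For $x, y \in X_s(c)$, I would then invoke Lemma~\ref{concave} to obtain a concave minimal gallery $\gamma = (v_0, \ldots, v_\ell)$ from $x$ to $y$ with respect to $c$, with phases $1 \leq i \leq j$ (distance decreasing by $1$), $j < i \leq k$ (distance unchanged), $k < i \leq \ell$ (distance increasing by $1$). Call $v_i$ \emph{good} if $p(i) = c$ and \emph{bad} otherwise. A short case analysis, using the gate property and the bijective projection between parallel $s$-panels, shows that at a projection-changing step the distance increment $\dw(v_i,c) - \dw(v_{i-1},c)$ equals $+1$, $-1$, or $0$ according to whether the transition is good$\to$bad, bad$\to$good, or bad$\to$bad, while every other step preserves the projection and changes the distance by $-1$, $0$, or $+1$. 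Matching this with the phase structure, phase~1 forbids good$\to$bad, phase~3 forbids bad$\to$good, and phase~2 forbids both. Since $v_0 = x$ and $v_\ell = y$ are both good, and the only phase in which one could leave $c$ is phase~3 — from which one can never return — we conclude $p(i) = c$ for every $i$, so the entire concave gallery lies in $X_s(c)$.

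To finish, I would upgrade this from one concave gallery to every minimal gallery from $x$ to $y$. By Lemma~\ref{reduced words}, any two reduced representations of $\delta(x, y)$ are related by a sequence of elementary commutations, which translate into local gallery modifications of the form $v_{i-1} \adj{s_i} v_i \adj{s_{i+1}} v_{i+1} \leadsto v_{i-1} \adj{s_{i+1}} v_i' \adj{s_i} v_{i+1}$ with $s_i \neq s_{i+1}$ commuting. In particular at least one of $s_i, s_{i+1}$ differs from $s$, so the projection stability lemma applied to the corresponding edge at $v_i'$ forces $\proj_\Pa(v_i') = c$; hence the property ``every chamber of the gallery lies in $X_s(c)$'' is preserved under each modification, and iterating from the concave gallery covers every minimal gallery from $x$ to $y$. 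The main obstacle is the case analysis in the middle paragraph, where one has to simultaneously control the projection change, the distance to $c$, and the phase of the concave gallery; it is the way these three pieces of information lock together that drives the argument.
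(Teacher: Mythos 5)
The paper does not reproduce a proof of this proposition; it is imported verbatim from Caprace as Proposition~3.2. So there is no ``paper proof'' to compare against, and I evaluate your argument on its own. Your proof is correct, and it is a pleasant self-contained argument built from this paper's own tools (Lemma~\ref{Lemma2.5}, Proposition~\ref{Proposition2.8}, Lemma~\ref{concave}, Lemma~\ref{reduced words}) rather than from Caprace's machinery.

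The projection-stability lemma is a correct use of Lemma~\ref{Lemma2.5} together with Proposition~\ref{Proposition2.8}(1), and it indeed localizes projection changes to $s$-steps across $s$-panels parallel to $\Pa = \Pa_{s,c}$. The case analysis in the middle paragraph rests on two facts that do hold: on such a parallel $s$-panel $\Pa'$, the gate $\proj_{\Pa'}(c)$ is precisely the unique chamber whose projection back to $\Pa$ is $c$ (parallel panels are in inverse bijection under their projections), so the good chamber of $\Pa'$ is the one closest to $c$, while every bad chamber of $\Pa'$ is one step farther by the gate property. This gives the increments $+1$, $-1$, $0$ exactly as you state, and projection-preserving steps trivially preserve good/bad status. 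Your phase argument then correctly forces the whole concave gallery into $X_s(c)$: $v_0$ is good, phases 1 and 2 both forbid good$\to$bad (increments $-1$, $0$ are incompatible with $+1$), and in phase 3 bad$\to$good is forbidden while $v_\ell$ is good, so the gallery never goes bad. The upgrade to \emph{all} minimal galleries is also sound, since in a building a minimal gallery between two fixed chambers is uniquely determined by its type, two types differing by one commutation yield galleries differing in a single chamber $v_i'$, and $v_i'$ is $t$-adjacent for some $t \neq s$ to a good neighbour, hence good by projection stability. The only blemish is the informal phrase ``leave $c$'' where you mean ``leave $X_s(c)$'', but the logic is complete and correct.
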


The next lemma is proved in \cite[Lemma 3.1]{Caprace} for general residues.

\begin{lem}\label{le:wings}
Let $s\in S$ and $\T$ be an $s$-tree-wall.
Let $\Pa_1$ and $\Pa_2$ be $s$-panels of $\T$, and let $c_1 \in \Pa_1$ and $c_2\in \Pa_2$.
Then $X_s(c_1)=X_s(c_2)$ if and only if $c_2=\proj_{\Pa_2}(c_1)$, i.e., if and only if $c_1\in X_s(c_2)$.
\end{lem}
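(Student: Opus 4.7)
My proof plan is as follows. The last equivalence, $c_2 = \proj_{\Pa_2}(c_1) \iff c_1 \in X_s(c_2)$, is immediate from Definition~\ref{wings}. For one direction of the first equivalence, note that $c_1 \in X_s(c_1)$ holds trivially (since $\proj_{\Pa_1}(c_1) = c_1$), so the hypothesis $X_s(c_1) = X_s(c_2)$ directly forces $c_1 \in X_s(c_2)$.

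The content lies in the reverse implication: assuming $c_2 = \proj_{\Pa_2}(c_1)$, show $X_s(c_1) \subseteq X_s(c_2)$ (the other inclusion will follow by a symmetric argument, once we establish $c_1 = \proj_{\Pa_1}(c_2)$). I would exploit the structure of the residue $\Ra$ of type $s \cup s^\perp$ containing both $\Pa_1$ and $\Pa_2$, as provided by Corollary~\ref{4.39}. Applying Lemma~\ref{Lemma2.2} with $J_1 = \{s\}$ and $J_2 = s^\perp$, we obtain an identification $\ch\Ra \longleftrightarrow \ch{\Pa_1} \times \ch{\Ra_{s^\perp, c_1}}$ under which the canonical projections onto the two factors coincide with the restrictions of $\proj_{\Pa_1}$ and $\proj_{\Ra_{s^\perp, c_1}}$. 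Since $\Pa_2$ is an $s$-panel inside $\Ra$, under this identification it takes the form $\ch{\Pa_1} \times \{b_2\}$ for a unique $b_2 \in \ch{\Ra_{s^\perp, c_1}}$, and a short distance computation in the product structure (using the fact that minimal galleries split into independent contributions from the two commuting factors) gives $\proj_{\Pa_2}((a, b)) = (a, b_2)$ for every $(a, b) \in \ch\Ra$. The hypothesis $c_2 = \proj_{\Pa_2}(c_1)$ then translates to $c_2 = (c_1, b_2)$, from which $\proj_{\Pa_1}(c_2) = c_1$ is immediate, supplying the input needed for the symmetric step.

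To finish, take any $x \in X_s(c_1)$ and set $y = \proj_\Ra(x)$. Applying the gate property (Proposition~\ref{Gate}) twice, once to $\Ra$ and once to $\Pa_1 \subseteq \Ra$, and similarly for $\Pa_2 \subseteq \Ra$, yields $\proj_{\Pa_1}(x) = \proj_{\Pa_1}(y)$ and $\proj_{\Pa_2}(x) = \proj_{\Pa_2}(y)$. The first of these, combined with the product description above, forces $y = (c_1, b_y)$ for some $b_y$; the explicit formula for $\proj_{\Pa_2}$ then gives $\proj_{\Pa_2}(x) = (c_1, b_2) = c_2$, so that $x \in X_s(c_2)$. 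The main piece to carry out carefully is the distance computation giving the product formula for $\proj_{\Pa_2}$ on $\Ra$; this is where the commutation relation $[\{s\}, s^\perp] = 1$ enters essentially, but the argument is short, and the rest of the proof is essentially bookkeeping with the gate property.
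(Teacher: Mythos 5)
The paper itself offers no proof of this lemma; it simply cites \cite[Lemma 3.1]{Caprace}, which establishes the analogous statement for parallel residues of arbitrary type. Your argument is a correct, self-contained proof of the panel case, and it follows the natural route: by Corollary~\ref{4.39} both panels sit in a common residue $\Ra$ of type $\{s\}\cup s^\perp$, and Lemma~\ref{Lemma2.2} turns $\Ra$ into a product $\ch{\Pa_1}\times\ch{\Ra_{s^\perp,c_1}}$ in which projections act coordinatewise. The key computations you flag do go through: every $s$-panel of $\Ra$ is of the form $\ch{\Pa_1}\times\{b\}$ (because the Weyl distance in the product splits, and no nontrivial element of $W_{s^\perp}$ equals $s$), the projection formula $\proj_{\Pa_2}(a,b)=(a,b_2)$ follows from additivity of lengths across the two factors, and the identity $\proj_{\Pa_i}(x)=\proj_{\Pa_i}(\proj_\Ra(x))$ is the standard transitivity-of-projections consequence of the gate property. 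The reduction of the ``only if'' direction to $c_1\in X_s(c_2)$ via $c_1\in X_s(c_1)$ is also correct. One small remark: the symmetric step does not actually require redoing the decomposition with $\Pa_2$ as base --- the same product coordinates give $\proj_{\Pa_2}(y)=c_2$ forces the first coordinate of $y$ to be $c_1$, hence $\proj_{\Pa_1}(y)=c_1$ --- so both inclusions fall out of a single computation. In short, your proof is sound and is in the same spirit as Caprace's more general argument; it simply instantiates the parallelism-via-product-residue mechanism in the rank-one case.
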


In particular, projection between parallel $s$-panels induces an equivalence relation on $\ch\T$.
\begin{prop}\label{3panels}
    Let $\Pa_1, \Pa_2$ and $\Pa_3$ be $s$-panels in a common $s$-tree-wall. Let $c_1\in \Pa_1$, $c_2=\proj_{\Pa_2}(c_1)$ and $c_3=\proj_{\Pa_3}(c_1)$.
    Then $\proj_{\Pa_3}(c_2)=c_3$.
\end{prop}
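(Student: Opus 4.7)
The plan is to derive this transitivity-type statement for projections essentially as a three-step application of Lemma~\ref{le:wings}, exploiting the equivalence relation on $\ch\T$ induced by the wings.

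First I would translate the hypotheses into statements about wings. Since $c_2 = \proj_{\Pa_2}(c_1)$, Lemma~\ref{le:wings} applied to the pair of parallel $s$-panels $\Pa_1, \Pa_2$ (both lying in the same $s$-tree-wall by assumption) yields $X_s(c_1) = X_s(c_2)$. Similarly, applying Lemma~\ref{le:wings} to $\Pa_1, \Pa_3$, the equality $c_3 = \proj_{\Pa_3}(c_1)$ gives $X_s(c_1) = X_s(c_3)$. Combining these two equalities yields $X_s(c_2) = X_s(c_3)$.

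Next I would feed this back into Lemma~\ref{le:wings}, this time applied to the parallel $s$-panels $\Pa_2, \Pa_3$ (still in the same $s$-tree-wall) and to the chambers $c_2 \in \Pa_2$ and $c_3 \in \Pa_3$. The ``if'' direction of that lemma says that $X_s(c_2) = X_s(c_3)$ implies $c_3 = \proj_{\Pa_3}(c_2)$, which is exactly the conclusion.

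There is essentially no obstacle here beyond checking that $\Pa_1, \Pa_2, \Pa_3$ are pairwise parallel, which is immediate from the definition of an $s$-tree-wall as a parallelism class (and from the fact, recorded earlier in this subsection, that parallelism of residues is an equivalence relation). The argument is purely a matter of invoking the wing criterion three times and chaining equalities; no gallery manipulation or use of the Closing Squares Lemmas is required.
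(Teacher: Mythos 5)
Your proposal is correct and matches the paper's own proof exactly: both reduce the statement to three applications of Lemma~\ref{le:wings}, deriving $X_s(c_1)=X_s(c_2)$ and $X_s(c_1)=X_s(c_3)$, chaining to $X_s(c_2)=X_s(c_3)$, and then applying the converse direction to $\Pa_2,\Pa_3$.
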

\begin{proof}
    By Lemma~\ref{le:wings}, $c_2=\proj_{\Pa_2}(c_1)$ implies $X_s(c_1) = X_s(c_2)$,
    and $c_3=\proj_{\Pa_3}(c_1)$ implies $X_s(c_1) = X_s(c_3)$.
    Hence $X_s(c_2) = X_s(c_3)$, and therefore $\proj_{\Pa_3}(c_2) = c_3$.
\end{proof}

By Lemma~\ref{le:wings}, it makes sense to define a partition of $\ch\Delta$ into $s$-wings with respect to an $s$-tree-wall.
\begin{defn}\label{def:T-wings}
    Let $s\in S$ and $\T$ be an $s$-tree-wall.
    Let $\Pa$ be an arbitrary $s$-panel of $\T$.
    Then $\Pa$ induces a partition
    \[ \{ X_s(c) \mid c \in \Pa \} \]
    of $\ch\Delta$ into $q_s$ subsets, which we call the {\em partition of $\ch\Delta$ into $s$-wings with respect to $\T$}.
    By Lemma~\ref{le:wings}, this partition is independent of the choice of $\Pa$ in $\T$.
\end{defn}

We will now study the interaction between different $s$-tree-walls.
\begin{defn}
Let $s\in S$ and let $\T_1$ and $\T_2$ be two  $s$-tree-walls in $\Delta$.
We define the set
\[ \proj_{\T_1}(\T_2)=\{ \proj_{\T_1}(c) \mid c\in \T_2\}. \]
\end{defn}

\begin{prop}\label{4.53}
Let $s\in S$ and let $\T_1$ and $\T_2$ be distinct $s$-tree-walls. Let  $c_2\in \proj_{\T_2}(\T_1)$ and $c_1=\proj_{\T_1}(c_2)$.
Let $w$ be a reduced representation of $\delta(c_1,c_2)$. Then the following hold.
\begin{enumerate}
\item If $t\in S$ with $|ts| \leq 2$ then $l(tw)=l(wt)=l(w)+1$.
\item $c_2= \proj_{\T_2}(c_1)$. \label{4.53.2}
\item $\ch{\T_1}\subseteq X_s(c_2)$.
\item If $ c_2'\in \proj_{\T_2}(\T_1)$ then $\proj_{\Pa_{s, c_2}}(c_2')=c_2$.\label{4.53.4}
\end{enumerate}
\end{prop}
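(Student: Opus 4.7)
For (1), the plan is to apply the gate property (Proposition~\ref{Gate}) twice. Since $c_1 = \proj_{\T_1}(c_2)$ and $\T_1$ is the residue of type $s \cup s^\perp$ containing $c_1$, for any $t \in s \cup s^\perp$ one can pick $c_1^\sharp \in \T_1$ with $c_1^\sharp \adj{t} c_1$, and the gate property will yield $\delta(c_1^\sharp, c_2) = tw$ as a reduced product, so $l(tw) = l(w)+1$. For $l(wt) = l(w)+1$, I would pick $d_1 \in \T_1$ with $\proj_{\T_2}(d_1) = c_2$ (which exists by the hypothesis $c_2 \in \proj_{\T_2}(\T_1)$); the gate property for $\T_1$ gives $\delta(d_1,c_2) = \delta(d_1,c_1)\cdot w$ as a reduced product, while the gate property for $\T_2$ forces $l(\delta(d_1,c_2)\cdot t) > l(\delta(d_1,c_2))$, and combining these forces $l(wt) > l(w)$. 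Item (2) is then immediate: the reduced-word characterization of the projection onto a residue of type $s \cup s^\perp$ is precisely the condition on $w$ supplied by (1), together with $c_2 \in \T_2$.

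For (3), I would fix $c_1^* \in \T_1$ and set $v = \delta(c_1, c_1^*) \in \langle s \cup s^\perp\rangle$. By the gate property for $\T_1$, $\delta(c_1^*, c_2) = v^{-1} w$ is a reduced product, and the goal is to show $l(v^{-1} w \cdot s) = l(v^{-1} w)+1$. Using the unique normal form $v = s^\epsilon u$ with $\epsilon \in \{0,1\}$ and $u \in \langle s^\perp\rangle$ (available because $s$ commutes with all of $s^\perp$ and $s^2 = 1$), this amounts to showing that the concatenation $u^{-1} s^\epsilon w s$ is reduced. The central point will be a commutation-move analysis using Lemma~\ref{reduced words}: by (1), no reduced representation of $w$ starts or ends with a letter of $s \cup s^\perp$, so the first and last letters of any reduced representation of $w$ fail to commute with $s$. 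Consequently no $s$ appearing outside $w$ can be slid into $w$ from either side, and no $s$ appearing inside $w$ can be moved to an end of $w$ by commutations; therefore no two $s$'s in $u^{-1} s^\epsilon w s$ can be brought adjacent, and the word is reduced. This will give $\proj_{\Pa_{s,c_2}}(c_1^*) = c_2$.

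For (4), I would combine (2) and (3). Given $c_2' \in \proj_{\T_2}(\T_1)$, set $c_1' = \proj_{\T_1}(c_2')$; applying (2) with $c_2'$ in place of $c_2$ yields $c_2' = \proj_{\T_2}(c_1')$. Since $c_1' \in \T_1$, (3) gives $\proj_{\Pa_{s,c_2}}(c_1') = c_2$. The nested-projection identity $\proj_{\Pa_{s,c_2}} = \proj_{\Pa_{s,c_2}} \circ \proj_{\T_2}$, valid because $\Pa_{s,c_2} \subseteq \T_2$, then converts this into $\proj_{\Pa_{s,c_2}}(c_2') = \proj_{\Pa_{s,c_2}}(c_1') = c_2$. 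I expect the main obstacle to be the commutation-move analysis underlying (3); everything else amounts to straightforward bookkeeping with the gate property.
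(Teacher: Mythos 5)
Your proof is correct and follows the paper's route closely. Parts (1) and (2) match the paper's argument exactly: apply the gate property on $\T_1$ to get $l(tw)>l(w)$, apply it on $\T_1$ and then on $\T_2$ (via a chamber $d\in\T_1$ with $\proj_{\T_2}(d)=c_2$) to get $l(wt)>l(w)$, and then decompose $w=w_1w_2$ with $w_2$ lying inside $\T_2$ to force $w_2=1$. For part (3) you fill in a step the paper leaves terse: the paper simply writes ``By Statement 1, $l(w_1w)<l(w_1ws)$'' with $w_1\in\langle s\cup s^\perp\rangle$, whereas you supply the actual bridging argument — since no reduced representation of $w$ begins or ends with a letter of $\{s\}\cup s^\perp$, the first and last letters of $w$ in any representation fail to commute with $s$ and thus (via Observation~\ref{obsrep}\eqref{obsrep:3}) act as permanent barriers, so no $s$ in $w_1$ and no $s$ inside $w$ can ever occupy the last position, and $w_1ws$ is reduced. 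This is exactly the justification needed. For part (4) you finish differently from the paper, which applies (3) to both $c_2$ and $c_2'$ to get $\ch{\T_1}\subseteq X_s(c_2)\cap X_s(c_2')$ and then invokes the partition of $\ch\Delta$ into $s$-wings with respect to $\T_2$ together with Lemma~\ref{le:wings}; you instead use (2), (3), and the nested-projection identity $\proj_{\Pa_{s,c_2}}\circ\proj_{\T_2}=\proj_{\Pa_{s,c_2}}$ (a standard consequence of the gate property for $\Pa_{s,c_2}\subseteq\T_2$). Both routes are equally short, so this is a cosmetic variation rather than a different method.
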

\begin{proof}
\begin{enumerate}
\item Let $t\in S$ such that $|ts| \leq 2$. Let $c$ be a chamber $t$-adjacent to~$c_1$ (and hence $c\in \ch{\T_1}$).
Since $c_1=\proj_{\T_1}(c_2)$, we can apply the gate property (Proposition~\ref{Gate}) to find a minimal gallery from $c$ to $c_2$ passing through $c_1$, of type $tw$.
In particular, $l(w)<l(tw)$.

Since $c_2\in \proj_{\T_2}(\T_1)$, there is a chamber $d\in \T_1$ such that $\proj_{\T_2}(d)=c_2$.
Let $w_1$ be a reduced representation of $\delta (d, c_1)$. By the gate property again, $w_1w$ is a reduced representation of $\delta(d,c_2)$.
Let $e$ be a chamber $t$-adjacent to $c_2$ (and therefore in $\ch{\T_2}$).
The gate property w.r.t.\@ $\proj_{\T_2}(d)=c_2$ now implies that $l(w_1wt) > l(w_1w)$ and hence $l(wt) > l(w)$ as well.

\item By the gate property, $w$ can be written as $w_1w_2$ where $w_2$ is a reduced representation of the subgallery from $\proj_{\T_2}(c_1)$ to $c_2$ inside $\T_2$. Hence $|ts| \leq 2$ for all $t\in w_2$. By Statement 1 we get $l(w_2)=0$ and thus $c_2= \proj_{\T_2}(c_1)$.

\item Let $c\in \ch{\T_1}$ and let $w_1\in W$ be a reduced representation of $\delta(c, c_1)$.
By Lemma~\ref{notreduced}, $w_1w$ is a reduced representation of $\delta(c, c_2)$.
By Statement 1, $l(w_1w)<l(w_1ws)$, and it follows that $\proj_{\Pa_{s,c_2}}(c)=c_2$.

\item  By Statement 3, $\ch{\T_1} \subseteq X_s(c_2)$ and $\ch{\T_1} \subseteq X_s(c_2')$.
Since the $s$\dash wings with respect to $\T_2$ form a partition of $\ch\Delta$ (see Definition~\ref{def:T-wings}),
this implies $X_s(c_2)=X_s(c_2')$, and hence $\proj_{\Pa_{s, c_2}}(c_2')=c_2$ by Lemma~\ref{le:wings}.
\qedhere
\end{enumerate}
\end{proof}

We finish this section by defining a distance between tree-walls of the same type.

\begin{defn}\label{deftreewall}
Let $s\in S$. Let $ V_1$ be the set of all $s$-tree-walls of $\Delta$ and let $V_2$ be the set of all residues of type $S\setminus \{s\}$ of $\Delta$. Consider the bipartite graph $\Gamma_s$ with vertex set $V_1\sqcup V_2$, where an $s$-tree-wall $\T\in V_1$ is adjacent to a residue $\Ra\in V_2$ if and only if $\ch\T\cap\ch{\Ra}\neq\emptyset$.

The graph $\Gamma_s$ will be called the \emph{tree-wall tree of type} $s$.
\end{defn}

Notice that each $s$-tree-wall $\T$ in $\Gamma_s$ has precisely $q_s$ neighbors, corresponding to each of the residues of type $S\setminus \{s\}$
lying in a distinct part of the partition of $\ch\Delta$ induced by the $s$-wings with respect to $\T$ (see Definition~\ref{def:T-wings}).

Moreover, if there is a minimal path $\T_1 - \Ra_1 - \cdots - \Ra_2 - \T_2$ in $\Gamma_s$ and $c_1\in \ch{\T_1}\cap\ch{\Ra_1}$ and $c_2\in \ch{\T_2}\cap\ch{\Ra_2}$, then $c_2\in\proj_{\T_2}(\T_1)$ and $c_1\in \proj_{\T_1}(\T_2)$.
Therefore Proposition~\ref{4.53} then implies that $\ch{\T_2}\subseteq X_s(c_1)$ and $\ch{\T_1}\subseteq X_s(c_2)$.

\begin{prop}\label{tree-wall tree}
Let $s\in S$. The tree-wall tree $\Gamma_s$ is a tree.
\end{prop}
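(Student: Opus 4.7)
The plan is to prove that $\Gamma_s$ is both connected and acyclic.

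For connectedness, I would translate the chamber connectedness of $\Delta$ into a walk in $\Gamma_s$. Given any two vertices of $\Gamma_s$, pick chambers in the corresponding residues and take a gallery between them in $\Delta$. Every chamber $c$ lies in a unique $s$-tree-wall (its residue of type $\{s\}\cup s^\perp$, by Corollary~\ref{tree residue}) and a unique residue of type $S\setminus\{s\}$, and these two vertices of $\Gamma_s$ are adjacent. Moreover, for two chambers $d \adj{t} d'$, either $t \in \{s\} \cup s^\perp$ (they share an $s$-tree-wall) or $t \in S \setminus \{s\}$ (they share an $(S\setminus\{s\})$-residue), so successive chambers yield overlapping edges in $\Gamma_s$ and the gallery descends to a walk.

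The heart of the argument is acyclicity. First I would establish the auxiliary claim that every residue $\Ra$ of type $S \setminus \{s\}$ is contained in a single $s$-wing $X_s(c)$ of any $s$-panel $\Pa$; equivalently, all chambers of $\Ra$ project to the same chamber of $\Pa$. This follows from Lemma~\ref{Lemma2.5} combined with Proposition~\ref{Proposition2.8}(1): panels of different types are never parallel, so projections from any $t$-panel with $t \neq s$ onto $\Pa$ are constant, and concatenating along an $(S\setminus\{s\})$-gallery inside $\Ra$ propagates the constant projection.

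Now suppose for contradiction that $\Gamma_s$ contains a cycle, and take a simple cycle of minimal length $2k$ with $k \geq 2$,
\[ \T_0 - \Ra_0 - \T_1 - \Ra_1 - \cdots - \T_{k-1} - \Ra_{k-1} - \T_0 , \]
so that all $\T_i$ are pairwise distinct and all $\Ra_i$ are pairwise distinct. Fix $d_0 \in \ch{\T_0} \cap \ch{\Ra_0}$ and consider the $s$-wing $X_s(d_0)$. I would show by induction along the cycle that $\ch{\Ra_i}, \ch{\T_{i+1}} \subseteq X_s(d_0)$ for every $i = 0, 1, \ldots, k-2$, and also $\ch{\Ra_{k-1}} \subseteq X_s(d_0)$. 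Indeed, once a chamber of $\Ra_i$ is known to lie in $X_s(d_0)$, the auxiliary claim gives $\ch{\Ra_i} \subseteq X_s(d_0)$; then $\T_{i+1} \neq \T_0$ meets $\Ra_i$ and by Proposition~\ref{4.53}(3) lies entirely in a single $s$-wing of $\T_0$, which must be $X_s(d_0)$. Applied at $i = k-1$, the edge $\Ra_{k-1} - \T_0$ produces a chamber $d' \in \ch{\Ra_{k-1}} \cap \ch{\T_0} \subseteq X_s(d_0) \cap \ch{\T_0}$. Using the product decomposition $\ch{\T_0} = \ch{\Pa_{s,d_0}} \times \ch{\Ra_{s^\perp,d_0}}$ from Lemma~\ref{Lemma2.2}, the condition $\proj_{\Pa_{s,d_0}}(d') = d_0$ forces $d'$ and $d_0$ to share the same $s^\perp$-coordinate; they are therefore connected by an $s^\perp$-gallery and lie in the same $(S\setminus\{s\})$-residue, so $\Ra_{k-1} = \Ra_0$, contradicting the minimality of the cycle.

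The main obstacle I anticipate is isolating the correct invariant propagated along the cycle. The key insight is that a single $s$-wing $X_s(d_0)$ of $\T_0$ is preserved; once this is identified, both steps of the induction reduce cleanly to the auxiliary claim and Proposition~\ref{4.53}(3), and the final contradiction is extracted via Lemma~\ref{Lemma2.2}.
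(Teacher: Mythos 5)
Your proof is correct and is essentially the paper's argument made more explicit: you prove connectedness directly, establish the auxiliary claim that an $(S\setminus\{s\})$-residue lies in a single $s$-wing of any $s$-panel, and propagate the wing invariant around a minimal cycle---exactly the mechanism the paper invokes (tersely) to conclude that $\T_2$ would lie in two distinct $s$-wings of $\T_1$. One small wording slip at the end: after applying Lemma~\ref{Lemma2.2}, the condition $\proj_{\Pa_{s,d_0}}(d') = d_0$ forces $d'$ and $d_0$ to share the same $\Pa_{s,d_0}$-coordinate (not the same $s^\perp$-coordinate), so that they differ only in the $\Ra_{s^\perp, d_0}$-component and are therefore joined by an $s^\perp$-gallery; the conclusion you then draw is the correct one.
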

\begin{proof}
The graph $\Gamma_s$ is connected. Therefore it is enough to show that there are no cycles in $\Gamma_s$.
If there were a non-trivial cycle in $\Gamma_s$, say
\[ \T_1 - \Ra_1 -  \T_2 - \cdots - \Ra_n - \T_1 , \]
then the chambers of $\T_2$ would be contained in two distinct $s$-wings with respect to $\T_1$, namely the ones corresponding to $\Ra_1$ and $\Ra_n$, which is a contradiction.
We conclude that $\Gamma_s$ is a tree.
\end{proof}

\begin{defn}
Let $s\in S$ and let $\T_1$ and $\T_2$ be two $s$-tree-walls. We define the \emph{$s$-tree-wall distance}, denoted by $\dtw$, as
\[ \dtw(\T_1, \T_2) =  \frac{1}{2}\, {\sf{dist}}_{\Gamma_s}(\T_1, \T_2), \]
where ${\sf{dist}}_{\Gamma_s}(\T_1, \T_2)$ denotes the discrete distance in the tree-wall tree of type $s$.
\end{defn}

\subsection{Colorings in right-angled buildings}\label{ss:colorings}

In this section we will define different types of colorings of the chambers of semi-regular right-angled buildings. These colorings will be necessary to define the universal group later in Section~\ref{se:univ}.
We will use the following notation throughout the section.

Let $(W,S)$ be a right-angled Coxeter system with Coxeter diagram $\Sigma$ with vertex set $I$ and set of generators $S=\{s_i\}_{i\in I}$.
 Let $(q_s)_{s\in S}$ be a set of cardinal numbers with $q_s\geq 2$ for all $s\in S$.
Let $\Delta$  be the unique right-angled building of type $(W,S)$ with parameters $(q_s)_{s\in S}$ (see Theorem~\ref{unicityhp}).
For each $s\in S$, let $Y_s$ be a set of cardinality $q_s$;
we will refer to $Y_s$ as the set of {\em $s$-colors}.

\begin{defn}\label{coloring}
Let $s\in S$. A map $h_s \colon \ch\Delta\to Y_s$ is called an \emph{$s$-coloring} of $\Delta$ if
\begin{itemize}
    \item[(C)]
        for every $s$-panel $\Pa$ there is a bijection between the colors in $Y_s$ and the chambers in $\Pa$.
\end{itemize}
\end{defn}

\subsubsection{Legal colorings}

\begin{defn}\label{colorings}
Let $s\in S$. An $s$-coloring $h_s \colon \ch\Delta \to Y_s$ is called a \emph{legal $s$-coloring} if it satisfies
\begin{itemize}
    \item[(L)]
        for every $S\setminus \{s\}$-residue $\Ra$ and for all $c_1, c_2\in \ch\Ra$, $h_s(c_1)=h_s(c_2)$.
\end{itemize}
\end{defn}

In particular, if $\Pa_t$ is a $t$-panel then for every $s\in S\setminus \{t\}$ one can consider the $s$-color of the panel $\Pa_t$, denoted by $h_s(\Pa_t)$, since all the chambers in $\Pa_t$ have the same $s$-color.
Similarly, if $\T$ is a $t$-tree-wall with $|st|=\infty$ in $\Sigma$, then by Corollary~\ref{4.39},  $h_s(c_1)=h_s(c_2)$ for all $c_1, c_2 \in \ch\T$, and hence the color $h_s(\T)$ is well-defined.

\begin{prop}\label{elements}
Let $c_0 \in \ch\Delta$ and $(h_s^1)_{s\in S}$ and let $(h_s^2)_{s\in S}$ be two sets of legal colorings of $\Delta$. Then there exists $c \in \ch\Delta$ such that $h_s^1(c)=h_s^2(c_0)$ for all $s\in S$.
\end{prop}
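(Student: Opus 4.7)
The plan is to construct $c$ as the endpoint of a short gallery starting at $c_0$, adjusting one color at a time. The key observation is that if $c' \adj{s} c''$, then for every $t \in S \setminus \{s\}$ the chambers $c'$ and $c''$ both lie in the same residue of type $S \setminus \{t\}$ (since $s \in S \setminus \{t\}$), and therefore by condition (L) of a legal coloring we have $h_t^1(c') = h_t^1(c'')$ and likewise for any other legal coloring. In other words, stepping across an $s$-panel alters only the $s$-color of the chamber, while leaving every other color unchanged.

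Writing $S = \{t_1, \dots, t_n\}$ (the set $S$ being finite for a building), I would define a sequence of chambers $c_0 = d_0, d_1, \dots, d_n$ inductively as follows. Given $d_{i-1}$, let $d_i$ be the unique chamber in the $t_i$-panel containing $d_{i-1}$ which satisfies
\[
    h_{t_i}^1(d_i) = h_{t_i}^2(c_0).
\]
Such a $d_i$ exists and is unique by condition (C), which says that the $q_{t_i}$ chambers of the panel are in bijection with $Y_{t_i}$ under $h_{t_i}^1$.

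A straightforward induction on $i$ then shows that $h_{t_j}^1(d_i) = h_{t_j}^2(c_0)$ for every $j \leq i$: the case $j = i$ is true by construction, and for $j < i$ the inductive hypothesis gives $h_{t_j}^1(d_{i-1}) = h_{t_j}^2(c_0)$, while the opening observation applied to the pair $d_{i-1} \adj{t_i} d_i$ (with $t_i \neq t_j$) yields $h_{t_j}^1(d_i) = h_{t_j}^1(d_{i-1})$. Taking $c := d_n$ finishes the proof.

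I do not expect a serious obstacle here; the entire argument hinges on the single remark that adjacency across an $s$-panel is invisible to every other coordinate of a legal coloring, which is an immediate consequence of (L). The only mild subtlety is verifying that the adjustments carried out in later steps do not undo those done earlier, and this is exactly what the induction above takes care of.
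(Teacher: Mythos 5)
Your proof is correct and takes essentially the same approach as the paper: the paper also walks across one $s$-panel per generator, using condition (C) to pick the chamber with the right $s$-color and condition (L) to note that this step leaves all other colors unchanged, then repeats for each $s \in S$. You merely spell out the induction the paper leaves implicit.
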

\begin{proof}
We will prove the result recursively.
For each $s\in S$, let $\Pa_{s,c_0}$ be the $s$-panel that contains $c_0$. By the definition of a legal coloring, we know that there exists $c_1 \in \Pa_{s,c_0}$ such that  $h_{s}^1(c_1)= h_{s}^2(c_0)$.
Moreover, $h_{t}^1(c_1)=h_{t}^1(c_0)$ for all $t\neq s$.
Repeating this procedure for each $s\in S$ we find a chamber $c$ such that $h_s^1(c)=h_s^2(c_0)$ for all $s\in S$.
\end{proof}

\begin{prop}\label{existenceconjugate}
Let  $(h_s^1)_{s\in S}$ and $(h_s^2)_{s\in S}$ be two sets of legal colorings of $\Delta$. Let $c_0, c_0'\in \ch\Delta$ such that $h_s^1(c_0')=h_s^2(c_0)$ for all $s\in S$. Then there exists $g\in \Aut\Delta$ such that $g.c_0=c_0'$ and $h_s^2=h_s^1\circ g$, for all $s\in S$.
\end{prop}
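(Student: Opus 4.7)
The plan is to construct $g$ recursively along minimal galleries emanating from $c_0$, and then verify that the resulting map is an automorphism with the desired properties. First, set $g.c_0 := c_0'$. For any other chamber $d$, pick a minimal gallery $c_0 = d_0, d_1, \dots, d_n = d$ of type $(s_1, \dots, s_n)$ and recursively define $g.d_i$ as the unique chamber of $\Pa_{s_i, g.d_{i-1}}$ whose $h^1_{s_i}$-color equals $h^2_{s_i}(d_i)$; such a chamber exists and is unique by property~(C) of Definition~\ref{coloring}. A one-line induction using property~(L) (which forces $h^1_t$ to be constant on the $s_i$-panel $\Pa_{s_i, g.d_{i-1}}$ whenever $t \neq s_i$) together with the hypothesis $h^1_s(c_0') = h^2_s(c_0)$ shows that $h^1_s(g.d_i) = h^2_s(d_i)$ for every $s \in S$ and every $i$.

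The main step is to verify that $g.d$ does not depend on the chosen minimal gallery. By Lemma~\ref{reduced words}, any two reduced representations of $\delta(c_0, d)$, and hence any two minimal galleries from $c_0$ to $d$, are related by a sequence of type-(2) elementary operations, so it is enough to handle a single swap of two commuting adjacent generators $s_i, s_{i+1}$ at positions $i, i+1$. In that case, the two galleries agree outside position $i$, where one uses $d_i$ and the other some $d_i'$. By Lemma~\ref{Lemma2.2} applied to $\{s_i\} \cup \{s_{i+1}\}$, the rank-$2$ residue through $g.d_{i-1}$ is the direct product of its two panels, so each of its chambers is determined by the pair of $h^1$-coordinates $\bigl(h^1_{s_i}, h^1_{s_{i+1}}\bigr)$. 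A direct computation in these coordinates, combined with property~(L) applied to $h^2$ (for instance $d_i \adj{s_{i+1}} d_{i+1}$ gives $h^2_{s_i}(d_i) = h^2_{s_i}(d_{i+1})$, and symmetrically for $d_i'$), shows that both constructions produce the same chamber at position $i+1$ (and hence at every later position).

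A symmetric construction swapping $(h^1_s, c_0) \leftrightarrow (h^2_s, c_0')$ yields a map $g'$ with $h^2_s(g'.e) = h^1_s(e)$ for all $s, e$. Given a minimal gallery $c_0 \to d$, its image under $g$ is a gallery $c_0' \to g.d$ of the same (reduced) type, hence itself minimal; applying the recursive definition of $g'$ along this image gives $g'(g.d_i) = d_i$ inductively, and symmetrically $g(g'.e) = e$. So $g$ is a bijection of $\ch\Delta$. That $g$ sends an $s$-adjacent pair $(c, c')$ to an $s$-adjacent pair is a short check via the Gate Property: either one extends a minimal gallery $c_0 \to c$ by the step $c \adj{s} c'$, or else $c$ and $c'$ are both $s$-adjacent to the common projection of $c_0$ onto their $s$-panel, in which case the construction of $g$ places $g.c$ and $g.c'$ in a single $s$-panel. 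Hence $g$ is a type-preserving automorphism, and by construction $g.c_0 = c_0'$ and $h^1_s \circ g = h^2_s$ for every $s \in S$.

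The only real obstacle is the gallery-independence established in the second paragraph; once the product structure of rank-$2$ commuting residues (Lemma~\ref{Lemma2.2}) is in hand, the coordinate-level verification is essentially forced by property~(L).
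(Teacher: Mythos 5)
Your proof is correct, and it takes a genuinely different route from the paper's. The paper proceeds sphere by sphere: it builds a sequence of automorphisms $g_n$, each of which intertwines the two colorings on $\B{c_0}{n}$, obtaining $g_{n+1}$ from $g_n$ by extending panel permutations via Proposition~\ref{extending} (Caprace's extension result), and it invokes Lemma~\ref{lemmaA} and the Closing Squares Lemma~\ref{Squares} to check that the local corrections on distinct panels in $\Sp{c_0}{n+1}$ have disjoint supports, finally passing to the limit in the permutation topology. You instead construct $g$ explicitly and globally: $g.d$ is defined by tracing a minimal gallery from $c_0$ to $d$ and at each step choosing the unique chamber with the required color. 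Your well-definedness check is the crux, and you handle it by reducing to a single type-(2) elementary transposition and using Lemma~\ref{Lemma2.2} (the product structure of a $\{s,t\}$-residue with $st = ts$) together with property (L) to see that the chamber at position $i+1$ is forced by its $(h^1_s, h^1_t)$-coordinates. Your approach is more elementary in the sense that it avoids Proposition~\ref{extending} and the closing-square lemmas entirely, replacing them with the word-problem solution, Lemma~\ref{Lemma2.2}, and the Gate Property; it also yields an explicit pointwise formula for $g$ rather than a limit. The paper's approach, while heavier, fits the local-to-global patching pattern that reappears in Propositions~\ref{extuni} and~\ref{B1}, so it is perhaps more thematically aligned with the rest of the argument. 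Both proofs are complete and correct.
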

\begin{proof}
Consider the set
\[ G_n = \{ g \in \Aut\Delta \mid c_0'=g.c_0 \text{ and } h_s^1\circ g|_{\B {c_0}n}=h_s^2|_{\B {c_0} n} \text{ for all } s\in S\}. \]
We will recursively construct a sequence of elements $g_i$ ($i \in \N$) such that for every $i \in \N$, we have $g_i \in G_i$, and if $j \in \N$ is larger than $i$, then $g_i$ and $g_j$ agree on the ball $\B{c_0}{i}$.

Since $\Aut\Delta$ is chamber-transitive, $G_0$ is non-empty and we can pick a $g_0 \in G_0$ at random.

Let us assume that we already have constructed automorphisms $g_i$ for every $i\leq n$ with the right properties.
In particular $g_n.c_0=c_0'$ and $h_s^1\circ g_n (c)=h_s^2(c)$ for all $c\in \B{c_0}n$.
Therefore without loss of generality we can assume that $c_0'=c_0$ and that
\begin{equation}\label{eq:h1=h2}
    h_s^1(c) = h_s^2(c) \text{ for all } s \in S \text{ and for all } c\in \B{c_0}n . \tag{$*$}
\end{equation}
We will construct an element $g_{n+1}$ of $G_{n+1}$ by modifying $g_n$ (which now acts trivially on $\B{c_0}n$) step by step along $\Sp{c_0}{n+1}$.

Let $v\in \Sp{c_0}n$ and fix some $s\in S$.
Let $\alpha_s$ be the permutation of the chambers of $\Pa_{s,v}$ such that $h_s^2(c)=h_s^1(\alpha_s(c))$ for all $c\in \Pa_{s,v}$.
By Proposition~\ref{extending}, $\alpha_s$ extends to an automorphism $\widetilde{\alpha_s}$ such that $\widetilde{\alpha_s}$ stabilizes $\Pa_{s,v}$ and fixes all the chambers of $\Delta$ whose projection on $\Pa_{s,v}$ is fixed by $\alpha_s$.

We claim that $\widetilde{\alpha}_s$ fixes $\B{c_0}{n+1}\setminus {\ch{\Pa_{s,v}}}$.
 Let $c\in \B{c_0}{n+1}\setminus{\ch{\Pa_{s,v}}}$, and let $d = \proj_{\Pa_{s,v}}(c)$.
If $d \in \B{c_0}{n}$, then $d$ is fixed by $\alpha_s$, and this already implies that $c$ is fixed by $\widetilde{\alpha_s}$.

Suppose now that $d \in \Sp{c_0}{n+1}$.
By Lemma~\ref{lemmaA} there exists $v'\in \Sp{c_0}n$ such that $d$ is $t$-adjacent to $v'$ with $t \neq s$ (and $ts=st$ in $W$).
The definition of a legal coloring together with~\eqref{eq:h1=h2} now implies
\[ h_s^1(d) = h_s^1(v') = h_s^2(v') = h_s^2(d) , \]
so $\alpha_s$ must fix $d$.
Hence the automorphism $\widetilde{\alpha_s}$ fixes $c$ also in this case.

We have thus constructed, for each $s\in S$, an automorphism $\widetilde{\alpha_s} \in \Aut\Delta$, with the property that all elements of $\B{c_0}{n+1}$ that
are moved by $\widetilde{\alpha_s}$ are contained in $\Sp{c_0}{n+1} \cap \Pa_{s,v}$.
We now vary $s$, and we consider the element
\[ \alpha_v=\prod_{s\in S} \widetilde{\alpha_s} \in \Aut\Delta \]
where the product is taken in an arbitrary order.
Even though the element $\alpha_v$ might depend on the chosen order, its action on $\B{c_0}{n+1}$ does not, since the sets of chambers of $\B{c_0}{n+1}$
moved by the elements $\widetilde{\alpha_s}$ for distinct $s$, are disjoint.

Now we have an automorphism $\alpha_v$, for each chamber $v\in \Sp{c_0}n$, fixing \mbox{$\B{c_0}{n+1}\setminus \Sp v1$}.
Next, we want to vary $v$ along $\Sp{c_0}{n}$.
We claim that if $v_1, v_2 \in \Sp{c_0}n$ then $\alpha_{v_1}$ and $\alpha_{v_2}$ restricted to $\B{c_0}{n+1}$ have disjoint support.
The only case that remains to be checked is when $\Sp{v_1}1$ and  $\Sp{v_2}1$ have a chamber $c \in \Sp{c_0}{n+1}$ in common;
we want to show that both $\alpha_{v_1}$ and $\alpha_{v_2}$ fix $c$.

By Lemma~\ref{Squares}, there are $s \neq t$ in $W$ (with $st = ts$) such that $c\adj{s} v_1$ and $c\adj{t} v_2$.
By the definition of a legal coloring together with~\eqref{eq:h1=h2}, we have $h^1_t(c)=h^1_t(v_1)=h^2_t(v_1)=h^2_t(c)$.
Therefore $\alpha_{v_2}(c)=\widetilde{\alpha_{v_2, t}}(c)$ must fix $c$.
Similarly $h^1_s(c)=h^2_s(c)$, and so $\alpha_{v_1}$ fixes $c$.
This proves our claim, and hence $\alpha_{v_1}$ and $\alpha_{v_2}$ restricted to $\B{c_0}{n+1}$ have disjoint support for any two chambers $v_1$ and $v_2$ in $\Sp{c_0}n$.

We can now consider the product
\[ g_{n+1} = \prod_{v\in \Sp{c_0}n}\!\!\alpha_v= \prod_{v\in \Sp{c_0}n} \; \prod_{s \in S} \ \widetilde{\alpha_{v,s}} \in \Aut\Delta , \]
where the product is again taken in an arbitrary order.
By the previous paragraph, the action of $g_{n+1}$ on $\B{c_0}{n+1}$ is independent of the chosen order,
and the sets of chambers of $\B{c_0}{n+1}$ moved by distinct elements $\widetilde{\alpha_{v,s}}$ are disjoint.
Since every $\widetilde{\alpha_{v,s}}$ has the property that $h_s^2(c) = h_s^1(\widetilde{\alpha_{v,s}}(c))$ for all $c\in \Pa_{s,v}$,
we conclude that $h_s^2(c) = h_s^1(g_{n+1}(c))$ for all $c \in \Sp{c_0}{n+1}$, and therefore $g_{n+1} \in G_{n+1}$.

So we have extended the $g \in G_n$ to an element $g_{n+1}$ in $G_{n+1}$ agreeing with $g_n$ on the ball $\B{c_0}{n}$.
The sequence $g_0$, $g_1$, $\dots$ obtained by repeating this procedure hence converges to an element $g \in \Aut\Delta$ (with respect to the permutation topology).
From the construction and the definition of the sets $G_i$, the automorphism $g$ has the desired properties.
\end{proof}

\subsubsection{Weak legal colorings}

We define a weaker version of the legal colorings of Definition~\ref{colorings}. The goal will be to prove later that these two types of colorings play a similar role in the definition of the universal group.

\begin{defn}\label{weakcolor}
Let $s\in S$. An $s$-coloring $h_s$ is called a \emph{weak legal $s$-coloring} if the following holds:
\begin{itemize}
    \item[(W)]
        if $\Pa_1$ and $\Pa_2$ are two $s$-panels in a common $s$-tree-wall then for all $c\in \Pa_1$, we have $h_s(c)=h_s(\proj_{\Pa_2}(c))$.
\end{itemize}
\end{defn}

A legal coloring is in particular a weak legal coloring.
Conversely, the restriction of a weak legal coloring to a tree-wall is a legal coloring.

\begin{lem}\label{delta-finite}
Let $s\in S$. A weak legal $s$-coloring restricted to an $s$-tree-wall $\T$ is a legal coloring of $\ch\T$.
\end{lem}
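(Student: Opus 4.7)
The plan is to verify the two defining conditions of a legal coloring for the restricted map $h_s|_{\ch{\T}}$, viewing $\T$ as a right-angled sub-building in its own right. By Corollary~\ref{tree residue}, $\ch{\T}=\ch{\Ra}$ for a unique residue $\Ra$ of type $s\cup s^\perp$, so $\T$ may naturally be regarded as a right-angled building with generating set $s\cup s^\perp$. Under this identification, its $s$-panels are exactly the $s$-panels of $\Delta$ contained in $\T$, and its $(s\cup s^\perp)\setminus\{s\}$-residues are exactly the $s^\perp$-residues of $\Delta$ contained in $\T$. Condition (C) of Definition~\ref{coloring} is then inherited for free: $h_s$ is already an $s$-coloring of $\Delta$, so it restricts to a bijection between each $s$-panel of $\T$ and $Y_s$.

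The real work is in verifying condition (L): if $\Ra'$ is an $s^\perp$-residue with $\ch{\Ra'}\subseteq\ch{\T}$ and $c_1,c_2\in\ch{\Ra'}$, then $h_s(c_1)=h_s(c_2)$. The first observation is that $\Pa_{s,c_1}$ and $\Pa_{s,c_2}$ both lie in the common residue $\Ra$ of type $s\cup s^\perp$, so by Corollary~\ref{4.39} they lie in a common $s$-tree-wall (namely $\T$ itself). Consequently the defining property (W) from Definition~\ref{weakcolor} applies and gives
\[ h_s(c_1)=h_s(\proj_{\Pa_{s,c_2}}(c_1)) . \]
The entire problem therefore reduces to showing $\proj_{\Pa_{s,c_2}}(c_1)=c_2$.

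The main step is this combinatorial identification of the projection. The approach is to fix a reduced representation $w$ of $\delta(c_1,c_2)$; since $c_1$ and $c_2$ lie in a common $s^\perp$-residue, every letter of $w$ belongs to $s^\perp$, and in particular commutes with $s$ and is distinct from $s$. No $\Sigma$-elementary operation can then shorten $ws$, so $ws$ is $\Sigma$-reduced of length $l(w)+1$. Hence, for every chamber $c\in\ch{\Pa_{s,c_2}}\setminus\{c_2\}$ we have $\dw(c_1,c)=l(ws)>l(w)=\dw(c_1,c_2)$, and the gate property (Proposition~\ref{Gate}) forces $\proj_{\Pa_{s,c_2}}(c_1)=c_2$. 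Combined with (W), this yields $h_s(c_1)=h_s(c_2)$, as required. The only genuine subtlety is the combinatorial claim that $ws$ is $\Sigma$-reduced; everything else is a direct bookkeeping with (W), Corollary~\ref{4.39}, and Corollary~\ref{tree residue}.
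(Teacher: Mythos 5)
Your proof is correct and takes essentially the same approach as the paper: reduce condition (L) to a projection computation within the $s^\perp$-residue, then invoke property~(W). The only cosmetic difference is that the paper treats a single $t$-adjacent step (so the reduced word is just one letter in $s^\perp$) and implicitly chains, whereas you verify the projection identity $\proj_{\Pa_{s,c_2}}(c_1)=c_2$ directly for an arbitrary pair via the reduced-word argument that $ws$ stays reduced when all letters of $w$ lie in $s^\perp$ — both routes are sound, and yours just avoids the chaining.
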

\begin{proof}
If $c_1,c_2\in \ch\T$ are two $t$-adjacent chambers with $t\in S\setminus \{s\}$ then $c_1$ and $c_2$ lie in distinct parallel $s$-panels of $\T$ and $\proj_{\Pa_{s,c_1}}(c_2)=c_1$. Hence $h_s(c_1)=h_s(c_2)$.
\end{proof}

\begin{defn}\label{Gequivalent}
Let $s\in S$ and let $G\leq\Sym{Y_s}$. Two $s$-colorings $h_s^1$ and $h_s^2$ are said to be \emph{$G$-equivalent} if for every $s$-panel $\Pa$ there is $g\in G$ such that $h_s^1|_{\Pa}=g\circ h_s^2|_{\Pa}$.
\end{defn}

\begin{prop}\label{equivcolor}
Let $s\in S$ and $G\leq\Sym{Y_s}$ be a transitive permutation group.
Then every weak legal $s$-coloring is $G$-equivalent to some legal $s$-coloring.
\end{prop}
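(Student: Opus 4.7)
The plan is to produce $h_s'$ by pre-composing $h_s$, uniformly on each $s$-tree-wall, with a carefully chosen element of $G$, exploiting the fact that the tree-wall tree $\Gamma_s$ is an actual tree (Proposition~\ref{tree-wall tree}). Note first that since $h_s$ restricted to any $s$-tree-wall $\T$ is already a legal coloring of $\ch\T$ by Lemma~\ref{delta-finite}, it is constant on each $s^\perp$-residue contained in $\T$; in particular the transformation $h_s|_\T \mapsto g_\T \circ h_s|_\T$ with a single $g_\T \in G$ preserves this internal legality and keeps the result weak legal. The genuine task is to ensure that chambers in the same $(S\setminus\{s\})$-residue but in different tree-walls receive the same $h_s'$-color.

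Concretely, I would fix a base chamber $c_0$, let $\T_0$ be its $s$-tree-wall, set $g_{\T_0} = \id$, root $\Gamma_s$ at $\T_0$, and define the remaining elements $g_\T \in G$ inductively on $\dtw(\T_0, \T)$. For a tree-wall $\T$ at distance $k \geq 1$, the unique path from $\T_0$ ends $\cdots - \T_P - \Ra - \T$, where $\T_P$ is the parent tree-wall and $\Ra$ the parent $(S\setminus\{s\})$-residue. Each of the intersections $\ch\T \cap \ch\Ra$ and $\ch{\T_P} \cap \ch\Ra$ is a single $s^\perp$-residue (being the intersection of residues of types $s \cup s^\perp$ and $S \setminus \{s\}$), so by Lemma~\ref{delta-finite} the coloring $h_s$ takes constant values $y_\T$ and $y_{\T_P}$ on them, respectively. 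Using the transitivity of $G$ on $Y_s$, I pick any $g_\T \in G$ satisfying $g_\T(y_\T) = g_{\T_P}(y_{\T_P})$, and finally set $h_s'(c) := g_{\T(c)}(h_s(c))$ for every chamber $c$, where $\T(c)$ denotes the tree-wall of the $s$-panel through $c$. By construction $h_s'$ is $G$-equivalent to $h_s$, and condition (C) is automatic because each $g_\T$ is a permutation of $Y_s$.

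The main verification is property (L). Given an $(S\setminus\{s\})$-residue $\Ra^*$, in the rooted tree $\Gamma_s$ it has a unique parent tree-wall $\T_P^*$, and every other tree-wall meeting $\Ra^*$ is a child of $\Ra^*$. Set $y^* := h_s(v)$ for any $v \in \ch{\T_P^*}\cap \ch{\Ra^*}$; this is well-defined by Lemma~\ref{delta-finite}. A chamber $c \in \ch{\Ra^*}$ either lies in $\ch{\T_P^*}$, in which case $h_s'(c) = g_{\T_P^*}(y^*)$, or lies in some child tree-wall $\T$ of $\Ra^*$, in which case the defining equation for $g_\T$ (applied with $\Ra = \Ra^*$ and $\T_P = \T_P^*$) together with constancy of $h_s$ on $\ch\T \cap \ch{\Ra^*}$ yields $h_s'(c) = g_\T(h_s(c)) = g_{\T_P^*}(y^*)$ as well. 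Hence $h_s'$ is constant on $\ch{\Ra^*}$, which is (L). The only place this could fail is consistency when two tree-wall children of a common residue need to agree; this is resolved precisely because each child's defining equation is routed through the shared parent, and the tree structure of $\Gamma_s$ guarantees there is no competing constraint.
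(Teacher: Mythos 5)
Your proof is correct and follows essentially the same inductive strategy as the paper's: root $\Gamma_s$ at $\T_0$, set $g_{\T_0} = \id$, and define $g_\T$ inductively on tree-wall distance so that the colors propagate consistently through each intermediate $(S\setminus\{s\})$-residue. The difference is in how the defining constraint for $g_\T$ is phrased and how legality is verified. The paper works with a single chamber $d \in \proj_{\T_2}(\T_1)$ and its projection $c_1 = \proj_{\T_1}(d)$, then proves a separate claim $(*)$ to extend the coloring equality from $d$ to all of $\proj_{\T_2}(\T_1)$. You instead observe directly that $\ch\T \cap \ch\Ra$ is a single $s^\perp$-residue on which $h_s$ is constant (by Lemma~\ref{delta-finite} and the fact that the intersection of a $J$-residue and a $K$-residue through a common chamber is a $J\cap K$-residue), which packages the same information more transparently and makes the claim $(*)$ unnecessary. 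Your legality check is also a bit more complete than the paper's: when $c_1, c_2$ lie in distinct tree-walls $\T_1 \neq \T_2$ both adjacent to $\Ra^*$, the paper assumes WLOG that one of $\T_1, \T_2$ is the parent of the other, but in the rooted tree $\Gamma_s$ it can also happen that $\T_1$ and $\T_2$ are \emph{both} children of $\Ra^*$ with a common parent $\T_P^*$. Your argument covers this uniformly by showing every chamber of $\Ra^*$ gets the color $g_{\T_P^*}(y^*)$ no matter which adjacent tree-wall it sits in. So the approach is the same, but your write-up patches a small omission in the original's case analysis.
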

\begin{proof}
Let $h_s$ be a weak legal coloring of $\Delta$. We want to show that there is a legal coloring $h_s^\ell$ that is $G$-equivalent to $h_s$.

Let $c_0$ be a fixed chamber of $\Delta$. We will define $h_s^\ell$ recursively using the $s$-tree-wall distance.
Let $\T_0= \T_{s,c_0}$. For each chamber $c\in \ch{\T_0}$, define
\[ h_s^\ell(c)=h_s(c)=\id_{G}\circ h_s(c); \]
by Lemma~\ref{delta-finite}, the restriction of $h_s^\ell$ to $\ch{\T_0}$ is a legal coloring.

Assume that we have defined $h_s^\ell$ for all chambers of every $s$-tree-wall of $\Delta$ at tree-wall distance $\leq n$ from $\T_0$.
Let  $\T_2$ be an $s$-tree-wall at tree-wall distance $n+1$ from $\T_0$. By Proposition~\ref{tree-wall tree} there is a unique $s$-tree-wall $\T_1$ at tree-wall distance $1$ from $\T_2$ that is at tree-wall distance $n$ from $\T_0$.
By our recursion assumption, $h_s^\ell$ is already defined for all chambers of $\T_1$.
Pick some $d\in \proj_{\T_2}(\T_1)$, and let $c_1=\proj_{\T_1}(d)$.
Fix a $g\in G$ such that $g\circ h_s(d)=h_s^\ell(c_1)$ (which exists by transitivity) and define
\[ h_s^\ell(c)=g\circ h_s(c), \text{ for all } c\in \ch{\T_2}. \]
That is, we set $h_s^\ell(d)=h_s^\ell(c_1)$ and carry out the same permutation of the $s$-colors on each $s$-panel of $\T_2$.

We claim that
\begin{equation}\label{eq:hsl}
    h_s^\ell(d') = h_s^\ell\bigl(\proj_{\T_1}(d')\bigr) \quad \text{for all } d' \in \proj_{\T_2}(\T_1) . \tag{$*$}
\end{equation}
So let $d'\in \proj_{\T_2}(\T_1)$ and $c_1'=\proj_{\T_1}(d')$;
then by Proposition~\ref{4.53}\eqref{4.53.2} we have $d=\proj_{\T_2}(c_1)$. Hence from Proposition~\ref{4.53}\eqref{4.53.4} we obtain that  $\proj_{\Pa_{s,d}}(d')=d$ and $\proj_{{ \Pa}_{s,c_1}}(c_1')=c_1$.
Since $h_s$ is a weak legal coloring, we have $h_s(d)=h_s(d')$, and hence by construction $h_s^\ell(d) = h_s^\ell(d')$.
Moreover, since we already know that $h_s^\ell$ is a legal coloring on $\T_1$, we also have $h_s^\ell(c_1)=h_s^\ell(c_1')$.
Since $h_s^\ell(d)=h_s^\ell(c_1)$ and $g$ is fixed in each tree-wall, we conclude that $h_s^\ell(d')=h_s^\ell(c_1')$, proving the claim~\eqref{eq:hsl}.

With this procedure, we recursively define the map $h_s^\ell$ for all the chambers of $\Delta$.
It is obvious that $h_s^\ell$ is a coloring of the chambers of $\Delta$.

We will show that $h_s^\ell$ is a legal coloring.
So let $c_1$ and $c_2$ be chambers in a residue $\Ra$ of type $S\setminus \{s\}$; we have to show that $h_s^\ell(c_1) = h_s^\ell(c_2)$.
Since residues are combinatorially convex, the minimal galleries from $c_1$ to $c_2$ do not contain $s$-adjacent chambers;
in particular, $\proj_{\Pa_{s,c_1}}(c_2)=c_1$.
Let $\T_1$ and $\T_2$ be the $s$-tree-walls of $c_1$ and $c_2$, respectively.
If $\T_1$ coincides with $\T_2$ then we know that $h_s(c_1)=h_s(c_2)$ and thus $h_s^\ell(c_1)=g\circ h_s(c_1)=g\circ h_s(c_2)= h_s^\ell(c_2)$,
where $g\in G^s$ was the permutation used to define $h_s^\ell$ in $\T_1=\T_2$.

If $\T_1\neq \T_2$ then  $\T_1$ and $\T_2$ are both adjacent to the vertex in the $s$-tree-wall tree $\Gamma_s$ that corresponds to the residue $\Ra$.
Thus $\dtw(\T_1, \T_2)=1$.
Assume without loss of generality that
$n = \dtw(\T_{0}, \T_1)=\dtw(\T_{0}, \T_2)-1$,
where $\T_{0}$ is the $s$-tree-wall containing the base chamber $c_0$.
Then $\T_1$ is the unique $s$\dash tree-wall at tree-wall distance $n$ from $\T_{0}$ that is at tree-wall distance $1$ from~$\T_2$.
Therefore $h_s^\ell$ has been defined on $\T_2$ using the coloring in $\T_1$.

Let $d' = \proj_{\T_2}(c_1)$, and let $c_1' = \proj_{T_1}(d')$.
By the gate property (Proposition~\ref{Gate}), there is a minimal gallery from $c_1$ to $c_2$ through $c_1'$ and $d'$,
and the subgalleries from $c_1$ to $c_1'$ and from $d'$ to $c_2$ are completely contained in $\T_1$ and $\T_2$, respectively.
Since these galleries do not contain $s$-adjacent chambers because $c_1$ and $c_2$ are contained in $\Ra$ of type $S\setminus {\{s\}}$, we have $h_s^\ell(c_1) = h_s^\ell(c_1')$ and $h_s^\ell(d') = h_s^\ell(c_2)$.
Finally, by~\eqref{eq:hsl}, we also have $h_s^\ell(c_1') = h_s^\ell(d')$, and hence $h_s^\ell(c_1) = h_s^\ell(c_2)$.
We conclude that $h_s^\ell$ is a legal coloring, and by construction it is $G^s$-equivalent to $h_s$.
\end{proof}

\subsubsection{Directed legal colorings}\label{FC}

In this section we define a particular set of weak legal colorings.
As before, let $Y_s$ (for each $s\in S$) be the set of $s$-colors.
We additionally assume that each such set $Y_s$ contains a distinguished element $1_s$, or shortly 1 if no confusion can arise.

The key point of directed colorings is to get a set of colorings such that in every $s$-panel $\Pa$, the chamber of $\Pa$ closest to a fixed chamber has $s$-color $1$.
This will be particularly useful for studying chamber stabilizers.

\begin{prop}\label{directedc}
        Let $s\in S$ and let $c_0\in\ch\Delta$. Let $h_s$ be a weak legal $s$-coloring of $\Delta$ and $G\leq\Sym{Y_s}$ be a transitive permutation group.

        Then there exists a weak legal $s$-coloring $f_s$ of $\Delta$ which is $G$-equivalent to~$h_s$,
        such that $f_s(\proj_\Pa(c_0))=1_s$ for every $s$-panel $\Pa$.
\end{prop}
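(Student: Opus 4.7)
The key observation is that $s$-tree-walls partition $\ch\Delta$: each $s$-panel lies in a unique $s$-tree-wall, and by Corollary~\ref{tree residue} every $s$-tree-wall equals the chamber set of a residue of type $s\cup s^\perp$, and these residues partition $\ch\Delta$. This suggests defining $f_s$ one tree-wall at a time: on each $s$-tree-wall $\T$ I will post-compose $h_s$ with a single permutation $g_\T\in G$, chosen so as to send the color of a suitable ``reference chamber'' to $1_s$.

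More concretely, for each $s$-tree-wall $\T$, let $c_\T := \proj_\T(c_0)$. Using transitivity of $G$ on $Y_s$, pick $g_\T\in G$ with $g_\T\circ h_s(c_\T)=1_s$, and set
\[
  f_s(c) := g_\T\circ h_s(c) \qquad \text{for all } c\in\ch\T.
\]
Since tree-walls partition $\ch\Delta$, this defines $f_s$ unambiguously on all chambers. Condition (C) is immediate since each $g_\T$ is a permutation and $h_s$ already satisfies (C). For weak legality (W), if $\Pa_1,\Pa_2$ lie in a common $s$-tree-wall $\T$ and $c\in\Pa_1$, then both $c$ and $\proj_{\Pa_2}(c)$ lie in $\ch\T$, so
\[
  f_s(\proj_{\Pa_2}(c)) = g_\T\circ h_s(\proj_{\Pa_2}(c)) = g_\T\circ h_s(c) = f_s(c),
\]
using (W) for $h_s$. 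The $G$-equivalence of $f_s$ and $h_s$ is built into the construction, since on any $s$-panel $\Pa\subseteq\ch\T$ we have $f_s|_\Pa = g_\T\circ h_s|_\Pa$ with $g_\T\in G$.

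The last and only slightly subtle point is verifying that $f_s(\proj_\Pa(c_0))=1_s$ for every $s$-panel $\Pa$. If $\Pa$ lies in the $s$-tree-wall $\T$, iterated projection (the gate property applied to the residue of type $s\cup s^\perp$ underlying $\T$, then to $\Pa$) gives
\[
  \proj_\Pa(c_0) = \proj_\Pa(c_\T),
\]
and since $c_\T$ and $\proj_\Pa(c_\T)$ lie in two $s$-panels of the same tree-wall $\T$, condition (W) applied to $h_s$ yields $h_s(c_\T)=h_s(\proj_\Pa(c_\T))$. Therefore
\[
  f_s(\proj_\Pa(c_0)) = g_\T\circ h_s(\proj_\Pa(c_\T)) = g_\T\circ h_s(c_\T) = 1_s,
\]
as required. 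The main technical point to get right is the compatibility in this last display: once one has the identity $\proj_\Pa(c_0)=\proj_\Pa(c_\T)$ (obtained from the gate property) together with weak legality of $h_s$ inside $\T$, everything reduces to a single normalization per tree-wall, and no inductive argument on tree-wall distance (as in Proposition~\ref{equivcolor}) is needed.
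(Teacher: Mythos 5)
Your proposal is correct and follows essentially the same route as the paper: one normalizing permutation $g_\T\in G$ per $s$-tree-wall $\T$, chosen so that $g_\T\circ h_s(\proj_\T(c_0))=1_s$, applied uniformly on $\ch\T$. The only cosmetic difference is in the final verification: you invoke $\proj_\Pa(c_0)=\proj_\Pa(c_\T)$ directly from the gate property, whereas the paper phrases the same fact as $\proj_{\Pa_{s,c_1}}(c)=c_1$ before applying (W); both are equivalent consequences of Lemma~\ref{le:wings}.
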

\begin{proof}
For each chamber $c \in \ch\Delta$, let $\T(c)$ be the unique $s$-tree-wall containing $c$.
For each  $s$-tree-wall $\T$, we fix an element $g_{\T}\in G$ such that
\[ g_\T\bigl(h_s(\proj_\T(c_0))\bigr)=1; \]
notice that such an element exists because $G$ is transitive.
We now define a coloring $f_s \colon \ch\Delta\to Y_s$ by
\[ f_s(c) := g_{\T(c)}(h_s(c)) \quad \text{for all } c \in \ch\Delta . \]
Let $\Pa$ be an arbitrary $s$-panel, and let $c = \proj_\Pa(c_0)$; we claim that $f_s(c) = 1$.
Indeed, let $\T$ be the $s$-tree-wall containing $\Pa$, and let $c_1=\proj_\T(c_0)$.
Then $\proj_{\Pa_{s,c_1}}(c)=c_1$, so $h_s(c) = h_s(c_1)$, and hence
\[ f_s(c) = g_\T(h_s(c)) = g_\T(h_s(c_1)) = 1 . \]

Next, we claim that $f_s$ is a weak legal $s$-coloring.
Indeed, fix some $s$-tree-wall $\T$; then by definition, $g_\T$ induces the same permutation on each $s$-panel of $\T$.
Since $h_s$ is a weak legal $s$-coloring, it satisfies property (W) from Definition~\ref{weakcolor} for each $s$-tree-wall $\T$,
and hence the same holds for $f_s$; we conclude that also $f_s$ is a weak legal $s$-coloring.
\end{proof}

\begin{defn}\label{deffc}
Let $s\in S$. A weak legal $s$-coloring as in Proposition~\ref{directedc} is called a \emph{directed legal $s$-coloring} of $\Delta$ \emph{with respect to} $c_0$.

In other words, if $f_s$ is a weak legal $s$-coloring and $c_0\in\ch\Delta$ then $f_s$ is called a directed legal $s$-coloring with respect to $c_0$ if for every chamber $c\in\ch\Delta$ at Weyl distance $w$ from $c_0$ with $l(w)<l(ws)$, we have $f_s(c)=1$.
\end{defn}

\begin{obs}\label{obs:form}
By definition, one can construct a weak legal coloring on any right-angled building $\Delta$.
Proposition~\ref{directedc} then implies that, given a chamber $c$ in $\Delta$, there exists a directed legal coloring of $\Delta$ with respect to $c$.
\end{obs}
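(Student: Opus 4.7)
The second assertion follows at once from the first: given a chamber $c \in \ch\Delta$, a weak legal $s$-coloring $h_s$, and any transitive $G \leq \Sym{Y_s}$ (for instance $G = \Sym{Y_s}$), Proposition~\ref{directedc} yields a weak legal $s$-coloring $f_s$ which is $G$-equivalent to $h_s$ and satisfies $f_s(\proj_\Pa(c)) = 1_s$ for every $s$-panel $\Pa$; by Definition~\ref{deffc} this is a directed legal $s$-coloring with respect to $c$. Hence the only substantive claim is the existence of weak legal $s$-colorings, which I plan to establish for each $s \in S$ separately by a tree-wall-by-tree-wall construction.

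For a fixed $s \in S$, I would proceed as follows. For every $s$-tree-wall $\T$ of $\Delta$, choose (arbitrarily) one $s$-panel $\Pa_\T \subseteq \T$ and an arbitrary bijection $h_s|_{\ch{\Pa_\T}} \colon \ch{\Pa_\T} \to Y_s$; such a bijection exists since $|\ch{\Pa_\T}| = q_s = |Y_s|$. Then, for any other $s$-panel $\Pa'$ of $\T$ and any chamber $c \in \ch{\Pa'}$, define
\[
    h_s(c) := h_s\bigl(\proj_{\Pa_\T}(c)\bigr).
\]
Since $\Pa'$ and $\Pa_\T$ belong to the same $s$-tree-wall they are parallel (Corollary~\ref{4.39}), so $\proj_{\Pa_\T}\colon \ch{\Pa'} \to \ch{\Pa_\T}$ is a bijection, and thus $h_s|_{\ch{\Pa'}}$ is also a bijection onto $Y_s$. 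Since by Corollary~\ref{tree residue} the $s$-tree-walls partition $\ch\Delta$ into the chamber sets of the residues of type $s \cup s^\perp$, this procedure defines $h_s$ on all of $\ch\Delta$ and shows that $h_s$ satisfies condition (C) of Definition~\ref{coloring}.

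It remains to verify property (W) of Definition~\ref{weakcolor}. Fix two $s$-panels $\Pa_1, \Pa_2$ in a common $s$-tree-wall $\T$ and a chamber $c \in \ch{\Pa_1}$; set $c' := \proj_{\Pa_2}(c)$. By construction,
\[
    h_s(c) = h_s\bigl(\proj_{\Pa_\T}(c)\bigr) \quad\text{and}\quad h_s(c') = h_s\bigl(\proj_{\Pa_\T}(c')\bigr).
\]
Applying Proposition~\ref{3panels} to the triple $(\Pa_1, \Pa_2, \Pa_\T)$ (with $c_1 = c$, $c_2 = c'$, $c_3 = \proj_{\Pa_\T}(c)$) gives $\proj_{\Pa_\T}(c') = \proj_{\Pa_\T}(c)$, so the two right-hand sides coincide. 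Hence $h_s(c) = h_s(c')$, confirming (W).

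I do not expect any serious obstacle here: the only coherence check takes place within a single $s$-tree-wall, and is handled precisely by Proposition~\ref{3panels}. No inter-tree-wall compatibility is needed because property (W) is purely intra-tree-wall, which is why the construction can be performed on each $\T$ independently.
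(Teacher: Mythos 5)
Your construction is correct, and it fills in a detail the paper leaves as a remark ("By definition, one can construct a weak legal coloring\dots"). The technique you use---fix one reference $s$-panel $\Pa_\T$ in each $s$-tree-wall, define colors by projecting to $\Pa_\T$, and invoke Proposition~\ref{3panels} to verify property~(W)---is precisely the technique the paper employs in the proof of Proposition~\ref{propU}\eqref{U:universal} when it builds weak legal colorings adapted to a given subgroup $H$. The paper's version in \emph{loc.\ cit.} additionally runs a recursion over $s$-tree-wall distance, but that recursion is only needed there to make the coloring track the action of $H$; you are right that for the bare existence statement no inter-tree-wall coherence is required, since property~(W) is purely intra-tree-wall, and your tree-wall-by-tree-wall construction is therefore the cleanest way to establish the observation. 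The deduction of the second assertion from the first via Proposition~\ref{directedc} (with, e.g., $G = \Sym{Y_s}$) is immediate, as you say.
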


\subsection{Directed right-angled buildings}\label{ss:directed}

In this section we want to describe each right-angled building in a standard way. This will be useful for the study of the maximal compact open subgroups of the universal group
in Section~\ref{se:cpt-open}.

We will construct a right-angled building with a directed coloring by means of reduced words in the Coxeter group and the set of colors, providing us with a concrete model for the objects we work with.

\begin{defn}\label{def:deltaF}
Let $(W,S)$ be a right-angled Coxeter system with set of generators \mbox{$S=\{s_i\}_{i\in I}$} and Coxeter diagram $\Sigma$.  For each $s\in S$, let $Y_s$ be a set with cardinality $q_s$ (with $q_s \geq 2$) with a distinguished element $1$.
We define an edge-colored graph $\Delta_D$ as follows.

The vertex set of $\Delta_D$  is
\[ \ch{\Delta_D} = \left\{ \begin{pmatrix} s_1&\cdots& s_n \\ \alpha_{1}&\cdots& \alpha_{n} \end{pmatrix} \Biggm|
    \begin{aligned}
        & s_1\cdots s_n \text{ is a reduced word in $M_S$ w.r.t.\@~$\Sigma$}, \\
        & \alpha_{i} \in Y_s \setminus \{1\} \text{ for each } i \in \{ 1,\dots,n \}
    \end{aligned}
    \right\} \]
(where we allow $n$ to be zero, yielding the empty matrix),
modulo the equivalence relation defined by
\[ \begin{pmatrix} s_1&\cdots& s_n \\ \alpha_{1}&\cdots& \alpha_{n} \end{pmatrix}\sim \begin{pmatrix} s_1'&\cdots& s_n' \\ \alpha_{1}'&\cdots& \alpha_{n}' \end{pmatrix} \]
if there exists a $\sigma \in \Rep{s_1\cdots s_n}$ such that
\[ s_{\sigma(1)} \cdots s_{\sigma(n)}=s_1' \cdots s_n' \text{ and } \alpha_{{\sigma(j)}}=\alpha_{j}', \text{ for all } j \in \{1, \dots n\}, \]
i.e., $s_1' \cdots s_n'$ is obtained from $s_1 \cdots s_n$ by performing elementary operations of type (2).
(We are denoting the vertex set of $\Delta_D$ by $\ch{\Delta_D}$ because we will prove later that $\Delta_D$ is a right-angled building.
We will already call the elements of $\ch{\Delta_D}$ chambers.)

We will now define adjacency in this graph.
Let $c=\begin{pmatrix} s_1&\cdots& s_n \\ \alpha_{1}&\cdots& \alpha_{n} \end{pmatrix}$ be an arbitrary chamber; then the neighbors of $c$ are:
\begin{enumerate}
    \item
        all chambers of the form $c'= \begin{pmatrix} s_1&\cdots& s_n & s_{n+1} \\ \alpha_{1}&\cdots& \alpha_{n} & \alpha_{n+1} \end{pmatrix}$.
        In this case, we declare $c$ and $c'$ to be $s_{n+1}$-adjacent.
    \item
        all chambers of the form $c'= \begin{pmatrix} s_1&\cdots& s_{n-1} & s_n \\ \alpha_{1}&\cdots& \alpha_{n-1} & \alpha_{n}' \end{pmatrix}$,
        where $\alpha_n'$ takes any value in $Y_s \setminus \{1, \alpha_n\}$.
        In this case, we declare $c$ and $c'$ to be $s_n$-adjacent.
    \item
        the unique chamber $c'= \begin{pmatrix} s_1&\cdots& s_{n-1} \\ \alpha_{1}&\cdots& \alpha_{n-1} \end{pmatrix}$.
        In this case, we declare $c$ and $c'$ to be $s_n$-adjacent.
\end{enumerate}
\end{defn}

\begin{prop}\label{itsbuild}
    The edge-colored graph $\Delta_D$ is a chamber system with index set $S$ and with prescribed thickness $(q_s)_{s\in S}$.
    Let $s\in S$, and let $\Pa$ be an $s$-panel of $\Delta_D$.
    Then the chambers of $\Pa$ are of the form
    \[ \left\{ \begin{pmatrix} s_1&\cdots& s_n \\ \alpha_{1}&\cdots& \alpha_{n} \end{pmatrix}\right\}
        \cup \left\{\begin{pmatrix} s_1&\cdots& s_n & s \\ \alpha_{1}&\cdots& \alpha_{n} & \alpha_s \end{pmatrix} \bigm| \alpha_s \in Y_s \setminus \{1\} \right\} . \]
\end{prop}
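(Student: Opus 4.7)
The key combinatorial input is the following lemma: in any reduced word $w = s_1 \cdots s_n$, the relative order of occurrences of a fixed generator $s \in S$ is preserved by every permutation $\sigma \in \Rep{w}$. Indeed, each elementary transposition swaps two distinct commuting letters, hence never two copies of $s$; and by Observation~\ref{obsrep}\eqref{obsrep:1} every intermediate word remains reduced, which forbids any configuration in which two $s$'s become adjacent (this would yield a forbidden $ss$ subword). An immediate consequence is: if both $w$ and $\sigma.w$ end in $s$, then $\sigma(n) = n$, and in particular the final color is preserved under the equivalence relation on representations.

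With this in hand, I would verify that the three adjacency clauses of Definition~\ref{def:deltaF} descend to well-defined relations on equivalence classes. Clause~(1) is immediate, since any $\sigma \in \Rep{s_1 \cdots s_n}$ extends to an element of $\Rep{s_1 \cdots s_n s}$ fixing position $n+1$. For clauses~(2) and~(3), suppose two representatives of $c$ both end in $s$; their length-$(n-1)$ prefixes are reduced representations of the common element $ws \in W$, and the restriction of any $\sigma \in \Rep{w}$ with $\sigma(n) = n$ to $\{1, \dots, n-1\}$ is the unique permutation realizing the corresponding word transformation while preserving the canonical order of same-letter occurrences. This forces it to coincide with the element of $\Rep{s_1 \cdots s_{n-1}}$ provided by Lemma~\ref{reduced words}, so the color matching carries over automatically and both the deletion and the re-colorings yield well-defined chambers. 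Symmetry of $s$-adjacency then follows because clauses~(1) and~(3) are mutual inverses and clause~(2) is self-inverse.

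Finally, I would describe the $s$-panel of an arbitrary chamber $c$ represented by $(s_1 \cdots s_n;\, \alpha_1 \cdots \alpha_n)$ with underlying element $w \in W$, distinguishing two cases. If $l(ws) > l(w)$, then by Lemma~\ref{reduced words} no representative of $c$ ends in $s$, so only clause~(1) applies, producing exactly $q_s - 1$ chambers of the form $(s_1 \cdots s_n, s;\, \alpha_1 \cdots \alpha_n, \alpha_s)$ with $\alpha_s \in Y_s \setminus \{1\}$; together with $c$ itself this is an $s$-panel of size $q_s$, in the form stated in the proposition. If instead $l(ws) < l(w)$, then $s$ is a right descent of $w$ so some representative of $c$ ends in $s$; clause~(3) produces a unique shorter chamber $c_0$ whose underlying element $w_0 = ws$ satisfies $l(w_0 s) > l(w_0)$, and the $s$-panel of $c$ coincides with that of $c_0$ already described. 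In either case the panel contains a unique short chamber together with its $q_s - 1$ extensions by appending an $s$-column with arbitrary color in $Y_s \setminus \{1\}$, establishing both the prescribed thickness $q_s$ and the panel form asserted in the proposition.

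The main obstacle is the uniqueness argument in the second step: showing that the restriction of $\sigma$ to $\{1, \dots, n-1\}$ coincides with the permutation arising from Lemma~\ref{reduced words} requires the canonical matching of same-letter occurrences between two reduced representations of a common Coxeter element, which is the heart of the combinatorics and rests on the order-preservation lemma.
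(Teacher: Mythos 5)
Your proof is correct, and it supplies substantially more detail than the paper itself, whose proof of Proposition~\ref{itsbuild} is essentially the assertion ``it is clear from the definition of $s$-adjacency that the relation `$s$-adjacent or equal' is an equivalence relation.'' What the paper leaves as ``clear'' is precisely the point you isolate and prove: since every intermediate word in a chain of $\Sigma$-elementary operations of type~(2) remains reduced (Observation~\ref{obsrep}\eqref{obsrep:1}), two tokens bearing the same letter $s$ can never become adjacent and hence are never swapped, so the relative order of $s$-occurrences is a $\Rep{w}$-invariant. The consequence $\sigma(n)=n$ whenever both $w$ and $\sigma.w$ end in $s$ is exactly what guarantees that the last color $\alpha_n$ of a chamber ending in $s$ is independent of the chosen reduced representative, so that clauses~(2) and~(3) of Definition~\ref{def:deltaF} descend to equivalence classes and the $q_s-1$ ``long'' chambers in a panel are pairwise inequivalent. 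Your second-step observation -- that the restriction $\sigma|_{\{1,\ldots,n-1\}}$ is pinned down uniquely among the permutations realizing the prefix transformation by the same order-preservation property applied to \emph{every} letter, and hence lies in $\Rep{s_1\cdots s_{n-1}}$ -- is the right way to close this, and it is a genuine gap in the paper's presentation rather than something you are over-proving. One very small remark: in the final case split, when $l(ws)<l(w)$ you appeal to the exchange condition (``some representative ends in $s$''); it would be slightly cleaner to cite Lemma~\ref{reduced words} together with the right-descent characterization explicitly, but this is a matter of exposition, not correctness.
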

\begin{proof}
It is clear from the definition of $s$-adjacency that the relation ``$s$-adjacent or equal'' is an equivalence relation on the set of chambers of $\Delta_D$,
hence $\Delta_D$ is a chamber system.
The equivalence classes of this relation are precisely the $s$-panels, which are therefore of the required form.
Clearly, each $s$-panel has cardinality $|Y_s| = q_s$.
\end{proof}

Now we want to define colorings on the chamber system $\Delta_D$.

\begin{defn}\label{SFC}
Let $\Delta_D$ be as in Definition~\ref{def:deltaF}, and let $s\in S$.
We  define $F_s \colon \ch{\Delta_D} \to Y_s$ as follows.
Let $c = \begin{pmatrix} s_1&\cdots& s_n \\ \alpha_{1}&\cdots& \alpha_{n} \end{pmatrix}$ be a chamber of $\Delta_D$.
\begin{enumerate}
    \item
        If $l(s_1\cdots s_ns) > l(s_1\cdots s_n)$, then $F_s(c) := 1$.
    \item
        If $l(s_1\cdots s_ns) < l(s_1\cdots s_n)$, there exists a $\sigma \in \Rep{s_1\cdots s_n}$ such that $s_{\sigma(n)}=s$, i.e.,
        \[ s_1\cdots s_n = s_{\sigma(1)}\cdots s_{\sigma(n-1)}s \,\text{ in $W$}. \]
        Then
        \[ F_s(c) = F_s \begin{pmatrix} s_{\sigma(1)}&\cdots& s_{\sigma(n)} \\ \alpha_{\sigma(1)}&\cdots& \alpha_{\sigma(n)} \end{pmatrix}
            := \alpha_{\sigma(n)} \in \{2,\dots, q_s\}. \]
\end{enumerate}
We call $F_s$ the {\em standard $s$-coloring} of $\Delta_D$.
\end{defn}

Our next goal is to prove that given a semi-regular right-angled building $\Delta$ with a set of directed legal colorings with respect to a fixed chamber, one can construct
a color-preserving isomorphism to $\Delta_D$ (equipped with its standard colorings).
In particular, a pair consisting of a right-angled building and a set of directed legal colorings is unique up to isomorphism.

\begin{prop}\label{unirab}
Let $\Delta$ be a right-angled building of type $(W,S)$ with parameters $(q_s)_{s\in S}$,\
and let $(f_s)_{s\in S}$ be a set of directed colorings of $\Delta$ with respect to a fixed chamber $c_0\in\ch\Delta$.
Let $\Delta_D$ be the corresponding chamber system as in Definition~\textup{\ref{def:deltaF}},
and let $(F_s)_{s\in S}$ be its standard $s$-colorings as in Definition~\textup{\ref{SFC}}.

Then there is an isomorphism $\psi \colon \Delta \to \Delta_D$ such that $f_s(c)=F_s(\psi(c))$ for all $s\in S$ and all $c\in \ch\Delta$.
In particular, $\Delta_D$ is a right-angled building of type $(W,S)$, with thickness $(q_s)_{s\in S}$.
\end{prop}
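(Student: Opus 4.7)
The plan is to construct $\psi$ by reading off the directed colors along a minimal gallery from $c_0$. Concretely, given $c\in\ch\Delta$, choose a minimal gallery $\gamma=(c_0,c_1,\dots,c_n=c)$ with associated reduced word $s_1\cdots s_n$ representing $\delta(c_0,c)$ (so $c_{i-1}\adj{s_i}c_i$), and set $\alpha_i:=f_{s_i}(c_i)$ for $i=1,\dots,n$. One then puts
\[ \psi(c):=\begin{pmatrix} s_1 & \cdots & s_n \\ \alpha_1 & \cdots & \alpha_n \end{pmatrix}. \]
The entries $\alpha_i$ lie in $Y_{s_i}\setminus\{1\}$: minimality of $\gamma$ forces $c_{i-1}=\proj_{\Pa_{s_i,c_i}}(c_0)$, so by Definition~\ref{deffc} we have $f_{s_i}(c_{i-1})=1$, whence $f_{s_i}(c_i)\neq 1$ by property (C) applied to the $s_i$-panel.

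The main obstacle is well-definedness of $\psi$: a different minimal gallery produces a different reduced word, and by Lemma~\ref{reduced words}\,(2) any two reduced expressions of $\delta(c_0,c)$ are linked by a finite sequence of elementary transpositions. It suffices to handle a single transposition $(i,i+1)$ with $|s_is_{i+1}|=2$. The two galleries $(c_0,\dots,c_{i-1},c_i,c_{i+1},\dots,c_n)$ and $(c_0,\dots,c_{i-1},c_i',c_{i+1},\dots,c_n)$ differ only at position $i$, and by Lemma~\ref{Squares2} together with Lemma~\ref{Lemma2.2} the four chambers $c_{i-1},c_i,c_{i+1},c_i'$ lie in a common $\{s_i,s_{i+1}\}$-residue in which the two $s_{i+1}$-panels (resp.\ $s_i$-panels) are parallel. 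Because $f_{s_{i+1}}$ is a weak legal coloring, Definition~\ref{weakcolor} gives $f_{s_{i+1}}(c_i')=f_{s_{i+1}}(c_{i+1})=\alpha_{i+1}$, and similarly $f_{s_i}(c_{i+1})=f_{s_i}(c_i)=\alpha_i$. Thus the new pair $(s_{i+1},\alpha_{i+1}),(s_i,\alpha_i)$ at positions $i,i+1$ is exactly the one obtained from the old matrix via the permutation $\sigma=(i,i+1)\in\Rep{s_1\cdots s_n}$ of Definition~\ref{def:deltaF}, so the two matrices are equivalent in $\Delta_D$.

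With $\psi$ well-defined, adjacency is checked in three cases against Proposition~\ref{itsbuild}: if $\dw(c_0,c')=\dw(c_0,c)+1$ and $c\adj{s}c'$, then extending a minimal gallery of $c$ by the edge $c\adj{s}c'$ realises $\psi(c')$ as the one-step extension of $\psi(c)$ in case (1); if $\dw(c_0,c')=\dw(c_0,c)-1$, this is the same situation with the roles swapped, giving case (3); and if $\dw(c_0,c')=\dw(c_0,c)$ with $c\adj{s}c'$, then the $s$-panel $\Pa$ through $c,c'$ satisfies $d:=\proj_\Pa(c_0)\notin\{c,c'\}$, so prepending a minimal gallery to $d$ yields matrices for $\psi(c)$ and $\psi(c')$ sharing a common prefix and differing only in the last color entry, giving case (2). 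Color preservation $f_s=F_s\circ\psi$ is then immediate: if $l(\delta(c_0,c)s)>l(\delta(c_0,c))$ then $c=\proj_{\Pa_{s,c}}(c_0)$ gives $f_s(c)=1=F_s(\psi(c))$, while in the opposite case a minimal gallery to $c$ whose last letter is $s$ (which exists by Lemma~\ref{reduced words} applied to any reduced word and the appropriate $\sigma\in\Rep{}$) shows that $f_s(c)$ is precisely the final color entry, matching Definition~\ref{SFC}\,(2).

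Finally, injectivity of $\psi$ follows by induction on $n=\dw(c_0,c)$: the reduced word in $\psi(c)$ determines $\delta(c_0,c)$, so $c,c'$ with $\psi(c)=\psi(c')$ lie in a common sphere, and then the color at position $n$ together with the chamber built from the first $n-1$ entries (which by induction is unique in $\Delta$) pins down $c$ via property (C) on the terminal $s_n$-panel. Surjectivity is proved by the same induction, constructing a preimage one panel at a time: given a matrix of length $n+1$, pick a preimage of its length-$n$ truncation and use property (C) and the directed property to select the unique chamber of the next $s_{n+1}$-panel with the prescribed color. Hence $\psi$ is a type-preserving, color-preserving bijection and an isomorphism of chamber systems, and transporting the building structure of $\Delta$ through $\psi$ shows that $\Delta_D$ is the unique right-angled building of type $(W,S)$ with parameters $(q_s)_{s\in S}$ provided by Theorem~\ref{unicityhp}.
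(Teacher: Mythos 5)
Your proposal is correct and takes essentially the same approach as the paper: define $\psi$ by reading off the directed colors along a minimal gallery from $c_0$, establish well-definedness by reducing to a single elementary transposition and invoking property (W) of weak legal colorings on the resulting commuting square, and then verify color preservation, adjacency, and bijectivity. The paper's argument is the same in substance; it is slightly terser on why the $\alpha_i$ avoid $1$ and on surjectivity (both of which you spell out), and slightly more explicit in writing out the chain of elementary transpositions for well-definedness.
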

\begin{proof}
We start by setting $\psi(c_0) = (\ ) \in \ch{\Delta_D}$.
Let $c\in\ch\Delta$ be arbitrary, and let $w= s_1\cdots s_n$ be a reduced word in $M_S$ representing $\delta(c_0, c)$ and let $\gamma = (c_0, c_1, \dots, c_n)$ be the minimal gallery of type $s_1\cdots s_n$ connecting the chambers $c_0$ and $c_n=c$.
Then we define
\[ \psi(c) = \left[ \begin{pmatrix} s_1 & \cdots & s_n \\ f_{s_1}(c_1) & \cdots & f_{s_n}(c_n) \end{pmatrix}\right]_\sim . \]

We claim that $\psi$ is well-defined, i.e.\@~that it is independent of the choice of the reduced representation of $w$.
Let $w'$ be another reduced representation of $\delta(c_0, c)$.
We know that there is a $\sigma\in \Rep{w}$ such that $\sigma.w=w'$ and moreover that $\sigma$ can be written as a product of elementary transpositions $\sigma=\sigma_k\dotsm \sigma_1$ such that $\sigma_i$ is a $\sigma_{i-1}\dotsm\sigma_1.w$-elementary transposition, for all $i \in \{1,\dots, n\}$.

Let $i\in \{1,\dots, n\}$.
Consider the reduced words $w_1=\sigma_{i-1}\dotsm \sigma_1.w$ and $w_2=\sigma_i.w_1$.
These two words only differ in two generators $r_j$ and $r_{j+1}$ which are switched by $\sigma_i$.
Let $\gamma_1=(e_0, \dots, e_n)$ and $\gamma_2=(d_0, \dots, d_n)$ be the minimal galleries between $c_0=e_0=d_0$ and $c=e_n=d_n$ corresponding to the word $w_1$ and $w_2$ respectively.

We now prove that the colors $f_{r_t}(e_t)$ and $f_{r_t}(d_{\sigma_i (t)})$ agree for all $t\in \{0, \dots , n\}$.  Note that the galleries $\gamma_1$ and $\gamma_2$ differ only in the chamber $e_j$, and that $\sigma_i$ is a transposition switching $j$ and $j+1$. Therefore we only need to check $t = j$ and $t = j +1$.
We have
\[e_{j-1} \adj{r_j} e_j \adj{r_{j+1}} e_{j+1} \ \text{ and } \ e_{j-1}=d_{j-1}\adj{r_{j+1}} d_j\adj{r_j} d_{j+1}=e_{j+1}.\]
As $(f_s)_{s\in S}$ is a set of direct colorings with respect to $c_0$ we infer that $f_{r_j}(e_j) = f_{r_j}(d_{j+1}) = f_{r_j}(d_{\sigma_i (j)})$ and $f_{r_{j+1}}(e_{j+1}) = f_{r_{j+1}}(d_j) =  f_{r_{j+1}}(d_{\sigma_i(j+1)})$, which implies that the colors indeed agree.

From this we deduce that
\[ \begin{pmatrix} &w_1&  \\ f_{r_1}(e_1)&\cdots& f_{r_n}(e_n) \end{pmatrix}\sim \begin{pmatrix} &w_2& \\ f_{r_1}(d_1)&\cdots& f_{r_n}(d_n) \end{pmatrix}, \]
which allows us to conclude that
\[ \begin{pmatrix} s_1&\cdots& s_n \\ f_{s_1}(c_1)&\cdots& f_s(c_n) \end{pmatrix}\sim \begin{pmatrix} s_{\sigma(1)}&\cdots& s_{\sigma(n)} \\ f_{s_{\sigma(1)}}(c_1')&\cdots& f_{s_{\sigma(n)}}(c_n') \end{pmatrix}, \]
where $(c_0', \dots, c_n')$ is the minimal gallery between $c_0$ and $c$ corresponding to $w'$.
This proves that $\psi$ is independent of the choice of the reduced representation of $w$ and in particular that it is well defined.

Next, we claim that $\psi$ is color-preserving, i.e.\@ that $f_s(c)=F_s(\psi(c))$ for all $s\in S$ and all $c\in \ch\Delta$.
Let $c$ be a chamber of $\Delta$ at Weyl distance $w$ from $c_0$, and $s_1\cdots s_n$ a reduced word in $M_S$ with respect to $\Sigma$ representing $w$.
By construction of $\psi$ and definition of $F_{s_n}$, we have $f_{s_n}(c)=F_{s_n}(\psi(c))$.

Let $s\in S\setminus \{s_n\}$. If $l(s_1\cdots s_n)< l(s_1\cdots s_ns)$, then by definition of a directed coloring and $F_S$ we have $f_s(c)=1$ and $F_s(\psi(c))=1$.
If $l(s_1\cdots s_n)> l(s_1\cdots s_ns)$, then we could have picked a reduced representation $s_1 \cdots s_n$ of $w$ where $s_n = s$, which reduces the problem to a previous case.

Next, we show that $\psi$ is a bijection. It is clear from the definition that $\psi$ is surjective. It remains to prove that it is injective.
Let $c_1, c_2\in \ch\Delta$ be distinct chambers. If $c_1$ and $c_2$ are at distinct Weyl distances from $c_0$ then by definition $\psi(c_1)\neq \psi(c_2)$.

Assume that $s_1\cdots s_n$ is a reduced representation of the Weyl distance from $c_0$ to both $c_1$ and $c_2$.
Let $i$ be the minimal number in $\{1, \dots, n\}$ such that $v_1^i\neq v_2^i$, where $v_j^i$, for $j\in\{1,2\}$, is the unique chamber in $\Delta$ at Weyl distance $s_1\cdots s_i$ from $c_0$ and at Weyl distance $s_{n}\cdots s_{i+1}$ from $c_j$.
Then $v_1^{i-1}=v_2^{i-1}$, which means that $v_1^i$ and $v_2^i$ are in the same $i$-panel of $\Delta$.
So $f_{s_i}(v_1^i)\neq f_{s_i}(v_2^i)$. This implies, by definition of the map $\psi$, that $\psi(c_1)\neq \psi (c_2)$.
Therefore $\psi$ is a bijection.

Finally, we show that $\psi$ is  a homomorphism.
If $c_1$ and $c_2$ are $s$-adjacent chambers in $\Delta$, for $s\in S$, then
\begin{enumerate}
\item either $c_1$ is at Weyl distance $s_1\cdots s_n$ from $c_0$ and $c_2$ is at Weyl distance $s_1\cdots s_n s$ from $c_0$, or vice-versa. Then by definition of $\psi$ and the definition of adjacency in $\Delta_D$, the chamber $\psi(c_1)$ is $s$\dash adjacent to $\psi(c_2)$.

\item or $c_1$ and $c_2$ are both at Weyl distance $s_1 \cdots s_n s$ from $c_0$ and $f_s(c_1)\neq f_s(c_2)$. Again we conclude that $\psi(c_1)$ is $s$-adjacent to $\psi(c_2)$, this time by the second point of the definition of adjacency in $\Delta_D$.
\end{enumerate}
This shows that $\psi$ is an isomorphism from $\Delta$ to $\Delta_D$ respecting the set of colorings $(f_s)_{s\in S}$, proving the proposition.
\end{proof}

\begin{defn}\label{frab}
We call the building $\Delta_D$  the \emph{directed right-angled building} of type $(W,S)$ with prescribed thickness $(q_s)_{s\in S}$.
\end{defn}

\begin{rem}
    The directed legal coloring $F_s$ is not a legal $s$-coloring unless $s$ commutes with all elements of $S$
    (in which case every weak legal $s$-coloring is a legal $s$-coloring by Lemma~\ref{delta-finite}).
\end{rem}

\begin{prop}\label{prop:g0directed}
Let $\Delta$ be the directed right-angled building of type $(W,S)$ with prescribed thickness $(q_r)_{r \in S}$.
Let $s \in S$ and fix an $s$-tree-wall $\T$ in $\Delta$.
Let $g$ be a permutation of $Y_s \setminus \{1\}$.

Consider the following map $g_\T$ on the set of chambers of $\Delta$. Let $c$ be a chamber represented by the matrix
\[ \begin{pmatrix} s_1&\cdots& s_n \\ \alpha_{1}&\cdots& \alpha_{n} \end{pmatrix}. \]
If there is an $i \in \{1, \dots, n \}$ such that $s_i =s$ and the chambers represented by
\begin{equation}\label{eq:c1}
    \begin{pmatrix} s_1&\cdots& s_{i-1} \\ \alpha_{1}&\cdots& \alpha_{i-1} \end{pmatrix} \text{ and }
    \begin{pmatrix} s_1&\cdots& s_{i} \\ \alpha_{1}&\cdots& \alpha_{i} \end{pmatrix} \text{ are in $\T$},
\end{equation}
then $g_\T$ maps $c$ to the chamber represented by
\begin{equation}\label{eq:c2}
    \begin{pmatrix} s_1&\cdots& s_{i-1} & s & s_{i+1} & \cdots & s_n   \\ \alpha_{1}&\cdots& \alpha_{i-1} & g(\alpha_{i})  & \alpha_{i+1} & \cdots & \alpha_n \end{pmatrix}.
\end{equation}
If there is no such $i$, then $g_\T$ fixes $c$.

Then the map $g_\T$ constructed in this way is an automorphism of $\Delta$.
Moreover, if $g,h$ are two permutations of $Y_s \setminus \{ 1 \}$, then $g_\T h_\T = (gh)_\T$.
\end{prop}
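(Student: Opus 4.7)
The plan is to establish, in order: (i) $g_\T$ is well-defined on equivalence classes of matrices, so it is a genuine map on $\ch{\Delta_D}$; (ii) $g_\T$ preserves adjacency with its labels; and (iii) the identity $g_\T h_\T = (gh)_\T$, from which bijectivity (hence automorphism) follows by taking $h = g^{-1}$.

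For well-definedness, by Observation~\ref{obsrep}\eqref{obsrep:1} two matrix representatives of the same chamber differ by a sequence of elementary operations of type (2), so it suffices to check invariance of the output under a single swap of commuting generators $s_j, s_{j+1}$ with their colors $\alpha_j, \alpha_{j+1}$. If neither $s_j$ nor $s_{j+1}$ equals $s$, the position of each $s$-letter and its color is unaffected, and the output transforms by the same swap. The delicate case is $s_j = s$ (so $s_{j+1} \in s^\perp$). Here the $s$-letter moves from position $j$ to position $j{+}1$; the chambers represented by the first $j-1$ entries agree in both representatives, but the chambers represented by the first $j$ entries differ. However, those two candidate intermediate chambers both lie in the common residue of type $s \cup s^\perp$ generated by the length-$(j-1)$ chamber and the length-$(j+1)$ chamber. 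By Lemma~\ref{Lemma2.2}, the two $s$-panels crossed in the two representatives are parallel, so they determine the \emph{same} $s$-tree-wall. Hence the condition ``the intermediate chambers lie in $\T$'' holds for one representative iff it holds for the other; the color permuted by $g$ is $\alpha_j$ in both; and the two output matrices again differ by the same elementary swap. The case $s_{j+1} = s$ is symmetric.

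For adjacency preservation, let $c \adj{t} c'$ and write them with matching prefixes as in one of the three cases of Definition~\ref{def:deltaF}. In types (1) and (3) the two chambers share the shorter of their two reduced words as a prefix, so the indices $i$ produced by the defining rule agree except possibly for a new index at the final position of the deeper chamber; a new index occurs precisely when $t = s$ and the $s$-panel joining $c$ and $c'$ is a panel of $\T$, in which case both chambers have their last color permuted by $g$, and the resulting chambers remain $s$-adjacent by the description of panels in Proposition~\ref{itsbuild}. In type (2), $c$ and $c'$ share the full reduced word and differ only in $\alpha_n$; the rule either leaves both unchanged or permutes $\alpha_n$ and $\alpha_n'$ by the same $g$, which again preserves $s_n$-adjacency since the panel is identified with $Y_{s_n}$.

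For multiplicativity, the index $i$ assigned by the defining rule depends only on the reduced word of $c$ and on $\T$, not on the permutation applied; after applying $h_\T$ the resulting matrix has the same reduced word (only the entry at position $i$ changes, from $\alpha_i$ to $h(\alpha_i)$), so $g_\T$ acts by replacing $h(\alpha_i)$ by $g(h(\alpha_i)) = (gh)(\alpha_i)$, proving $g_\T h_\T = (gh)_\T$. Taking $h = g^{-1}$ yields $g_\T (g^{-1})_\T = \id$, so $g_\T$ is bijective, and combined with (ii) we conclude $g_\T \in \Aut \Delta$. The main obstacle is step (i): the interplay between elementary transpositions involving $s$ and the defining ``in $\T$'' condition requires the product decomposition of $s \cup s^\perp$-residues from Lemma~\ref{Lemma2.2} together with careful bookkeeping of the color indices; steps (ii) and (iii) are then a direct case analysis.
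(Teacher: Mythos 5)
Your argument is correct and follows essentially the same path as the paper's proof: well-definedness is checked under a single elementary transposition using the parallelism of the two $s$-panels in a common $\{s\}\cup s^\perp$-residue (your appeal to Lemma~\ref{Lemma2.2} plays the role of the paper's citation of Corollary~\ref{4.39}), panels are seen to map to panels via Proposition~\ref{itsbuild}, and bijectivity is derived from the composition law $g_\T h_\T = (gh)_\T$. One cosmetic imprecision worth noting: in the adjacency case where $t=s$ and the joining panel lies in $\T$, the shallower of the two chambers carries $s$-color $1$ and is therefore fixed by $g_\T$ (since $g$ permutes only $Y_s\setminus\{1\}$), so it is not accurate to say ``both chambers have their last color permuted by $g$''; nonetheless your conclusion that $s$-adjacency is preserved stands.
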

\begin{proof}
Notice that there can be at most one index $i$ with $s_i = s$ for which~\eqref{eq:c1} holds, since $s_1 \dotsm s_n$ is a reduced word and $\T$ is an $s$-tree-wall.

We start by showing that $g_\T$ is well defined, i.e., that our description of $g_\T$ is independent of the representation of the chamber $c$.
Assume that $c$ is represented as in the statement of the proposition.
It suffices to look at equivalence by a single elementary transposition; so assume that there is a $j \in \{1, \dots, n-1 \}$ such that $s_j$ and $s_{j+1}$ commute.
The only non-trivial cases are when condition~\eqref{eq:c1} is satisfied and either $j=i$ or $j=i-1$.
Assume that $j=i$ (the other case can then be handled analogously);
so $s_i = s$ commutes with $s_{i+1}$.

Note that the chambers represented by
\[
\begin{pmatrix} s_1&\cdots& s_{i-1} & s_{i+1} \\ \alpha_{1}&\cdots& \alpha_{i-1} &\alpha_{i+1} \end{pmatrix}
\text{ and }
\begin{pmatrix} s_1&\cdots& s_{i-1} & s_{i+1} & s_i \\ \alpha_{1}&\cdots& \alpha_{i-1} &\alpha_{i+1} & \alpha_i \end{pmatrix}
\]
are still contained in $\ch\T$ by Corollary~\ref{4.39}.
The image under $g_\T$ of $c$ using its representation obtained by applying the elementary transposition is therefore represented by
\[
\begin{pmatrix}
s_1&\cdots& s_{i-1} & s_{i+1} & s_i & s_{i+2} & \cdots & s_n \\
\alpha_{1}&\cdots& \alpha_{i-1} &\alpha_{i+1} & g(\alpha_i) & \alpha_{i+2} & \cdots & \alpha_n
\end{pmatrix},
\]
which is an equivalent representation of the image as in~\eqref{eq:c2}.
This proves that the map $g_\T$ is indeed well defined.

In order to show that $g_\T$ is a homomorphism of $\Delta$, we have to show that $t$\dash panels are mapped to $t$-panels, for all $t \in S$.
However, this is now immediately clear from the description of a $t$-panel in Proposition~\ref{itsbuild}.

Finally observe that each $g_\T$ is invertible, with inverse $(g^{-1})_\T$; this implies that $g_\T$ is an automorphism of $\Delta$.
The last statement is also clear.
\end{proof}

\section{Universal group of a right-angled building}\label{se:univ}

The goal of this section is to extend the concept of universal groups defined for regular trees by Burger and Mozes to the more general setting of right-angled buildings.
These groups will depend on the choice of a transitive permutation group for each $s \in S$.

\subsection{Definition}

Let $(W,S)$ be a right-angled Coxeter system with Coxeter diagram $\Sigma$ with index set $I$ and set of generators $S=\{s_i\}_{i\in I}$.
 For all $s\in S$, let $q_s$ be a cardinal number and $Y_s$ be a set of size $q_s$.
 Consider the right-angled building $\Delta$ of type $(W,S)$ with parameters $(q_s)_{s\in S}$, which is unique up to isomorphism (see \cite[Proposition~1.2]{HP2003}).

\begin{defn}\label{universalgroup}
For each $s \in S$, let $G^s \leq \Sym{Y_s}$ be a transitive permutation group and $h_s \colon \ch\Delta\to Y_s$ be a (weak) legal coloring of $\Delta$.
We define the \emph{universal group} of $\Delta$ \emph{with respect to} the groups $(G^s)_{s\in S}$ as
\begin{align*}
U &= U((G^s)_{s\in S}) \\
&= \{ g\in \Aut \Delta \mid (h_s|_{\Pa_{s,gc}})\circ g\circ (h_s|_{\Pa_{s,c}})^{-1} \in G^s, \text{ for all } s\in S, \\
    & \hspace*{25ex} \text{ all $s$-panels } \Pa_s, \text{ and for all chambers } c\in \Pa_s \},
\end{align*}
where $\Pa_{s,c}$ is the $s$-panel containing $c\in \ch\Delta$.
\end{defn}

If  the group $G^s$ equals $\Sym{Y_s}$, for all $s\in S$, then $U$ is the group $\Aut\Delta$ of all type preserving automorphisms of the right-angled building since we are assuming that the groups $G^s$ are transitive.

\begin{lem}\label{sameuniversal}
For each $s\in S$, let $(h_s)_{s\in S}$ and $(h_s')_{s\in S}$ be two $G^s$-equivalent colorings.
Then the universal groups constructed using $(h_s)_{s\in S}$ and $(h_s')_{s\in S}$ coincide.
\end{lem}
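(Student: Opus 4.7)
The plan is to fix an arbitrary $g \in \Aut \Delta$ and show that the defining condition for membership in $U$ does not change when we replace the colorings $(h_s)_{s \in S}$ by $(h_s')_{s \in S}$. Concretely, for each $s \in S$ and each $s$-panel $\Pa$, I would denote the local action
\[ \sigma_s(g, \Pa) := (h_s|_{g.\Pa}) \circ g \circ (h_s|_\Pa)^{-1} \colon Y_s \to Y_s, \]
and the analogous $\sigma_s'(g, \Pa)$ using $h_s'$. Both $\sigma_s(g,\Pa)$ and $\sigma_s'(g,\Pa)$ are elements of $\Sym{Y_s}$, since each $h_s|_{\Pa}$ and $h_s'|_\Pa$ is a bijection from $\Pa$ onto $Y_s$ (by condition (C) of Definition~\ref{coloring}). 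The goal is to check that $\sigma_s(g,\Pa) \in G^s$ if and only if $\sigma_s'(g,\Pa) \in G^s$; membership in $U$ for $g$ will then be equivalent under the two choices of colorings.

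By the definition of $G^s$-equivalence (Definition~\ref{Gequivalent}), applied to the panels $\Pa$ and $g.\Pa$ separately, there exist elements $\alpha, \beta \in G^s$ such that
\[ h_s'|_\Pa = \alpha \circ h_s|_\Pa \quad \text{and} \quad h_s'|_{g.\Pa} = \beta \circ h_s|_{g.\Pa}. \]
Substituting into the definition of $\sigma_s'(g,\Pa)$ gives
\[ \sigma_s'(g,\Pa) = \beta \circ (h_s|_{g.\Pa}) \circ g \circ (h_s|_\Pa)^{-1} \circ \alpha^{-1} = \beta \, \sigma_s(g,\Pa) \, \alpha^{-1}. \]
Since $G^s$ is a subgroup of $\Sym{Y_s}$ and $\alpha, \beta \in G^s$, the element $\sigma_s'(g,\Pa)$ lies in $G^s$ if and only if $\sigma_s(g,\Pa)$ does.

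There is no real obstacle here: the statement is essentially the observation that the local action is defined up to left and right multiplication by elements of $G^s$, so that whether it lands in $G^s$ or not is independent of these normalizations. The only thing worth being careful about is that the elements $\alpha, \beta$ depend on the panel (and may differ for $\Pa$ and $g.\Pa$), but this causes no trouble because we only need the membership in the group $G^s$, not any specific value.
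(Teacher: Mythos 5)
Your proof is correct and follows essentially the same approach as the paper: both use the $G^s$-equivalence of the colorings on the two panels $\Pa$ and $g.\Pa$ to show that the local actions $\sigma_s(g,\Pa)$ and $\sigma_s'(g,\Pa)$ differ by left and right multiplication by elements of $G^s$, hence one lies in $G^s$ iff the other does. Your version is slightly streamlined in presenting it as a clean equivalence for an arbitrary $g \in \Aut\Delta$ rather than as an implication followed by a symmetry remark, but the argument is the same.
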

\begin{proof}
Let $g \in U^{h_s}$ be an element of the universal group constructed with the coloring $h_s$.
Let $c\in \ch\Delta$ and $\Pa$ be the $s$-panel of $c$.
By definition we know that $h_s\circ g\circ (h_s)^{-1}|_{\Pa} \in G^s$.
Further, since $h_s$ and $h_s'$ are $G^s$-equivalent colors we know that $h_s|{\Pa}=g_1\circ h_s'|_{\Pa}$ and $h_s|{{\Pa}_{s,gv}}=g_2\circ h_s'|_{{\Pa}_{s,gv}}$ with $g_1, g_2\in G^s$.
Thus
\[ h_s\circ g\circ (h_s)^{-1}|_{\Pa}=g_2\circ h_s'\circ g\circ (h_s')^{-1}\circ g_1^{-1}|_{\Pa}\in G^s, \]
which implies that $h_s'\circ g\circ (h_s')^{-1}|_{\Pa}\in G^s$.
Hence $g$ is an element of the universal group $U^{h_s'}$ constructed using $h_s'$.
Exchanging the colorings in the reasoning, we obtain that the two universal groups coincide.
\end{proof}

This lemma implies, in view of Proposition~\ref{equivcolor}, that in the definition of the universal group restricting to legal colorings, weak legal colorings or directed legal colorings with respect to a fixed chamber, all being $G^s$-equivalent (see Proposition~\ref{equivcolor} and Observation~\ref{obs:form}), all yield the same universal group.

\begin{rem}
The definition of a universal group for a right-angled building also makes sense when the groups $G^s$ are not transitive.
For instance, if the groups $G^s$ are all trivial and $\Delta$ is locally finite, then $U$ is a lattice in $\Aut\Delta$ since it acts freely and cocompactly on $\Delta$.
However, only when all groups $G^s$ are transitive, the universal group will be chamber transitive, which is a property we will need very often.
\end{rem}

\subsection{Basic properties}
In this section we gather some basic properties concerning universal groups for right-angled buildings.

We start by looking at the action on panels.
\begin{defn}
Let $H\leq \Aut\Delta$ and $\Pa$ be a panel of $\Delta$.
We define the \emph{local action} of $H$ at the panel $\Pa$ as the permutation group formed by restricting the action of $H_{\{\Pa\}}$ (the setwise stabilizer of $\Pa$ in $H$) to the panel $\Pa$.
\end{defn}

\begin{lem}\label{lem:local}
The local action of the universal group $U$ on an $s$-panel ($s \in S$) is isomorphic to the transitive group $G^s$.
\end{lem}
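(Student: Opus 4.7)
The plan is to prove the lemma by two opposite inclusions under the identification of $\Pa$ with $Y_s$ via the bijection $h_s|_{\Pa}$. The inclusion $U|_{\Pa} \leq G^s$ is immediate from the definition of $U$: if $g \in U$ setwise stabilizes an $s$-panel $\Pa$, then $\Pa_{s,c} = \Pa_{s,g.c} = \Pa$ for every $c \in \Pa$, so the defining condition for $U$ forces $(h_s|_{\Pa}) \circ g|_{\Pa} \circ (h_s|_{\Pa})^{-1}$ to lie in $G^s$, which is exactly the induced permutation transported to $Y_s$.

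For the reverse inclusion I would realize an arbitrary $\pi \in G^s$ as a local action by invoking Proposition~\ref{existenceconjugate}. Starting from the given set of legal colorings $(h_{s'})_{s' \in S}$, I would define a second set $(h_{s'}^2)_{s' \in S}$ by $h_s^2 := \pi \circ h_s$ and $h_{s'}^2 := h_{s'}$ for every $s' \neq s$; property (L) is clearly preserved under post-composition with a fixed bijection, so $(h_{s'}^2)$ is again a set of legal colorings, and by construction it is $G^s$-equivalent to the original one. Pick any $c_0 \in \Pa$ and let $c_\pi$ be the unique chamber of $\Pa$ with $h_s(c_\pi) = \pi(h_s(c_0))$. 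For each $s' \neq s$, property (L) applied to the $S \setminus \{s'\}$-residue containing $\Pa$ gives $h_{s'}(c_\pi) = h_{s'}(c_0) = h_{s'}^2(c_0)$, and the equality $h_s(c_\pi) = \pi(h_s(c_0)) = h_s^2(c_0)$ is immediate from the definition of $h_s^2$. So the hypothesis of Proposition~\ref{existenceconjugate} is met, yielding an element $g \in \Aut\Delta$ with $g.c_0 = c_\pi$ and $h_{s'}^2 = h_{s'} \circ g$ for every $s' \in S$.

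Finally I would check that this $g$ lies in $U$ and that $g|_{\Pa}$ realizes $\pi$. For any $s'' \in S$, any $s''$-panel $\mathcal{Q}$, and any $c \in \mathcal{Q}$, the map $(h_{s''}|_{\Pa_{s'', g.c}}) \circ g \circ (h_{s''}|_{\mathcal{Q}})^{-1}$ simplifies via the identity $h_{s''}^2 = h_{s''} \circ g$ to $h_{s''}^2 \circ (h_{s''}|_{\mathcal{Q}})^{-1}$; this equals $\pi$ when $s'' = s$ and the identity when $s'' \neq s$, and both lie in $G^{s''}$, so $g \in U$. Since $g.c_0 = c_\pi \in \Pa$, the element $g$ stabilizes $\Pa$ setwise, and its induced action transported by $h_s|_{\Pa}$ is exactly $\pi$. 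I do not anticipate any serious obstacle; the only mildly subtle step is matching the $s'$-colors at $c_0$ and $c_\pi$ for $s' \neq s$, which succeeds precisely because property (L) makes $h_{s'}$ constant on each $s$-panel whenever $s' \neq s$.
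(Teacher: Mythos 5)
Your proof is correct and follows essentially the same strategy as the paper's: the inclusion $U|_{\Pa} \leq G^s$ comes directly from the definition of $U$, and the reverse inclusion is obtained by modifying the $s$-coloring by a fixed $\pi \in G^s$, matching colors at the source and target chambers, and invoking Proposition~\ref{existenceconjugate} to produce a color-respecting automorphism that visibly lies in $U$ and realizes $\pi$ on $\Pa$. The only cosmetic difference is that the paper post-composes with $g_r^{-1}$ and applies Proposition~\ref{existenceconjugate} in the opposite direction, while you post-compose with $\pi$; these are equivalent.
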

\begin{proof}
Consider some chamber $c$ and pick an $r\in S$.
The chambers in the $r$-panel $\Pa :=\Pa_{r,c}$ containing $c$ are parametrized by $Y_r$ via the coloring.
From the definition of the universal group it is immediate that the local action on the $r$-panel is a subgroup of $G^r$.

We will now show that this local action is in fact $G^r$.
Let $g_r \in G^r$.
Let $c'$ be the chamber in $\Pa$ with color $g_r \circ h_r(c)$.
Denote by $(h_s')_{s \in S}$ the set of legal colorings obtained from $(h_s)_{s \in S}$ by replacing the coloring $h_r$ by $g_r^{-1} \circ h_r$ and leaving the other colorings unchanged. Note that $h_r'$ is again a legal coloring, as  it will still satisfy the defining property (L) of legal colorings.
As $h_s(c)$ equals $h_s'(c')$ for every $s \in S$, we can apply Proposition~\ref{existenceconjugate} to find an automorphism $g \in \Aut\Delta$ mapping $c$ to $c'$.

The automorphism $g$ acts locally as the identity for $t$-panels where $t \in S \setminus \{ r\}$, and as $g_r$ on $r$-panels.
Hence $g$ is an element of the universal group and it has the desired action on the panel $\Pa_{r,c}$, whence the claim.
\end{proof}

Assume that the set $Y_s$ contains an element called $1$, and let $G^s_0$ be the stabilizer of this element in $G^s$.

\begin{prop}\label{prop:nicegenerators}
Let $\Delta$ be the directed right-angled building with prescribed thickness $(q_r)_{r \in S}$ (see Definition~\ref{frab}) and base chamber $c_0$. Consider the universal group $U$ with respect to the standard colorings $(f_s)_{s\in S}$ of $\Delta$ directed with respect to $c_0$.
Let $\T$ be an $s$\dash tree wall of $\Delta$.

For each $g \in G^s_0$, let $g_\T$ be as in Proposition~\ref{prop:g0directed}.
Then $\{ g_\T \mid g \in G^s_0 \}$ is a subgroup of $U$ fixing the chambers of the $s$-wing with respect to $\T$ containing the chamber $c_0$.
This subgroup acts locally as $G^s_0$ on each $s$-panel of the tree-wall~$\T$.
\end{prop}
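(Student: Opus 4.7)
The plan is to establish four things in turn: (i) $\{g_\T \mid g \in G^s_0\}$ is a subgroup of $\Aut\Delta$; (ii) every $g_\T$ lies in $U$; (iii) this subgroup pointwise fixes the $s$-wing $X_s(c_\T)$ containing $c_0$, where $c_\T := \proj_\T(c_0)$; and (iv) its induced local action on each $s$-panel of $\T$ is the natural action of $G^s_0$ on $Y_s$. Assertion (i) is immediate from the identity $g_\T h_\T = (gh)_\T$ established at the end of Proposition~\ref{prop:g0directed}.

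For (iii), I would take $c \in X_s(c_\T)$ and any minimal gallery $\gamma$ from $c_0$ to $c$. By the gate property, $\gamma$ first enters $\T$ at $c_\T$, then traverses a subgallery inside $\T$ ending at $\proj_\T(c)$, and finally (if $c \notin \T$) leaves $\T$ via a step of type outside $s \cup s^\perp$. Iterated projection combined with $c \in X_s(c_\T)$ yields $c_\T = \proj_{\Pa_{s,c_\T}}(c) = \proj_{\Pa_{s,c_\T}}(\proj_\T(c))$, so Lemma~\ref{le:wings} places $\proj_\T(c)$ and $c_\T$ in the same $s$-wing of $\T$; through the product decomposition $\T = \Pa_{s,c_\T} \times \Ra_{s^\perp,c_\T}$ of Lemma~\ref{Lemma2.2}, this means they lie in a common $s^\perp$-residue, and the portion of $\gamma$ inside $\T$ therefore uses only $s^\perp$-steps. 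Hence no index $i$ of the associated reduced representation satisfies both $s_i = s$ and the $\T$-containment condition of Proposition~\ref{prop:g0directed}, so $g_\T$ fixes $c$.

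For (ii) and (iv), I would perform a local analysis on a single $t$-panel $\Pa$. Set $c := \proj_\Pa(c_0)$ and fix a reduced representation $(s_1, \ldots, s_n; \alpha_1, \ldots, \alpha_n)$ of $c$; every other chamber of $\Pa$ then has a representation obtained by appending a single column $(t; \beta)$ with $\beta \in Y_t \setminus \{1\}$. The only potentially new bad index arising from the appended column is $n+1$, which is bad exactly when $t = s$ and $\Pa \subseteq \T$. In that tree-wall case, Lemma~\ref{le:wings} gives $c = \proj_\Pa(c_\T) \in X_s(c_\T)$, so by (iii) $c$ is fixed; each other $d \in \Pa$ has $n+1$ as its unique bad index and is sent by $g_\T$ to the chamber of $\Pa$ whose last colour is $g(\beta)$, so under the bijection $f_s|_\Pa \colon \Pa \to Y_s$ the local action is precisely $g \in G^s_0$, proving (iv) and ensuring membership in $G^s$. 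In every remaining case ($t = s$ with $\Pa \not\subseteq \T$, or $t \neq s$), position $n+1$ is never bad, so every chamber of $\Pa$ shares with $c$ the same bad index (or none at all); $g_\T$ then either fixes $\Pa$ pointwise or alters a single middle entry $\alpha_i$ of every representation, leaving the word part $s_1 \cdots s_n$ and, when present, the appended column $(t; \beta)$ untouched. Since by Definition~\ref{SFC} $f_t$ depends only on this data, $f_t$ agrees on $\Pa$ and $g_\T(\Pa)$ at matching chambers, so the induced local permutation is the identity, an element of $G^t$.

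The hard part will be the colour-preservation check in the preceding paragraph: one must observe that $F_t$ depends only on the word part of a chamber's representation and (when the last letter equals $t$) on its final colour, both of which $g_\T$ preserves in precisely the cases where $\Pa$ is actually displaced. Once this has been noted, combining (i)--(iv) delivers the proposition.
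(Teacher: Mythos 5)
Your decomposition into (i)--(iv) mirrors the paper's proof, and your treatment of (ii) and (iv) is a more careful spelling-out of the paper's ``it is clear that $g_\T$ fixes the colorings of the chambers in $\Pa$'' for $t\neq s$; the essential observation that $F_t$ depends only on the word part of the representation together with the final colour when the word ends in $t$ is the same in both. Step (iii), however, opens with a claim that is false as stated: the gate property does \emph{not} imply that a minimal gallery from $c_0$ to $c$ first enters $\T$ at $c_\T$ and passes through $\proj_\T(c)$. For instance, take $c$ $s$-adjacent to $c_0$ with $c_0\notin\T$ (so $c_\T\neq c_0$): then $c\in X_s(c_\T)$, but the unique one-step minimal gallery $c_0\adj{s}c$ never visits $\T$ at all, and the concatenation $c_0\to c_\T\to\proj_\T(c)\to c$ has length $1+0+2=3>1$, so it is not minimal. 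More generally the gate property controls distances from $c_0$ to chambers \emph{of} $\T$; it gives no information about whether $c_\T$ or $\proj_\T(c)$ lies on a minimal gallery to an arbitrary $c$.

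The repair is available and is already half-present in your write-up: since $c_0,c\in X_s(c_\T)$ and wings are combinatorially convex (\cite[Proposition 3.2]{Caprace}, cited in the paper), the entire gallery $\gamma$ stays in $X_s(c_\T)$; combined with the decomposition $\ch\T=\ch{\Pa_{s,c_\T}}\times\ch{\Ra_{s^\perp,c_\T}}$ of Lemma~\ref{Lemma2.2}, the set $\T\cap X_s(c_\T)$ is exactly the single $s^\perp$-residue $\Ra_{s^\perp,c_\T}$, so whatever (possibly empty) portion of $\gamma$ lies in $\T$ consists only of $s^\perp$-steps, and no bad index can occur. After this substitution your argument is correct and reaches the same end as the paper, which instead shows directly from the matrix that a bad index $i$ would place $c$ and $c_0$ in different $s$-wings of $\T$.
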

\begin{proof}
Let $g \in G^s_0$ and let $g_\T \in \Aut\Delta$ be as in Proposition~\ref{prop:g0directed}.
We will first prove that $g_\T$ is an element of the universal group $U$.
Let $\Pa$ be a $t$-panel for some $t \in S$.
It is clear that the automorphism $g_\T$ fixes the colorings of the chambers in $\Pa$ unless $t = s$.

Assume that $t=s$. In the case that $\Pa$ is a panel in the $s$-tree-wall $\T$, we have $f_s|_{g_\T.\Pa}\circ g_\T \circ f_s|_{\Pa}=g\in G^s_0$.
If $\Pa \not\in \T$, then this permutation of the colors is the identity in $G^s$.
Hence $g_\T \in U$.

Next, we prove that $g_\T$ fixes the wing $X_s(\proj_\T(c_0))$.
Let \[c=\begin{pmatrix} s_1&\cdots& s_{n} \\ \alpha_{1}&\cdots& \alpha_{n} \end{pmatrix} \]
be a chamber in $X_s(\proj_\T(c_0))$, so that $\proj_\T(c) = \proj_\T(c_0)$.
If there were an $i \in \{1, \ldots, n\}$ such that $s_i=s$ and
\[ \begin{pmatrix} s_1&\cdots& s_{i-1} \\ \alpha_{1}&\cdots& \alpha_{i-1} \end{pmatrix} \text{ and }
    \begin{pmatrix} s_1&\cdots& s_{i} \\ \alpha_{1}&\cdots& \alpha_{i} \end{pmatrix} \text{ are in $\T$}, \]
then
\[\proj_\T(c)= \begin{pmatrix} s_1&\cdots& s_{i-1} & s \\ \alpha_{1}&\cdots& \alpha_{i-1} &\alpha_i \end{pmatrix}
    \neq \begin{pmatrix} s_1&\cdots& s_{i-1} \\ \alpha_{1}&\cdots& \alpha_{i-1} \end{pmatrix}=\proj_\T(c_0),\]
which is a contradiction; hence there is no such $i$, and therefore $g_\T$ fixes $c$ by definition.

By the last statement of Proposition~\ref{prop:g0directed}, the set $G = \{ g_\T \mid g \in G^s_0 \}$ forms a group.
By construction, $G$ acts locally as $G^s_0$ on each $s$-panel of~$\T$.
\end{proof}

We will prove in the following proposition that different choices of legal colorings give rise to conjugate subgroups of $\Aut\Delta$;
this will allow us to omit an explicit reference to the colorings in our notation for the universal groups.

\begin{prop}[Properties of $U$]\label{propU}
Let $\Delta$ be a right-angled building with prescribed thickness $(q_s)_{s\in S}$.
Let $U$ be the universal group of $\Delta$ with respect to the finite transitive permutation groups $(G^s\leq\Sym{Y_s})_{s\in S}$.
Then the following hold.
\begin{enumerate}
\item The subgroup $U \subset \Aut\Delta$ is independent of the choice of the set of legal colorings up to conjugacy.
\item $U$ is a closed subgroup of $\Aut \Delta$.
\item $U$ is a chamber-transitive subgroup of $\Aut\Delta$.
\item\label{U:universal} $U$ is universal for $(G^s)_{s\in S}$, i.e., if $H$ is a closed chamber-transitive subgroup  of $\Aut \Delta$ for which the local action on each $s$-panel is permutationally isomorphic to the group $G^s$, for all $s\in S$, then  $H$ is conjugate in $\Aut\Delta$ to a subgroup of $U$.
\item If $\Delta$ is locally finite, then $U$ is compactly generated.
\end{enumerate}
\end{prop}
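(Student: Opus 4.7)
My plan is to prove the five parts of Proposition~\ref{propU} in order; parts (1), (2), (3), and (5) are routine once one has Lemma~\ref{lem:local} and the color-transport results of Section~\ref{ss:colorings}, and part (4) is the main obstacle. For (1), given two sets of legal colorings $(h_s)_{s \in S}$ and $(h'_s)_{s \in S}$, I would invoke Proposition~\ref{elements} to find a chamber $c$ with $h_s(c) = h'_s(c_0)$ for every $s$, then apply Proposition~\ref{existenceconjugate} to produce $g \in \Aut\Delta$ with $g.c_0 = c$ and $h'_s = h_s \circ g$ for all $s$; unfolding Definition~\ref{universalgroup} shows that conjugation by $g$ carries one universal group onto the other. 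For (2), the defining condition $(h_s|_{\Pa_{s,gc}}) \circ g \circ (h_s|_{\Pa_{s,c}})^{-1} \in G^s$ depends only on the restriction of $g$ to the panel $\Pa_{s,c}$; since the permutation topology is the topology of pointwise convergence on $\ch\Delta$, the condition is preserved under limits and $U$ is closed in $\Aut\Delta$. For (3), Lemma~\ref{lem:local} supplies, on every $s$-panel, a local action realising the transitive group $G^s$; composing elements of $U$ along a minimal gallery between any two chambers then yields chamber transitivity.

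Part (4) is the main obstacle. The strategy is to produce, starting from $H$, a legal coloring $(h_s)$ of $\Delta$ and an element $g_0 \in \Aut\Delta$ such that the local action of $g_0 H g_0^{-1}$ on every $s$-panel coincides exactly with $G^s \leq \Sym{Y_s}$; combined with (1), this will place some conjugate of $H$ inside $U$. I would first fix any legal coloring and, by composing each $h_s$ with an appropriate element of $\Sym{Y_s}$, arrange that the local action of $H$ on $\Pa_{s,c_0}$ is literally $G^s$, which is possible because this local action is \emph{a priori} $\Sym{Y_s}$-conjugate to $G^s$ by hypothesis. For an arbitrary $s$-panel $\Pa$, chamber transitivity of $H$ yields $h \in H$ with $h.\Pa_{s,c_0} = \Pa$, and the local action of $H$ on $\Pa$ becomes $\tau_h G^s \tau_h^{-1}$ with $\tau_h = h_s|_\Pa \circ h \circ h_s|_{\Pa_{s,c_0}}^{-1} \in \Sym{Y_s}$. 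The hard step will be to adjust the coloring globally, while preserving legality, so that every $\tau_h$ lies in $G^s$; here I would induct along the tree-wall tree $\Gamma_s$ (Proposition~\ref{tree-wall tree}), using parallelism of all panels in a single $s$-tree-wall (Corollary~\ref{4.39}) and the gate property to propagate adjustments consistently from one tree-wall to an adjacent one. The extra flexibility provided by Lemma~\ref{sameuniversal} and Proposition~\ref{equivcolor}, namely that any weak legal coloring which is $G^s$-equivalent to a given one produces the same universal group, will be crucial for making the inductive step go through.

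For (5), assume $\Delta$ is locally finite. Then $(\Aut\Delta)_{c_0}$ is a compact open subgroup of $\Aut\Delta$, hence its closed subgroup $U_{c_0} = U \cap (\Aut\Delta)_{c_0}$ is a compact open subgroup of $U$. For each $s \in S$, Lemma~\ref{lem:local} yields an element $\gamma_s \in U$ sending $c_0$ to one of its $s$-neighbours. Chamber transitivity from (3) combined with an induction on gallery length shows that every $g \in U$ factors as a product of some $\gamma_s$'s times an element of $U_{c_0}$; since $S$ is finite, $U$ is therefore generated by the compact set $U_{c_0} \cup \{\gamma_s \mid s \in S\}$ and is compactly generated.
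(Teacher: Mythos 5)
Parts (1), (2) and (3) are fine and mirror the paper's argument. Part (4) is underspecified in a way that would cause trouble if read literally: rescaling $h_s$ separately on each $s$-tree-wall can only ever yield a \emph{weak} legal $s$-coloring, never a legal one, since condition (L) of Definition~\ref{colorings} couples chambers lying in different tree-walls of the same type. The paper in fact builds a weak legal coloring directly --- one bijection per $s$-tree-wall, transported inside the tree-wall by projections between parallel panels and chosen so that the local action of $H$ reads off as $G^s$ on a base panel of that tree-wall --- and then appeals to Lemma~\ref{sameuniversal}; there is no need to ``propagate adjustments consistently'' between adjacent tree-walls, the tree-wall distance serving only to enumerate them. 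Your sketch should be recast in these terms.

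The genuine gap is in part (5). You choose one $\gamma_s\in U$ per type $s\in S$ sending $c_0$ to a single fixed $s$-neighbour, and assert that an induction on gallery length gives $U=\langle U_{c_0},\gamma_s : s\in S\rangle$. The base case of that induction already requires the orbit of $c_0$ under $\langle U_{c_0},\gamma_s : s\in S\rangle$ to contain \emph{every} chamber adjacent to $c_0$, and that is not supplied by your generators: $U_{c_0}$ and $\gamma_s$ both stabilise the panel $\Pa_{s,c_0}$, and their local actions on it generate only $\langle G^s_{h_s(c_0)},\,g_s\rangle$, with $g_s$ the local action of $\gamma_s$, which can be a proper intransitive subgroup of $G^s$. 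For example, if $G^s$ is the Klein four-group acting regularly on $Y_s=\{1,2,3,4\}$, then $G^s_{h_s(c_0)}=1$ and $\langle g_s\rangle$ has order $2$ no matter how $\gamma_s$ is chosen, so this leaves two of the three $s$-neighbours of $c_0$ unaccounted for. The paper avoids the problem by taking one element $g_v\in U$ with $g_v.v=c_0$ for \emph{each} of the finitely many chambers $v$ adjacent to $c_0$; with the compact set $U_{c_0}\cup\{g_v\}$, the induction on gallery distance is then immediate. You should do the same.
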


\begin{proof}
\begin{enumerate}
	\item
	Let $(h^1_s)_{s\in S}$ and $(h^2_s)_{s\in S}$ be distinct sets of legal colorings.
	We want to show that the universal groups $U^{1}$ and $U^{2}$ defined using the legal colorings $(h_s^1)_{s\in S}$ and $(h_s^2)_{s\in S}$, respectively, are conjugate in $\Aut\Delta$.
	By Proposition~\ref{existenceconjugate} we know that there exists $g \in \Aut\Delta$ such that $h_s^2=h_s^1 \circ g$ for all $s\in S$.
	If $a\in U^2$, we know that
\[ \begin{array}{ll}
 h_s^2 \circ a \circ (h_s^2)^{-1} \in G^s
\iff h_s^1\circ g \circ a \circ \inv g \circ (h_s^1)^{-1} \in G^s, \ \text{ for all } s\in S.\\
\end{array} \]
	Hence $a^g \in U^1$ and $U^1$ and $U^2$ are conjugate in $\Aut\Delta$.
	\item
	To prove Statement 2, we will show that $\Aut \Delta \setminus U$ is open.
	Consider \mbox{$a\in \Aut \Delta \setminus U$}.
	Then there exists $s\in S$, an $s$-panel $\Pa_s$ and $v\in \Pa_s$ such that
\[ h_s|_{\Pa_{s, av}}\circ a \circ (h_s|_{\Pa_{s,v}})^{-1} \not\in G^s. \]
	But then the set $\{ a'\in \Aut \Delta \mid a'|_{\Pa_{s,v}}=a|_{\Pa_{s,v}} \}$ is contained in $\Aut \Delta \setminus U$ and it is a coset of the stabilizer of $\Pa_{s,v}$. Hence $\Aut \Delta \setminus U$ is open.
	\item
	Next we show that $U$ is a chamber-transitive group.
	Since $\Delta$ is connected, it is enough to prove the result for two adjacent chambers.
	Let $v_1$ and $v_2$ be two adjacent chambers (vertices) in the building $\Delta$,
	i.e. there exists an $s\in S$ such that $v_1$ and $v_2$ are in the same $s$-panel $\Pa$.
	By Lemma~\ref{lem:local}, the action of $U_{\{ \Pa\}}$ on $\Pa$ is isomorphic to the group $G^s$.
	Since $G^s$ is assumed to be transitive, there is an element in $G^s$ mapping $v_1$ to $v_2$.
	Hence there is an element $g\in U$ such that $gv_1=v_2$.
	Thus $U$ is a chamber-transitive subgroup of $\Aut\Delta$.
	\item
	Now we prove that the group $U$ is universal.
	Let $H$ be a closed chamber-transitive subgroup  of $\Aut \Delta$ for which the local action on each $s$-panel is permutationally isomorphic to the group $G^s$, for all $s\in S$.
	We will construct weak legal colorings $(h_s)_{s\in S}$ such that $H$ is a subgroup of $U^{h_s}$, i.e., the universal group defined using the set of weak legal colorings $(h_s)_{s\in S}$.

	Let us fix $c_0 \in \ch\Delta$ and  $s\in S$.
	We choose a bijection $h_s^0  \colon  \Pa_{s, c_0} \to Y_s$ such that
\[ h_s^0 \circ H_{\{ \Pa_{s,c_0}\}} \circ (h_s^0)^{-1}|_{\Pa_{s,c_0}} = G^s, \]
where $H_{\{ \Pa_{s,c_0}\}}$ is the setwise stabilizer of $\Pa_{s,c_0}$ in $H$.

	Let $\T_0=\T_{s,c_0}$.
	For each $s$-panel $\Pa$ in the $s$-tree-wall $\T_{0}$ we define
\[ h_s(c)=h_s^0(\proj_{\Pa_{s,c_0}}(c)), \text{ for every } c\in \Pa. \]
	With this procedure we have colored with  a color from $Y_s$ all the vertices in $\T_{0}$.
	Further, since the chambers of parallel panels are in bijection through the projection map, we have
$h_s\circ H|_{\Pa} \circ (h_s|_{\Pa})^{-1} =G^s \text{ for each  panel } \Pa \text{ of } \T_0. $

	Assume that the coloring $h_s$ is defined in the $s$-tree-walls of $\Delta$ at tree-wall distance $\leq n-1$ from $\T_{0}$.

	Let $\T$ be an $s$-tree-wall at tree-wall distance $n$ from $\T_{0}$.
	Fix $c\in\ch\T$. Define a bijection  $h_s^n \colon  \Pa_{s,c} \to Y_s$ such that
    $h_s^n \circ H_{\{ \Pa_{s,c}\}}  \circ (h_s^n)^{-1} = G^s$.
    For all chambers $v\in \ch\T$, we define \[ h_s(v)=h_s^n(\proj_{\Pa_{s,c}}(v)). \]

	In this fashion  we color all the chambers of the building with the colorings $(h_s)_{s\in S}$.
	Now we have to prove that $h_s$ is a weak legal coloring, for each $s\in S$.
	It is clear that $h_s$ is a coloring since it was defined as a bijection in a panel of each $s$-tree-wall
    and parallel panels in $\T$ are in bijection through the projection map.

	To prove that $h_s$ is a weak legal coloring, let $\Pa_1$ and $\Pa_2$ be $s$-panels in a common $s$-tree-wall $\T$.
	Let $c_1 \in \Pa_1$ and $c_2=\proj_{\Pa_2}(c_1)$.
	Let $c=\proj_{\Pa}(c_1)$, where $\Pa$ is the $s$-panel of $\T$ that was used to define $h_s$ in the recursive process.
	Then, by Proposition~\ref{3panels}, $\proj_{\Pa}(c_2)=c$ and hence $h_s(c_1)=h_s(c)=h_s(c_2)$.
	Thus $h_s$ is a weak legal coloring.

	Further, if $g\in H$ then $h_s\circ g\circ (h_s|_{\Pa})^{-1} \in G^s$ for every $s$-panel $\Pa$ and every $s\in S$, by the construction of the weak legal colorings $(h_s)_{s\in S}$.
	Therefore $H$ is a subgroup of $U^{h_s}$.
	Hence $U$ is the largest vertex-transitive closed subgroup of $\Aut\Delta$ which acts locally as the groups $(G^s)_{s\in S}$.
	\item
	Finally, we prove that $U$ is compactly generated when $\Delta$ is locally finite.
	Let $c\in \ch\Delta$. The group $U_c$ is a compact open subgroup of $U$ by the definition of the permutation topology.
	Let $\{v_1, \dots, v_n\}$ be the set of chambers of $\Delta$ adjacent to $c$.
	Since $U$ is chamber-transitive, for each $v_i$, there exists an element $g_i \in U$ such that $g_iv_i=c$.
    Let $G = \{g_1, \dots, g_n \}$.

	Let $g\in U$. We claim that there exists some $g'\in \langle S\rangle$ such that $g'gc=c$.
	This is proved by induction on the discrete distance from $c$ to $gc$.
	If $\dist c {gc}=1$, this follows from the definition of $G$.
	Assume that the claim holds if $\dist c {gc} \leq n$.
	If $\dist c{gc}=n+1$, let $\gamma=(c, v, \dots, gc)$ be a minimal gallery from $c$ to $gc$.
	We have $\dist c v =1$ therefore there is $\overline{g}\in G$ such that $\overline{g}v=c$.
	As $\dist v{gc}=n$ we have $\dist c {\overline{g}gc} =n$.
	Hence there exists $g^*\in \langle G \rangle$ such that $g^*\overline{g}gc=c$ and $g^*\overline{g}\in \langle G \rangle$.
	But then $g^*\overline{g}g\in U_c$.
	So we conclude that the compact subset $U_c \cup G$ generates the group $U$.
\qedhere
\end{enumerate}
\end{proof}

\begin{rem}
    As the referee observed, the maximal subgroups with a prescribed local action from
    Proposition~\ref{propU}\eqref{U:universal} could be compared with minimal subgroups with the same prescribed local action.

    More precisely, let $U$ be the universal group for $(G^s)_{s \in S}$ (still assumed transitive)
    and for each $s \in S$, let $G^s_0$ be a point stabilizer in $G^s$, as before.
    For each {\em spherical} subset $J \subseteq S$, we define
    \[ P_J := \prod_{s \in J} G^s \times \prod_{s \in S \setminus J} G^s_0 . \]
    Let $B := P_\emptyset$.
    Then the groups $(P_J)$, where $J$ runs over all spherical subsets of $S$, form a simple complex of groups.
    Let $M$ denote the direct limit of this complex (which is called the {\em amalgamated sum} in \cite{Tits-amal}),
    and let $C$ denote the chamber system $C(M; B; (P_s)_{s \in S})$ as in \cite{Tits-amal}.
    Then we can apply \cite[Section 2.1 and Theorem 1]{Tits-amal} (see also \cite[Theorem 4.2]{DavisCat0})
    to conclude that $C$ is a building (which must then be isomorphic to $\Delta$),
    that $M$ acts chamber-transitively on $\Delta$,
    and that the stabilizers of the spherical $J$-residues are conjugate to $P_J$.
    In particular, the induced action on each $s$-panel is permutationally isomorphic to~$G^s$.
    This group $M$ is then a minimal subgroup with prescribed local actions $(G^s)_{s \in S}$;
    by Proposition~\ref{propU}\eqref{U:universal}, $M$ is contained in $U = U((G^s)_{s\in S})$.


    When $G^s$ is finite for each $s$, this group $M$ is a cocompact lattice of the universal group $U$.

    (We thank Pierre-Emmanuel Caprace for his help in formulating this remark.)
\end{rem}

\subsection{Extending elements of the universal group}

Let $\Delta$ be a right-angled building of type $(W, S)$ with parameters $(q_s)_{s\in S}$.
For each $s\in S$, let \mbox{$h_s \colon \ch\Delta \to Y_s$} be a legal coloring and let $G^s \leq \Sym{Y_s}$ be a transitive permutation group.
Consider the universal group $U$ of $\Delta$ with respect to the groups $(G^s)_{s\in S}$.

\begin{defn}
If $B$ is a connected subset of the building $\Delta$ containing $c_0$, and $\T$ is an $s$-tree-wall of the building such that $B$ is not entirely in one wing of $\T$, we say that $B$ \emph{crosses} $\T$.
Let $c$ be the projection of $c_0$ on such a tree-wall $\T$.
We call the Weyl distance $\delta(c,c_0)$ the \emph{distance between $c_0$ and $\T$}.
\end{defn}

Before proceeding we mention the following technical lemma.

\begin{lem}[{\cite[Lemma 5.3]{Caprace}}]\label{CapraceP}
Let $n>0$ be an integer, let $C, W$ be sets and $\delta \colon C^n \to W$ be a map.
Let $G$ denote the group of all permutations $g\in \Sym C$ such that $\delta(g.x_1, \dots , g.x_n)=\delta(x_1, \dots ,x_n)$ for all $(x_1, \dots , x_n)\in C^n$.
Let moreover $(V_s)_{s\in S}$ be a collection of groups indexed by a set $S$, and for all $s\in S$, let $\varphi_s \colon  V_s \to G$ be an injective homomorphism such that for all $s\neq r$, the subgroups $\varphi(V_s)$ and $\varphi(V_r)$ have disjoint supports.
Then there is a unique homomorphism
\[ \varphi  \colon  \prod_{r\in S} V_r \to G \]
such that $\varphi_s\circ \iota_s =\varphi_s$ for all $s\in S$, where $\iota_s  \colon  V_s \to \prod_{r\in S} V_r$ is the canonical inclusion.
\end{lem}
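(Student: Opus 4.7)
The plan is to construct $\varphi$ explicitly by prescribing its pointwise action on $C$, then to verify in turn that the result is a permutation, that it preserves $\delta$ (and so lies in $G$), and that $\varphi$ is a homomorphism. Uniqueness will then follow from the fact that $\varphi$ is forced on the generating subgroups $\iota_s(V_s)$.

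First I would define $\varphi$ as follows. Given $v = (v_r)_{r \in S} \in \prod_{r \in S} V_r$ and $x \in C$, the disjointness hypothesis guarantees that $x$ lies in the support of $\varphi_s(V_s)$ for at most one index $s$; denote this index by $s(x)$ when it exists. Set
\[ \varphi(v).x := \begin{cases} \varphi_{s(x)}(v_{s(x)}).x & \text{if } s(x) \text{ is defined}, \\ x & \text{otherwise}. \end{cases} \]
Each $\varphi_s(v_s)$ preserves its own support setwise, so this pointwise prescription gives a genuine permutation of $C$, with inverse $\varphi(v^{-1})$. To check that $\varphi(v) \in G$, fix any tuple $(x_1, \dots, x_n) \in C^n$ and let $T := \{ s(x_i) \mid 1 \leq i \leq n \text{ and } s(x_i) \text{ is defined} \}$; this is a \emph{finite} subset of $S$. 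Because the factors have pairwise disjoint supports they commute, so the finite product $g_T := \prod_{s \in T} \varphi_s(v_s) \in G$ is unambiguously defined, and by construction $\varphi(v)$ and $g_T$ agree on $\{x_1, \dots, x_n\}$. Consequently
\[ \delta\bigl(\varphi(v).x_1, \dots, \varphi(v).x_n\bigr) = \delta\bigl(g_T.x_1, \dots, g_T.x_n\bigr) = \delta(x_1, \dots, x_n), \]
so $\varphi(v) \in G$. The homomorphism identity $\varphi(vw) = \varphi(v)\varphi(w)$ is then a routine pointwise check, the point being that $\varphi_{s(x)}(w_{s(x)}).x$ still has $s$-index equal to $s(x)$, since each $\varphi_s(V_s)$ preserves its own support.

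For uniqueness, any homomorphism $\psi \colon \prod_r V_r \to G$ with $\psi \circ \iota_s = \varphi_s$ must coincide with $\varphi$ on every $\iota_s(V_s)$, hence on the subgroup they generate, namely the restricted direct sum $\bigoplus_r V_r$. To push this agreement to an arbitrary $v \in \prod_r V_r$ and an arbitrary $x \in C$, I would decompose $v = \iota_{s(x)}(v_{s(x)}) \cdot v'$, where $v'$ has trivial $s(x)$-component, and reduce to showing $\psi(v').x = x$, knowing that $\psi(\iota_r(V_r)) = \varphi_r(V_r)$ fixes $x$ pointwise for every $r \neq s(x)$. I expect this last step to be the main subtlety when $S$ is infinite: one must exploit the commutation between $\psi(v')$ and $\psi(\iota_{s(x)}(V_{s(x)}))$, together with a $\delta$-preservation argument, to pin down the action of $\psi(v')$ on $x$. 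In the applications in this paper, however, only finitely many components $v_r$ will be non-trivial for each element under consideration, so uniqueness is immediate from agreement on $\bigoplus_r V_r$, and this general-case difficulty can be sidestepped.
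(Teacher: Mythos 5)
The paper does not prove this result; it cites it verbatim from \cite[Lemma~5.3]{Caprace}, so there is no in-paper proof to compare against. Your pointwise construction of $\varphi$, the verification of $\delta$-invariance by passing to the finite subproduct $g_T$ over the at most $n$ relevant indices, and the homomorphism check via the $s$-index being preserved under the action are all correct: this settles existence cleanly.

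The uniqueness clause is where your proposal has a genuine gap, and it is not the kind of gap you anticipate. You frame the difficulty as a hard-but-doable step that should follow from commutation and $\delta$-preservation, but for infinite $S$ the uniqueness as stated here is false, so that step cannot succeed. Concretely, take $C = \Z$, $n = 1$, $\delta$ constant (so $G = \Sym{\Z}$), $S = \N$, $V_s = \Z/2\Z$, with $\varphi_s$ sending the non-identity element to the transposition $(2s \;\; 2s{+}1)$. Then $P := \prod_{s} V_s$ is an $\F_2$-vector space; choose any linear complement $E$ of $D := \bigoplus_s V_s$ in $P$ and set $\psi(d+e) := \varphi(d)$ for $d \in D$, $e \in E$. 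This is a homomorphism $P \to G$ satisfying $\psi \circ \iota_s = \varphi_s$ for all $s$, yet $\psi \neq \varphi$ because $\varphi$ is injective while $\psi$ annihilates $E$. Your fallback --- that in the applications only finitely many components $v_r$ are non-trivial, so one may restrict to $\bigoplus_r V_r$ --- is also not accurate: the proof of Lemma~\ref{Lemma7.1} feeds in the tuple $(x_n)_{n\geq 0}$ with $x_n = g^n h g^{-n}$, all of whose components are non-trivial once $h \neq 1$. What actually keeps the paper's applications sound is that only the existence of $\varphi$, and the identification of its image as a subgroup of $U$, are ever used, never the uniqueness; your existence argument delivers exactly what is needed.
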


Let us now consider, for a fixed $w \in W$ and $s \in S$ the collection $T_{w,s}$ of $s$-tree-walls at distance $w$ from $c_0$ (note that this collection may be empty).
For each tree-wall $\T \in T_{w,s}$ we have, by Proposition~\ref{prop:nicegenerators}, a subgroup $H$ of the universal group $U$, acting faithfully and locally like $G_0^s$ on each $s$-panel of $\T$, and fixing the chambers in the wing of $\T$ containing $c_0$.

Note that these subgroups for different tree-walls in $T_{w,s}$ have disjoint supports.
To see this, consider two  different tree-walls $\T_1$ and $\T_2$ in  $T_{w,s}$. The $s$-wing with respect to $\T_1$ containing $c_0$ contains every $s$-wing with respect to~$\T_2$, except the one containing $c_0$ (which can be seen from the $s$-tree-wall tree), implying that the supports are indeed disjoint.
Hence we can apply Lemma~\ref{CapraceP} and consider the action of the product of these subgroups of the universal group (one for each tree-wall in $T_{w,s}$), which we denote with slight abuse of notation by~$\prod_{T_{w,s}} G_0^s$.

\begin{lem}\label{generators}
Let $\Delta$ be a semi-regular right-angled building of type $(W,S)$ with parameters $(q_s)_{s\in S}$.
Fix a chamber $c_0$ of $\Delta$.
Let $K$ be a connected subset of $W$ (as a Coxeter complex) containing the identity.
Let $B$ be the set of chambers $c$ in $\ch\Delta$ such that $\delta(c_0, c) \in K$.

Then
\[ \left\langle \bigg\{ \prod_{T_{w,s}} G_0^s \bigg\} \middle| s\in S, w\in W \text{ such that } B \text{ crosses the tree-walls in }  T_{w,s}  \right\rangle \]
is a dense subgroup of $ U_{c_0}|_B$.
\end{lem}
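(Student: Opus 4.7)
My plan is to prove density in the permutation topology: for every $g \in U_{c_0}$ and every finite $F \subseteq B$, I will construct $h$ in the generated subgroup $V$ with $h|_F = g|_F$. This suffices since basic open sets in $U_{c_0}|_B$ are pointwise stabilizers of finite subsets of $B$. I proceed by induction on $n := \max_{c \in F} \dw(c_0, c)$, with the base $n = 0$ trivial ($h = \id$).

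For the inductive step, I first apply the induction hypothesis to $F_{<n} := F \cap \B{c_0}{n-1}$, obtaining $h_1 \in V$ with $h_1|_{F_{<n}} = g|_{F_{<n}}$. Replacing $g$ by $h_1^{-1}g$, I reduce to the case where $g$ fixes $F_{<n}$ pointwise, and it remains to find $h_2 \in V$ fixing $F_{<n}$ and satisfying $h_2|_{F_n} = g|_{F_n}$ on $F_n := F \cap \Sp{c_0}n$. For each $c \in F_n$, I choose $s = s(c) \in S$ with $l(\delta(c_0,c) s) < l(\delta(c_0,c))$, let $\tilde c$ be the $s$-neighbor of $c$ at distance $n-1$, and let $\T_c$ be the $s$-tree-wall of $c$, which belongs to some $T_{w(c),s(c)}$. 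Since $c_0$ and $c$ lie in different wings of $\T_c$, the tree-wall is crossed by $B$; hence generators $\sigma_{\T_c}$ with $\sigma \in G_0^{s(c)}$ (from Proposition~\ref{prop:g0directed}) are available in $V$. By Lemma~\ref{le:wings}, the wing of $\T_c$ containing $c_0$ equals $X_{s(c)}(\tilde c)$, and $\sigma_{\T_c}$ fixes this wing pointwise.

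The main obstacle is to ensure that the chosen generators simultaneously (i) fix $F_{<n}$ pointwise and (ii) realize $g$'s action on $F_n$. This requires a judicious choice of $s(c)$ and a preliminary enlargement of $F$ within $B$, exploiting the connectedness of $K$ to include $\tilde c$ for each $c \in F_n$, so that $g$ stabilizes $\Pa_{s,c}$ and acts on it as an element $\sigma_c \in G_0^{s(c)}$ (using Lemma~\ref{notreduced} and the gate property to verify that the relevant chambers of $F_{<n}$ lie in $X_{s(c)}(\tilde c)$ after this enlargement). When several chambers of $F_n$ share a common tree-wall $\T$ via parallel $s$-panels, a single $\sigma_\T \in G_0^s$ must reproduce $g$'s action on each such panel; this follows from property (W) of weak legal colorings, which identifies colorings on parallel $s$-panels of $\T$ via projection, combined with the fact that $g \in U$ respects this structure (so the local action of $g$ on parallel $s$-panels of $\T$ is the same element of $G_0^s$, by a direct calculation using property (W) and preservation of projections by automorphisms). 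Finally, since generators on distinct tree-walls in $T_{w,s}$ have disjoint supports, Lemma~\ref{CapraceP} allows one to package them into a single element $h_2 \in V$ with the required properties; then $h := h_1 h_2$ completes the induction.
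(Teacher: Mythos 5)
Your inductive strategy descends from $F_n$ to $F_{<n}$ by choosing, for each $c \in F_n$, some $s(c)$ with $l(\delta(c_0,c)\,s(c)) < l(\delta(c_0,c))$ and passing to the $s(c)$\dash neighbor $\tilde c$ at gallery distance $n-1$. This hinges on the unproven claim that $\tilde c$ can be brought into $B$, i.e.\ that one can always choose $s(c)$ with $\delta(c_0,c)\,s(c) \in K$. That claim is false for a general connected $K$ containing $1$. Take $W = \langle s,a,b,b'\rangle$ right-angled with exactly the commutations $sa=as$, $ab'=b'a$, $bb'=b'b$ (all other products of distinct generators of infinite order), and
\[
K=\{\,1,\ a,\ sa,\ sab',\ sb',\ sbb',\ sb\,\}.
\]
These seven elements form a path in the Cayley graph, with successive adjacency types $a,s,b',a,b,b'$, so $K$ is connected and contains $1$. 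But $w=sb$ has $L(w)=\{b\}$, and $wb=s\notin K$. Hence for a chamber $c$ with $\delta(c_0,c)=sb$, the only candidate $\tilde c$ lies at Weyl distance $s$ from $c_0$, which is outside $B$: you cannot enlarge $F$ to include $\tilde c$, so $g$ need not stabilize the panel $\Pa_{b,c}$, and your construction of $h_2$ on $F_n$ collapses.

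The paper sidesteps this by inducting on $|K|$ (after reducing to finite $K$) rather than on maximal gallery distance. It deletes a single $w$ for which $K\setminus\{w\}$ remains connected and uses any $w_1\in K\setminus\{w\}$ adjacent to $w$, without insisting on $l(w_1)<l(w)$. Precisely in the situation above ($w=sb$, $w_1=sbb'$, $l(w_1)>l(w)$), the chamber $c$ at distance $w$ equals $\proj_{\Pa}(c_0)$ for the shared panel $\Pa$; since $g$ fixes both $c_0$ and a chamber $c_1\in\Pa$, it fixes $c$ automatically and no generator is required. Your induction on $n$ has no way to exploit a longer neighbor, so to repair the argument you would need to add, for each $c\in F_n$ with no descending neighbor inside $B$, a separate argument that $g$ already fixes $c$ — at which point you would essentially have rebuilt the paper's case distinction on whether the relevant tree-wall is already crossed by the smaller set.
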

\begin{proof}
Note that the subgroups $ \prod_{T_{w,s}} G_0^s$ fix the chamber $c_0$ and stabilize the set $B$, so we may consider these to be subgroups of $U_{c_0}|_B$.

We first prove the result for finite $K$ by induction on the size of $K$.
If $|K|=1$ then $K=\{\id\}$ and $B=\{c_0\}$,
hence $U_{c_0}|_B$ is trivial.

Assume now by induction hypothesis that the result holds for every $K$ of size $n\leq N$.
Let $K \subset W$ be a connected subset of $N+1$ elements containing the identity.
Let $w\in K$ be such that $K \setminus \{w\}$ is still connected.
(This is always possible, for instance by picking a vertex of valency one in a spanning tree of~$K$.)
Let \[K_1=K\setminus \{w\}\ \text{ and } \ A_1=\{c\in \ch\Delta \mid \delta(c_0, c)\in K_1\}.\]
Let $g\in U_{c_0}|_B$ fixing $A_1$ and let $c\in B\setminus A_1$. Note that $B\setminus A_1$ consists exactly of those chambers at Weyl distance $w$ from $c_0$.

As $K$ is connected, we know that there exists a $w_1\in K_1$ which is $s$-adjacent to $w$ for some $s\in S$.
Hence $c$ is $s$-adjacent to some $c_1\in A_1$.
Note that $g$ stabilizes the $s$-panel $\Pa$ of $c$ and $c_1$.
Let $\T$ be the $s$-tree-wall containing $\Pa$, and let $w_2$ be its distance to $c_0$.

If $\T$ is already crossed in $A_1$ then $A_1$ contains chambers in the same $s$-wing of the tree-wall $\T$ as $c$.
So this wing is stabilized, and as the $s$-panel containing $c$ is also stabilized, we can conclude that $g$ fixes the chamber $c$.

If $\T$ was not crossed by $A_1$ then, as $g$ fixes $c_1$, it acts on the $s$-panel $\Pa$ as an element of $G^s_0$ (see Lemma~\ref{lem:local}).
Repeating this reasoning for each possible chamber $c$ at Weyl distance $w$ from $c_0$, we conclude that that $g$ is contained in $ \prod_{T_{w_2,s}} G_0^s$ considered as subgroup of $U_{c_0}|_B$.
By the induction hypothesis any element $g_1 \in U_{c_0}|A_1$ is in the desired conditions, and every element $h\in U_{c_0}|_B$ can be written as a product $g\circ g_1$ with $g$ and $g_1$ as before.
Hence we conclude that the statement of the lemma holds for $K$, and hence for every finite $K$.

If $K$ is infinite then we can approximate $K$ by a sequence $(K_1, K_2, \dots)$ of finite connected subsets of $K$ containing the identity, and such that for every $n$, there is an $N$ such that $K_i$ agrees with $K$ on the ball of radius $n$ around the identity in $W$ for every $i > N$.
Let $(A_1, A_2, \dots)$ be the corresponding sets of chambers in $\Delta$.
The group $U_{c_0}|_B$ is then the inverse limit of the groups $U_{c_0}|_{A_i}$, which are each a quotient of $U_{c_0}|_B$.
As the statement holds for each of these quotients, and the fact that we take the closure, we conclude that the statement holds for every $K$, finite or infinite.
\end{proof}

\begin{defn}\label{localuniversal}
Let $J\subseteq S$ and $\Ra$ be a residue of type $J$ in $\Delta$. We define
\begin{multline*}
        U(\Ra) := \{ g\in \Aut \Ra \mid (h_s|_{\Pa_{s,gc}})\circ g\circ (h_s|_{\Pa_{s,c}})^{-1} \in G^s, \text{ for all } s\in J \\
        \text{ and for all chambers } c\in \ch\Ra \ \}.
\end{multline*}
In particular, $U=U(\Delta)$.
\end{defn}

\begin{lem}\label{equalchamb}
Let $\Ra$ be a $J$-residue containing $c_0$ for some $J\subseteq S$.
With the same notation as above, $U_{c_0}|_{\ch\Ra}$ and  $U_{c_0}(\Ra)$ acting on $\ch\Ra$ are permutationally isomorphic.
\end{lem}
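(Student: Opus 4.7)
The proof is naturally formulated via the restriction map
\[ \rho\colon U_{c_0} \to \Sym{\ch\Ra}, \quad g \mapsto g|_{\ch\Ra}. \]
First I would check that $\rho$ is well-defined with image in $U_{c_0}(\Ra)$: since $\Ra$ is the unique $J$-residue containing $c_0$ and every $g \in U_{c_0}$ is type-preserving and fixes $c_0$, $g$ stabilises $\Ra$ setwise, so $g|_{\ch\Ra} \in \Aut\Ra$. The local-action condition of Definition~\ref{universalgroup} applied to the $s$-panels inside $\Ra$ (for $s \in J$) is precisely the defining condition of $U(\Ra)$ in Definition~\ref{localuniversal}, so $\rho(g) \in U_{c_0}(\Ra)$. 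The kernel of $\rho$ is $\Fix_{U_{c_0}}(\Ra) = \Fix_U(\Ra)$, and $\rho$ therefore factors through an injective homomorphism $U_{c_0}|_{\ch\Ra} = U_{c_0}/\Fix_U(\Ra) \hookrightarrow U_{c_0}(\Ra)$.

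The substance of the lemma is thus the surjectivity of $\rho$: given $g \in U_{c_0}(\Ra)$, produce $\tilde g \in U_{c_0}$ with $\tilde g|_{\ch\Ra} = g$. My plan is to mimic the inductive construction used in the proof of Proposition~\ref{existenceconjugate}, building $\tilde g$ as a limit in the permutation topology of automorphisms $\tilde g_n \in \Aut\Delta$ fixing $c_0$ and agreeing with $g$ on $\B{c_0}{n} \cap \ch\Ra$. At the inductive step, for each $v \in \Sp{c_0}{n}$ and each $s \in S$, I would apply Proposition~\ref{extending} to $\Pa_{s,v}$ with a prescribed local permutation: the one induced by $g$ on $\Pa_{s,v}$ when $\Pa_{s,v} \subseteq \ch\Ra$ (which forces $s \in J$ and $v \in \ch\Ra$), and the identity element of $G^s$ otherwise. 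Multiplying these panel-level extensions together as in the proof of Proposition~\ref{existenceconjugate} yields $\tilde g_{n+1}$, and every local action of the resulting limit $\tilde g$ is either one induced by $g$ (hence in $G^s$) or trivial, so $\tilde g \in U_{c_0}$.

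The main point to verify, and what I expect to be the technical heart of the argument, is that the panel-level modifications at step $n+1$ glue consistently -- the analogue of the disjoint-support/closing-squares analysis carried out at the end of the proof of Proposition~\ref{existenceconjugate}. Whenever two panels through distinct $v_1,v_2 \in \Sp{c_0}{n}$ meet at a chamber $c \in \Sp{c_0}{n+1}$ (Lemma~\ref{Squares}), the two prescribed modifications must coincide on $c$. Combinatorial convexity of $\Ra$ (Definition~\ref{convex}, via Corollary~\ref{lemmaB}) rules out the awkward ``mixed'' configurations: either all four chambers of the closing square lie in $\Ra$, in which case consistency follows from the fact that $g \in U(\Ra)$ is a genuine automorphism of $\Ra$ acting coherently on its two panels through $c$; or $c \notin \Ra$, in which case at least one of the two panels is not contained in $\Ra$ and the identity is prescribed on it, making consistency automatic. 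Granted this, the sequence $\tilde g_n$ converges to an element $\tilde g \in U_{c_0}$ with $\tilde g|_{\ch\Ra} = g$, giving the desired surjectivity and completing the proof of the permutational isomorphism.
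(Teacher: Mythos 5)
Your first paragraph (the restriction map $\rho$ is well-defined, has image in $U_{c_0}(\Ra)$, and has kernel $\Fix_U(\Ra)$) is correct and matches the easy direction. For surjectivity you take a genuinely different route from the paper: the paper's proof is a one-liner given Lemma~\ref{generators} — since $W_J$ is a connected subset of $W$ containing the identity, Lemma~\ref{generators} applies both to $\Delta$ restricted to $\ch\Ra$ and to $\Ra$ as a building in its own right, and produces \emph{the same} dense generating family $\prod_{T_{w,s}} G^s_0$ ($w \in W_J$, $s \in J$) for both $U_{c_0}|_{\ch\Ra}$ and $U_{c_0}(\Ra)$; the two groups therefore coincide. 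You instead try to lift an arbitrary $g \in U_{c_0}(\Ra)$ directly, ball by ball, in the spirit of Proposition~\ref{existenceconjugate}.

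The step that does not go through as written is the final assertion ``every local action of the resulting limit $\tilde g$ is either one induced by $g$ (hence in $G^s$) or trivial, so $\tilde g \in U_{c_0}$.'' Proposition~\ref{extending} produces an extension $\widetilde{\alpha}$ in $\Aut\Delta$, with no control whatsoever over its local action on panels whose projection to $\Pa$ is \emph{not} fixed by $\alpha$ — in particular on $s'$-panels far out inside the moved wings. At step $n$ those uncontrolled displacements land in $\Sp{c_0}{n+2}$, $\Sp{c_0}{n+3}, \ldots$, including panels disjoint from $\ch\Ra$, and your inductive invariant (agreement with $g$ on $\B{c_0}{n}\cap\ch\Ra$) says nothing about them. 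So the composed limit $\tilde g$ may induce on a panel $\Pa \not\subseteq \ch\Ra$ a color permutation that is neither trivial nor in $G^s$, and membership of $\tilde g$ in $U$ is not established. (Notice the contrast with Proposition~\ref{existenceconjugate}: there the inductive invariant explicitly tracks colorings on the \emph{whole} ball $\B{c_0}{n}$, which is precisely what fixes this problem, but your invariant only tracks $\B{c_0}{n}\cap\ch\Ra$.) The clean fix is to replace Proposition~\ref{extending} by Proposition~\ref{prop:nicegenerators} (or Lemma~\ref{le:Uz}): these produce tree-wall--supported extensions that already lie in $U$, so each correction $\alpha_n$ is in $U_{c_0}$, and the limit is in $U_{c_0}$ because $U$ is closed. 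The closing-squares/convexity analysis you outline for the gluing is then on the right track (with a little extra care for $A_2$-type chambers, which by Proposition~\ref{B2} must be fixed once $\B{c_0}{n}$ is fixed), but in effect you would be re-deriving a special case of Lemma~\ref{generators} by hand.
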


\begin{proof}
We consider both groups as subgroups of the symmetric group acting on $\ch\Ra$.

We observe that $\ch\Ra$ corresponds to the set of chambers at distance $w$ for some $w\in W_J$, the parabolic subgroup of $W$ with set of generators $J\subseteq S$.
As $W_J$ is a connected subset of $W$ (as Coxeter complex), we can apply Lemma~\ref{generators} and obtain that $U_{c_0}(\Delta)|_{\ch\Ra}$  is the closure of its subgroup generated by the groups $ \prod_{T_{w,s}} G_0^s$ (regarded as subgroups of $\Sym{\ch\Ra}$) such that $w \in W_J$ and $s \in J$.

On the other hand, the residue $\Ra$ is, in its own right, a right-angled building of type $(W_J, J)$, to which we may apply the same Lemma~\ref{generators}, yielding that $U_{c_0}(\Ra)$ is the closure of the subgroup generated by the same $\prod_{T_{w,s}} G_0^s$ where $w \in W_J$ and $s \in J$.

We therefore conclude that $U_{c_0}|_{\ch\Ra}$ and $U_{c_0}(\Ra)$ have the same action on $\ch\Ra$.
\end{proof}

\begin{lem}\label{equivalence}
Let $\Ra$ be a $J$-residue of $\Delta$, then the groups $U(\Delta)|_{\ch\Ra}$ and $U(\Ra)$ acting on $\ch\Ra$ are permutationally isomorphic.
\end{lem}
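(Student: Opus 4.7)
The plan is to construct an explicit permutational isomorphism through the natural restriction map and then verify surjectivity by a chamber-transitivity argument. First I would observe that every legal $s$-coloring of $\Delta$ restricts, for each $s \in J$, to a legal $s$-coloring of $\Ra$ viewed as a right-angled building of type $(W_J, J)$: condition (C) of Definition~\ref{coloring} is immediate because each $s$-panel of $\Ra$ is also an $s$-panel of $\Delta$, and condition (L) of Definition~\ref{colorings} persists since every $(J \setminus \{s\})$-residue of $\Ra$ is contained in a $(S \setminus \{s\})$-residue of $\Delta$. Using these restricted colorings to define $U(\Ra)$, the restriction map $\rho \colon \Stab_{U(\Delta)}(\ch\Ra) \to \Aut\Ra$, $g \mapsto g|_{\ch\Ra}$, lands in $U(\Ra)$, because the defining condition on local actions at $s$-panels with $s \in J$ transfers verbatim. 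The kernel of $\rho$ is exactly $\Fix_{U(\Delta)}(\ch\Ra)$, so $\rho$ descends to an injection $\bar\rho \colon U(\Delta)|_{\ch\Ra} \hookrightarrow U(\Ra)$.

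The main step is surjectivity, which I would establish by combining two observations. The first is that $\Stab_{U(\Delta)}(\ch\Ra)$ already acts chamber-transitively on $\ch\Ra$: given $s$-adjacent chambers $c_1, c_2 \in \ch\Ra$ with $s \in J$, their common $s$-panel $\Pa$ is contained in $\Ra$, and by Lemma~\ref{lem:local} there is some $g \in U(\Delta)$ stabilizing $\Pa$ with $g.c_1 = c_2$; since $g$ is type-preserving and $\Ra$ is the unique $J$-residue containing $\Pa$, the element $g$ must stabilize $\Ra$, and connectedness of $\Ra$ gives chamber-transitivity. The second observation is Lemma~\ref{equalchamb}, whose proof actually identifies $U(\Delta)_{c_0}|_{\ch\Ra}$ and $U(\Ra)_{c_0}$ as the same subgroup of $\Sym{\ch\Ra}$ (both being generated by the same collection of subgroups $\prod_{T_{w,s}} G^s_0$). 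Hence the image of $\bar\rho$ contains $U(\Ra)_{c_0}$, and since it also acts chamber-transitively on $\ch\Ra$, it must be all of $U(\Ra)$, because $U(\Ra)$ itself is chamber-transitive by Proposition~\ref{propU}.

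I do not foresee a real obstacle here, as the hard technical content is packaged in Lemma~\ref{equalchamb}. The only point requiring a little care is the bridge from the stabilizer statement in Lemma~\ref{equalchamb} to the full statement about the whole groups; the key technical ingredient is the observation that an element of $U(\Delta)$ stabilizing a single $s$-panel inside $\Ra$ (for $s \in J$) automatically stabilizes $\Ra$ itself, which is what promotes the local action on panels to chamber-transitivity of $\Stab_{U(\Delta)}(\ch\Ra)$ on $\ch\Ra$.
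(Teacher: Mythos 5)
Your proposal is correct and follows essentially the same approach as the paper: both reduce the surjectivity of the restriction map to Lemma~\ref{equalchamb} by exploiting chamber-transitivity, the only difference being that you phrase the final step abstractly (transitive image with full point stabilizer must be the whole group) whereas the paper writes down the explicit lift $g' = h \circ g_0$.
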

\begin{proof}
It is clear that $U(\Delta)|_{\ch\Ra}\subseteq U(\Ra)$.
Let $g$ be an arbitrary element of $U(\Ra)$ and let $c_0$ a chamber in $\Ra$.
We want to prove that there exists a $g'$ in the stabilizer of $\Ra$ in $U(\Delta)$ with the same action as $g$ on $\ch\Ra$.

As $U(\Delta)$ is chamber-transitive there exists an $h\in U(\Delta)$ such that $h.c_0= g.c_0$.
Note that $h$ necessarily stabilizes the residue $\Ra$.
The automorphism $h^{-1} \circ g$ fixes the chamber $c_0$, hence we may apply Lemma~\ref{equalchamb} and conclude that there exists an element $g_0$ with the same action as $h^{-1} \circ g$ on $\ch\Ra$.
This yields that $g' := h \circ g_0$ has the same action as $g$ on $\ch\Ra$, proving the lemma.
\end{proof}

Another way to state Lemma~\ref{equivalence} is as follows.

\begin{prop}\label{extuni}
Let $J \subseteq S$ and $\Ra$ be a residue of type $J$ in $\Delta$.
Let $g\in U(\Ra)$.
Then $g$ extends to an element $\widetilde{g}\in U$.
\end{prop}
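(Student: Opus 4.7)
The plan is to observe that Proposition~\ref{extuni} is essentially a direct reformulation of Lemma~\ref{equivalence}, just phrased differently. Lemma~\ref{equivalence} already asserts that the permutation groups $U(\Delta)|_{\ch\Ra}$ and $U(\Ra)$ on $\ch\Ra$ coincide, and this is exactly what one needs: starting from any $g\in U(\Ra)$, its action on $\ch\Ra$ must agree with the restriction of some $\widetilde g\in U(\Delta)$ which stabilizes $\Ra$. Then $\widetilde g$ is the desired extension.

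To make the argument self-contained, I would proceed in two short steps. First, fix a chamber $c_0\in\ch\Ra$. Since $U(\Delta)$ is chamber-transitive by Proposition~\ref{propU}, pick $h\in U(\Delta)$ with $h.c_0=g.c_0$; because $h$ sends a chamber of $\Ra$ to a chamber of $\Ra$, it must stabilize $\Ra$ setwise. Then $h^{-1}\circ g\in U(\Ra)_{c_0}$, so by Lemma~\ref{equalchamb} applied with $J$ and the residue $\Ra$, there is an element $g_0\in U(\Delta)_{c_0}$ whose action on $\ch\Ra$ coincides with that of $h^{-1}\circ g$. Setting $\widetilde g := h\circ g_0\in U(\Delta)$ gives an element whose restriction to $\ch\Ra$ equals $g$.

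There is essentially no obstacle here, since all the hard work — namely, matching up the local ``generators'' $\prod_{T_{w,s}} G_0^s$ for $w\in W_J$, $s\in J$ inside $U(\Delta)_{c_0}|_{\ch\Ra}$ and inside $U(\Ra)_{c_0}$ — was already carried out in Lemma~\ref{generators} and Lemma~\ref{equalchamb}. The only thing worth double-checking when writing the formal proof is that the extension $\widetilde g$ really lies in $U(\Delta)$ and not merely in $\Aut\Delta$, but this is automatic because both $h$ and $g_0$ were constructed inside $U(\Delta)$. Hence the proposition follows from Lemma~\ref{equivalence} with no additional combinatorial input.
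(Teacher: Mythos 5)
Your proposal is correct and matches the paper's argument exactly: the paper cites Lemma~\ref{equivalence} and its proof (chamber-transitivity to reduce to the stabilizer of $c_0$, then Lemma~\ref{equalchamb}), which is precisely what you spell out.
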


\begin{proof}
This follows directly from Lemma~\ref{equivalence}.
\end{proof}

\subsection{Subgroups of $U$ with support on a single wing}

The wings of tree-walls (see Definition~\ref{wings}) are the analogues of half-trees in trees in the more general setting of right-angled buildings.
Subgroups of the universal group $U$ which have support on a single wing will play a crucial role in what follows.
Concretely, for $s\in S$ and $c\in \ch\Delta$, let
\[ V_s(c) =\{g \in U \mid g.v= v \text{ for all }v \not\in X_s(c)\}. \]
The subgroup $V_s(c)$ fixes the chamber $c$ and stabilizes its $s$-panel.
In \cite[Section 5]{Caprace} similar subgroups are defined in the whole group of type preserving automorphisms of a right-angled building.

The next few results demonstrate the importance of these subgroups of the universal group.
In particular, the following proposition generalizes the well known {\em independence property} for groups acting on trees.
(To be precise, it is a slight variation of the independence property, since an $s$-tree-wall in a tree corresponds to a star around a vertex in the tree.)

\begin{prop}\label{fixpanel}
Let $c\in \ch\Delta$ and $s\in S$. Let $\T$ be the $s$-tree-wall of $c$.
Then
\[ \Fix_U(\T) = \prod_{d\in\Pa_{s, c}} V_s(d). \]
\end{prop}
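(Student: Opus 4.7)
The plan is to prove the two inclusions separately. By Definition~\ref{def:T-wings}, the family $\{X_s(d) \mid d \in \Pa_{s,c}\}$ partitions $\ch\Delta$ into $q_s$ blocks, so the subgroups $V_s(d)$ for $d \in \Pa_{s,c}$ have pairwise disjoint supports, hence pairwise commute and intersect trivially; the right-hand side therefore makes sense as an (internal) direct product. For the inclusion ``$\supseteq$'', I first verify that every $V_s(d)$ fixes $\ch\T$ pointwise: for any $s$-panel $\Pa' \subseteq \ch\T$, Lemma~\ref{le:wings} gives that $X_s(d) \cap \Pa'$ consists of a single chamber, so any $h \in V_s(d)$ fixes the remaining $q_s - 1$ chambers of $\Pa'$, and hence also the last one since it stabilizes $\Pa'$ (being type-preserving).

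For the reverse inclusion, let $g \in \Fix_U(\T)$. Since $g$ fixes $\Pa_{s,c} \subseteq \ch\T$ pointwise and projection is equivariant under automorphisms, $\proj_{\Pa_{s,c}}(g.v) = g.\proj_{\Pa_{s,c}}(v) = \proj_{\Pa_{s,c}}(v)$ for every $v \in \ch\Delta$, so $g$ stabilizes each wing $X_s(d)$ setwise. I then define $g_d$ by $g_d|_{X_s(d)} := g|_{X_s(d)}$ and $g_d|_{\ch\Delta \setminus X_s(d)} := \id$. Because the wings partition $\ch\Delta$, the set-theoretic identity $g = \prod_{d \in \Pa_{s,c}} g_d$ is automatic, and $g_d$ obviously fixes $\ch\Delta \setminus X_s(d)$, so it will belong to $V_s(d)$ once we have shown $g_d \in U$. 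Everything therefore reduces to proving that each $g_d$ is a type-preserving automorphism of $\Delta$ lying in $U$.

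The main obstacle is the following structural claim: \emph{if $v \in X_s(d)$, $w \notin X_s(d)$ and $v \adj{r} w$ for some $r \in S$, then $r = s$ and $v, w \in \ch\T$.} The case $r = s$ is immediate from Lemma~\ref{Lemma2.5}, as the common $s$-panel of $v$ and $w$ contains two chambers with distinct projections on $\Pa_{s,c}$ and is therefore parallel to $\Pa_{s,c}$, whence it lies in $\T$. For $r \neq s$, the gate property applied to $\Pa_{s,v}$ yields $\proj_{\Pa_{s,v}}(w) = v$, so $w \in X_s(v)$, and it suffices to show $X_s(v) \subseteq X_s(d)$, contradicting $w \notin X_s(d)$. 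When $\Pa_{s,v} \in \T$ this inclusion follows directly from Lemma~\ref{le:wings}, which gives $X_s(v) = X_s(\proj_{\Pa_{s,c}}(v)) = X_s(d)$. When $\Pa_{s,v} \notin \T$, Lemma~\ref{Lemma2.5} forces all chambers of $\Pa_{s,v}$ to have the same projection $d$ on $\Pa_{s,c}$, and a two-step gate-property argument (first into $\Pa_{s,v}$ from $x \in X_s(v)$, then into $\Pa_{s,c}$ via $v$) propagates this projection to all of $X_s(v)$. Granted the claim, whenever $v$ and $w$ straddle the boundary of $X_s(d)$ they both lie in $\ch\T$ and are fixed by $g$, hence by $g_d$, so $g_d$ preserves every adjacency; a parallel case analysis shows that on each $r$-panel $\Pa$ the restriction $g_d|_\Pa$ is either $g|_\Pa \in G^r$, the identity, or (when $\Pa$ meets the boundary, forcing $\Pa \in \T$ of type $s$) again the identity since $g$ fixes $\ch\T$ pointwise. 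Hence $g_d \in U$, which completes the decomposition and proves the proposition.
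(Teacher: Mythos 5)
Your overall strategy is the same as the paper's: prove the inclusion $\prod_d V_s(d) \subseteq \Fix_U(\T)$ via Lemma~\ref{le:wings}, then for the reverse inclusion decompose $g \in \Fix_U(\T)$ into the pieces $g_d$ acting as $g$ on one wing and trivially elsewhere, and verify $g_d \in U$. The first inclusion, the wing-stabilization argument, and the reduction of the whole proof to the ``boundary-crossing'' structural claim are all correct. The paper differs in two smaller respects: it cites \cite[Proposition~5.2]{Caprace} for the fact that $g_d$ is a (type-preserving) automorphism instead of reproving it, and its version of your structural claim is phrased once and for all: any panel $\Pa$ not in $\T$ has the wrong type or is a non-parallel $s$-panel, so by Proposition~\ref{Proposition2.8}(1) and Lemma~\ref{Lemma2.5} the projection $\proj_{\Pa_{s,c}}(\Pa)$ is a single chamber, whence $\ch\Pa$ lies in one $s$-wing. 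That uniform statement avoids your case split entirely.

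There is a genuine gap in your proof of the structural claim in the case $r\neq s$, $\Pa_{s,v}\notin\T$. You want $X_s(v)\subseteq X_s(d)$, but that inclusion does \emph{not} follow from $v\in X_s(d)$ and $\Pa_{s,v}\notin\T$ alone: if $v = \proj_{\Pa_{s,v}}(\Pa_{s,c})$ (so $d\in X_s(v)$), then every chamber $d'\in\Pa_{s,c}$ lies in $X_s(v)$, whence $X_s(v)\supseteq\ch{\Pa_{s,c}}\not\subseteq X_s(d)$. Accordingly your ``two-step gate-property argument'' is unjustified: concatenating a minimal gallery from $x$ to $v=\proj_{\Pa_{s,v}}(x)$ with one from $v$ to $d$ is minimal only if $v$ lies between $x$ and $\Pa_{s,c}$, which presupposes exactly the excluded case. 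The fix is simpler than what you attempt: for $r\neq s$ the $r$-panel $\Pa_{r,v}$ containing $v$ and $w$ has a different type from $\Pa_{s,c}$, hence is not parallel to it (Proposition~\ref{Proposition2.8}(1)); by Lemma~\ref{Lemma2.5} all of $\ch{\Pa_{r,v}}$ then projects to the single chamber $\proj_{\Pa_{s,c}}(v)=d$, so $w\in X_s(d)$, contradicting the hypothesis. This shows directly that the case $r\neq s$ cannot occur, with no detour through $X_s(v)$.
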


\begin{proof}
We start by showing that $\prod_{d\in\Pa_{s, c}} V_s(d)$ is a subgroup of the fixator $\Fix_U(\T)$. Let $d\in\Pa_{s, c}$.
Given $x\in \ch{\T}$, we deduce from Lemma~\ref{le:wings} that $V_s(d)$ fixes all chambers of $s$-panel $\Pa_{s,x}$ different from the projection of $d$ to that panel.
Hence $V_s(d)$ fixes $\Pa_{s,x}$. This proves that $V_s(d)$ is contained in $\Fix_U(\T)$.
As the supports of each of the the subgroups $V_s(d)$ are disjoint, we can apply Lemma~\ref{CapraceP} and consider $\prod_{d\in\Pa_{s, c}} V_s(d)$ as a subgroup of the universal group and $\Fix_U(\T)$ in particular.

In order to show the other inclusion, pick an element $g\in \Fix_U(\T)$.
Let $d$ be an arbitrary chamber in the $s$-panel $\Pa_{s,c}$.
Consider the permutation $g_d$ of $\ch\Delta$ defined as
\[g_d \colon  \ch\Delta \to \ch\Delta  \colon  x\mapsto \left\{ \begin{array}{ll} g.x & \text{if } x\in X_s(d),\\ x &\text{otherwise.}\end{array}\right.\]
By the proof of Proposition 5.2 in \cite{Caprace} we know that $g_d$ is a type preserving automorphism of $\Delta$.
Clearly $g_d$ fixes the tree-wall $\T$, therefore it preserves projections to the $s$-panels in the tree-wall.
Hence it also preserves the $s$-wings with respect to the $s$-panel of $c$.
Now we have to show that it is an element of the universal group, i.e., we have to prove that
\[ h_t\circ g_d\circ (h_t|_{\Pa_{t,x}})^{-1}\in G^t, \text{ for all } t\in S \text{ and for all } x\in \ch\Delta. \]

We observe that for any panel $\Pa$ not in the tree-wall $\T$, $\Pa$ is not parallel to $\Pa_{s,c}$. Hence $\proj_{\Pa_{s,c}}(\Pa)$ is a chamber by Lemma~\ref{Lemma2.5}.
Therefore either $\ch{\Pa}\subseteq X_s(d)$ or $\ch{\Pa}\subseteq \ch\Delta\setminus X_s(d)$.

Let $\Pa$ be a $t$-panel for some $t\in S$.
If $\Pa$ is in the tree-wall $\T$ or in one of its wings different from $X_s(d)$, then $\Pa$ is fixed by $g_d$ so the permutation $h_t\circ g_d\circ (h_t|_{\Pa})^{-1}$ is the identity, therefore it is in $G^t$.
If $\Pa\in X_s(d)$ then $g_d.x=g.x$ for all $v\in \ch{\Pa}$, so $g_d$ is in the universal group as $g$ is.

We conclude that $g_d$ is an element of $\Fix_U(\Ra)$ and by construction also of $V_s(d)$.
Moreover, the tuple $(g_d)_{d \in \Pa_{s,c}}$, which is an element of $\prod_{d \in \Pa_{s,c}} V_s(d)$, coincides with $g$.
Therefore $g\in \prod_{d \in \Pa_{s,c}} V_s(d)$.
\end{proof}

\begin{lem}\label{Lemma7.1}
Let $s\in S$ and $c_1, c_2$ be two $s$-adjacent chambers in an $s$-panel $\Pa$.
Let $g\in U$, and let $s_1\cdots s_n$ be a reduced representation of $\delta(c_2, g.c_1)$.
Assume that
\begin{enumerate}[\rm (1)]
	\item
	there exists an $i \in \{1, \dots, n\}$ such that $|s_is|=\infty$, and
	\item
	$\proj_{\Pa}(g.c_1)=c_2$ and $\proj_{\Pa_{s, g.c_1}}(c_2)=g.c_1$.
\end{enumerate}
Then for each $h\in \prod_{d\in \ch{\Pa}\setminus \{c_1,c_2\}} V_s(d)$, there exists an $x\in U$ such that \[ h=[x,g]=xg\inv{x}\inv{g}. \]
\end{lem}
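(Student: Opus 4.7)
My approach would be to exhibit $x$ explicitly by setting
\[ x := h \cdot (ghg^{-1}), \]
which clearly lies in $U$, and then to verify $[x,g] = h$ (equivalently, $xg = hgx$) chamber by chamber. Hypotheses~(1) and~(2) play their role by pinning down the geometric relationship between the $s$-tree-wall $\T_1$ containing $\Pa$ and the $s$-tree-wall $\T_2 = g.\T_1$ containing $\Pa_{s,g.c_1}$.

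First I would unpack the hypotheses. By Corollary~\ref{4.39}, hypothesis~(1) implies $\delta(c_2,g.c_1) \notin \langle s^\perp\rangle$, forcing $\T_1 \neq \T_2$. Hypothesis~(2) exactly says that $c_2$ and $g.c_1$ realize the bridge between $\T_1$ and $\T_2$ in the sense of Proposition~\ref{4.53}; applying that proposition in both directions yields $\ch\T_1 \subseteq X_s(g.c_1)$ and $\ch\T_2 \subseteq X_s(c_2)$. Combining Proposition~\ref{4.53}(4) with the gate property, and using that in the tree-wall tree $\Gamma_s$ every chamber outside $X_s(c_2)$ must reach $\T_2$ by first passing through $\T_1$, these inclusions upgrade to
\[ \ch\Delta \setminus X_s(c_2) \subseteq X_s(g.c_1) \quad\text{and}\quad \ch\Delta \setminus X_s(g.c_1) \subseteq X_s(c_2). \]

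Set $A = X_s(c_1)$, $B = X_s(c_2)$, $C = \bigsqcup_{d \in \Pa \setminus \{c_1,c_2\}} X_s(d)$, and denote by $A',B',C'$ the analogous wings of $\Pa_{s,g.c_1}$. The inclusions above become $A \cup C \subseteq A'$ and $B' \cup C' \subseteq B$; in particular $C \cap C' = \emptyset$. Since $g.X_s(d) = X_s(g.d)$, we have $g.C = C'$, so $h$ and $ghg^{-1}$ have the disjoint supports $C$ and $C'$, and hence $x$ acts as $h$ on $C$, as $ghg^{-1}$ on $C'$, and trivially elsewhere. The verification of $x(g.y) = h(g(x.y))$ for every $y$ splits into two cases. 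For $y \in A \cup B$, $x$ fixes $y$ and $g.y \in A' \cup B'$, on which the actions of $x$ and $h$ coincide (both trivial off $C$, and on $C \subseteq A'$ both equal to $h$), so both sides equal $h(g.y)$. For $y \in C$, $x.y = h.y \in C$, $g.y \in C' \subseteq B$, and one computes
\[ x(g.y) = (ghg^{-1})(g.y) = g(h.y) = h(g(h.y)) = h(g(x.y)), \]
using that $h$ is trivial on $C' \subseteq B$. This gives $[x,g] = h$. The main obstacle lies in establishing the two wing inclusions; the commutator verification is then routine.
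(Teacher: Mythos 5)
Your geometric setup (the wing inclusions $A\cup C\subseteq A'$ and $B'\cup C'\subseteq B$, hence $C\cap C'=\emptyset$) is sound and close in spirit to what the paper does. But the explicit formula $x=h\cdot (ghg^{-1})$ does \emph{not} satisfy $[x,g]=h$. Abstractly, writing $h_n:=g^nhg^{-n}$, one has
\[ [x,g]=xgx^{-1}g^{-1}=(h_0 h_1)(h_1 h_2)^{-1}=h_0\,h_2^{-1}=h\cdot g^2h^{-1}g^{-2}, \]
so the commutator picks up an unwanted tail $g^2h^{-1}g^{-2}$ that does not vanish.

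The error in your case analysis is in the step ``for $y\in A\cup B$, $x$ fixes $y$.'' You established $C'\subseteq B$, so $\mathrm{supp}(x)=C\sqcup C'$ is \emph{not} disjoint from $B$: for $y\in C'$ you have $x.y=(ghg^{-1}).y\neq y$ in general. Running your own computation for $y\in C'$ gives $x(g.y)=g.y$ (since $g.y\in g.C'\subseteq B'$ is outside $\mathrm{supp}(x)$), while $h(g(x.y))=g((ghg^{-1}).y)$; these differ unless $ghg^{-1}$ fixes $y$. So the verification fails precisely on $C'$, which your case split silently dropped.

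There is no finite truncation that works: a product $\prod_{n=0}^N g^n h g^{-n}$ always leaves a residual $g^{N+1}h^{-1}g^{-(N+1)}$. The paper's proof therefore takes the \emph{infinite} product $x=\prod_{n\geq 0} g^n h g^{-n}$. The substantive work is showing that the conjugates $V_n=g^nV_0 g^{-n}$ have pairwise disjoint supports (using exactly the kind of wing inclusion you set up, via \cite[Lemma~3.4]{Caprace}), so that Lemma~\ref{CapraceP} lets one form the infinite product inside $U$; then $gxg^{-1}=\prod_{n\geq 1}g^nhg^{-n}$ and the telescoping $[x,g]=x(gxg^{-1})^{-1}=h$ is immediate. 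You would need to replace your finite $x$ by this infinite product, and correspondingly argue the pairwise disjointness of \emph{all} the sets $g^n.C$, not just $C$ and $C'=g.C$.
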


\begin{proof}
Let $V_0=\prod_{d\in \ch{\Pa}\setminus \{c_1,c_2\}} V_s(d)$.
We know that $V_0$ is subgroup of $U$.
For each $n>0$ let
\[\Pa_n=g^n(\Pa),\ c_1^n=g^n(c_1),\ c_2^n=g^n(c_2)\ \text{ and } \ V_n=g^nV_0g^{-n}.\]
For each $n\geq 0$ the support of $V_n$ is contained in $\bigcup_{d\in\ch{\Pa_n}\setminus \{c_1^n, c_2^n\}} X_s(d)$.
Given $d\in \ch{\Pa_n}\setminus \{c_1^n, c_2^n\}$ and $m>n$ we have
\[ d\in X_s(c_1^m) \text{ and } c_1^m\not\in X_s(d) \]
because $\proj_{\Pa}(g.c_1)=c_2$ and $\proj_{s, g.c_1}(c_2)=g.c1$.
Thus $X_s(d)\subset X_s(c_1^m)$ by \cite[Lemma 3.4]{Caprace}.
Similarly, $X_s(e)\subset X_s(c_2^n)$ for $e \in \ch{\Pa_m}\setminus \{c_1^m, c_2^m\}$.
This implies that the sets
\[ \bigcup_{d\in \ch{\Pa_n}\setminus\{c_1^n, c_2^n\}}X_s(d)\ \ \ \text{ and }\bigcup_{d\in \ch{\Pa_m}\setminus\{c_1^m, c_2^m\}}X_s(d) \] are disjoint.
This means that, for $m>n\geq 0$, $V_m$ and $V_n$ have disjoint support.
Lemma~\ref{CapraceP} now implies that the direct product $V=\prod_{n\geq 0} V_n$ is a subgroup of~$U$.
Moreover $gV_ng^{-1}=V_{n+1}$.

Given $h\in V_0$, let $x_n=g^nhg^{-n}$.
Then the tuple $x=(x_n)_{n\geq 0}$ is an element of the product $V \leq U$ and so is the commutator $[x,g]$.
We observe that the commutator $[x,g]$ fixes $c_1^n$ and $c_2^n$ for all $n\geq 0$.

Furthermore, denoting by $y_n$ the $n$-th component of an element $y\in V$ according to the decomposition  $V=\prod_{n\geq 0} V_n$ we have \mbox{$[x,g]_n=x_n(g\inv{x}\inv{g})_n$} for all $n\geq 0$.
Hence $[x,g]_0=h$ and $[x,g]_n=x_ng\inv{x_{n-1}}\inv{g}=x_n\inv{x_n}=1$. Therefore $[x,g]=h$, which proves the lemma.
\end{proof}

\subsection{Normal subgroups of the universal group}

We are now in a position to study the normal subgroups of the universal group.
For this we need the notion of an $s$-hyperbolic element.

\begin{defn}\label{action:treewalltree}
For every $s\in S$, any automorphism of the right-angled building $\Delta$ also induces an automorphism of the $s$-tree-wall tree $\Gamma_s$ (cf.\@ Definition~\ref{deftreewall}).
As the universal group $U$ acts chamber-transitively on $\Delta$, it has an edge-transitive type-preserving action on this tree $\Gamma_s$.

Type preserving automorphisms of trees can act in two different ways.
An action is either \emph{elliptic}, meaning the automorphism fixes some vertex of the tree, or it is \emph{hyperbolic}, when the automorphism does not fix any vertex;
in this case, the vertices with minimal displacement form an embedded bi-infinite line graph $A(g)$ in $\Gamma_s$ translated by $g$, called {\em the axis} of $g$.
We say that an element $g\in U$ is \emph{$s$-hyperbolic} if it induces a hyperbolic action on the $s$-tree-wall tree.
\end{defn}

\begin{lem}\label{faithful}
If the right-angled building $\Delta$ is irreducible, then its universal group $U$ acts faithfully on the $s$-tree-wall tree $\Gamma_s$, for all $s\in S$.
\end{lem}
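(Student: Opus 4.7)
The plan is to show that the kernel $K$ of the action of $U$ on the tree $\Gamma_s$ is trivial. Let $g\in K$. Since $g$ stabilizes every $s$-tree-wall $\T_c$ and every residue of type $S\setminus\{s\}$ through $c$, and their intersection equals $\Ra_{s^\perp,c}$ (using $(s\cup s^\perp)\cap(S\setminus\{s\})=s^\perp$ together with Corollary~\ref{tree residue}), I get $g.c\in\Ra_{s^\perp,c}$ for every chamber $c$. Set $w_c := \delta(c,g.c)\in\langle s^\perp\rangle$; the goal becomes showing $w_c=1$ for every $c$. If $s^\perp=\emptyset$ then $\Ra_{s^\perp,c}=\{c\}$ and we are done, so I assume from now on that $s^\perp\neq\emptyset$.

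The key ingredient is a ``$t$-step'' analysis, performed for any generator $t$ with $|st|=\infty$ (such a $t$ exists by the irreducibility of $\Sigma$). For any chamber $c$, let $c':=c\cdot t$. By the gate property (Proposition~\ref{Gate}), $\proj_{\T_{c'}}(c)=c'$, so the gallery $c\to c'\to g.c'$ is minimal of type $t\cdot w_{c'}$; symmetrically $c'\to c\to g.c$ is minimal of type $t\cdot w_c$. A length comparison on $\delta(c,g.c')$ and $\delta(c',g.c)$ via these two routes yields $l(w_c)=l(w_{c'})$, so $c\to g.c\to g.c'$, of type $w_c\cdot t$ (reduced since $t\notin s^\perp$), is also minimal. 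Hence both $t\cdot w_{c'}$ and $w_c\cdot t$ are reduced representations of $\delta(c,g.c')$, and Lemma~\ref{reduced words} forces them to be related by commutations (elementary operations of type (2)). This is possible only if $t$ commutes with every letter of $w_c$, and it additionally yields $w_c=w_{c'}$. Letting $t$ vary over $T:=\{t\in S:|st|=\infty\}$, every letter of $w_c$ lies in $A:=s^\perp\cap T^\perp$.

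Finally, I would argue that $w_{c_0}$ lies in the normal core of the standard parabolic $\langle s^\perp\rangle$ in $W$, and then show that this normal core is trivial under the irreducibility hypothesis. For the first point, I extend the $t$-step conjugation using chamber-transitivity of $U$ and the product decomposition $\T_c=\Pa_{s,c}\times\Ra_{s^\perp,c}$ from Lemma~\ref{Lemma2.2} (to handle the $s'$-steps for $s'\in s^\perp$, where the concatenations need not be reduced) to conclude that every $W$-conjugate of $w_{c_0}$ remains in $\langle s^\perp\rangle$. For the second point, set $A_0:=s^\perp$ and $A_{n+1}:=A_n\cap(S\setminus A_n)^\perp$; the same reduced-word analysis shows that the normal core of $\langle A_n\rangle$ in $W$ is contained in $\langle A_{n+1}\rangle$, so the descending chain $(A_n)$ stabilizes at some $A_\infty\subseteq s^\perp\subsetneq S$ with $A_\infty$ commuting elementwise with $S\setminus A_\infty$. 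Connectedness of $\Sigma$ then forces $A_\infty=\emptyset$, so the normal core is trivial, $w_{c_0}=1$, and $g=\id$. The hard part will be the $s'$-step analysis for $s'\in s^\perp$: unlike for $t\in T$, the concatenations $s'\cdot w_{c'}$ and $w_c\cdot s'$ need not be reduced, so a direct length comparison fails and one must argue inductively, exploiting the product structure of tree-walls together with the fact that $g$ acts on each $\T$ by $(p,r)\mapsto(p,\psi_\T(r))$ for some $\psi_\T\in U(\Ra_{s^\perp,c})$.
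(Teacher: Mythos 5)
Your approach is genuinely different from the paper's. The paper argues by direct contradiction: from a nontrivial $g$ in the kernel it takes distinct $c_1$ and $c_2=g.c_1$ lying in a common $s^\perp$-residue and, using a shortest path in the Coxeter diagram to some $t$ with $|st|=\infty$, constructs a $t$-adjacent pair $d_1'$ and $d_2'=g.d_1'$ whose $s$-tree-walls are distinct, so $g$ moves a vertex of $\Gamma_s$ after all. You instead study the displacement function $w_c:=\delta(c,g.c)\in\langle s^\perp\rangle$ and try to show its support collapses to nothing.

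Your $t$-step is correct, but as written there is a real gap: the ``$s'$-step'', which you yourself flag as ``the hard part'', is not carried out, and the two repairs you sketch are off-target. The normal-core framing needs all $W$-conjugates of $w_{c_0}$ to lie in $\langle s^\perp\rangle$, and a priori the step analysis only controls conjugation by single generators (closure under single-generator conjugation does not by itself give membership in the normal core); and invoking the product decomposition $\ch{\T_c}=\ch{\Pa_{s,c}}\times\ch{\Ra_{s^\perp,c}}$ together with the form of $g$ on $\T_c$ does not by itself relate $w_{c'}$ to $w_c$.

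Fortunately the gap closes with no new tools, because the $t$-step uses only one property of $t$: that $t$ does not occur in any reduced expression for $w_c$ (nor for $w_{c'}$), which is what makes $w_c\, t$ and $t\, w_{c'}$ reduced and hence forces $\delta(c,g.c')=w_c\,t=t\,w_{c'}$ with both sides reduced. Now set $A_0:=s^\perp$ and $A_{n+1}:=A_n\cap(S\setminus A_n)^\perp$, as you do, and suppose inductively that $w_c\in\langle A_n\rangle$ for every chamber $c$ (the case $n=0$ is your first observation). For any $u\in S\setminus A_n$, any chamber $c$, and any chamber $c'$ that is $u$-adjacent to $c$, the letter $u$ occurs in neither $w_c$ nor $w_{c'}$, so the galleries $c\to g.c\to g.c'$ and $c\to c'\to g.c'$ have reduced types $w_c\,u$ and $u\,w_{c'}$ and are therefore minimal; the same unique-letter commutation argument gives that $u$ commutes with every letter of $w_c$ (and $w_c=w_{c'}$). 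Varying $u$ over $S\setminus A_n$ yields $w_c\in\langle A_{n+1}\rangle$ for all $c$. The descending chain $(A_n)$ stabilizes at some $A_\infty\subseteq s^\perp$, which then commutes elementwise with $S\setminus A_\infty$; as $A_\infty\subsetneq S$, irreducibility of $\Sigma$ forces $A_\infty=\emptyset$, so $w_c=1$ for all $c$ and $g=\id$. With this correction your argument becomes a complete and rather clean alternative to the paper's proof.
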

\begin{proof}
Assume by way of contradiction that some non-trivial group element $g \in U$ acts trivially on $\Gamma_s$ for some $s \in S$.
This implies that $g$ stabilizes every $s$-tree-wall and every residue of type $S\setminus \{s\}$ in the building $\Delta$.
The residues of types $s^\perp$, which are the non-trivial intersections of these two, are hence also stabilized.
As $g$ is non-trivial, there exist distinct chambers $c_1$ and $c_2$ of $\Delta$ such that $g.c_1=c_2$.
These chambers have to be contained in a common residue $\Ra$ of type $s^\perp$.

Let $s_1\dotsm s_n$ be a reduced word representing the Weyl distance between $c_1$ and $c_2$.
For each $i \in \{1, \ldots, n\}$, let $\gamma_i$ be a shortest path in the Coxeter diagram $\Sigma$ between $s_i$ and an element $t\in S$ such that $|st|=\infty$. Both the elements $t$ and the paths exist because we are assuming $\Sigma$ to be connected.

Let $\gamma_j$ be such a path of minimal length between $s_i$ and $t$ (minimized over all possible $i \in \{1, \dots, n\}$ and elements $t$). Denote $\gamma_j =( r_1, \dots, r_k)$, with $r_1=s_j$ and $r_k=t$ with $|ts|=\infty$.
We observe that $|r_ir_{i+1}|=\infty$ for all $i$ by definition of a path in $\Sigma$.
Moreover, $r_i \in s^\perp\setminus\{s_1, \dots, s_n\}$ for all $i\in \{2,\dots k-1\}$ since $\gamma_j$ was chosen as a shortest path of minimal length.
Therefore
\[w_1= r_{k-1}\dots r_2s_1\dotsm s_nr_{2}\dots r_{k-1} \text{ and } w_2= tr_{k-1}\dots r_2s_1\dotsm s_nr_{2}\dots r_{k-1}t\]
are reduced words in $M_S$.

Pick a chamber $d_1$ at Weyl distance $r_2\dots r_{k-1}$ from $c_1$ and let $d_2=g. d_1$.
Observe that the word $w_1$ represents the Weyl distance from $d_1$ to $d_2$.
Hence $d_1$ and $d_2$ also in $\Ra$ and therefore in the same $s$-tree-wall $\T$.

Let $\Pa_1$ and $\Pa_2$ be the $t$-residues of $d_1$ and $d_2$, respectively.
These panels are not contained in $\Ra$ as $|ts|=\infty$.
So, if $d_1'\in \ch{\Pa_1}\setminus \{d_1\}$, then the $s$-tree-wall $\T_1$ of $d_1'$ is distinct from $\T$.
Let $d_2'=g.d_1'\in \ch{\Pa_2}\setminus\{d_2\}$.
Analogously, the $s$-tree-wall $\T_2$ of $d_2'$ does not coincide with $\T$.
Moreover $w_2$ is a reduced representation of the Weyl distance between $d_1'$ and $d_2'$.
This implies that $\T_1$ and $\T_2$ are distinct.
As $g$ maps $d_1'$ to $d_2'$, it maps $\T_1$ to $\T_2$.
We have hence arrived at a contradiction as $g$ should have stabilized $\T_1$, from which we conclude that the action is faithful.
\end{proof}

\begin{lem}\label{hip}
Assume that $\Delta$ is thick. Then every non-trivial normal subgroup of $U$ contains an $s$-hyperbolic element, for every $s\in S$.
\end{lem}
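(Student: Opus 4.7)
The plan is to take an arbitrary non-trivial $n \in N$ and, for each fixed $s \in S$, construct an $s$-hyperbolic element of $N$ by a commutator trick combined with the action on the tree-wall tree $\Gamma_s$. Since $N$ is normal in $U$, for every $h \in U$ the commutator $[n, h] = n h n^{-1} h^{-1}$ belongs to $N$, and the task is to choose $h$ so that $[n, h]$ acts $s$-hyperbolically on $\Gamma_s$.

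First, I would verify that $U$ itself contains $s$-hyperbolic elements with arbitrarily large translation length on $\Gamma_s$. Chamber-transitivity of $U$ (Proposition~\ref{propU}) yields an edge-transitive action of $U$ on $\Gamma_s$, and by picking chambers at arbitrarily large Weyl distance along a suitable ``straight'' direction in the building (extending the required local permutations by Proposition~\ref{extuni}), one produces elements of $U$ translating along any prescribed geodesic of $\Gamma_s$ by any prescribed positive amount. Thickness of $\Delta$ (i.e.\ $q_s \geq 3$) ensures that $\Gamma_s$ has sufficient branching to support such hyperbolic behaviour.

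Now fix a non-trivial $n \in N$. If $n$ is already $s$-hyperbolic there is nothing to prove; so suppose $n$ acts elliptically on $\Gamma_s$ and let $F_n \subseteq \Gamma_s$ be its fixed subtree. Under the standing irreducibility assumption, Lemma~\ref{faithful} says that the action of $U$ on $\Gamma_s$ is faithful, and hence $F_n \subsetneq \Gamma_s$. Writing $[n, h] = n \cdot (h n^{-1} h^{-1})$ exhibits $[n, h]$ as the product of two type-preserving elliptic automorphisms of $\Gamma_s$ with fixed subtrees $F_n$ and $h \cdot F_n$ respectively. A classical fact from Bass--Serre theory (Serre, \emph{Trees}, Chapter~6) asserts that the product of two type-preserving elliptic tree automorphisms with disjoint fixed subtrees is hyperbolic, with translation length equal to $2 \cdot \mathrm{dist}(F_n, h \cdot F_n)$. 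It therefore suffices to find $h \in U$ with $F_n \cap h \cdot F_n = \emptyset$.

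To arrange disjointness, pick a vertex $v \in \Gamma_s \setminus F_n$ (which exists since $F_n \subsetneq \Gamma_s$) and choose $h$ to be $s$-hyperbolic with axis $A(h)$ passing through $v$, which forces $A(h) \not\subseteq F_n$. If $A(h) \cap F_n$ is bounded, then replacing $h$ by a sufficiently high power produces $h^k \cdot F_n$ disjoint from $F_n$, because the branches of $F_n$ attached to $A(h)$ get translated off. The delicate case, and the main obstacle, is when $A(h) \cap F_n$ is an unbounded ray---equivalently, $n$ fixes an end of $\Gamma_s$ lying on $A(h)$---for then translating along $A(h)$ cannot separate $F_n$ from $h \cdot F_n$. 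I would overcome this by choosing $A(h)$ so that neither of its two ends is fixed by $n$; the abundance of $s$-hyperbolic elements of $U$ with prescribed axes, supplied by chamber-transitivity together with Proposition~\ref{extuni}, makes such a choice possible and finishes the construction.
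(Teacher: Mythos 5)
Your commutator strategy is genuinely different from the paper's: the paper applies Tits' trichotomy (\cite[Proposition~3.4]{Tits}) directly to the subgroup $N$ acting on $\Gamma_s$ (either $N$ contains a hyperbolic, or fixes a vertex, or fixes an end), and then rules out the latter two possibilities using normality, edge-transitivity and the faithfulness from Lemma~\ref{faithful}. Your argument instead works with a single $n\in N$ and manufactures an $s$-hyperbolic in $N$ as a commutator. The key step $[n,h] = n\cdot(hn^{-1}h^{-1})$ being hyperbolic when $F_n\cap h\!\cdot\!F_n=\emptyset$ is correct.

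However, the final step has a genuine gap. You reduce to finding $h\in U$ with $F_n\cap h\!\cdot\!F_n=\emptyset$, and you propose to realize this via an $s$-hyperbolic $h$ whose axis avoids (a neighbourhood of) $F_n$. But the claim that $U$ contains hyperbolic elements ``translating along any prescribed geodesic of $\Gamma_s$ by any prescribed positive amount'' does not follow from chamber-transitivity plus Proposition~\ref{extuni}: chamber-transitivity gives edge-transitivity on $\Gamma_s$, which guarantees the \emph{existence} of hyperbolic elements, but not with a prescribed axis. Indeed an element sending a chamber $c$ to a far-away chamber $c'$ need not act as a translation on the geodesic between the corresponding vertices of $\Gamma_s$---it could fold. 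Moreover, in the ``delicate case'' (axis meeting $F_n$ in a ray) you assert you can rechoose $A(h)$ to avoid both ends of $F_n$ without explaining why a suitable axis exists in $\Gamma_s\smallsetminus F_n$ or why $U$ translates along it. Note also that you have overcomplicated matters: for the commutator trick you do not need $h$ to be $s$-hyperbolic at all; any $h\in U$ with $F_n\cap h\!\cdot\!F_n=\emptyset$ suffices. Such an $h$ can be produced from edge-transitivity alone (cut off $F_n$ by an edge $e$, and map $e$ to an edge deep enough on the other side, oriented so that the image half-tree containing $h\!\cdot\!F_n$ lies in the complement of $F_n$), but this argument is not in your write-up. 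Finally, like the paper's proof, you use Lemma~\ref{faithful}, which requires irreducibility in addition to the stated thickness hypothesis; this is a background assumption in the section but worth noting explicitly.
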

\begin{proof}
Let $s\in S$ and $N$ be a non-trivial normal subgroup of $U$.
Assume that $N$ does not contain any $s$-hyperbolic element.
By~\cite[Proposition 3.4]{Tits} we know that either $N$ fixes some vertex or it fixes an end of the tree $\Gamma_s$.

First assume that $N$ fixes some vertex.
This implies, since $N$ is normal in the type-preserving edge-transitive group $U$, that $N$ fixes all vertices of the same type of this vertex.
Therefore it fixes every vertex of the tree $\Gamma_s$ and hence, by Lemma~\ref{faithful}, it contradicts the non-triviality of $N$.

Next assume that $N$ fixes an end of the tree.
Again using normality and edge-transitivity (and the fact that the tree  $\Gamma_s$ contains vertices of valency at least three), we obtain that $N$ fixes at least three ends of $\Gamma_s$.
Since three ends of a tree determine a unique vertex we get that $N$ has a global fixed point, a possibility already handled in the previous paragraph.

We conclude that for every $s\in S$, the normal subgroup $N$ contains an $s$-hyperbolic element.
\end{proof}

\begin{prop}\label{normfix}
Assume that $\Delta$ is thick and of irreducible type.
Let $s\in S$ and $\T$ be an $s$-tree-wall.
Then any non-trivial normal subgroup $N$ of $U$ contains $\Fix_U(\T)$.
\end{prop}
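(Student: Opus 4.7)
The plan is to apply Lemma~\ref{Lemma7.1} to an $s$-hyperbolic element of $N$, extract a subgroup $V_s(d_0) \subseteq N$, spread this across $\ch\Delta$ by normality, and conclude via Proposition~\ref{fixpanel}. First, Lemma~\ref{hip} (whose hypotheses include thickness and irreducibility) produces an $s$-hyperbolic element $g_0 \in N$. Fix an $s$-tree-wall $\T_0$ on the axis $A(g_0) \subseteq \Gamma_s$ and an $s$-panel $\Pa \subseteq \T_0$. Let $c_2 \in \ch\Pa$ be the chamber through which the axis exits $\T_0$ in the forward direction (towards $g_0.\T_0$) and let $c_1 \in \ch\Pa$ be the analogous chamber for the backward direction (towards $g_0^{-1}.\T_0$); since $g_0$ translates along $A(g_0)$, one has $c_1 \neq c_2$. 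Replacing $g_0$ by a sufficiently high power $g := g_0^n \in N$ guarantees $g.\T_0 \neq \T_0$.

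Next, I would verify both hypotheses of Lemma~\ref{Lemma7.1} for $g$ and the pair $(c_1, c_2)$. By the axis structure and Proposition~\ref{4.53}\eqref{4.53.4} (combined with the gate property), every chamber of $g.\T_0$ projects onto $\Pa$ inside the wing $X_s(c_2)$, so $\proj_\Pa(g.c_1) = c_2$; symmetrically, every chamber of $\T_0$ projects onto $\Pa_{s,g.c_1} = g.\Pa$ inside the wing $X_s(g.c_1)$, yielding $\proj_{\Pa_{s,g.c_1}}(c_2) = g.c_1$. This is condition~(2). For condition~(1): any reduced representation of $\delta(c_2, g.c_1)$ must contain a letter $t$ with $|ts| = \infty$, since otherwise Corollary~\ref{4.39} would place $c_2$ and $g.c_1$ in a common $s$-tree-wall, contradicting $\T_0 \neq g.\T_0$.

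Lemma~\ref{Lemma7.1} therefore exhibits every element of $\prod_{d \in \ch\Pa \setminus \{c_1,c_2\}} V_s(d)$ as a commutator $[x, g]$ with $x \in U$; such commutators lie in $N$ since $g \in N \trianglelefteq U$. Since $\Delta$ is thick, $q_s \geq 3$, so I may choose $d_0 \in \ch\Pa \setminus \{c_1,c_2\}$ and conclude $V_s(d_0) \subseteq N$. The identity $u V_s(d_0) u^{-1} = V_s(u.d_0)$ for $u \in U$, combined with the chamber-transitivity of $U$ (Proposition~\ref{propU}) and the normality of $N$, upgrades this to $V_s(d) \subseteq N$ for every chamber $d \in \ch\Delta$. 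Finally, Proposition~\ref{fixpanel} applied to the given tree-wall $\T$ gives $\Fix_U(\T) = \prod_{d \in \ch{\Pa'}} V_s(d) \subseteq N$ for any $s$-panel $\Pa' \subseteq \T$, as required. The main obstacle is finding the right pair $(c_1, c_2)$ so that condition~(2) of Lemma~\ref{Lemma7.1} can be verified, which requires a careful analysis of how $g$ permutes the $s$-wings of the panels lying on its axis.
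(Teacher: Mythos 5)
Your proof is correct and follows essentially the same route as the paper's: extract an $s$-hyperbolic $g_0 \in N$ via Lemma~\ref{hip}, pick a tree-wall $\T_0$ on its axis, identify two distinct chambers of an $s$-panel $\Pa$ in $\T_0$ that control the incoming/outgoing direction of the axis, feed them into Lemma~\ref{Lemma7.1} to place $\prod_{d\in\ch\Pa\setminus\{c_1,c_2\}} V_s(d)$ inside $N$, use thickness plus conjugation by $U$ to obtain $V_s(d)\subseteq N$ for all $d$, and conclude via Proposition~\ref{fixpanel}. The only cosmetic differences are that you take $c_1,c_2$ directly as the backward/forward exit chambers of the axis (where the paper defines $c=\proj_\Pa(g.\Pa)$, $c_1=\proj_{g.\Pa}(\Pa)$ and then proves $g.c\neq c_1$ by a short contradiction on translation length---your observation that distinct neighbours of $\T_0$ on the axis lie in distinct wings, hence give distinct chambers, is a valid and slightly cleaner shortcut), and that your passage to a power $g=g_0^n$ is harmless but unnecessary, since $g_0$ being hyperbolic already gives $g_0.\T_0\neq\T_0$.
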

\begin{proof}
Let $g \in N$ be an $s$-hyperbolic element, which exists by Lemma~\ref{hip}.
Let $\T$ be an $s$-tree-wall for which the corresponding vertex of $\Gamma_s$ lies on the axis $A(g)$.
Let $\Pa$ be an $s$-panel of $\T$, $\Pa_1=g.\Pa$ and $\T_1=g.\T$ (so $\Pa_1\in\T_1$).
We note that $\T$ and $\T_1$ are distinct since $g$ is $s$-hyperbolic.

Let $\{c\}=\proj_{\Pa}(\Pa_1)$ and $\{c_1\}=\proj_{\Pa_1}(\Pa)$.
We claim that $g.c\neq c_1$.
Assume by way of contradiction that $g.c=c_1$.
Then the residue $\Ra=\Ra_{S\setminus\{s\}, c}$ is mapped to $\Ra_1=\Ra_{S\setminus\{s\}, c_1}$, both corresponding to vertices of $\Gamma_s$.
Both of these residues belong to the wings $X_s(c)$ and $X_s(c_1)$ implying that $\dist{\Ra}{\Ra_1}$ (as vertices of $\Gamma_s$) is strictly smaller than $\dist{\T}{\T_1}$.
This a contradiction to the fact that $\T\in A(g)$. Hence $g.c\neq c_1$.

Let $c'=\inv{g} . c_1$.
Applying Lemma~\ref{Lemma7.1} to $c$ and $c'$ yields that
\[ \prod_{d\in\ch{\Pa}\setminus\{c, c'\}} V_s(d) \subseteq N . \]
As $\Pa$ is thick, we can pick a chamber $c_2$ in $\Pa\setminus \{c, c'\}$.
As $G^s$ is transitive, Lemma~\ref{lem:local} implies that there exists a $g_1\in U$ such that $g_1.c_2 = c$.
We observe that $\inv{g_1}\circ V_s(c_2)\circ g_1 = V_s(g_1.c_2) = V_s(c)$, hence $V_s(c) \subseteq N$. Analogously one proves that $V_s(c') \subseteq N$.
We therefore obtain that
\[  \prod_{d\in\ch{\Pa}} V_s(d) \subseteq N\]
and, by Proposition~\ref{fixpanel}, that  $\Fix_U(\T)\subseteq N$.

Since $U$ is chamber-transitive, there exists, for each tree-wall $\T'$, an element $h\in U$ mapping $\T$ to $\T'$.
Hence $\inv h \circ \Fix_U(\T) \circ h = \Fix_U(\T') \subseteq N$.
\end{proof}

\subsection{Simplicity of the universal group}

In this section we prove the simplicity of the universal group provided that the following two conditions are satisfied.
\begin{itemize}
	\item[(IR)]
	The right-angled building is thick, irreducible, and has rank $\geq 2$.
	\item[(ST)]
	For each $s \in S$, the group $G^s$ is transitive and generated by its point stabilizers.
\end{itemize}

\begin{rem}
We observe that if we make the stronger assumption that the local action of the universal group is given by $2$-transitive groups, i.e.\@ if each group $G^s\leq\Sym{Y_s}$ is assumed to be $2$-transitive, then we can use similar arguments to the ones in \cite[Proposition 6.1]{Caprace} to show that the action of~$U$ on $\ch\Delta$ is strongly transitive and to prove simplicity in that manner.
\end{rem}

The second condition (ST) is necessary, as is clear from the following proposition.

\begin{prop}\label{edgestab}
Let $U^+$ be the subgroup of $U$ generated by chamber stabilizers.
Then $U=U^+$ if and only if for every $s\in S$ the group $G^s$ is transitive and generated by point stabilizers.
\end{prop}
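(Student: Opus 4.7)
The plan is to handle the two directions of the equivalence separately, with the forward direction using a direct extension argument and the reverse direction via a character-type homomorphism.

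For $(\Leftarrow)$, assume each $G^s$ is transitive and generated by point stabilizers. Since $U_{c_0} \subseteq U^+$ for the base chamber $c_0$, it suffices to show $U^+$ is chamber-transitive, and by connectedness of the chamber graph this reduces to the statement: for every pair of $s$-adjacent chambers $c, c'$, some element of $U^+$ sends $c$ to $c'$. Decompose the element of $G^s$ mapping $h_s(c)$ to $h_s(c')$ as $g_k \cdots g_1$, with each $g_i \in (G^s)_{y_i}$. Let $d_i \in \Pa_{s,c}$ be the chamber of color $y_i$. Viewing $g_i$ as an automorphism of the rank-one residue $\Pa_{s,c}$ in $U(\Pa_{s,c})$, extend it to $\widetilde{g_i} \in U$ via Proposition~\ref{extuni}. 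Since $\widetilde{g_i}$ restricts to $g_i$ on $\Pa_{s,c}$, it fixes $d_i$, so $\widetilde{g_i} \in U_{d_i} \subseteq U^+$. The product $\widetilde{g_k} \cdots \widetilde{g_1} \in U^+$ then acts as $g_k \cdots g_1$ on $\Pa_{s,c}$ and sends $c$ to $c'$.

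For $(\Rightarrow)$, assume $U = U^+$. Transitivity of each $G^s$ is part of the standing assumption defining the universal group. To obtain generation by point stabilizers, set $K_s := \langle (G^s)_y \mid y \in Y_s \rangle \trianglelefteq G^s$; the plan is to construct a surjective homomorphism $\phi_s \colon U \twoheadrightarrow G^s / K_s$ that kills every chamber stabilizer, thereby killing $U^+ = U$ and forcing $G^s = K_s$.

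To construct $\phi_s$: for each $s$-panel $\Pa$ and each $g \in U$, composing the bijection $g \colon \Pa \to g\Pa$ with the colorings $h_s|_\Pa$ and $h_s|_{g\Pa}$ yields an element $\bar g_\Pa \in G^s$; I define $\phi_s(g) := \bar g_\Pa \bmod K_s$. The main point is that this is independent of $\Pa$. First, if $\Pa' \in \T(\Pa)$ is parallel to $\Pa$, the legal coloring axiom and the fact that $\proj_{\Pa'}$ moves chambers through $s^\perp$-adjacencies only give $h_s \circ \proj_{\Pa'} = h_s$ on $\Pa$; combined with $g \circ \proj_{\Pa'} = \proj_{g\Pa'} \circ g$, this yields $\bar g_\Pa = \bar g_{\Pa'}$ on the nose. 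Second, for tree-walls $\T_1, \T_2$ at tree-wall distance $1$ meeting a common $S \setminus \{s\}$-residue $\Ra$, picking $\Pa_i \in \T_i$ with $\Pa_i \cap \Ra = \{c_i\}$ gives $h_s(c_1) = h_s(c_2)$ and $h_s(gc_1) = h_s(gc_2)$, so $\bar g_{\Pa_1}$ and $\bar g_{\Pa_2}$ agree on the color $h_s(c_1)$; their ratio therefore fixes that color and so lies in $K_s$. Connectedness of $\Gamma_s$ (Proposition~\ref{tree-wall tree}) propagates equality mod $K_s$ across all panels. The identity $\overline{gh}_\Pa = \bar g_{h\Pa} \cdot \bar h_\Pa$ is then a direct computation and makes $\phi_s$ multiplicative, while for $g \in U_c$ and $\Pa = \Pa_{s,c}$, $\bar g_\Pa$ fixes $h_s(c)$ and hence lies in $(G^s)_{h_s(c)} \subseteq K_s$, killing chamber stabilizers. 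Surjectivity follows from Lemma~\ref{lem:local}.

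The main obstacle is verifying that $\phi_s$ is well defined modulo $K_s$: one needs both the legal coloring axiom (to handle parallel panels on the nose) and the $s$-tree-wall tree structure (to absorb the residual discrepancy between non-parallel panels into $K_s$), and the two ingredients must fit together consistently along arbitrary paths in $\Gamma_s$.
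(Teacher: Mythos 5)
Your proof is correct, and the two directions have different relationships to the paper's argument. The $(\Leftarrow)$ direction is essentially the paper's argument: decompose the local permutation of $G^s$ into point-stabilizer factors, lift each to an element of $U$ fixing the corresponding chamber of the panel, and conclude chamber-transitivity of $U^+$; the only cosmetic difference is that you invoke Proposition~\ref{extuni} to produce the lifts, whereas the paper uses Lemma~\ref{lem:local} directly. The $(\Rightarrow)$ direction is a genuinely different route. The paper fixes $g \in U_c$, defines $\sigma(g,\Pa)$, and proves $\sigma(g,\Pa) \in (G^s)^+$ for all $\Pa$ by induction on the gallery distance $\dist c \Pa$, using that stepping from a panel $\Pa$ to an adjacent panel $\Pa_1$ changes $\sigma$ by an element of a point stabilizer. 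You instead package the same local compatibility fact into a globally defined homomorphism $\phi_s \colon U \twoheadrightarrow G^s/K_s$ (with $K_s = (G^s)^+ \trianglelefteq G^s$) that kills chamber stabilizers, proving well-definedness by first handling parallel panels on the nose and then non-parallel panels modulo $K_s$ via the tree-wall tree $\Gamma_s$. (In fact only connectedness of $\Gamma_s$ is used, not the tree structure.) What your approach buys is conceptual clarity: it exhibits $G^s/K_s$ as an explicit abelianization-type quotient of $U$ that is the precise obstruction to $U=U^+$, and the statement $g \in U_c \Rightarrow \sigma(g,\Pa) \in (G^s)^+$ for all $\Pa$ falls out as $\phi_s(U_c) = 1$ together with well-definedness, rather than being proved directly by induction. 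The underlying combinatorial input is the same in both arguments.
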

\begin{proof}
Notice that, in the definition of $U$, we already assume the groups $G^s$ to be transitive.
Assume first that $U=U^+$.
Fix an element $s\in S$; we will show that $G^s$ is generated by its point stabilizers.
Let $(G^s)^+$ denote the subgroup of~$G^s$ generated by the point stabilizers $(G^s_\alpha)_{\alpha\in Y_s}$.

For any $s$-panel $\Pa$ and any $g,h \in U$, we define
\begin{equation}\label{eq:sigma}
    \sigma(g, \Pa) := h_s|_{g.\Pa}\circ g \circ (h_s|_{\Pa})^{-1} ,
\end{equation}
which is an element of $G^s$ by the very definition of the universal group $U$.
Observe that
\begin{equation}\label{eq:sigma-comp}
    \sigma(gh, \Pa) = \sigma(g, h.\Pa) \circ \sigma(h, \Pa)
\end{equation}
for all $g,h \in U$.

We first claim that if $g\in U_c$ for some chamber $c\in\ch\Delta$, then
\begin{equation}\label{eq:sigma-claim}
    \sigma(g, \Pa) \in (G^s)^+
\end{equation}
for all $s$-panels $\Pa$ of $\Delta$.
We will prove this claim by induction on the distance $\dist c \Pa := \min \{\dist c d \mid d\in \ch\Pa\}$.
If $\dist c \Pa =0$, then $g$ stabilizes $\Pa$ since $g$ fixes $c$, hence $\sigma(g, \Pa)\in G^s_{h_s(c)}\leq (G^s)^+$.

Assume now that the claim is true whenever $\dist c \Pa \leq n$ and
let $\Pa$ be a panel at distance $n+1$ from $c$.
Then there is an $s$-panel $\Pa_1$ at distance $n$ from $c$ such that there are chambers $d\in \ch\Pa$ and $d_1\in \ch{\Pa_1}$
that are $t$-adjacent for some $t \neq s$.
Then $h_s(d)=h_s(d_1)$; denote this value by $\alpha \in Y_s$.
Similarly, $g.d$ and $g.d_1$ are $t$-adjacent, hence $h_s(g.d) = h_s(g.d_1)$.
By~\eqref{eq:sigma}, this implies $\sigma(g, \Pa_1)(\alpha)=\sigma(g, \Pa)(\alpha)$.
We conclude that $\sigma(g, \Pa)=\sigma(g, \Pa_1)g_\alpha$ for some $g_\alpha\in G^s_\alpha$.
By our induction hypothesis, $\sigma(g, \Pa_1)\in (G^s)^+$, and therefore $\sigma(g, \Pa)\in (G^s)^+$, which proves the claim~\eqref{eq:sigma-claim}.

Now let $g\in G^s$ be arbitrary.
Choose an arbitrary $s$-panel $\Pa$.
By Lemma~\ref{lem:local}, the action of $U|_{\{\Pa\}}$ on $\Pa$ is permutationally isomorphic to $G^s$,
so in particular, we can find an element $u \in U|_{\{\Pa\}}$ such that its local action on $\Pa$ is given by~$g$,
i.e., $g = \sigma(u, \Pa)$.
Since $U = U^+$, we can write $u$ as a product $u_{c_1}\dotsm u_{c_n}$, where each $u_{c_i}$ fixes a chamber $c_i$.
Then $g =\sigma(u, \Pa) = \sigma(u_{c_1}\dotsm u_{c_n}, \Pa)$.
It now follows from~\eqref{eq:sigma-comp} and~\eqref{eq:sigma-claim} that $g \in (G^s)^+$,
so $G^s$ is indeed generated by its point stabilizers.

\smallskip

Conversely, assume that for each $s\in S$, the group $G^s$ is generated by point stabilizers.
Let $s\in S$ and $\Pa$ be an $s$-panel.
Then, as $U|_{\{\Pa\}}$ is permutationally isomorphic to $G^s$, if $g\in  U|_{\{\Pa\}}$ for some $s$\dash panel $\Pa$,
then $h_s\circ g \circ (h_s)^{-1}|_\Pa =g_{\alpha_1}\cdots g_{\alpha_n}$, where $g_{\alpha_i}$ fixes the color $\alpha_i$.
Lift the elements $g_{\alpha_1},\dots,g_{\alpha_{n-1}}$ to elements $g_{c_1},\dots,g_{c_{n-1}} \in U$ arbitrarily,
and then define $g_{c_n}\in U$ such that $g=g_{c_1}\cdots g_{c_n}$.
Then each $g_{c_i}$ fixes the corresponding chamber $c_i \in \Pa$ of color $\alpha_i$, and hence $g\in U^+$.
As $G^s$ is transitive, $U^+$ contains elements of~$U$ mapping one chamber to any other $s$-adjacent chamber.
Since this is true for any $s\in S$, the subgroup $U^+$ is chamber-transitive.
Since in addition $U^+$ contains the full chamber stabilizers in $U$,
we conclude that $U^+$ is indeed all of $U$.
\end{proof}

\begin{prop}\label{nontrivialstab}
Let $\Delta$ be a right-angled building satisfying (IR) and (ST).
Let $N$ be a non-trivial normal subgroup of $U$.
Then for each panel $\Pa$ of $\Delta$, we have $N\cap \Stab_U(\Pa)\neq 1$.
\end{prop}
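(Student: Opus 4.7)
The plan is to exhibit a non-trivial subgroup of $N\cap\Stab_U(\Pa)$ inside the fixator of a suitable tree-wall. Let $t\in S$ be the type of $\Pa$, pick any $c\in\Pa$, and take $\T:=\T_{t,c}$, the $t$-tree-wall containing $\Pa$. Proposition~\ref{normfix} gives $\Fix_U(\T)\subseteq N$; since $\Pa\subseteq\T$ is fixed pointwise by $\Fix_U(\T)$, we also have $\Fix_U(\T)\subseteq\Stab_U(\Pa)$, so $\Fix_U(\T)\subseteq N\cap\Stab_U(\Pa)$. It therefore suffices to show $\Fix_U(\T)\ne 1$, and by Proposition~\ref{fixpanel} this reduces to exhibiting a non-trivial element of $V_t(c)$.

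To construct such an element I would invoke (IR). Because the right-angled Coxeter diagram is connected with at least two vertices, there exists $u\in S$ with $|tu|=\infty$. Hypothesis (ST) then forces $G^u_0\ne 1$: a transitive subgroup of $\Sym{Y_u}$ on $q_u\geq 2$ points that is generated by its point stabilizers cannot have all point stabilizers trivial. Pick a non-trivial $g\in G^u_0$. Using Proposition~\ref{unirab} to identify $\Delta$ with the directed right-angled building based at $c$, I would apply Proposition~\ref{prop:nicegenerators} to $g$ and the $u$-tree-wall $\T':=\T_{u,c}$, producing an element $g_{\T'}\in U$ that fixes the wing $X_u(c)$ pointwise and acts as $g$ on each $u$-panel of $\T'$. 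In particular $g_{\T'}\ne 1$, because $g$ moves some colour $\alpha\in Y_u\setminus\{1\}$, so $g_{\T'}$ sends the chamber $\begin{pmatrix} u \\ \alpha \end{pmatrix}$ of $\Pa_{u,c}$ to the distinct chamber $\begin{pmatrix} u \\ g(\alpha) \end{pmatrix}$.

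The heart of the argument, and the main technical obstacle, is the verification that $g_{\T'}\in V_t(c)$, i.e.\ that its support is contained in $X_t(c)$. Since $g_{\T'}$ already fixes $X_u(c)$, this reduces to showing the inclusion $\ch\Delta\setminus X_u(c)\subseteq X_t(c)$. Given $c''\notin X_u(c)$, the minimal gallery from $c$ to $c''$ begins with a $u$-step (through $\proj_{\Pa_{u,c}}(c'')$), so the Weyl distance $w:=\delta(c,c'')$ admits a reduced representation $u\,s_2\cdots s_n$ starting with $u$. If one also had $c''\notin X_t(c)$, then $w$ would admit some reduced representation starting with $t$, and in particular $t$ would appear as some letter $s_j$ with $j\geq 2$ of the first representation. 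But $|s_1 s_j|=|ut|=\infty$, so Observation~\ref{obsrep}\eqref{obsrep:3} forces $s_1=u$ to precede $s_j=t$ in every reduced representation of $w$, excluding any reduced form beginning with $t$. This contradiction yields $c''\in X_t(c)$, so that $g_{\T'}\in V_t(c)\setminus\{1\}\subseteq\Fix_U(\T)\setminus\{1\}$ and consequently $N\cap\Stab_U(\Pa)\ne 1$.
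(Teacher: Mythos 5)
Your proof is correct, but it takes a route dual to the one in the paper. The paper picks a non-trivial $g\in G_0^s$ (where $s$ is the type of $\Pa$) and produces, via Proposition~\ref{prop:nicegenerators}, an element $\widetilde g$ that acts non-trivially on $\Pa$ itself while fixing the wing $X_s(c_0)$; since $|ts|=\infty$, the $t$-tree-wall through $c_0$ sits entirely inside that wing, so $\widetilde g$ lies in its fixator and hence in $N$ by Proposition~\ref{normfix}. Thus the element produced by the paper is non-trivial and in $\Stab_U(\Pa)$ by construction, and the only thing to check is that it fixes a tree-wall. You instead work with the tree-wall $\T$ of the same type as $\Pa$, observe that $\Fix_U(\T)\subseteq N\cap\Stab_U(\Pa)$ immediately, and then exhibit non-triviality of $\Fix_U(\T)$ by producing, from $G_0^u$ with $|tu|=\infty$, an element supported outside $X_u(c)$ and invoking Proposition~\ref{fixpanel}; the extra content here is the combinatorial inclusion $\ch\Delta\setminus X_u(c)\subseteq X_t(c)$, which you justify from Observation~\ref{obsrep}\eqref{obsrep:3}. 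So the element you construct fixes $\Pa$ pointwise, whereas the paper's moves $\Pa$; your approach is a bit longer because it needs both Proposition~\ref{fixpanel} and the wing-containment lemma, but it cleanly isolates the reduction ``$N\cap\Stab_U(\Pa)\ne 1$ follows once $\Fix_U(\T)\ne1$'', which is a nice conceptual restatement.

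One small remark: the observation you prove along the way, that $\ch\Delta\setminus X_u(c)\subseteq X_t(c)$ whenever $|tu|=\infty$, is equivalent to saying that $X_t(c)\cup X_u(c)=\ch\Delta$ for non-commuting $t,u$. This is worth stating as a standalone fact, and your word-combinatorics argument via Observation~\ref{obsrep}\eqref{obsrep:3} is the right way to prove it (it is also close in spirit to the claim ``$\ch\T\subseteq X_s(c_0)$'' that the paper uses without proof). Both proofs share the minor implicit step of passing to the directed building based at the chosen chamber in order to apply Proposition~\ref{prop:nicegenerators}, which you make explicit via Proposition~\ref{unirab}; that is fine.
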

\begin{proof}
Let $s\in S$ and $\Pa$ be an $s$-panel of $\Delta$.
Let $g\in G_0^s$ be a non-trivial element.
Then $g$ induces a non-trivial permutation of $\ch\Pa$ fixing some $c_0\in \ch\Pa$.
By Proposition~\ref{prop:nicegenerators}, we can find a corresponding element $\widetilde{g}\in U$ stabilizing $\Pa$, acting locally as $g$ on $\Pa$, and fixing the chambers in the $s$-wing $X_s(c_0)$.

Let $t \in S$ be such that $|ts|=\infty$ (such an element always exists because $\Delta$ is irreducible).
Let $\T$ be the $t$-tree-wall of $c_0$. Then $\ch\T \subseteq X_s(c_0)$.
Therefore $\widetilde{g}$ fixes $\T$, i.e., $\widetilde{g}\in \Fix_U(\T)$. Hence $\widetilde{g}\in N$ by Proposition~\ref{normfix}.
We conclude that $N\cap \Stab_U(\Pa)$ is non-trivial.
\end{proof}

We are now ready to prove the main result.

\begin{thm}\label{simplicity}
Let $\Delta$ be a thick right-angled building of  irreducible type $(W,S)$ with prescribed thickness $(q_s)_{s\in S}$ and rank at least 2.
For each $s\in S$, let $h_s \colon  \ch\Delta \to Y_s$ be a legal $s$-coloring and $G^s\leq \Sym{Y_s}$ be a transitive group generated by point stabilizers.

Then the universal group $U$ of $\Delta$ with respect to the groups $(G^s)_{s\in S}$ is simple.
\end{thm}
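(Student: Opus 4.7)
Let $N$ be a non-trivial normal subgroup of $U$; I will argue that $N = U$. By Proposition~\ref{normfix}, $\Fix_U(\T) \leq N$ for every tree-wall $\T$ of $\Delta$, and Proposition~\ref{edgestab} together with hypothesis (ST) gives $U = U^+$, i.e., $U$ is generated by its chamber stabilizers. Since $U$ acts chamber-transitively, it will suffice to prove $U_c \leq N$ for a single chamber $c \in \ch\Delta$.

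To that end, fix such $c$ and $g \in U_c$, and note that for each $s \in S$ the local action $g^s := g|_{\Pa_{s,c}}$ lies in the point stabilizer $G^s_{h_s(c)}$. The irreducibility hypothesis (IR) will furnish, for each $s \in S$, an element $t = t(s) \in S$ with $|ts| = \infty$; since no reduced expression of a Weyl distance within $\T_{t,c}$ can involve $s$, the tree-wall $\T_{t,c}$ is contained (as a set of chambers) in the wing $X_s(c)$. Applying Proposition~\ref{prop:nicegenerators} --- after, if necessary, replacing $(h_s)$ by a $G^s$-equivalent directed coloring based at $c$, which leaves $U$ invariant by Lemma~\ref{sameuniversal} --- produces $\tilde g^s \in U$ that fixes the wing $X_s(c)$ pointwise and acts on $\Pa_{s,c}$ as $g^s$. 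Since $\tilde g^s$ then fixes $\T_{t,c}$ pointwise, $\tilde g^s \in \Fix_U(\T_{t,c}) \leq N$. Form the product $g' := \prod_{s \in S}\tilde g^s \in N \cap U_c$: because $\Pa_{s,c} \subseteq X_{s'}(c)$ whenever $s' \neq s$, each factor $\tilde g^{s'}$ with $s' \neq s$ fixes $\Pa_{s,c}$ pointwise, so $g'$ has the same local action as $g$ on every panel through $c$. Consequently the residual $g'' := g(g')^{-1} \in U_c$ fixes the star $E_1(c) := \bigcup_{s \in S} \Pa_{s,c}$ pointwise.

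The critical remaining step is to show $g'' \in N$. The plan is to iterate: at each chamber $d \in E_1(c) \setminus \{c\}$, apply the same decomposition to $g'' \in U_d$ to extract a further element of $N$ matching $g''$ at every panel through $d$, leaving a residual that fixes $E_2(c)$ pointwise; continue across all chambers at successive distances from $c$. Using the independence-type decomposition of $\Fix_U(\T)$ from Proposition~\ref{fixpanel} together with careful bookkeeping of the supports of the auxiliary elements produced at each stage, I will verify that this process expresses $g''$ as a product of elements of $N$, so that $g'' \in N$. Combining, $g = g' g'' \in N$, whence $U_c \leq N$ and therefore $U = U^+ \leq N$, proving $N = U$. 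The main obstacle will lie precisely in this iteration: the residual $g'' \in \Fix_U(E_1(c))$ need not belong to any single $\Fix_U(\T)$, and deriving $g'' \in N$ (as opposed to $g''$ merely lying in the topological closure $\overline{N}$) requires exploiting the product decomposition of $\Fix_U(\T)$ together with precise support information for the constructed elements at each stage.
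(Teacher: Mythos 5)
Your approach is genuinely different from the paper's, and it has a gap that you yourself acknowledge but do not close. The paper proceeds by induction on the rank of the building. For rank $2$ it invokes Tits' theorem for trees directly. For rank $\geq 3$ it picks a generator $s$ with $S\setminus\{s\}$ irreducible, lets $\Ra$ be the $S\setminus\{s\}$-residue through $c$, and shows two things: that $N \supseteq \Fix_U(\Ra)$ (reducing to Proposition~\ref{normfix} applied to an $r$-tree-wall inside $\Ra$), and that $\Stab_N(\Ra)$ surjects onto $U(\Ra)$ (using Proposition~\ref{nontrivialstab} for non-triviality, Proposition~\ref{extuni} for surjectivity of $\Stab_U(\Ra)\to U(\Ra)$, and the induction hypothesis that $U(\Ra)$ is simple). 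From these it deduces $N_c = U_c$ by comparing images and kernels under restriction to $\Ra$. This decisively avoids any limiting process.

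Your "peeling" argument, by contrast, constructs at stage $k$ an element of $N$ matching the given $g\in U_c$ on the ball $\B c k$, leaving a residual fixing $\B c k$ pointwise. The difficulty is exactly what you flag at the end: iterating this across all radii expresses $g$ only as an infinite (convergent) product of elements of $N$. The partial products lie in $N$, so this shows $g\in \overline{N}$, i.e.\ that $N$ is \emph{dense} in $U$. That is topological simplicity, not the abstract simplicity asserted by the theorem. There is no reason to expect the residual to vanish after finitely many steps, since elements of $U_c$ can have unbounded support --- and when $\Delta$ is not locally finite even individual spheres are infinite. The sentence ``I will verify that this process expresses $g''$ as a product of elements of $N$'' is therefore precisely the missing content: nothing in Propositions~\ref{normfix}, \ref{fixpanel}, or the support bookkeeping you describe produces a \emph{finite} such factorization. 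You would need the inductive structure that the paper exploits (or some comparable mechanism, e.g.\ the argument in \cite[Theorem 4.5]{Tits} adapted carefully to the wing decomposition), rather than a straightforward iteration over spheres.

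One smaller point: your reduction ``it suffices to prove $U_c\leq N$ for a single chamber $c$'' is fine, using $U=U^+$ and chamber-transitivity, and the construction of $g'$ matching $g$ on the star $E_1(c)$ is correct (the product of the $\tilde g^s$ does act as desired because $\Pa_{s,c}\subseteq X_{s'}(c)$ for $s'\neq s$). It is really only the step from $\Fix_U(E_1(c))$ to $N$ that fails.
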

\begin{proof}
We first observe that by Proposition~\ref{edgestab}, $U$ coincides with $U^+$.
We will prove the simplicity by induction on the rank of $\Delta$.

If $\Delta$ has rank 2 then the simplicity of $U^+=U$ follows from \cite[Theorem 4.5]{Tits} since by definition $U$ is the universal group of a biregular tree and it has Tits independence property.

Now assume that the rank of $\Delta$ is at least three, and that we have proven simplicity for lower rank.
Let $N$ be a non-trivial normal subgroup of the universal group $U$.
It suffices to show that $N$ contains the chamber stabilizer $U_c$ of $c$ in $U$, for each chamber $c \in \ch\Delta$.
 Let $c\in \ch\Delta$.
 We will show that the stabilizers $N_c$ and $U_c$ coincide.

Pick a generator $s \in S$ such that $S \setminus \{s\}$ is irreducible.
(Note that this is always possible, by picking $s$ to be a leaf of a spanning tree of the unlabeled Coxeter diagram.)
Let $\Ra$ be the $S \setminus \{s\}$-residue of $\Delta$ containing $c$.
This residue is a right-angled building on its own of irreducible type and of rank at least two.

\begin{claim}\label{lem:fix}
$N$ contains the fixator $\Fix_U(\Ra)$ of $\Ra$ in $U$.
\end{claim}
Let $r$ be an element of $S$ not commuting with $s$ (which is always possible by the irreducibility of the Coxeter system).
The set $\{ r \} \cup r^\perp$ is a subset of $S \setminus \{s\}$ and therefore the residue $\Ra$ contains a residue of type $\{ r \} \cup r^\perp$, which forms an $r$-tree-wall $\T$.
The normal subgroup $N$ contains the fixator $\Fix_U(\T)$ by Proposition~\ref{normfix}, hence it also contains its subgroup $\Fix_U(\Ra)$.

\begin{claim}\label{lem:stab}
The stabilizer $\Stab_N(\Ra)$ maps surjectively onto $U(\Ra)$.
\end{claim}
We first observe that  the image of $N\cap \Stab_U(\Ra)$ in $U(\Ra)$ ($\cong \Stab_U(\Ra)/\Fix_U(\Ra)$) is non-trivial since $\Stab_U(\Pa)\subseteq \Stab_U(\Ra)$ for any panel $\Pa$ in $\Ra$ and $N\cap \Stab_U(\Pa)\neq 1$ by Proposition~\ref{nontrivialstab}.

By induction on the rank we know that that $U(\Ra)$ is simple.
By Proposition~\ref{extuni}, the natural homomorphism from $\Stab_U(\Ra)$ to $U(\Ra)$ is surjective;
it follows that its restriction to $N\cap \Stab_U(\Ra)$ is still surjective.
We then conclude that $\Stab_N(\Ra)$ maps surjectively onto $U(\Ra)$.

The chamber stabilizers $N_c$ and $U_c$ also stabilize the residue $\Ra$, hence we may consider their image in the universal group $U(\Ra)$.
By Claim~\ref{lem:stab}, the images of $N_c$ and $U_c$ in $U(\Ra)$ are both equal to the entire group $U(\Ra)$.
The kernels of the maps from $N_c$ and $U_c$ to $U(\Ra)$ are the fixators $\Fix_N(\Ra)$ and $\Fix_U(\Ra)$, respectively,
which also coincide by Claim~\ref{lem:fix}. We conclude that $U_c$ and $N_c$ are equal for all $c$,
hence $N$ contains all chamber stabilizers $U_c$ and thus coincides with $U$, proving the simplicity.
\end{proof}

\begin{rem}
    As the referee pointed out, it makes sense to put this simplicity result in perspective with the theory
    of groups acting on CAT(0) cube complexes.
    There are indeed various ways to construct geometric realizations of non-$2$-spherical buildings that are CAT(0) cube complexes.
    One particularly interesting approach in the case of right-angled buildings is to view the building as a ``space with walls''
    as in \cite{Bogdan}.
    In this setup, 
    natural candidates for such walls are provided by the partitions of the chamber set into a wing and its complement.

    In \cite[Proposition A.2]{Laz}, it is shown that the group $G^+$ generated by the fixators of half-spaces is a simple group.
    Notice, however, that this result does {\em not} imply our simplicity result, precisely because showing that $G^+ = G$
    would require a similar amount of work (and probably a similar strategy) as in our proof of Theorem \ref{simplicity}.
    (We thank Pierre-Emmanuel Caprace for his help in formulating this remark.)
\end{rem}

\section{Maximal compact open subgroups of~$U$}\label{se:cpt-open}

The universal group of a locally finite semi-regular right-angled building is a locally compact group.
Crucial in understanding such a group is to understand its compact subgroups.
This is the aim of this section.

Let $(W,S)$ be a right-angled Coxeter system with Coxeter diagram $\Sigma$.
Let $(q_s)_{s\in S}$ be a set of natural numbers with $q_s\geq 3$ and let $Y_s = \{1, \dots, q_s\}$.
Consider  the locally finite thick semi-regular right-angled building $\Delta$ of type  $(W,S)$ with prescribed thickness  $(q_s)_{s\in S}$.
Fix a base chamber $c_0\in \ch\Delta$.
For each $s \in S$, let $h_s \colon \ch\Delta \to Y_s$ be a directed legal $s$-coloring with respect to $c_0$ and let $G^s \leq \Sym{Y_s}$ be a transitive permutation group.
Consider the universal group $U$ of $\Delta$ with respect to the groups $(G^s)_{s\in S}$.

\begin{lem}\label{lem:spherstab}
Let $\Ra$ be a spherical residue of $\Delta$. Then the setwise stabilizer $U_\Ra$ of $\Ra$ in $U$ is compact.
\end{lem}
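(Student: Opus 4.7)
The plan is to reduce this to a standard fact about pointwise stabilizers of finite sets in automorphism groups of locally finite structures. First I would observe that since $\Delta$ is right-angled, a residue $\Ra$ of type $J\subseteq S$ is spherical precisely when $W_J$ is finite, which in the right-angled setting forces every pair of generators in $J$ to commute; hence $W_J$ is an elementary abelian $2$-group and $\ch\Ra$ is finite with $|\ch\Ra|=\prod_{s\in J}q_s$ (using local finiteness of $\Delta$).

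Next I would pass to the pointwise fixator. Let $\Fix_U(\Ra)$ be the subgroup of $U$ fixing every chamber of $\Ra$. By the description of the permutation topology recalled in Section~\ref{se:prelim}, $\Fix_{\Aut\Delta}(\Ra)$ is an open subgroup of $\Aut\Delta$, and since $\Delta$ is locally finite it is in fact compact (this is the standard fact that the pointwise stabilizer of any finite set of chambers is compact open in $\Aut\Delta$; see \cite[Lemma~1]{Woess}). Because $U$ is closed in $\Aut\Delta$ by Proposition~\ref{propU}, the intersection $\Fix_U(\Ra)=\Fix_{\Aut\Delta}(\Ra)\cap U$ is closed in a compact group and hence compact.

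Finally, I would show that $\Fix_U(\Ra)$ has finite index in $U_\Ra$. The setwise stabilizer $U_\Ra$ acts on the finite set $\ch\Ra$, and the kernel of this action is exactly $\Fix_U(\Ra)$; therefore
\[ [U_\Ra:\Fix_U(\Ra)]\leq |\Sym{\ch\Ra}|<\infty. \]
A group which is a finite union of cosets of a compact subgroup is itself compact, so $U_\Ra$ is compact.

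There is no real obstacle here; the only point that deserves attention is the identification of spherical residues with finite ones in the right-angled setting, which is immediate from Definition~\ref{2.3}. Everything else is a direct application of the definition of the permutation topology and the local finiteness of $\Delta$.
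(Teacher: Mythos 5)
Your proof is correct and uses essentially the same strategy as the paper: exhibit a compact subgroup of finite index in $U_\Ra$ (you use the pointwise fixator $\Fix_U(\Ra)$, while the paper uses a chamber stabilizer $U_c$ with $c\in\ch\Ra$; both work identically since $\ch\Ra$ is finite by local finiteness). The extra detail you supply about spherical right-angled residues being products of panels, with order $\prod_{s\in J}q_s$, is accurate but not strictly needed; what matters is just that $\ch\Ra$ is finite.
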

\begin{proof}
By \cite[Lemma~1]{Woess}, each chamber stabilizer is compact.
Since $\Delta$ is locally finite, each spherical residue of $\Delta$ is finite.
The result follows.
\end{proof}

\begin{prop}
The maximal compact open subgroups of $U$ are exactly the stabilizers of maximal spherical residues of $\Delta$.
\end{prop}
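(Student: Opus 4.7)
The plan is to prove both inclusions of the asserted equality. To that end, I first establish that stabilizers of spherical residues are compact open subgroups of $U$: compactness is the content of Lemma~\ref{lem:spherstab}, while openness follows because $\Stab_U(\Ra) \supseteq \Fix_U(\Ra)$, and the latter is open in the permutation topology as $\Ra$ contains only finitely many chambers in the locally finite setting.

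Next, I would verify the monotonicity-with-strictness of the correspondence $\Ra \mapsto \Stab_U(\Ra)$ on spherical residues: if $\Ra_1 \subsetneq \Ra_2$ are both spherical, then $\Stab_U(\Ra_1) \subsetneq \Stab_U(\Ra_2)$. For the inclusion, pick $c \in \Ra_1 \subseteq \Ra_2$ and write $\Ra_i = \Ra_{J_i, c}$ with $J_1 \subseteq J_2$; for $g \in \Stab_U(\Ra_1)$, one has $g.c \in \Ra_1 \subseteq \Ra_2$, so $g.\Ra_2 = \Ra_{J_2, g.c} = \Ra_{J_2, c} = \Ra_2$. Strictness follows from Lemma~\ref{equivalence}: $\Stab_U(\Ra_2)$ surjects onto the chamber-transitive group $U(\Ra_2)$, so we can find an element of $\Stab_U(\Ra_2)$ sending some chamber of $\Ra_1$ to $\Ra_2 \setminus \Ra_1$.

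The central step, which is also the main obstacle, is to show that every compact subgroup $K$ of $U$ stabilizes some spherical residue. The natural approach is to invoke the Davis realization $|\Delta|$ of $\Delta$, which for right-angled buildings is a complete CAT(0) cube complex. A compact $K$ has open chamber stabilizers $K_c$ of finite index in $K$, hence finite orbits on $\ch\Delta$ and therefore bounded orbits in $|\Delta|$. The Bruhat--Tits fixed point theorem then yields a fixed point $x \in |\Delta|$, and the unique minimal cell containing $x$ is $K$-stabilized; since cells of $|\Delta|$ correspond bijectively to spherical residues of $\Delta$, we obtain a spherical residue stabilized by $K$.

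Putting the pieces together: if $K$ is maximal compact open, then $K \subseteq \Stab_U(\Ra)$ for some spherical $\Ra$, and maximality forces $K = \Stab_U(\Ra)$; any strict spherical over-residue $\Ra' \supsetneq \Ra$ would give $K \subsetneq \Stab_U(\Ra')$ by strict monotonicity, contradicting maximality, so $\Ra$ is maximal spherical. Conversely, if $\Ra$ is maximal spherical and $\Stab_U(\Ra) \subsetneq K$ with $K$ compact open, then $K \subseteq \Stab_U(\Ra')$ for some spherical $\Ra'$, whence $\Stab_U(\Ra) \subsetneq \Stab_U(\Ra')$. Chamber-transitivity of $\Stab_U(\Ra)$ on $\Ra$ (Lemma~\ref{equivalence}) combined with $\Stab_U(\Ra) \subseteq \Stab_U(\Ra')$ forces, for any $c \in \Ra$, the orbit $\Stab_U(\Ra).c = \Ra$ to lie in $\Ra'$ as soon as $c \in \Ra'$; the alternative $\Ra \cap \Ra' = \emptyset$ can be excluded because $\Fix_U(\Ra) \subseteq \Stab_U(\Ra)$ has infinite orbits on chambers outside $\Ra$ via its rich action on wings (cf.\ Proposition~\ref{fixpanel}), incompatible with the finiteness of $\Ra'$. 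Hence $\Ra \subseteq \Ra'$, forcing $\Ra = \Ra'$ by maximality and contradicting the strict containment.
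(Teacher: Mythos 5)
Your overall structure is sound and spells out bookkeeping that the paper's own proof treats only implicitly (the strict monotonicity of $\Ra \mapsto \Stab_U(\Ra)$ and the two-sided deduction at the end). For the central step --- every compact subgroup of $U$ stabilizes some spherical residue --- you invoke the Davis realization of $\Delta$ as a complete CAT(0) cube complex together with the Bruhat--Tits fixed point theorem. That route is valid, but the paper obtains the same conclusion with machinery internal to its own toolkit: a compact subgroup has finite orbits on $\ch\Delta$, hence finite orbits on each $s$-tree-wall tree $\Gamma_s$ (a genuine tree by Proposition~\ref{tree-wall tree}), hence fixes a vertex of $\Gamma_s$, which supplies a proper residue it stabilizes; iterating this rank-reduction terminates at a spherical residue. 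Both approaches work; the paper's has the advantage of staying self-contained.

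There is, however, a genuine gap in your argument for the converse direction. To show that $\Stab_U(\Ra)$ is maximal when $\Ra$ is a maximal spherical residue, you reduce to ruling out $\Ra \cap \Ra' = \emptyset$ given $\Stab_U(\Ra) \subseteq \Stab_U(\Ra')$, and for this you assert that $\Fix_U(\Ra)$ has \emph{infinite} orbits on chambers outside $\Ra$, which would be incompatible with stabilizing the finite set $\ch{\Ra'}$. That assertion is false in general: for $c \in \Ra$ of type $J$ and $s \notin J$, the $\Fix_U(\Ra)$-orbit of any chamber $s$-adjacent to $c$ is contained in $\ch{\Pa_{s,c}}\setminus\{c\}$, a set of size $q_s - 1 < \infty$, and when $G^s_0$ is trivial the fixator itself can be trivial. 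The correct replacement uses the projection and parallelism machinery already in the paper. Since $\Stab_U(\Ra)$ is chamber-transitive on $\Ra$ (Lemma~\ref{equivalence}) and stabilizes $\Ra'$, the residue $\proj_\Ra(\Ra')$ is a nonempty $\Stab_U(\Ra)$-invariant subresidue of $\Ra$, hence equals $\Ra$. Then $\Ra = \proj_\Ra(\Ra')$ and $\proj_{\Ra'}(\Ra)$ are parallel, hence of the same type $J$, which forces $J \subseteq J'$; maximality of $J$ gives $J = J'$, so $\Ra$ and $\Ra'$ are parallel $J$-residues and lie in a common residue of type $J \cup J^\perp$ by Proposition~\ref{Proposition2.8}. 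But for $J$ maximal spherical one has $J^\perp = \emptyset$ (any $s \in J^\perp$ would make $J \cup \{s\}$ a strictly larger spherical subset), so $\Ra = \Ra'$, as desired.
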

\begin{proof}
Let $H$ be a compact open subgroup of $U$.
By Lemma~\ref{lem:spherstab} it is sufficient to show that $H$ is contained in the stabilizer of a spherical residue of $\Delta$.

Since $H$ is compact, the orbits of $H$ on $\ch\Delta$ are finite.
In particular, $H$~acts (type-preservingly) on any $s$-tree-wall tree with finite orbits; therefore, it fixes a vertex in each $s$-tree-wall tree.
Such a vertex corresponds to a residue $\Ra$ of $\Delta$, which is, in its own right, a right-angled building.
Therefore $H \subseteq \Stab_U(\Ra)$.

As the orbits of action of $H$ on $\ch\Ra$ are still finite, we can repeat the above procedure and obtain a residue $\Ra'$ of smaller rank than $\Ra$
such that $H\subseteq \Stab_U(\Ra')$.
We can continue this procedure until there are no non-trivial tree-wall trees left, which happens exactly when the right-angled building is spherical.

We conclude that $H$ is indeed contained in the stabilizer of a spherical residue.
\end{proof}

The chamber stabilizers $U_{c_0}$ (as well as the maximal compact subgroups of which they are finite index subgroups) are totally disconnected compact groups, and are therefore profinite (see \cite[Corollary 1.2.4]{Wilson}), i.e., they are a projective limit of finite groups.
We will make this inverse limit of finite groups explicit for the subgroups $U_{c_0}$ and we will describe the structure of these finite groups.

\subsection{A description of $U_{c_0}$ as iterated semidirect products}\label{inverse.limit}

Let $U_{c_0}|_{\B{c_0}n}$ be the induced action of $U_{c_0}$ on the $n$-ball $\B{c_0}n$, that is,
$U_{c_0}|_{\B{c_0}n} = U_{c_0}/ \Fix_{U_{c_0}}(\B{c_0}n)$.
(Recall that $\B{c_0}n$ is the set of chambers of $\Delta$ at gallery distance smaller than or equal to $n$, as in Definition~\ref{galdist}.)

The restriction of the action of $U_{c_0}|_{\B{c_0}{n+1}}$ to $\B{c_0}n$ maps $U_{c_0}|_{\B{c_0}{n+1}}$ to $U_{c_0}|_{\B{c_0}n}$ and this restriction is onto.
Therefore
\begin{equation}\label{eq:Uc0}
    U_{c_0} = \varprojlim_n U_{c_0}|_{\B{c_0}n} = \varprojlim_n U_{c_0}|_{\Sp{c_0}n},
\end{equation}
where the last equality follows because there is a unique path of each type between any two chambers of~$\Delta$
and we are considering type-preserving automorphisms only.

We will make the extension of the action of $U_{c_0}|_{\B{c_0}n}$ to $\B{c_0}{n+1}$ explicit following the strategy described in \cite[Section 9]{Gibbins}.
\begin{defn}\label{defn:Ai}
\begin{enumerate}
    \item
        For any $w \in W$, let
        \begin{equation}\label{eq:Lw}
            L(w)=\{s\in S \mid l(ws)<l(w)\},
        \end{equation}
        that is, the set of generators which added to a reduced representation for~$w$ do not form a new reduced word.
        Let $W(n)$ denote the set of elements $w\in W$ of length $n$. Define
        \begin{align*}
            W_1(n) &= \{ w\in W(n) \mid |L(w)| = 1 \} , \\
            W_2(n) &= \{ w\in W(n) \mid |L(w)| \geq 2 \} .
        \end{align*}
        Observe that if $w\in W_1(n)$, then the last letter of $w$ is independent of the choice of the reduced representation for $w$.
        We will write this last letter as $r_w$; then $L(w) = \{ r_w \}$.
    \item
        Let $n \geq 1$ and let $A(n)=\Sp{c_0}{n}$ be the $n$-sphere around~$c_0$.
        We will create a partition of $A(n)$ by defining, for each $i \in \{ 1,2 \}$,
        \begin{equation}\label{eq:A1A2}
            A_i(n)=\{ c \in A(n) \mid \delta(c_0,c) \in W_i(n) \} .
        \end{equation}
        \begin{figure}[ht!]
            \captionsetup[subfigure]{justification=centering}
            \begin{subfigure}[b]{0.38\textwidth}
            \begin{tikzpicture}[scale =0.8, line cap=round,line join=round,>=triangle 45,x=1.0cm,y=1.0cm]
            \clip(3.1,2.4) rectangle (11,9.5);
            \draw [shift={(7.196048227047175,-8.159740084379619)}] plot[domain=0.90299902219246:2.132950643098445,variable=\t]({1.0*12.987535105233386*cos(\t r)+-0.0*12.987535105233386*sin(\t r)},{0.0*12.987535105233386*cos(\t r)+1.0*12.987535105233386*sin(\t r)});
            \draw [shift={(7.196048227047175,-8.159740084379619)}] plot[domain=0.8405516372429191:2.200323608157614,variable=\t]({1.0*14.924888309352937*cos(\t r)+-0.0*14.924888309352937*sin(\t r)},{0.0*14.924888309352937*cos(\t r)+1.0*14.924888309352937*sin(\t r)});
            \draw [line width=1.2pt](6.794841094694121,4.821596555066206)-- (6.460693486180265,6.747021618007139);
            \draw [line width=1.2pt](6.794841094694121,4.821596555066206)-- (7.939261914182559,6.746631857240333);
            \draw (6.583558053137442,4.472276888761895) node[anchor=north west] {$\vdots$};
            \draw (8.03581479852291,6.7983491334894675) node[anchor=north west] {$c_3$};
            \draw (6.571250792583328,6.847578175705924) node[anchor=north west] {$c_2$};
            \draw (5.414368300496599,6.7983491334894675) node[anchor=north west] {$c_1$};
            \draw (6.75,8.11522601277968) node[anchor=north west] {$t$};
            \draw (5.65,8.08) node[anchor=north west] {$t$};
            \draw (8.946552079527356,7.9429243650220815) node[anchor=north west] {$t$};
            \draw (5.6,5.862997331376793) node[anchor=north west] {$s$};
            \draw (6.2,6.022991718580276) node[anchor=north west] {$s$};
            \draw (6.9,5.95) node[anchor=north west] {$s$};
            \draw (6.878932306436181,2.9) node[anchor=north west] {$c_0$};
            \draw [shift={(7.196048227047175,-8.159740084379619)}] plot[domain=0.752893986406544:2.2097710988591883,variable=\t]({1.0*16.732763310877026*cos(\t r)+-0.0*16.732763310877026*sin(\t r)},{0.0*16.732763310877026*cos(\t r)+1.0*16.732763310877026*sin(\t r)});
            \draw [line width=1.2pt](5.279356698174269,6.641563393010703)-- (6.794841094694121,4.821596555066206);
            \draw [line width=1.2pt](3.4250688710404686,8.142562903677115)-- (5.279356698174269,6.641563393010703);
            \draw [line width=1.2pt](4.345950880774987,8.328507643459885)-- (5.279356698174269,6.641563393010703);
            \draw [line width=1.2pt](5.887125218266153,8.521749314799218)-- (6.460693486180265,6.747021618007139);
            \draw [line width=1.2pt](6.845058071938161,8.569341597794358)-- (6.460693486180265,6.747021618007139);
            \draw [line width=1.2pt](8.312548470057324,8.53573221277899)-- (7.939261914182559,6.746631857240333);
            \draw [line width=1.2pt](9.391698372060409,8.42834263962847)-- (7.939261914182559,6.746631857240333);
            \draw (7.7,8) node[anchor=north west] {$t$};
            \draw (4.7,8) node[anchor=north west] {$t$};
            \draw (3.5,7.8) node[anchor=north west] {$t$};
            \draw (9.783445797207118,7.056801605125864) node[anchor=north west] {$n$};
            \draw (9.598836888895406,5.173790740346401) node[anchor=north west] {$n-1$};
            \draw (9.746524015544775,9) node[anchor=north west] {$n+1$};
            \begin{scriptsize}
            \draw [fill=black] (7.196048227047175,-8.159740084379619) circle (3.5pt);
            \draw [fill=black] (5.279356698174269,6.641563393010703) circle (3.5pt);
            \draw [fill=black] (6.460693486180265,6.747021618007139) circle (3.5pt);
            \draw [fill=black] (7.939261914182559,6.746631857240333) circle (3.5pt);
            \draw [fill=black] (6.794841094694121,4.821596555066206) circle (3.5pt);
            \draw [fill=black] (6.9191121815025936,2.9434084491899646) circle (3.5pt);
            \draw [fill=black] (19.406165723102667,3.281343725747452) circle (3.5pt);
            \draw [fill=black] (-0.4659084580477993,2.1531227399789077) circle (3.5pt);
            \draw [fill=black] (3.4250688710404686,8.142562903677115) circle (3.5pt);
            \draw [fill=black] (4.345950880774987,8.328507643459885) circle (3.5pt);
            \draw [fill=black] (5.887125218266153,8.521749314799218) circle (3.5pt);
            \draw [fill=black] (6.845058071938161,8.569341597794358) circle (3.5pt);
            \draw [fill=black] (8.312548470057324,8.53573221277899) circle (3.5pt);
            \draw [fill=black] (9.391698372060409,8.42834263962847) circle (3.5pt);
            \end{scriptsize}
            \end{tikzpicture}
            \subcaption{$c_i \in A_1(n)$: for all $t \neq s$, $l(wt) > l(w)$.}
            \end{subfigure}
            \hspace{2.6cm}
            \begin{subfigure}[b]{0.4\textwidth}
            \begin{tikzpicture}[scale=0.83, line cap=round,line join=round,>=triangle 45,x=1.0cm,y=1.0cm]
            \clip(3.7,1.3) rectangle (10.2,8);
            \draw [shift={(7.5,-7.66)}] plot[domain=0.8281072191624526:2.264578884900933,variable=\t]({1.0*11.592980634849695*cos(\t r)+-0.0*11.592980634849695*sin(\t r)},{0.0*11.592980634849695*cos(\t r)+1.0*11.592980634849695*sin(\t r)});
            \draw [shift={(7.5,-7.66)}] plot[domain=0.8158107657899643:2.23901068745554,variable=\t]({1.0*14.417503251256784*cos(\t r)+-0.0*14.417503251256784*sin(\t r)},{0.0*14.417503251256784*cos(\t r)+1.0*14.417503251256784*sin(\t r)});
            \draw [line width=1.2pt](4.877468301282304,6.516978785665895)-- (4.13142200772546,3.432785146660134);
            \draw [line width=1.2pt](4.877468301282304,6.516978785665895)-- (5.810790384722948,3.8092532832637183);
            \draw [line width=1.2pt](5.810790384722948,3.8092532832637183)-- (6.622348908479946,6.730765391790445);
            \draw [line width=1.2pt](5.810790384722948,3.8092532832637183)-- (8.406499532206302,6.7289769823330285);
            \draw [line width=1.2pt](6.622348908479946,6.730765391790445)-- (7.510742996295576,3.9329756571826984);
            \draw [line width=1.2pt](8.406499532206302,6.7289769823330285)-- (8.811721489227729,3.858532316866512);
            \draw [line width=1.2pt](6.484130852902304,3.5) node[anchor=north west] {$\vdots$ };
            \draw (4.7,7.1) node[anchor=north west] {$c_1$};
            \draw[color=black] (6.76802401325785,7.026868238155123) node {$c_2$};
            \draw[color=black] (8.558734717038993,7.026868238155123) node {$c_3$};
            \draw[color=black] (4.3,5.1) node {$t$};
            \draw[color=black] (5.064665051124568,5.2361575343739775) node {$s$};
            \draw[color=black] (6.05,5.5) node {$s$};
            \draw[color=black] (9.6,6.9) node {$n$};
            \draw[color=black] (9.7,4.1) node {$n-1$};
            \draw[color=black] (7.8,5.75) node {$s$};
            \draw[color=black] (6.8,5.6) node {$t$};
            \draw[color=black] (8.35,5.5) node {$t$};
            \draw[color=black] (7.1,1.5) node {$c_0$};
            \begin{scriptsize}
            \draw [fill=black] (4.877468301282304,6.516978785665895) circle (3.5pt);
            \draw [fill=black] (6.622348908479946,6.730765391790445) circle (3.5pt);
            \draw [fill=black] (8.406499532206302,6.7289769823330285) circle (3.5pt);
            \draw [fill=black] (4.13142200772546,3.432785146660134) circle (3.5pt);
            \draw [fill=black] (5.810790384722948,3.8092532832637183) circle (3.5pt);
            \draw [fill=black] (8.811721489227729,3.858532316866512) circle (3.5pt);
            \draw [fill=black] (7.510742996295576,3.9329756571826984) circle (3.5pt);
            \draw [fill=black] (6.75,1.8) circle (3.5pt);
            \end{scriptsize}
            \end{tikzpicture}
            \subcaption{$c_i \in A_2(n)$: for some $t \neq s$, $l(wt) < l(w)$.}
            \end{subfigure}
            \caption{Partition of $A(n)$.}
        \end{figure}
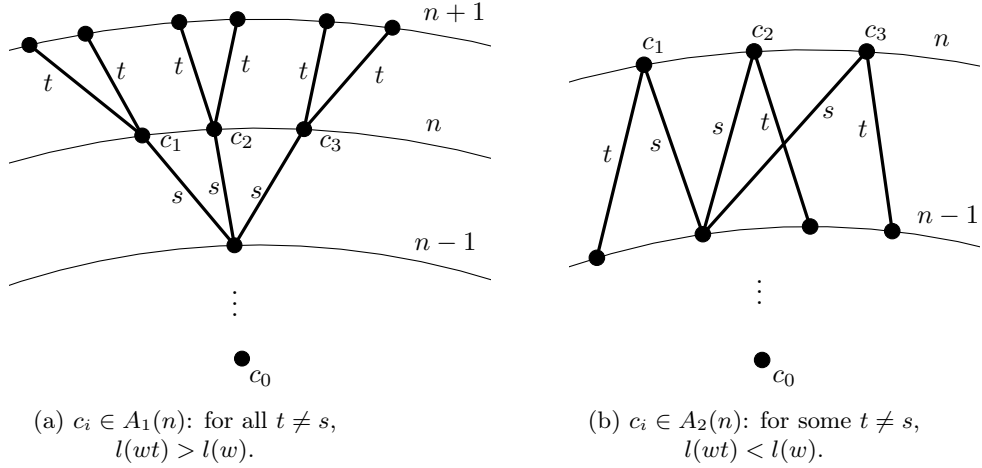
\end{enumerate}
\end{defn}
We observe that if two chambers $c_1$ and $c_2$ in $A(n)$ are $s$-adjacent  for some $s\in S$,
then there is a unique chamber $c$ in their $s$-panel that is in $A(n-1)$.
Thus $\delta(c_0, c_1)=\delta(c_0, c)s=\delta(c_0, c_2)$ and hence $c_1$ and $c_2$ are in the same part $A_i(n)$.
Therefore $A_1(n)$ and $A_2(n)$ are mutually disconnected parts of $A(n)$.

\smallskip

We will extend the action of $U_{c_0}|_{\B{c_0}n}$ to $\B{c_0}{n+1}$ by considering the extension to $A_1(n+1)$ and to $A_2(n+1)$ separately.
Let
\[ C_n = \{ c \in \B{c_0}n \mid c \adj{s} c' \text{ for some } c'\in A_1(n+1) \text{ and some } s\in S\}, \]
and for each $c\in C_n$, let
\[ S_c = \{ s \in S \mid c \adj{s} c' \text{ for some } c'\in A_1(n+1)\} . \]
Now consider the set of pairs
\[ Z_n = \{ (c,s) \mid c\in C_n \text{ and } s\in S_c\}. \]
Equivalently,
\begin{equation}\label{eq:Zn}
    Z_n = \{ (c,s) \in A(n) \times S \mid \delta(c_0,c)s \in W_1(n+1) \} .
\end{equation}
For each element $z=(c,s) \in Z_n$, let $\Pa_z$ be the $s$-panel containing $c$ and let $G_z=G_0^s$, where $G_0^s$ is the stabilizer of the element $1\in Y_s$ in $G^s$.

\begin{lem}\label{le:Uz}
    Let $E := \B{c_0}{n} \cup A_1(n+1)$.
    For each element $z = (c,s) \in Z_n$, there is a subgroup $U_z \leq U_{c_0}|_E$ acting trivially on $E \setminus \Pa_z$ and acting as $G_z$ on~$\Pa_z$.
    In particular, $U_z$ fixes $\B{c_0}{n}$.
\end{lem}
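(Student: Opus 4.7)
The plan is to exhibit $U_z$ as the restriction to $E$ of a subgroup of $U_{c_0}$ obtained from Proposition~\ref{prop:nicegenerators}. Let $\T$ denote the $s$-tree-wall containing $\Pa_z$, i.e.\@ the residue of type $s \cup s^\perp$ containing $c$ (Corollary~\ref{tree residue}). For each $g \in G_0^s$, Proposition~\ref{prop:nicegenerators} produces an element $g_\T \in U$ acting as $g$ on every $s$-panel of $\T$ and fixing the $s$-wing with respect to $\T$ containing $c_0$; by the last part of Proposition~\ref{prop:g0directed}, the assignment $g \mapsto g_\T$ is a homomorphism. I will define $U_z$ as the image of the induced map $G_0^s \to U_{c_0}|_E$.

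For this to yield the statement, I need to verify (i) that $c = \proj_\T(c_0)$, so that the wing containing $c_0$ is $X_s(c)$ (and hence $h_s(c) = 1$ by the directed coloring property), and (ii) that $E \setminus \Pa_z \subseteq X_s(c)$, so that each $g_\T$ restricts to the identity on $E \setminus \Pa_z$. Combining (i) and (ii) with the fact that $g_\T$ permutes the colors of $\Pa_z \setminus \{c\}$ according to $g$ then gives $U_z$ with the required local action $G_0^s = G_z$, and the final assertion that $U_z$ fixes $\B{c_0}n$ is immediate because $\B{c_0}n \cap \Pa_z = \{c\}$.

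Point (i) falls out as soon as the conditions defining $Z_n$ are unwound: writing $w = \delta(c_0,c)$, the assumption $ws \in W_1(n+1)$ reads $l(ws) = n+1$ and $L(ws) = \{s\}$, and a short length computation using $st = ts$ for $t \in s^\perp$ yields $L(w) \cap (s \cup s^\perp) = \emptyset$. In the right-angled setting this implies that the concatenation of $w$ with any reduced word in generators of $s \cup s^\perp$ stays reduced, so the gate property forces $c = \proj_\T(c_0)$.

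Point (ii) is the main obstacle, and splits into $\B{c_0}{n} \setminus \Pa_z \subseteq X_s(c)$ and $A_1(n+1) \setminus \Pa_z \subseteq X_s(c)$. For the first inclusion, I would pick $d \in \B{c_0}{n}$ and consider a minimal gallery from $c_0$ to $d$ lying inside $\B{c_0}{n}$ (which exists by Corollary~\ref{lemmaB}); by Lemmas~\ref{Lemma2.2} and~\ref{Lemma2.5}, any non-$s$-adjacency preserves the projection to $\Pa_z$, so any transition between $s$-wings of $\Pa_z$ must occur inside an $s$-panel of $\T$, and the gate property at $c = \proj_\T(c_0)$ forces such a transition to take place at $c$, sending the gallery into $\Pa_z \setminus \{c\} \subseteq \Sp{c_0}{n+1}$ and contradicting containment in $\B{c_0}n$. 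For the second inclusion, given $c' \in A_1(n+1) \setminus \Pa_z$ with $\proj_{\Pa_z}(c') = c_j \neq c$, Lemma~\ref{lemmaA} produces $c_3 \in \Sp{c_0}{n}$ with $c_3 \adj{t} c_j$ for some $t \in s^\perp$; then $\delta(c_0, c_3) = wst = wts$ in $W$ and comparing lengths gives $t \in L(ws) = \{s\}$, contradicting $t \in s^\perp \setminus \{s\}$.
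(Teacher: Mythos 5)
Your proof is correct, and the overall strategy coincides with the paper's: both construct $U_z$ as the restriction to $E$ of the subgroup $\{g_\T \mid g \in G_0^s\}$ furnished by Proposition~\ref{prop:nicegenerators}, which reduces the whole lemma to the inclusion $E \setminus \Pa_z \subseteq X_s(c)$. Where you diverge is in how that inclusion is handled. The paper treats $E \setminus \ch{\Pa_z}$ uniformly: for any $d \in E \setminus \ch{\Pa_z}$, if $\proj_{\Pa_z}(d) \in \Sp{c_0}{n+1}$, then Lemma~\ref{lemmaA} produces a $t \ne s$ with $st = ts$ and $t \in L(ws)$, contradicting $ws \in W_1(n+1)$ — and this works whether $d$ lies in $\B{c_0}{n}$ or in $A_1(n+1)$, since both are subsets of $\B{c_0}{n+1}$. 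You apply that very argument for $A_1(n+1)$, but then switch to a separate gallery argument for $\B{c_0}{n}$ (concave gallery inside $\B{c_0}{n}$ via Corollary~\ref{lemmaB}, preservation of $\proj_{\Pa_z}$ along non-$s$-steps via Lemma~\ref{Lemma2.5} and Proposition~\ref{Proposition2.8}, gate property to force the transition chamber to equal $c$). That argument is sound, but it is longer than necessary and imports the stronger fact $c = \proj_\T(c_0)$, which you establish via the word-length computation $L(w) \cap (\{s\} \cup s^\perp) = \emptyset$; the paper gets by with the immediate observation $c = \proj_{\Pa_z}(c_0)$. One small citation slip: the preservation of $\proj_{\Pa_z}$ under non-$s$-adjacency follows from Lemma~\ref{Lemma2.5} together with Proposition~\ref{Proposition2.8}(1) (parallel residues have the same type); Lemma~\ref{Lemma2.2}, which concerns commuting disjoint type sets, is not what is needed there.
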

\begin{proof}
    Let $z=(c,s) \in Z_n$, and consider the $s$\dash panel $\Pa = \Pa_z$.
    Notice that $c = \proj_\Pa(c_0)$.
    Since $h_s$ is a directed legal coloring, we know that $h_s(c)=1$ and that, for any $g \in U_{c_0}|_{\B{c_0}n}$,
    also $h_s(g(c))=1$ because $g$ is a type preserving automorphism.

    For each $a \in G_0^s$, we consider the induced automorphism of $\Pa$ given by $a_\Pa = (h_s|_\Pa)^{-1} \circ a \circ h_s|_\Pa \in \Aut\Pa$.
    By Lemma~\ref{prop:nicegenerators} we know that there exists an element $\widetilde{a_{\Pa}} \in U$ acting locally as $a_{\Pa}$ on the panel $\Pa$
    and fixing the $s$-wing $X_s(c)$.
    In particular, $\widetilde{a_{\Pa}}\in U_{c_0}$.

    We claim that $\widetilde{a_{\Pa}}$ fixes each chamber $d \in E \setminus \ch{\Pa}$.
    We start by showing that $\proj_\Pa(d) = c$.
    Indeed, if $\proj_{\Pa}(d)=c_2\in \Sp{c_0}{n+1}$, then by Lemma~\ref{lemmaA} there would exist $c_3\in \Sp{c_0}n$ such that $c_2$ is $t$-adjacent to $c_3$ with $st=ts$,
    contradicting the fact that $c_2\in A_1(n+1)$.
    Therefore $\proj_\Pa(d) = c$.
    It follows that $d \in X_s(c)$, and hence $\widetilde{a_{\Pa}}$ fixes $d$, proving our claim.
    In particular, the restriction of $\widetilde{a_\Pa}$ to $E$ is independent of the choice of the extension $\widetilde{a_\Pa}$;
    denote this element of $U_{c_0}|_E$ by $\widehat{a_\Pa}$.
    Then
    \[ U_z := \{ \widehat{a_\Pa} \mid a \in G_0^s \} \]
    is a subgroup of $U_{c_0}|_E$ isomorphic to $G_0^s$, fixing every chamber outside $\ch{\Pa} \cap A_1(n+1)$.
\end{proof}

\begin{prop}\label{B1}
The group $U_{c_0}|_{\B{c_0}n\cup A_1(n+1)}$ is isomorphic to
\[ (\prod_{z\in Z_n} G_z) \rtimes U_{c_0}|_{\B{c_0}n}, \]
where the action of $U_{c_0}|_{\B{c_0}n}$ on $\prod_{z\in Z_n} G_z$ is given by permuting the entries of the direct product
according to the action of $U_{c_0}|_{\B{c_0}n}$ on $C_n \subseteq \B{c_0}{n}$.
\end{prop}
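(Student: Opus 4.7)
The plan is to exhibit $U_{c_0}|_{\B{c_0}n \cup A_1(n+1)}$ as a split extension of $U_{c_0}|_{\B{c_0}n}$ by $\prod_{z \in Z_n} G_z$. Let $\rho \colon U_{c_0}|_{\B{c_0}n \cup A_1(n+1)} \to U_{c_0}|_{\B{c_0}n}$ denote the natural restriction map; surjectivity of $\rho$ is automatic, since every element of $U_{c_0}|_{\B{c_0}n}$ lifts to $U_{c_0}$ and then restricts.

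First I would identify $\ker\rho$ with $\prod_{z \in Z_n} G_z$. This rests on the combinatorial fact that the sets $\Pa_z \setminus \{c\}$ for $z = (c,s) \in Z_n$ partition $A_1(n+1)$: each $d \in A_1(n+1)$ has $L(\delta(c_0,d)) = \{s\}$ for a unique $s$, and the unique $s$-neighbor of $d$ in $\B{c_0}n$ together with $s$ determines the pair $z \in Z_n$. A kernel element fixes each base chamber $c$, which carries $h_s$-color $1$ (since $c = \proj_{\Pa_z}(c_0)$ and the coloring is directed), so its restriction to $\Pa_z$ acts through an element of $G_0^s = G_z$. Conversely, Lemma~\ref{le:Uz} supplies the subgroups $U_z \cong G_z$, and by the partition their supports are pairwise disjoint, so Lemma~\ref{CapraceP} identifies $\ker\rho$ with $\prod_{z \in Z_n} U_z \cong \prod_{z \in Z_n} G_z$.

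The main obstacle will be constructing a splitting $\sigma \colon U_{c_0}|_{\B{c_0}n} \to U_{c_0}|_{\B{c_0}n \cup A_1(n+1)}$. My plan is to use the directed colorings to single out the canonical \emph{color-preserving} extension. Given $g$, I would take any lift $\hat g \in U_{c_0}$; observe that both $c$ and $g(c)$ have $h_s$-color $1$ (since $g$ fixes $c_0$ and preserves projections to $s$-panels), so the induced local action of $\hat g$ on $\Pa_z$ must lie in $G_0^s$; then I would post-correct $\hat g$ panel-by-panel using elements from the $U_{z'}$'s (with $z' = (g(c),s)$) to kill this local action. Uniqueness of a color-preserving bijection between parallel $s$-panels will then show both that $\sigma(g)$ is independent of the choice of $\hat g$ and that $\sigma(g_1 g_2) = \sigma(g_1)\sigma(g_2)$, because both extensions of $g_1 g_2$ are color-preserving on each $\Pa_z$.

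Finally, to determine the conjugation action of $\sigma(U_{c_0}|_{\B{c_0}n})$ on $\ker\rho$, I would chase an element $\sigma(g) (a_z)_z \sigma(g)^{-1}$ through a chamber $d' \in \Pa_{z'}$ with $z' = (c',s)$: since $\sigma(g)^{\pm 1}$ is color-preserving on every panel, the conjugate acts on $\Pa_{z'}$ via the factor $a_{(g^{-1}(c'),s)}$. Thus conjugation permutes the direct factors according to the map $(c,s) \mapsto (g(c),s)$ on $Z_n$, which is precisely the permutation action of $U_{c_0}|_{\B{c_0}n}$ on $Z_n$ coming from its action on $C_n$ (with the generator $s$ as a parameter), yielding the claimed semidirect product decomposition.
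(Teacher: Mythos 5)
Your proof is correct and follows essentially the same approach as the paper's own argument: the kernel of the restriction map is identified with $\prod_{z\in Z_n} U_z$ using Lemma~\ref{le:Uz} and the disjoint-support observation, and the splitting is the unique color-preserving extension. The paper phrases this directly as a decomposition $U_{c_0}|_E = D \rtimes F$ with $D=\prod_z U_z$ and $F$ the set of color-preserving extensions, rather than via the short exact sequence framing you use, but the underlying ideas coincide.
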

\begin{proof}
    Let $E = \B{c_0}{n} \cup A_1(n+1)$ as in Lemma~\ref{le:Uz} and
    consider, for each $z \in Z_n$, the subgroup $U_z \leq U_{c_0}|_E$.
    Then the subgroups $U_z$ for different $z \in Z_n$ all have disjoint support,
    and hence form a direct product
    \[ D := \prod_{z \in Z_n} U_z \leq U_{c_0}|_E . \]
    Moreover, $D$ fixes $\B{c_0}n$.

    On the other hand, it is clear that each element $g \in U_{c_0}|_{\B{c_0}n}$ can be uniquely extended to an element $\tilde g \in U_{c_0}|_E$
    in such a way that for each $z \in Z_n$, the induced map from $\Pa_z$ to $\Pa_{g.z}$ preserves the $s$-coloring,
    i.e., induces the identity of $G^s$.
    Let
    \[ F := \{ \tilde g \mid g \in U_{c_0}|_{\B{c_0}n} \} \leq U_{c_0}|_E . \]
    Then indeed $U_{c_0}|_E = D \rtimes F$,
    and the conjugation action of $F$ on $D$ is given by permuting the entries of the direct product
    according to the action of $U_{c_0}|_{\B{c_0}n}$ on $C_n \subseteq \B{c_0}{n}$.
\end{proof}

\begin{prop}\label{B2}
Each element of $U_{c_0}|_{\B{c_0}n}$ extends uniquely to $\B{c_0}n\cup A_2(n+1)$.
In particular, $U_{c_0}|_{\B{c_0}n \cup A_2(n+1)}\cong U_{c_0}|_{\B{c_0}n}$.
\end{prop}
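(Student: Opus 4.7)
The plan is to reduce the proposition to showing that the canonical restriction homomorphism
\[ \rho \colon U_{c_0}|_{\B{c_0}n \cup A_2(n+1)} \to U_{c_0}|_{\B{c_0}n} \]
is an isomorphism. Surjectivity of $\rho$ is automatic, since the natural projection $U_{c_0} \to U_{c_0}|_{\B{c_0}n}$ factors through the larger quotient, so the extension exists. For the uniqueness of the extension, I will show that $\rho$ is also injective, or equivalently that any $g \in U_{c_0}$ fixing $\B{c_0}n$ pointwise must fix every chamber of $A_2(n+1)$ as well.

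Fix such a $g$ and take any $c \in A_2(n+1)$, writing $w := \delta(c_0, c)$. By the definition of $A_2(n+1)$ in~\eqref{eq:A1A2}, the set $L(w)$ contains at least two elements, say $s$ and $t$. Let $c_s$ (resp.\@ $c_t$) be the unique chamber of the $s$-panel (resp.\@ $t$-panel) of $c$ that lies in $\Sp{c_0}n$. Applying Lemma~\ref{Squares} to the configuration $c_s \adj{s} c \adj{t} c_t$ forces $|st|=2$ in $\Sigma$ and produces a chamber $c_4 \in \Sp{c_0}{n-1}$ with $c_s \adj{t} c_4$ and $c_t \adj{s} c_4$; the four chambers $c_4, c_s, c_t, c$ thus form a commuting square inside the $\{s,t\}$-residue $\Ra := \Ra_{\{s,t\}, c_4}$.

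Since $s$ and $t$ commute, Lemma~\ref{Lemma2.2} identifies $\ch\Ra$ with $\ch{\Pa_{s,c_4}} \times \ch{\Pa_{t,c_4}}$ via the pair of panel projections; a direct check using the gate property shows that under this bijection one has $c_s \leftrightarrow (c_4, c_s)$, $c_t \leftrightarrow (c_t, c_4)$ and $c \leftrightarrow (c_t, c_s)$, while $s$-adjacency (resp.\@ $t$-adjacency) of chambers in $\Ra$ corresponds to equality of the second (resp.\@ first) coordinate. Because $g$ fixes $c_s$, $c_t$ and $c_4$, it stabilizes $\Ra$ and preserves both projections; therefore $g(c)$ must lie in $\Ra$, be $s$-adjacent to $c_s$ and be $t$-adjacent to $c_t$, which forces its coordinates to be $(c_t, c_s)$, i.e.\@ $g(c) = c$. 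The only slightly delicate point is the second paragraph: one has to recognize that the defining condition $|L(w)| \geq 2$ of $A_2(n+1)$ is precisely what is needed to produce the commuting square, after which Lemma~\ref{Lemma2.2} pins $g(c)$ down with no additional work; alternative approaches through reduced words or colorings would be noticeably more intricate.
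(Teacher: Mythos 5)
Your proof is correct, but it uses heavier machinery than the paper's argument for the key uniqueness step. You construct the square by Lemma~\ref{Squares}, pass to the spherical residue $\Ra_{\{s,t\},c_4}$, and invoke the product decomposition of Lemma~\ref{Lemma2.2} to pin down $g(c)$. The paper dispenses with both lemmas: it simply observes that $c$ is the unique chamber of $A_2(n+1)$ that is $s$-adjacent to $c_s$ and $t$-adjacent to $c_t$, because any other candidate $d$ would be both $s$-adjacent and $t$-adjacent to $c$, which is impossible for $s\neq t$. That single panel-intersection observation already determines $g(c)$ from $g(c_s)$ and $g(c_t)$, so no square-closing or residue structure is needed. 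Your route does make the underlying geometry more explicit (it exhibits the commuting square and the product structure on the rank-$2$ residue), while the paper's version is more elementary and shorter. One small presentational difference: the paper phrases the determination directly at the level of general $g\in U_{c_0}$, whereas you reduce to the kernel of the restriction map $\rho$; these are equivalent formulations, and your reduction is cleanly stated.
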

\begin{proof}
Let $c\in A_2(n+1)$ and let $w$ be a reduced representation of $\delta(c_0,c)$.
Since $|L(w)| \geq 2$, we can choose $s,t \in L(w)$ with $s \neq t$.
Consider the chambers $c_s = \proj_{\Pa_{s,c}}(c_0)$ and $c_t = \proj_{\Pa_{t,c}}(c_0)$ in $\B{c_0}n$.
Observe that $c$ is the unique chamber of $A_2(n+1)$ that is $s$-adjacent to $c_s$ and $t$-adjacent to $c_t$:
if $d$ were another such chamber, then $c$ and $d$ would be both $s$-adjacent and $t$-adjacent, which is impossible.
In particular, for any $g \in U_{c_0}$, the restriction of $g$ to $\B{c_0}{n} \cup A_2(n+1)$ is already determined by
the restriction of $g$ to $\B{c_0}{n}$, and the result follows.
\end{proof}

As the sets $A_1(n+1)$ and $A_2(n+1)$ are mutually disconnected,
we can combine the two previous results to obtain an extension of the action of $U_{c_0}|_{\B{c_0}n}$ to $\B{c_0}{n+1}$ and a description of the groups $U_{c_0}|_{\B{c_0}n}$.

\begin{thm}\label{thelimit}
For each $n$, we have
\begin{equation}\label{eq:thelimit}
    U_{c_0}|_{\B{c_0}{n+1}} \cong (\prod_{z\in Z_n} G_z) \rtimes U_{c_0}|_{\B{c_0}{n}},
\end{equation}
where the conjugation action of $U_{c_0}|_{\B{c_0}n}$ on $\prod_{z\in Z_n} G_z$ is given by permuting the entries of the direct product
according to the action of $U_{c_0}|_{\B{c_0}n}$ on $C_n \subseteq \B{c_0}{n}$.
Moreover,
\[ U_{c_0} \cong \varprojlim_n U_{c_0}|_{\B{c_0}{n}} . \]
\end{thm}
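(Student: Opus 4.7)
The plan is to deduce Theorem~\ref{thelimit} by combining Propositions~\ref{B1} and~\ref{B2}, after exploiting the fact that $A_1(n+1)$ and $A_2(n+1)$ are mutually disconnected in the adjacency graph. First I would observe the set-theoretic decomposition
\[ \B{c_0}{n+1} = \B{c_0}{n} \sqcup A_1(n+1) \sqcup A_2(n+1), \]
where $A_1(n+1)$ and $A_2(n+1)$ partition $\Sp{c_0}{n+1}$ as in Definition~\ref{defn:Ai}. By the remark preceding Proposition~\ref{B1}, any two $s$-adjacent chambers of $A(n+1)$ lie in the same part $A_i(n+1)$, so no chamber of $A_1(n+1)$ is adjacent to any chamber of $A_2(n+1)$; moreover, chambers in $A(n+1)$ are only adjacent to chambers of $A(n) \cup A(n+1) \cup A(n+2)$, so the interaction of $A_1(n+1)$ and $A_2(n+1)$ inside $\B{c_0}{n+1}$ happens only through their common ``parent'' set $\B{c_0}{n}$.

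Next I would use Proposition~\ref{B2} to show that the restriction map
\[ U_{c_0}|_{\B{c_0}{n+1}} \longrightarrow U_{c_0}|_{\B{c_0}{n} \cup A_1(n+1)} \]
is an isomorphism. Indeed, by Proposition~\ref{B2}, the action of any element of $U_{c_0}$ on $A_2(n+1)$ is uniquely determined by its action on $\B{c_0}{n}$, so the composition with further restriction to $\B{c_0}{n} \cup A_1(n+1)$ loses no information. Applying Proposition~\ref{B1} to the right-hand side then yields
\[ U_{c_0}|_{\B{c_0}{n+1}} \cong \bigl( \prod_{z \in Z_n} G_z \bigr) \rtimes U_{c_0}|_{\B{c_0}{n}}, \]
with the prescribed conjugation action of $U_{c_0}|_{\B{c_0}{n}}$ permuting the factors according to its action on $C_n$. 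The verification that the action on $A_2(n+1)$ is compatible with the semidirect product (i.e., does not introduce additional relations between the factors of the direct product) is immediate from Proposition~\ref{B2}, since $U_z$ fixes $\B{c_0}{n}$ pointwise and hence fixes $A_2(n+1)$ pointwise by the uniqueness of the extension.

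Finally, the inverse limit description $U_{c_0} \cong \varprojlim_n U_{c_0}|_{\B{c_0}{n}}$ is essentially equation~\eqref{eq:Uc0}, and follows from the definition of the permutation topology: the fixators $\Fix_{U_{c_0}}(\B{c_0}{n})$ form a neighborhood basis of the identity in $U_{c_0}$, and $U_{c_0}$ is closed in $\Aut\Delta$ by Proposition~\ref{propU}(2). Since the restriction maps $U_{c_0}|_{\B{c_0}{n+1}} \to U_{c_0}|_{\B{c_0}{n}}$ are surjective (they split as semidirect products by the first part just proved), the inverse limit is realized by $U_{c_0}$ itself. I do not expect any substantial obstacle in this argument, as the real content lives in the preparatory Propositions~\ref{B1} and~\ref{B2}; the only point that deserves some care is the compatibility check in the second paragraph, which hinges crucially on $A_1(n+1)$ and $A_2(n+1)$ being mutually disconnected.
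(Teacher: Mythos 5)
Your proposal is correct and takes essentially the same approach as the paper: the paper's proof is the one-line statement that the theorem is an immediate consequence of Propositions~\ref{B1} and~\ref{B2}, and your argument merely spells out the details that make this consequence immediate (the injectivity of the restriction map to $\B{c_0}{n} \cup A_1(n+1)$ via Proposition~\ref{B2}, and the surjectivity of the restriction maps for the inverse limit).
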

\begin{proof}
    This is now an immediate consequence of Propositions~\ref{B1} and~\ref{B2}.
\end{proof}

\begin{rem}
    The semidirect product occurring in Theorem~\ref{thelimit} is almost a (complete) wreath product;
    the only difference is that the groups $G_z$ might be distinct for each orbit of the action of $U_{c_0}|_{\B{c_0}n}$ on $Z_n$.
    On the other hand, see Remark~\ref{rem:Ugwp} below.
\end{rem}

\begin{rem}\label{rem:constr}
    Theorem~\ref{thelimit} gives a procedure to {\em construct} the group $U_{c_0}$ recursively, together with its action on
    the directed right-angled building $\Delta$.
    Indeed, given the group $U_n = U_{c_0}|_{\B{c_0}{n}}$ acting on $\B{c_0}{n}$, we can define a group $U_{n+1}$ as in~\eqref{eq:thelimit},
    and endow it with a faithful action on $\B{c_0}{n+1}$ precisely by extending the action of $\B{c_0}{n}$ following
    the description given in the proofs of Theorems~\ref{B1} and~\ref{B2}.
    In particular, the group $\prod_{z\in Z_n} G_z$ acts trivially on $\B{c_0}{n}$ and on $A_2(n+1)$, and acts naturally on $A_1(n+1)$
    via the isomorphisms $G_z \cong U_z$.
\end{rem}

\subsection{A description of the $w$-sphere stabilizers as generalized wreath products}\label{sec:compgenwreath}

It might be desirable to have a more direct description of the stabilizer of a $w$-sphere for a specific $w \in W$,
where a $w$-sphere is a subset of $\ch\Delta$ of the form
\[ \Sp{c_0}{w} = \{ v \in \ch\Delta \mid \delta (c_0,v) = w \} , \]
where $\delta$ is the Weyl distance.
It turns out that this is indeed possible, using the generalized wreath products described in Section~\ref{sec:genwreath}.
Notice that, by Example~\ref{Intransitive groups}, the group $U_{c_0}|_{\Sp{c_0}{n}}$ is a subdirect product of the groups
$U_{c_0}|_{\Sp{c_0}{w}}$ running over all $w \in W$ with $l(w) = n$;
this gives another point of view on the structure of the group $U_{c_0}$.

\begin{defn}\label{def:XwDw}
    Let $w = s_1 \dotsm s_{n-1} s_n$ be a reduced word in $M_S$ and write $w' = s_1 \dotsm s_{n-1}$.
    Consider the associated partially ordered set $(S_w, \prec_w)$ as in Definition~\ref{def:poset}.
    For each $i \in \{ 1,\dots,n \}$, let $X_i = \{2, \dots, q_{s_i}\}$.
    We define
    \[ X_w = \{ (\alpha_1, \dots, \alpha_n) \mid \alpha_i \in X_i \} . \]
    By the definition of a directed right-angled building, the chambers in the $w$\dash sphere $\Sp{c_0} w$ are indexed by the elements of $X_w$.
    Moreover, every element of $U_{c_0}|_{\Sp{c_0}{w}}$ induces an element of $U_{c_0}|_{\Sp{c_0}{w'}}$ by restricting the elements of $X_w$
    to their first $n-1$ coordinates.
    Let
    \[ D_w = \{ g \in U_{c_0}|_{\Sp{c_0}{w}} \mid g \text{ fixes } \Sp{c_0}{w'} \} . \]
    Notice that $D_w$ depends on the word $w$ in $M_S$: different reduced representations $w_1, w_2$ for a given element of $W$
    might have different associated groups $D_{w_1}, D_{w_2}$.
\end{defn}

\begin{prop}\label{swstructure}
Let $w = s_1 \dotsm s_{n-1} s_n$ be a reduced word in $M_S$.
\begin{enumerate}
    \item
        The group $U_{c_0}|_{\Sp{c_0}w}$ is permutationally isomorphic to the generalized wreath product $G = X_w\gwr_{i \in S_w} G^{s_i}_0$
        with respect to the partial order~$\prec_w$.
    \item
        Under this isomorphism, the subgroup $D_w \leq U_{c_0}|_{\Sp{c_0}w}$ corresponds to the subgroup $D(n) \leq G$ from Definition~\ref{def:ideal}.
    \item
        If we write $w' = s_1 \dotsm s_{n-1}$, then $U_{c_0}|_{\Sp{c_0}w} \cong D_w \rtimes U_{c_0}|_{\Sp{c_0}{w'}}$.
\end{enumerate}
\end{prop}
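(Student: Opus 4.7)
The plan is to prove the three claims in order, with essentially all the technical work concentrated in part~(1); parts~(2) and~(3) will then follow from the general theory of generalized wreath products developed in Section~\ref{sec:genwreath}. Throughout, I identify $\Sp{c_0}w$ with $X_w$ via the directed right-angled building (Definition~\ref{def:XwDw} and Proposition~\ref{unirab}), so that the image of $U_{c_0}$ in $\Sym{\Sp{c_0}w}$ can be compared directly with the candidate group $G = X_w\gwr_{i\in S_w} G^{s_i}_0$.

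The heart of part~(1) is the following construction. For each $g\in U_{c_0}$, each $x\in X_w$, and each $i\in S_w$, let $c_x\in\Sp{c_0}w$ be the chamber indexed by $x$ and let $\T_i$ denote the $s_i$-tree-wall containing $c_x$. By Lemma~\ref{delta-finite}, the directed legal coloring $h_{s_i}$ restricts to a \emph{legal} coloring on $\T_i$, so $g$ induces a single permutation $g_{i,x}$ of $s_i$-colors throughout $\T_i$; this $g_{i,x}$ lies in $G^{s_i}_0$ because $g$ fixes $c_0$ and the $s_i$-color~$1$ chamber in each panel $\Pa$ of $\T_i$ is $\proj_\Pa(c_0)$ (Definition~\ref{deffc}). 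The crucial step is then to verify that for every $y$ with $y\adj{i}x$ (that is, $y$ and $x$ agree on all coordinates $\alpha_j$ with $i\prec_w j$), the chamber $c_y$ lies in the same tree-wall $\T_i$, so that $(g.y)_i = g_{i,x}.y_i$ holds uniformly. This geometric match is the technical core of the proof: using Corollary~\ref{4.39} to describe $\T_i$ as the $s_i\cup s_i^\perp$-residue of $c_x$, and combining the Closing Squares Lemmas with Observation~\ref{obsrep} and Lemma~\ref{reduced}, one shows that moving inside this residue is allowed exactly when the coordinates forced equal by $\prec_w$ are preserved. Surjectivity onto the full generalized wreath product then follows from Proposition~\ref{prop:nicegenerators}, which for each $s_i$-tree-wall realizes an arbitrary element of $G^{s_i}_0$ as an element of $U$ supported on that tree-wall, combined with Proposition~\ref{extuni} to glue such local pieces into global elements of $U$.

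For part~(2), the observation following Definition~\ref{def:poset} that $i\prec_w j$ forces $i>j$ shows that no element of $S_w$ lies strictly $\prec_w$-below $n$, so $\{n\}$ is an ideal of $(S_w,\prec_w)$. Unwinding Definitions~\ref{def:ideal} and~\ref{def:XwDw}, the subgroup $D(n)\leq G$ comprises those elements that modify only the $n$-th coordinate; under the isomorphism of part~(1) this is precisely the subgroup of $U_{c_0}|_{\Sp{c_0}w}$ consisting of elements that fix $\Sp{c_0}{w'}$, which is the defining property of $D_w$. For part~(3), I apply Lemma~\ref{le:r-ideal} with $I=\{n\}$ to obtain $G=D(n)\rtimes H$ where $H$ is the generalized wreath product over $S_w\setminus\{n\}=S_{w'}$; a short check using the natural identification between $\Rep{w'}$ and those reduced representations of $w$ that keep $s_n$ in the last position shows that the restriction of $\prec_w$ to $S_{w'}$ coincides with $\prec_{w'}$, so part~(1) applied to $w'$ identifies $H$ with $U_{c_0}|_{\Sp{c_0}{w'}}$. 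Combined with part~(2), this yields the semidirect product decomposition. The main obstacle throughout will be the geometric heart of part~(1): precisely matching the equivalence relation $\adj{i}$ on $X_w$ with the relation ``$c_y$ and $c_x$ lie in a common $s_i$-tree-wall'' on $\Sp{c_0}w$.
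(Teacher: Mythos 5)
Your geometric insight for the inclusion $U_{c_0}|_{\Sp{c_0}w}\subseteq G$ — that for $y\adj{i}x$ the $s_i$-panel entered at step~$i$ along the gallery from $c_0$ to $c_y$ lies in the same $s_i$-tree-wall as the one for $c_x$, so that weak legality of the coloring forces a uniform $g_{i,x}\in G^{s_i}_0$ — is exactly the right thing to verify, and it is also what the paper needs implicitly when it asserts that the generators supplied by Lemma~\ref{generators} lie in the generalized wreath product. (One small imprecision: your $\T_i$ should be the $s_i$-tree-wall of the $i$-th gallery chamber between $c_0$ and $c_x$, not of $c_x$ itself; when $s_i$ recurs later in $w$ these need not coincide.) So on that side you are essentially matching the paper, just checking arbitrary elements rather than only the generators of Lemma~\ref{generators}.

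The genuine gap is in the reverse inclusion $G\subseteq U_{c_0}|_{\Sp{c_0}w}$. Propositions~\ref{prop:nicegenerators} and~\ref{extuni} do produce elements of $U_{c_0}|_{\Sp{c_0}w}$ supported on individual $s$-tree-walls, but showing that such local pieces generate \emph{all} of $G$ is precisely the nontrivial structural content of the result; ``gluing'' them does not by itself account for the full generalized wreath product. The paper closes this by proving all three statements simultaneously by induction on $n=l(w)$: Lemma~\ref{le:r-ideal} gives $G=D(n)\rtimes H$ with $H\cong G'$, one checks directly that $D(n)=D_w\leq U_{c_0}|_{\Sp{c_0}w}$, and $H\leq U_{c_0}|_{\Sp{c_0}w}$ follows from the inductive hypothesis applied to $w'$, yielding $G\leq U_{c_0}|_{\Sp{c_0}w}$. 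You in fact invoke Lemma~\ref{le:r-ideal} and the recursion to $w'$ in your part~(3), but since you place this \emph{after} part~(1), it cannot be used to close the surjectivity step there. The fix is to fold your parts~(1)--(3) into a single induction on $l(w)$, as the paper does; your treatment of parts~(2) and~(3) is otherwise correct and matches the paper's argument.
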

\begin{proof}
We will identify $\Sp{c_0}{w}$ and $X_w$, i.e., we will view both $U_{c_0}|_{\Sp{c_0}w}$ and~$G$ as subgroups of $\Sym{X_w}$,
and we will show by induction on $n = l(w)$ that they are equal.
Notice that if $n=1$, then these two groups coincide by definition;
hence we may assume that $n \geq 2$.

Let $w_1, w_2, \dots, w_n$ be the sequence of elements in $W$ represented by the words $s_1, s_1s_2, \dots, s_1s_2 \dotsm s_n$, respectively,
and let $K := \{ 1_W, w_1, w_2, \dots, w_n \}$.
As $K$ is a (finite) connected subset of $W$ containing the identity, we can apply Lemma~\ref{generators} and obtain a set of generators
for $U_{c_0}|_{\Sp{c_0}w}$, with an explicit description as in Propositions~\ref{prop:g0directed} and~\ref{prop:nicegenerators}.
Each of these generators (acting on~$X_w$) belongs to the generalized wreath product $G$,
hence $U_{c_0}|_{\Sp{c_0}w} \leq G$.

Now consider $S_{w'} = \{1, \dots, n-1 \}$ with the partial order $\prec_{w'}$, let
$X_{w'} = \{ (\alpha_1, \dots, \alpha_{n-1}) \mid \alpha_i \in X_i \}$
and let $G' = X_{w'} \gwr_{i\in S'_{w'}} G_0^{s_i}$.
By the induction hypothesis, we have $G' = U_{c_0}|_{\Sp{c_0}{w'}}$ as subgroups of $\Sym{X_{w'}}$.

Observe that $\{ n \}$ is an ideal of $S_w$;
Lemma~\ref{le:r-ideal} now implies that
\begin{equation}\label{eq:Gsemi}
    G=D(n)\rtimes H ,
\end{equation}
where $H$ is a subgroup of $G$ isomorphic to $G'$.
Comparing Definition~\ref{def:ideal} and Definition~\ref{def:XwDw}, we see that $D(n) = D_w$, and of course $D_w \leq U_{c_0}|_{\Sp{c_0}w}$.
On the other hand, the embedding of $G'$ into $G$ as in Lemma~\ref{le:r-ideal}.
corresponds precisely to the embedding of $U_{c_0}|_{\Sp{c_0}{w'}}$ into $U_{c_0}|_{\Sp{c_0}{w}}$ obtained by extending each element
trivially on the last coordinate of $X_w$, hence $H \leq U_{c_0}|_{\Sp{c_0}{w}}$.
We conclude that $G \leq U_{c_0}|_{\Sp{c_0}{w}}$, and hence we have equality.
The last statement now follows from~\eqref{eq:Gsemi}.
\end{proof}

\begin{rem}\label{rem:intwr}
The generalized wreath product in Proposition~\ref{swstructure} is closely related to intersections of standard (i.e., complete) wreath products.
The partial order used to construct $G = X_w\gwr_{i \in S_w} G^{s_i}_0$ uses only the structure of the Coxeter diagram $\Sigma$ and the distinct representations of a reduced word $w$.
For each reduced word $w=s_1\dotsm s_n$ in the free monoid $M_S$ with respect to $\Sigma$, let
\[G_w=G_0^{s_n}\wr \cdots \wr G_0^{s_1}\]
denote the iterated wreath product acting on the set $X_w$ of tuples $(\alpha_1, \dots, \alpha_n)$,
where each $\alpha_i$ is an element in $\{2, \dots, q_{s_i}\}$, for $i\in \{1, \ldots, n\}$, with its imprimitive action (see \cite[Section 2.6]{DixonMortimer}).
We can view the iterated wreath product as a subgroup of $\Sym{X_w}$.

For each $\sigma\in \Rep w$, we have
\[G_{\sigma.w}=G_0^{s_{\sigma(n)}}\wr \cdots \wr G_0^{s_{\sigma(1)}};\]
we can view this as a group acting on the same set $X_w$ (but of course taking into account that the entries have been permuted by $\sigma$),
so in particular, it makes sense to consider the intersection
\[ H = \bigcap_{\sigma\in \Rep w} G_{\sigma.w} \]
as a subgroup of $\Sym{X_w}$.
The generalized wreath product $G$ is then permutationally isomorphic to $H$.
\end{rem}

\begin{rem}
    When $\Delta$ is a tree, then for each reduced word $w$, the partial order considered in the set $S_w$ is a chain.
    Therefore, by Remark~\ref{rem:gwp}, the generalized wreath products considered in this construction are iterated wreath products in their imprimitive action.
    Hence Proposition~\ref{swstructure} translates, for the case of trees, to the description provided by Burger and Mozes in \cite[Section 3.2]{BurgerMozes}.
    Notice that the chamber $c_0$ corresponds to an edge of the tree, and for each of the two reduced words $w$ of length $n$, the corresponding
    $w$-sphere stabilizer corresponds to the stabilizer of a ball in the tree around one of the two vertices of that edge.
\end{rem}

We finish this section by computing the order of the groups $U_{c_0}|_{\Sp{c_0}w}$ and $U_{c_0}|_{\Sp{c_0}n}$.
\begin{defn}\label{def:dw}
    Let $w = s_1 \dotsm s_n$ be a reduced word in $M_S$.
    Then we define
    \[ d_w = \prod_{j \succ_{\!w} n}(q_{s_j}-1) \]
    (where $d_w = 1$ if there is no $j \succ_{\!w} n$).
    Notice that if $w \in W_1(n)$, then every $j \in \{ 1,\dots,n-1 \}$ satisfies the condition $j \succ_{\!w} n$,
    so in that case, $d_w = \prod_{j=1}^{n-1} (q_{s_j}-1)$.
\end{defn}
\begin{prop}
    Let $w=s_1\cdots s_n$ be a reduced word.
    Then
    \[ \big\lvert U_{c_0}|_{\Sp{c_0}w} \big\rvert = \prod_{i=1}^{n} |G^{s_i}_0|^{d_i}, \]
    where $d_i = d_{s_1 \dotsm s_i} = \prod_{j \succ_{\!w} i}\,(q_{s_j}-1)$, for each $i\in \{1, \dots, n\}$.
\end{prop}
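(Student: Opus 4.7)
The plan is to prove the order formula by induction on $n=l(w)$, using the semidirect product decomposition furnished by Proposition~\ref{swstructure}. For the base case $n=1$ the word is $w=s_1$ and $\Sp{c_0}{w}$ consists of the $q_{s_1}-1$ chambers $s_1$\dash adjacent to~$c_0$ other than $c_0$ itself. Since $c_0$ is fixed, the induced action on this panel is $G^{s_1}_0$ by Lemma~\ref{lem:local}, so $|U_{c_0}|_{\Sp{c_0}{w}}| = |G^{s_1}_0|$. The product on the right reduces to $|G^{s_1}_0|^{d_1}$ with $d_1 = 1$ (empty product, since there is no $j<1$ with $j\succ_w 1$), which matches.

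For the inductive step I would set $w'=s_1\cdots s_{n-1}$ and invoke Proposition~\ref{swstructure}(3) to obtain
\[ \bigl\lvert U_{c_0}|_{\Sp{c_0}{w}} \bigr\rvert = |D_w|\cdot \bigl\lvert U_{c_0}|_{\Sp{c_0}{w'}} \bigr\rvert . \]
Next, combining parts~(1) and~(2) of the same proposition with Proposition~\ref{DIorder} (applied to the singleton ideal $\{n\}$ of $(S_w,\prec_w)$, which is indeed an ideal since $t\not\succ_w n$ for any $t\in S_w\setminus\{n\}$ is exactly what $\{n\}$ being maximal amounts to) yields
\[ |D_w| = |G^{s_n}_0|^{d_n}, \qquad d_n = \prod_{t\succ_w n}|X_t| = \prod_{j\succ_w n}(q_{s_j}-1), \]
since $|X_j| = q_{s_j}-1$ by Definition~\ref{def:XwDw}. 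Finally, the induction hypothesis applied to $w'$ gives
\[ \bigl\lvert U_{c_0}|_{\Sp{c_0}{w'}} \bigr\rvert = \prod_{i=1}^{n-1}|G^{s_i}_0|^{d'_i}, \quad \text{with } d'_i = \prod_{j\succ_{w'} i}(q_{s_j}-1), \]
and multiplying the two factors gives the desired expression, provided $d'_i = d_i$ for each $i<n$.

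Thus the only delicate step is the verification that, for $i,j<n$, one has $j\succ_w i \iff j\succ_{w'} i$, so that the exponents produced by the induction hypothesis agree with those in the statement. I would justify this as follows: any elementary transposition used to build an element of $\Rep{w'}$ involves indices $<n$ and only depends on commutation relations that are identical in $w$ and $w'$, hence extending such a $\sigma'$ by fixing $n$ yields an element of $\Rep{w}$; this gives $\{j\succ_w i\} \subseteq \{j\succ_{w'} i\}$. Conversely, given a reduced representation of $w$, removing the letter originally at position $n$ (after moving it, via commutations, to the right end; which is possible because the letters it commutes past can be tracked using Lemma~\ref{Lemma2.2}-type arguments on the Coxeter group) produces a reduced representation of $w'$ in which the relative order of $s_i,s_j$ is unchanged. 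I expect this combinatorial check on the partial orders to be the main (and essentially only) obstacle in the proof; once it is established, the inductive computation closes immediately.
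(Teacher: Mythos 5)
Your proposal is correct and takes essentially the same route as the paper: induction on $n$ via Propositions~\ref{swstructure} and~\ref{DIorder}, with the observation that $\prec_{w'}$ agrees with the restriction of $\prec_w$ to $\{1,\dots,n-1\}$ (which the paper states without proof) as the only combinatorial point needing verification. One small misattribution in your sketch of that point: the reason $s_n$ can be commuted to the right end of any reduced representation of $w$ is Observation~\ref{obsrep}\eqref{obsrep:3} (any $s_i$ with $i<n$ and $|s_is_n|=\infty$ must remain to the left of $s_n$ in every reduced representation, so everything to the right of $s_n$ commutes with it), not Lemma~\ref{Lemma2.2}, which concerns residue decompositions rather than word combinatorics.
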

\begin{proof}
    This follows by induction on $n$ from Proposition~\ref{swstructure} and Proposition~\ref{DIorder}.
    Notice that for each initial subword $w_i = s_1 \dotsm s_i$, the poset $\prec_{w_i}$ is a sub-poset of $\prec_w$,
    so that we can indeed express all $d_i$ using $\prec_w$ only.
\end{proof}

\begin{prop}
For each generator $s\in S$, let
\[ d(s,n) = \sum_{\substack{w\in W_1(n) \\ \textrm{s.t.\@~} r_w=s}} d_w \ \ \text{ and } \ \ t(s,n)=\sum_{i=1}^n d(s,i). \]
Then
\[ |U_{c_0}|_{\B{c_0}{n}}| = \prod_{s\in S} |G_0^s|^{t(s,n)} . \]
\end{prop}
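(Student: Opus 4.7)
The plan is to induct on $n$, using Theorem~\ref{thelimit} as the main engine. The base case $n=0$ is immediate: $\B{c_0}{0} = \{c_0\}$ so $U_{c_0}|_{\B{c_0}{0}}$ is trivial, and $t(s,0) = 0$ for every $s \in S$, so both sides equal~$1$.

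For the inductive step, Theorem~\ref{thelimit} gives the semidirect product decomposition, from which
\[ \big\lvert U_{c_0}|_{\B{c_0}{n+1}} \big\rvert = \big\lvert U_{c_0}|_{\B{c_0}{n}} \big\rvert \cdot \prod_{z \in Z_n} |G_z|. \]
Since $G_z = G_0^s$ whenever $z = (c,s)$, grouping the product by $s$ yields
\[ \prod_{z \in Z_n} |G_z| = \prod_{s \in S} |G_0^s|^{N(s,n)}, \quad \text{where } N(s,n) := \bigl\lvert \{ c \in A(n) \mid \delta(c_0,c) s \in W_1(n+1) \} \bigr\rvert, \]
using the description of $Z_n$ in~\eqref{eq:Zn}. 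It then remains to show that $N(s,n) = d(s,n+1)$, for then the inductive hypothesis combined with the identity $t(s,n+1) = t(s,n) + d(s,n+1)$ closes the induction.

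To compute $N(s,n)$, I would partition the contributing chambers according to their Weyl distance from~$c_0$. The assignment $c \mapsto w' := \delta(c_0,c) \cdot s$ sets up a bijection between chambers $c \in A(n)$ with $\delta(c_0,c)s \in W_1(n+1)$ and pairs $(w', c)$ with $w' \in W_1(n+1)$, $r_{w'} = s$, and $c \in \Sp{c_0}{w's}$; note that because $w' \in W_1(n+1)$, the last letter $r_{w'}$ is well-defined independently of the representation, so this map is unambiguous. For a fixed such $w'$, the fibre $\Sp{c_0}{w's}$ has cardinality $\prod_{i=1}^{n}(q_{s_i}-1)$, where $s_1 \dotsm s_n$ is any reduced representation of $w's$; this follows from the directed right-angled building description (Proposition~\ref{unirab}), or equivalently from the $w$-sphere description in Definition~\ref{def:XwDw}.

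Finally, because $w' \in W_1(n+1)$, Definition~\ref{def:dw} (and the explicit remark following it) identifies this product with $d_{w'}$. Summing over all reduced words $w' \in W_1(n+1)$ with $r_{w'} = s$ gives $N(s,n) = \sum_{w' \in W_1(n+1),\, r_{w'}=s} d_{w'} = d(s,n+1)$, as desired. There is no genuine obstacle here; the only point to handle carefully is that two different words in $W_1(n+1)$ represent the same element of $W$, so the sum defining $d(s,n+1)$ is indexed by words in $M_S$, and the bijection above respects this convention since $d_{w'}$ depends only on the element of $W$ (all reduced representations of a given element of $W_1$ give the same value of $d_{w'} = \prod_{i=1}^n(q_{s_i}-1)$, as the multiset of letters is invariant under elementary transpositions).
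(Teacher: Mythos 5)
Your proof is correct and follows essentially the same route as the paper's: both reduce to Theorem~\ref{thelimit}, then partition $Z_n$ (resp.\ $Z_{n-1}$ in the paper's indexing) by the element $w' = \delta(c_0,c)s \in W_1(n+1)$ and use the remark in Definition~\ref{def:dw} to count each fibre $\Sp{c_0}{w's}$ as $d_{w'}$ chambers. Your closing remark on the well-definedness of $d_{w'}$ across reduced representations of a given element of $W_1(n+1)$ is a minor point the paper leaves implicit but does not change the argument.
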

\begin{proof}
    Recall from equation~\eqref{eq:Zn} on p.\@~\pageref{eq:Zn} that $Z_{n-1} = \{ (c,s) \in A(n-1) \times S \mid \delta(c_0,c)s \in W_1(n) \}$, and observe that this set
    can be partitioned as
    \[ Z_{n-1} = \bigsqcup_{w \in W_1(n)}\bigl( \Sp{c_0}{wr_w} \times \{ r_w \} \bigr) . \]
    (Notice that $wr_w$ is the unique initial subword of $w$ of length $n-1$.)
    By the remark in Definition~\ref{def:dw}, the sphere $\Sp{c_0}{wr_w}$
    contains precisely $d_w$ chambers.
    It follows that
    \[ \prod_{z \in Z_{n-1}} G_z = \prod_{s \in S} \ \prod_{\substack{w\in W_1(n) \\ \textrm{s.t.\@~} r_w=s}} G^s_0 = \prod_{s \in S} (G^s_0)^{d(s,n)} . \]
    The result now follows by induction on $n$ from Theorem~\ref{thelimit}.
\end{proof}

\begin{rem}\label{rem:Ugwp}
Similarly as in Proposition~\ref{swstructure}, one can show that also the groups $U_{c_0}|_{\B{c_0}n}$ are generalized wreath products.

We first describe the underlying finite posets of these generalized wreath products.
Consider the Coxeter group $W$ as a thin right-angled building and let $B_n$ be the ball of radius $n$ around the identity element $1_W$.

Let $S_n$ be the set of all tree-walls in $W$ crossing the ball $B_n$.
For each pair $\T_1$ and $\T_2$ of such tree-walls in $S_n$, we set $\T_1 \prec_n \T_2$ if and only if
the tree-wall $\T_2$ is completely contained in the wing of $\T_1$ containing the identity element $1_W$
{\em and} the tree-wall $\T_1$ is contained in one of the wings of $\T_2$ {\em not} containing the identity element $1_W$.
This makes $(S_n, \prec_n)$ into a partially ordered set.

For each $s$-tree-wall $\T \in S_n$, we set $s_\T = s$ and $X_\T = \{2, \dots, q_s \}$.
Let $X_n$ be the Cartesian product of all the sets $X_\T$ for every $\T$ in $S_n$.

Then the group $U_{c_0}|_{\B{c_0}n}$ is permutationally isomorphic to the generalized wreath product $X_n\gwr_{\T \in S_n} G^{s_\T}_0$
with respect to the partial order $\prec_n$.

We refer to the second author's PhD thesis \cite[Section 5.3.3]{PhDAna} for more details.
\end{rem}


\addcontentsline{toc}{section}{References}
\small
\bibliographystyle{amsalpha}
\bibliography{universal-arXiv}

\end{document}